\tikzset{knotarrow/.pic={ \draw[edge, <-] (0,0) -- +(-.001,0);}}
\tikzset{edge/.style={line width=0.8}}
\tikzset{wall/.style={very thick}}
\tikzset{->-/.style n args={2}{decoration={markings, mark=at position #1 with {\arrow{#2}}}, postaction={decorate}}} 
\tikzset{-o-/.code 2 args={\ifstreqF{#2}{} 
{\ifstreqTF{#2}{>}
   {\pgfkeysalso{decoration={markings,mark=at position #1 with {\arrow[scale=0.8]{#2}}}
                    ,postaction={decorate}}
    }
   {\ifstreqTF{#2}{<}
       {\pgfkeysalso{decoration={markings,mark=at position #1 with {\arrow[scale=0.8]{#2}}}
                    ,postaction={decorate}}
        }
       {\pgfkeysalso{decoration={markings,
                    mark=at position #1 with
                    {\draw[black, fill={#2}] circle[radius=2pt];}}
                    ,postaction={decorate}}
        }
     }
  }}}
\newtheorem{theorem}{Theorem}[section]
\newtheorem{lemma}[theorem]{Lemma}
\newtheorem{definition}[theorem]{Definition}
\newtheorem{corollary}[theorem]{Corollary}
\newtheorem{proposition}[theorem]{Proposition}
\newtheorem{remark}[theorem]{Remark}
\newcommand{\Rlabel}[1]{\hyperref[R#1]{(R#1)}}
\newcommand{\bp}{\begin{proposition}}
\newcommand{\ep}{\end{proposition}}
\newcommand{\bpr}{\begin{proof}}
\newcommand{\epr}{\end{proof}}
\newcommand{\bt}{\begin{theorem}}
\newcommand{\et}{\end{theorem}}
\newcommand{\bl}{\begin{lemma}}
\newcommand{\el}{\end{lemma}}
\newcommand{\bcr}{\begin{corollary}}
\newcommand{\ecr}{\end{corollary}}
\newcommand{\be}{\begin{equation}}
\newcommand{\ee}{\end{equation}}
\newcommand{\bes}{\begin{equation*}}
\newcommand{\ees}{\end{equation*}}
\newcommand{\ba}{\begin{align}}
\newcommand{\ea}{\end{align}}
\newcommand{\bas}{\begin{align*}}
\newcommand{\eas}{\end{align*}}
\DeclareMathOperator{\im}{\mathrm{Im}}
\DeclareMathOperator{\sgn}{\mathrm{sgn}}
\newcommand{\red}[1]{{\color{red}#1}}
\begin{document}
\bibliographystyle{alpha}

\title{Centers and representations of ${\rm SL}_n$ quantum Teichm{\"u}ller spaces}

\author[Zhihao Wang]{Zhihao Wang}
\address{Zhihao Wang, School of Mathematics, Korea Institute for Advanced Study (KIAS), 85 Hoegi-ro, Dongdaemun-gu, Seoul 02455, Republic of Korea}
\email{zhihaowang@kias.re.kr}

\keywords{Representations, centers, ${\rm SL}_n$-skein algebras, Fock-Goncharov algebras}

 \maketitle

\begin{abstract}

In this paper, we compute the center of the balanced Fock–Goncharov algebra and determine its rank over the center when the quantum parameter is a root of unity. These results have potential applications to the study of the center and rank of the ${\rm SL}_n$-skein algebra. Building on this computation, we classify the irreducible representations of the balanced Fock–Goncharov algebra. 

Due to the Frobenius homomorphism, every irreducible representation of the (projected) ${\rm SL}_n$-skein algebra of a punctured surface $\mathfrak{S}$ determines a point in the ${\rm SL}_n$ character variety of $\mathfrak{S}$, known as the classical shadow of the representation. By pulling back the irreducible representations of the balanced Fock–Goncharov algebra via the quantum trace map, we show that there exists a “large” subset of the ${\rm SL}_n$ character variety such that, for any point in this subset, there exists an irreducible representation of the (projected) ${\rm SL}_n$-skein algebra whose classical shadow is this point. Finally, we prove that, under mild conditions, the representations of the ${\rm SL}_n$-skein algebra obtained in this way are independent of the choice of ideal triangulation.
 
\end{abstract}

\tableofcontents

\newcommand{\ca}{{\cev{a}  }}

\def\CT{\mathcal T}
\def\BZ{\mathbb Z}
\def\Id{\mathrm{Id}}
\def\Mat{\mathrm{Mat}}
\def\BN{\mathbb N}
\def\BC{\mathbb C}
\def\BCS{\mathbb{C}^{*}}

\def \cb {\color{blue}}
\def\red {\color{red}}
\def \cbf {\color{blue}\bf}
\definecolor{ligreen}{rgb}{0.0, 0.3, 0.0}
\def \cg {\color{ligreen}}
\def \cgf {\color{ligreen}\bf}
\definecolor{darkblue}{rgb}{0.0, 0.0, 0.55}
\def \dbf {\color{darkblue}\bf}
\definecolor{anti-flashwhite}{rgb}{0.55, 0.57, 0.68}
\def \afw {\color{anti-flashwhite}}
\def\cF{\mathcal F}
\def\cP{\mathcal P}
\def\embed{\hookrightarrow}
\def\pr{\mathrm{pr}}
\def\cV{\VV}
\def\ot{\otimes}
\def\buu{{\mathbf u}}
\def\HA{\text{Hom}_{\text{Alg}}}


\def \ri {{\rm i}}
\newcommand{\bs}[1]{\boldsymbol{#1}}
\newcommand{\cev}[1]{\reflectbox{\ensuremath{\vec{\reflectbox{\ensuremath{#1}}}}}}
\def\bS{\bar \fS}
\def\cE{\mathcal E}
\def\fB{\mathfrak B}
\def\cY{\mathcal Y}
\def\cS{\mathscr S}
\def\rS{\overline{\cS}_{\hat\omega}}

\def\fS{\mathfrak{S}}
\def\OfS{\overline{\mathfrak{S}}}
\def\fST{\mathfrak{S}\times (-1,1)}

\def\Xsle{\mathfrak X_{{\rm SL}_n(\mathbb C)}^{(d)}(\fS)}

\def\MN {(M)}
\def\cN {\mathcal{N}}
\def\SL{{\rm SL}_n}

\def\bP{\mathbb P}
\def\bR{\mathbb R}

\def\SS{\cS_{\hat\omega}(\fS)}
\def\Sso{\cS_{\bar\omega}(\fS)}
\def\Sse{\cS_{\bar\eta}(\fS)}
\def\Ssc{\cS_{1}(\fS)}
\def\rdS{\overline \cS_{\hat\omega}^{\rm st}(\fS)}

\newcommand{\beq}{\begin{equation}}
	\newcommand{\eeq}{\end{equation}}

\section{Introduction}
Throughout this paper, all algebras and vector spaces are assumed to be defined over the complex field $\mathbb C$, unless explicitly stated otherwise.  
We denote by $\mathbb C^{*}$ the multiplicative group $\mathbb C\setminus\{0\}$.

\subsection{Review of the ${\rm SL}_n$-skein algebra and the quantum trace map}

A \emph{punctured surface} is obtained from an oriented closed surface with finitely many connected components by removing a finite set of points, called \emph{punctures}. 

Let $\hat\omega^{\frac{1}{2}}$ be a nonzero complex number. Define
\[
\bar\omega = \hat\omega^{\,n}, 
\qquad 
\omega = \hat\omega^{\,n^2},
\]
so that $\bar\omega^{\frac{1}{2n}} = \hat\omega^{\frac{1}{2}}$ and $\omega^{\frac{1}{2n^2}} = \hat\omega^{\frac{1}{2}}$.

The ${\rm SL}_n$-skein algebra of a punctured surface, denoted $\cS_{\bar\omega}(\fS)$, was introduced in \cite{Sik05} as a quantization of the coordinate ring, denoted as $\mathcal{O}_{{\rm SL}_n(\mathbb{C})}(\fS)$, of the ${\rm SL}_n$ character variety of $\fS$:
\begin{align}\label{intro-Sl3-character}
\mathfrak{X}_{{\rm SL}_n(\mathbb{C})}(\fS) 
= \text{Hom}(\pi_1(\fS),{\rm SL}_n(\mathbb C)) \big/ \simeq,
\end{align}
where $\chi\simeq\chi'$ if and only if 
$\operatorname{tr}(\chi(x))=\operatorname{tr}(\chi'(x))$ for all $x\in\pi_1(\fS)$.

In our setting, the $\SL$-skein algebra is a $\BC$-algebra. It is defined as the quotient of the $\BC$-vector space freely generated by the isotopy classes of $n$-webs in $\fS\times (-1,1)$, modulo the relations \eqref{w.cross}–\eqref{wzh.four}. Here an $n$-web in $\fS\times (-1,1)$ is a disjoint union of framed oriented knots and embedded framed oriented graphs such that each graph contains only $n$-valent sinks and sources (see \S\ref{sec-sub-def-stated-skein}). For two $n$-webs $\alpha,\beta$, their product $\alpha\beta$ is defined by stacking $\alpha$ over $\beta$. It is well known that the ${\rm SL}_2$-skein algebra is isomorphic to the Kauffman bracket skein algebra \cite{LS21}.

\medskip

When $\fS$ is triangulable with a chosen triangulation $\lambda$ (see \S\ref{subsec-quantum-trace-map}; note that we exclude self-folded triangles), there exists an algebra homomorphism \cite{LY23}
\begin{align}\label{intro-quaantum-trace-map}
      {\rm tr}_\lambda\colon \cS_{\bar\omega}(\fS)\;\longrightarrow\; \mathcal Z_{\hat\omega}(\fS,\lambda).
\end{align}
Here $\mathcal Z_{\hat\omega}(\fS,\lambda)$ is a quantum torus defined as follows (see \S\ref{subsec-quantum-trace-map} for details). Cutting $\fS$ along the ideal arcs of $\lambda$ produces a collection of ideal triangles $\mathbb F_\lambda$. There is a natural projection
\[
{\bf pr}_{\lambda} \colon \bigsqcup_{\tau\in\mathbb F_\lambda} \tau \;\longrightarrow\; \fS.
\]
For each $\tau\in\mathbb F_\lambda$, as illustrated in the right panel of Figure~\ref{Fig;coord_ijk}, we associate a quiver $\Gamma_\tau$ whose vertices are called \emph{small vertices}. Gluing these quivers via ${\bf pr}_\lambda$ identifies small vertices lying on edges $b$ and $b'$ whenever $b$ and $b'$ are identified, and cancels any pair of oppositely oriented arrows between the same two vertices. The resulting quiver in $\fS$ is denoted $\Gamma_\lambda$, with vertex set $V_\lambda$. Each arrow in $\Gamma_\lambda$ is assigned weight $1$. Let
\[
Q_\lambda\colon V_\lambda\times V_\lambda \to \frac{1}{2}\BZ \qquad \text{(see \eqref{eq-sign-matrix-ad})}
\]
be the signed adjacency matrix of the weighted quiver $\Gamma_\lambda$.
The \emph{Fock–Goncharov algebra} is defined by \cite{FG09b,FG09a}
\begin{align}\label{intro-X}
\mathcal{X}_{\omega}(\fS,\lambda) 
= \BC \langle
X_v^{\pm 1}, v \in V_\lambda \rangle \big/ (
X_v X_{v'}= \omega^{\, 2 Q_\lambda(v,v')} X_{v'} X_v \;\; \text{for } v,v'\in V_\lambda ).
\end{align}
The \emph{$n$-th root Fock–Goncharov algebra} is defined by
\[
\mathcal{Z}_{\hat\omega}(\fS,\lambda)
=  \BC \langle
Z_v^{\pm 1}, v \in V_\lambda \rangle \big/ (
Z_v Z_{v'}= \hat\omega^{\, 2 Q_\lambda(v,v')} Z_{v'} Z_v \;\; \text{for } v,v'\in V_\lambda ).
\]
Since $\omega= \hat\omega^{n^2}$, there is an algebra embedding
\begin{align}\label{intro-embedding}
    \mathcal{X}_{\omega}(\fS,\lambda) \hookrightarrow \mathcal{Z}_{\hat\omega}(\fS,\lambda),
\qquad
X_v \mapsto Z_v^n.
\end{align}

\medskip

It is shown in \cite{LY23} that the image of ${\rm tr}_\lambda$ is contained in a subalgebra
$\mathcal{Z}_{\hat\omega}^{\rm bl}(\fS,\lambda)$ of $\mathcal{Z}_{\hat\omega}(\fS,\lambda)$. This subalgebra will be the main object of study in this paper.  

For each $\tau\in\mathbb F_\lambda$, as illustrated in the left panel of Figure~\ref{Fig;coord_ijk}, every small vertex in $\tau$ has a barycentric coordinate $ijk$ with $0\leq i,j,k\leq n-1$ and $i+j+k=n$. Define functions
\[
{\bf k}_{1}(ijk)=i,\quad {\bf k}_{2}(ijk)=j,\quad {\bf k}_3(ijk)=k,
\]
on the vertex set $V_\tau$ of $\Gamma_\tau$. Let $\mathcal B_{\tau}\subset\BZ^{V_{\tau}}$ be the subgroup generated by ${\bf k}_1,{\bf k}_2,{\bf k}_3$ together with $(n\BZ)^{V_{\tau}}$. Elements of $\mathcal{B}_{\tau}$ are called \emph{balanced}.  
A vector ${\bf k}\in\BZ^{V_\lambda}$ is \emph{balanced} if its restriction
\[
{\bf k}_\tau=(k_v)_{v\in V_\tau},\qquad k_v = {\bf k}({\bf pr}_\lambda(v)),
\]
lies in $\mathcal B_\tau$ for each $\tau\in\mathbb F_\lambda$. Let $\mathcal B_\lambda$ denote the subgroup of $\BZ^{V_\lambda}$ generated by all such balanced vectors.  
The \emph{balanced Fock–Goncharov algebra} is then the monomial subalgebra \cite{LY23}
\[
\mathcal{Z}_{\hat\omega}^{\rm bl}(\fS,\lambda)
=\operatorname{span}_{\BC}\{Z^{\bf k}\mid {\bf k}\in\mathcal B_\lambda\}
\;\subset\;
\mathcal{Z}_{\hat\omega}(\fS,\lambda),
\]
where $Z^{\bf k}$ is Weyl-ordered monomial defined in \eqref{def-mmmmmmm}.


\def\PT{\Phi^{\mathbb T}}

\subsection{The center and the rank of the balanced Fock--Goncharov algebra at roots of unity}

Let $\fS$ be a triangulable punctured surface with a triangulation $\lambda$.  
For $n=2,3$, it is known that the center and the rank of $\mathcal{Z}_{\hat\omega}^{\rm bl}(\fS,\lambda)$ serves as a powerful tool for understanding the center and the rank of $\cS_{\bar\omega}(\fS)$ \cite{unicity,frohman2021dimension,kim2024unicity}.
For $n>3$, however, a ``good basis'' for $\cS_{\bar\omega} (\fS)$ is not yet available. Once such a basis is constructed, one can compute the center and the rank of $\cS_{\bar\omega} (\fS)$ via the quantum trace map \eqref{intro-quaantum-trace-map} and the center and the rank of $\mathcal{Z}_{\hat\omega}^{\rm bl}(\fS,\lambda)$.

\begin{figure}[h]
    \centering
    \includegraphics[width=150pt]{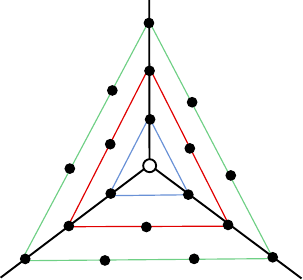}
    \caption{The central puncture is $p$. The blue line is at distance~$1$ from $p$, the red line at distance~$2$, and the green line at distance~$3$.}
    \label{Fig;distance}
\end{figure}

We begin with central elements in $\mathcal{Z}_{\hat\omega}^{\rm bl}(\fS,\lambda)$ arising from punctures.  
For each puncture $p$ and $1 \leq i \leq n-1$, define ${\bf a}(\lambda,p,i) = (a_v)_{v \in V_\lambda}$ by
\[
   a_v = \text{the number of times the vertex $v$ appears on the line at distance $i$ from $p$},
\]
where the line at distance $i$ from $p$ is illustrated in Figure~\ref{Fig;distance}.
A formal definition of ${\bf a}(\lambda,p,i)$ is given in \S\ref{sec-sub-center-bal-generic}.  
Note that ${\bf a}(\lambda,p,i)$ is not balanced. We therefore set
\begin{align*}
     {\bf b}(\lambda,p,i)
     := \left(\sum_{1\leq t\leq n-i-1} n(t+i-n)\,{\bf a}(\lambda,p,t) \right) 
     + (n-i)\sum_{1\leq t\leq n-1} (n-t)\,{\bf a}(\lambda,p,t).
\end{align*}
It is shown in \S\ref{sec-sub-center-bal-generic} that 
$${\bf b}(\lambda,p,i)\in\mathcal B_\lambda.$$
Moreover, when $\omega$ is not a root of unity, Corollary~\ref{cor-center-generic} gives
$$\mathsf{Z}(\mathcal Z_{\hat\omega}^{\rm bl}(\fS,\lambda))
= \mathbb C\big[Z^{{\bf b}(\lambda,p,i)} \;\mid\; 1\leq i\leq n-1,\; p\in\mathcal P\big],$$
where $\mathsf{Z}(\mathcal Z_{\hat\omega}^{\rm bl}(\fS,\lambda))$ denotes the center and $\mathcal P$ is the set of punctures of $\fS$.

\medskip

When $\hat\omega$ is a root of unity, the center 
$\mathsf{Z}(\mathcal Z_{\hat\omega}^{\rm bl}(\fS,\lambda))$ becomes significantly larger.  
We now introduce two versions of root of unity assumptions, which will be used frequently in the sequel.

\begin{itemize}
    \item[\Rlabel{1}]:  
    $\hat\omega^{\frac{1}{2}}\in\mathbb C$ is a root of unity such that the order of $\hat\omega^2$ is $N''$.  
    Define $\bar\omega = \hat\omega^n$ and $\omega = \hat\omega^{n^2}$, so that 
    $\bar\omega^{\frac{1}{2n}} = \hat\omega^{\frac{1}{2}}$ and 
    $\omega^{\frac{1}{2n^2}} = \hat\omega^{\frac{1}{2}}$.  
    Then $\bar\omega^2$ has order $N' = \frac{N''}{\gcd(N'',n)}$, and $\omega^2$ has order $N=\frac{N'}{d}$, where $d=\gcd(N',n)$.  
    Set $\hat \eta^{\frac{1}{2}} = (\hat\omega^{\frac{1}{2}})^{N^2}$.  
    Then $\bar \eta = \hat\eta^n= \bar\omega^{N^2}$ and
    $\eta=\hat \eta^{n^2}=\omega^{N^2}$, with $\bar\eta^d = \pm 1$. 
    \label{R1}
    
    \item[\Rlabel{2}]:  
    In addition to \Rlabel{1}, assume $d=1$.  
    \label{R2}
\end{itemize}

\medskip

Under assumption \Rlabel{1}, there is an algebra embedding
\begin{align}\label{intro-eq-def-F}
    \Phi^{\mathbb T}\colon \mathcal{Z}_{\hat\eta}^{\rm bl}(\fS,\lambda)\hookrightarrow
    \mathcal{Z}^{\rm bl}_{\hat\omega}(\fS,\lambda), 
    \qquad
    Z^{\bf k}\mapsto Z^{N{\bf k}} \quad \text{for ${\bf k}\in\mathcal B_\lambda$.}
\end{align}
To describe the central elements arising from $\im \PT$, we introduce the notation
\begin{align*}
    \mathcal Z_{\hat\eta}^{\rm bl}(\fS,\lambda)_d   :=\operatorname{span}_{\BC}\big\{Z^{\bf k} \;\big|\; {\bf k}\in \mathcal B_\lambda\text{ s.t. }
    {\bf k} Q_\lambda {\bf t}^T=0 \;\;\text{in}\;\; \mathbb Z_{dn}\;\;\text{for all }{\bf t}\in\mathcal B_\lambda\big\},
\end{align*}
which is obviously a subalgebra of $\mathcal Z_{\hat\eta}^{\rm bl}(\fS,\lambda)$.

We now describe the center $\mathsf{Z}(\mathcal Z_{\hat\omega}^{\rm bl}(\fS,\lambda))$ at roots of unity, together with the rank of $\mathcal Z_{\hat\omega}^{\rm bl}(\fS,\lambda)$ over its center.  
The following result is the main theorem of this paper.  

\begin{theorem}[Theorems~\ref{thm-center-balanced-root-of-unity} and  \ref{thm-rank-Z}]\label{intro-1}
Assume \Rlabel{1}.  
Let $\fS$ be a triangulable connected punctured surface with triangulation $\lambda$, and let $\PT$ be the algebra embedding defined in \eqref{intro-eq-def-F}.  
Then:
\begin{enumerate}[label={\rm (\alph*)}]
    \item The center of $\mathcal{Z}^{\rm bl}_{\hat\omega}(\fS,\lambda)$ is generated by 
    \[
        \Phi^{\mathbb T}\big(\mathcal Z_{\hat\eta}^{\rm bl}(\fS,\lambda)_d\big)
    \]
    together with the elements 
    \[
        Z^{{\bf b}(\lambda,p,i)}, \qquad 1\leq i\leq n-1,\; p\in\mathcal P,
    \]
    where $\mathcal P$ is the set of punctures of $\fS$.

    \item If $\fS$ has genus $g$ and $m$ punctures, then $\mathcal Z_{\hat\omega}^{\rm bl}(\fS,\lambda)$ is a free module over its center of rank
    \[
        d^{2g}\, N^{2(n^2-1)(g-1) + n(n-1)m}.
    \]
\end{enumerate}
\end{theorem}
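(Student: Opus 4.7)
The plan is to view $\mathcal{Z}^{\rm bl}_{\hat\omega}(\fS,\lambda)$ as a monomial subalgebra of the quantum torus $\mathcal{Z}_{\hat\omega}(\fS,\lambda)$ and to reduce both statements to the arithmetic of the bilinear form $Q_\lambda$ on the balanced lattice $\mathcal{B}_\lambda$. Since $Z^{\bf k}Z^{\bf t}=\hat\omega^{2{\bf k}Q_\lambda{\bf t}^T}Z^{\bf t}Z^{\bf k}$ and $\hat\omega^2$ has order $N''$, a Weyl-ordered monomial $Z^{\bf k}$ with ${\bf k}\in\mathcal{B}_\lambda$ is central if and only if ${\bf k}$ lies in $\mathsf{C}:=\{{\bf k}\in\mathcal{B}_\lambda\mid{\bf k}Q_\lambda{\bf t}^T\equiv 0\pmod{N''}\text{ for every }{\bf t}\in\mathcal{B}_\lambda\}$. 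Thus $\mathsf{Z}(\mathcal{Z}^{\rm bl}_{\hat\omega}(\fS,\lambda))$ has basis $\{Z^{\bf k}\mid{\bf k}\in\mathsf{C}\}$ and $\mathcal{Z}^{\rm bl}_{\hat\omega}(\fS,\lambda)$ is free of rank $[\mathcal{B}_\lambda:\mathsf{C}]$ over it, so both parts reduce to identifying $\mathsf{C}$ and computing this index.

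For (a), I would first verify that both proposed sources lie in $\mathsf{C}$. The puncture vectors satisfy the stronger equality ${\bf b}(\lambda,p,i)Q_\lambda{\bf t}^T=0$ in $\mathbb{Z}$ for every balanced ${\bf t}$; this is exactly what Corollary~\ref{cor-center-generic} asserts when it identifies these elements as generating the center at generic $\hat\omega$. For ${\bf k}\in\mathcal{B}_\lambda$ with ${\bf k}Q_\lambda{\bf t}^T\in dn\mathbb{Z}$ for all balanced ${\bf t}$, one uses $N''=Nd\gcd(N'',n)$ together with $\gcd(N'',n)\mid n$ to conclude $N{\bf k}Q_\lambda{\bf t}^T\in N''\mathbb{Z}$, so $\PT(Z^{\bf k})=Z^{N{\bf k}}$ is central. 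The substantive content is the converse: every ${\bf k}\in\mathsf{C}$ is an integer combination of puncture vectors and of vectors $N{\bf k}'$ with ${\bf k}'$ satisfying the $dn$-condition. For this I would split $\mathcal{B}_\lambda\otimes\mathbb{Q}$ into the radical of $Q_\lambda$ (already exhausted by the puncture vectors via the generic statement) plus a nondegenerate complement, then run a Smith-normal-form argument on the complement; the $n$-divisibility built into the definition of $\mathcal{B}_\lambda$ through the local generators ${\bf k}_1,{\bf k}_2,{\bf k}_3$ pins down the elementary divisors and hence characterises when a vector kills all of $\mathcal{B}_\lambda$ modulo $N''$.

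For (b), the rank equals $[\mathcal{B}_\lambda:\mathsf{C}]$. Counting barycentric vertices triangle-by-triangle ($\tfrac{(n-1)(n+4)}{2}$ per triangle, with $n-1$ shared along each edge) over an ideal triangulation having $F=4g-4+2m$ triangles and $E=6g-6+3m$ edges yields $|V_\lambda|={\rm rank}_{\mathbb{Z}}(\mathcal{B}_\lambda)=(n^2-1)(2g+m-2)$. The radical of $Q_\lambda$ on $\mathcal{B}_\lambda$ is spanned by the $(n-1)m$ puncture vectors, lies inside $\mathsf{C}$, and contributes a factor of $1$ to the index. The nondegenerate quotient has rank $|V_\lambda|-(n-1)m=2(n^2-1)(g-1)+n(n-1)m$, and computing the mod-$N''$ cokernel of $Q_\lambda$ there via Smith normal form produces a factor of $N$ for each of these directions together with an extra factor of $d$ on a $2g$-dimensional block which, after an adapted choice of basis, corresponds to a standard symplectic block attached to the closed-surface first homology.

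The main obstacle is this final Smith-normal-form computation. What is required is a combinatorial basis of $\mathcal{B}_\lambda$ tied to the quiver $\Gamma_\lambda$ that simultaneously exposes the puncture radical, isolates a standard symplectic block on the $H_1$-part, and tracks the interaction between the $n$-divisibility in the definition of $\mathcal{B}_\lambda$ and the arithmetic relation $d=\gcd(N',n)$. This is the step where the number theory of the tower $N\mid N'\mid N''$ must be matched with the topology of $\fS$, and it is exactly what distinguishes the root-of-unity statement in Theorem~\ref{intro-1}(b) from the purely combinatorial generic situation of Corollary~\ref{cor-center-generic}.
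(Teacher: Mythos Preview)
Your overall framework is correct: both parts reduce to identifying the sublattice $\mathsf{C}\subset\mathcal{B}_\lambda$ and computing $[\mathcal{B}_\lambda:\mathsf{C}]$. The inclusions you check for (a) are fine. But in both parts you are missing precisely the structural input that makes the computation go through, and the paper supplies this by two ideas you do not have.

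For (a), your proposed ``split off the radical, then Smith normal form on the complement'' is too coarse. The paper's argument is genuinely local: for each edge $e\in\lambda$ one isolates the quadrilateral $\mathbb{P}_{4,e}$ and uses the matrices $H_e,K_e$ (Lemma~\ref{lem-matrix-HK}) satisfying $H_eK_e=nI$ together with the explicit block form of $K_eQ_e$. The key technical lemma (Lemma~\ref{lem-Ke-b}) identifies the boundary rows of $K_e$ with the puncture vectors ${\bf b}(\mathbb{P}_{4,e},p_j,n-i)$, and one then propagates around each puncture to strip off a $\mathbb{Z}$-combination of the ${\bf b}(\lambda,p,i)$, leaving a residue divisible by $N$ whose quotient one checks lies in $\mathcal{B}_{\lambda,d}$. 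None of this comes out of an abstract Smith normal form argument on $\mathcal{B}_\lambda$; it relies on the explicit description of balanced vectors via $K_e$ in the pieces where there are no interior punctures.

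For (b), the paper does \emph{not} compute a Smith normal form first. Instead it constructs a surjection $\zeta\colon\mathcal{B}_\lambda\to H_1(\overline{\fS},\mathbb{Z}_n)$ (Lemma~\ref{lem-surj-zeta-def}) and proves that $\tfrac{1}{n}{\bf k}Q_\lambda{\bf h}^T\equiv\langle\zeta({\bf k}),\zeta({\bf h})\rangle_n\pmod{n}$ (Proposition~\ref{prop-bilinear-key}), i.e.\ the reduced bilinear form is literally the intersection pairing. This immediately gives $[\mathcal{B}_\lambda:\mathcal{B}_{\lambda,d}]=|H_1(\overline{\fS},\mathbb{Z}_n)/dH_1(\overline{\fS},\mathbb{Z}_n)|=d^{2g}$, and the remaining factor $N^{|V_\lambda|-(n-1)m}$ is elementary once one knows $\mathcal{B}_\lambda^\circ$ is a rank-$(n-1)m$ direct summand. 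Your intuition that the $d^{2g}$ comes from a symplectic $H_1$-block is right, but you have the logic reversed: the paper obtains the Smith normal form (Corollary~\ref{lam-decom}) as a \emph{consequence} of the rank formula, not as the means to prove it. The step you flag as ``the main obstacle'' is exactly what the topological map $\zeta$ is designed to bypass.
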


\vspace{0.2cm}

Here we briefly state the techniques used to prove Theorem~\ref{intro-1}.

\vspace{0.2cm}

For each $e\in\lambda$, let $\tau$ and $\tau'$ be the two ideal triangles adjacent to $e$.
Denote by $e_1,e_2$ (resp. $e_3,e_4$) the two other edges of $\tau$ (resp. $\tau'$), noting that repetitions may occur among $e_1,e_2,e_3,e_4$.
By cutting along $e_1,e_2,e_3,e_4$, we isolate a quadrilateral $\mathbb P_{4,e}$ having $e$ as its diagonal.
Although $\mathbb P_{4,e}$ may not be globally embedded in $\fS$, its interior is embedded.
A crucial technical step in proving Theorem~\ref{intro-1}(a) is the analysis of central elements in the interior of each quadrilateral $\mathbb P_{4,e}$.
This local analysis, combined with further arguments around punctures, leads to the full description of the center.

For Theorem~\ref{intro-1}(b), we rely on the following bilinear form.  
From Lemma~\ref{lem-balanced-property}, there exists an anti-symmetric $\mathbb Z$-bilinear form
\begin{align*}
    \langle\cdot,\cdot\rangle\colon \mathcal B_\lambda\times \mathcal B_\lambda\rightarrow\mathbb Z,
    \qquad 
    \langle {\bf k}, {\bf t}\rangle = \tfrac{1}{n}{\bf k}Q_\lambda{\bf t}^T.
\end{align*}
We construct a surjective group homomorphism (Lemma~\ref{lem-surj-zeta-def})
\[
    \zeta\colon \mathcal B_\lambda\longrightarrow H_1(\overline{\fS},\mathbb{Z}_n),
\]
where $\overline{\fS}$ is obtained from $\fS$ by filling in all punctures.  
We then show (Proposition~\ref{prop-bilinear-key}) that
\begin{align}\label{intro-inter-number}
    \langle{\bf k},{\bf h}\rangle
    =\langle\zeta({\bf k}),\zeta({\bf h})\rangle_n \in\mathbb Z_n,
\end{align}
where $\langle\zeta({\bf k}),\zeta({\bf h})\rangle_n$ is the algebraic intersection number modulo $n$.  
Combining this relation with direct computations, we establish Theorem~\ref{intro-1}(b).  
Equation~\eqref{intro-inter-number} is the most technical and crucial step of the proof, and it also has further applications (see Remark~\ref{rem-d-trace}).

An application of Theorem~\ref{intro-1}(b) yields the following corollary.  

\begin{corollary}[Corollary~\ref{lam-decom}]\label{intro-cor-1}
Suppose that $\fS$ is a triangulable connected punctured surface of genus $g$ with $m$ punctures, and let $\lambda$ be a triangulation of $\fS$.  
Then there exists a basis $\{{\bf h}_i\mid 1\leq i\leq |V_\lambda|\}$ of $\mathcal B_\lambda$ such that the matrix of the bilinear form is the following:
\[
    \big(\langle {\bf h}_i,{\bf h}_j\rangle\big)_{1\leq i,j\leq |V_\lambda|}
    = \text{diag}\!\left\{
    \begin{pmatrix}
        0 & s_1\\
        - s_1 & 0
    \end{pmatrix},
    \;\dots,\;
    \begin{pmatrix}
        0 & s_r\\
        - s_r & 0
    \end{pmatrix},\;0,\dots,0
    \right\},
\]
where $r=(n^2-1)(g-1) + \tfrac{1}{2}n(n-1)m$ and 
\[
    s_i=\begin{cases}
        1 & 1\leq i\leq g,\\[0.3em]
        n & g< i\leq r.
    \end{cases}
\]
\end{corollary}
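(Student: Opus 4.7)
My plan is to combine the Smith-type classification of anti-symmetric $\BZ$-bilinear forms on a free abelian group with the rank formula of Theorem~\ref{intro-1}(b), exploiting the freedom to vary the order $N''$ of $\hat\omega^2$. Since $\mathcal{B}_\lambda$ is a finite-index subgroup of $\BZ^{V_\lambda}$, it is free abelian of rank $|V_\lambda|$, so the classification of anti-symmetric bilinear forms on such groups yields a basis $\{\mathbf{h}_i\}_{i=1}^{|V_\lambda|}$ of $\mathcal{B}_\lambda$ in which the matrix of $\langle\cdot,\cdot\rangle$ is block-diagonal with blocks $\bigl(\begin{smallmatrix} 0 & s_i \\ -s_i & 0 \end{smallmatrix}\bigr)$ for $1\le i\le r'$ and $s_1\mid s_2\mid\cdots\mid s_{r'}$, followed by zeros on the remaining diagonal. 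It remains to show $r'=r$ and that the $s_i$ take the asserted values.

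To pin down the $s_i$, I will express the rank of $\mathcal{Z}^{\rm bl}_{\hat\omega}(\fS,\lambda)$ over its center in terms of the $s_i$ and compare with Theorem~\ref{intro-1}(b). In the new basis, the Weyl-ordered monomials satisfy $Z^{\mathbf{h}_{2i-1}}Z^{\mathbf{h}_{2i}}=\hat\omega^{2ns_i}Z^{\mathbf{h}_{2i}}Z^{\mathbf{h}_{2i-1}}$, monomials from distinct blocks commute, and $Z^{\mathbf{h}_j}$ is central for $j>2r'$. The standard quantum-torus center computation then yields
\[
\mathrm{rank} \;=\; \prod_{i=1}^{r'} M_i^{\,2}, \qquad M_i := \mathrm{ord}\bigl(\hat\omega^{2ns_i}\bigr) = \frac{N'}{\gcd(N',s_i)},
\]
which by Theorem~\ref{intro-1}(b) equals $d^{2g}N^{2r}$. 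Taking $p$-adic valuations and using $v_p(N')=v_p(N)+v_p(d)$ converts this into the identity $\sum_{i=1}^{r'}\max(0,\,v_p(N')-v_p(s_i))=g\,v_p(d)+r\,v_p(N)$, valid for every allowed $N''$.

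I then specialize $N''$ at each prime. For $p\nmid n$ and $N''=p^k$ one has $d=1$ and $v_p(N')=v_p(N)=k$; letting $k$ vary forces $r'=r$ and $v_p(s_i)=0$ for all $i$, so every $s_i$ is supported on primes dividing $n$. For $p\mid n$ with $e=v_p(n)$ and $v_p(N'')=e+j$ $(j\ge 1)$, one computes $v_p(N')=j$, $v_p(d)=\min(j,e)$, $v_p(N)=\max(j-e,0)$. The case $j=1$ yields $\#\{i:v_p(s_i)=0\}=g$, and monotonicity $s_1\mid\cdots\mid s_r$ fixes these as the first $g$ indices. The cases $j=2,\dots,e$ force $v_p(s_i)\ge e$ for $i>g$, and $j$ very large gives $\sum_i v_p(s_i)=(r-g)e$; combined with each term $\ge e$, this forces $v_p(s_i)=e$ for every $i>g$. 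Assembling across all primes yields $s_i=1$ for $i\le g$ and $s_i=n$ for $i>g$, as required.

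The main subtlety will be the center-computation step: one must carefully set up the quantum-torus structure of $\mathcal{Z}^{\rm bl}_{\hat\omega}(\fS,\lambda)$ on the lattice $\mathcal{B}_\lambda$ with the Weyl ordering, and verify that the order of $\hat\omega^{2ns_i}$ is indeed $N'/\gcd(N',s_i)$ using the relation $\mathbf{k}Q_\lambda\mathbf{t}^T=n\langle\mathbf{k},\mathbf{t}\rangle$ on balanced lattices. Once this is in place, the rest is routine $p$-adic bookkeeping.
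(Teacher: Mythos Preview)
Your approach is correct and mirrors the paper's proof closely: both put the form $\langle\cdot,\cdot\rangle$ on $\mathcal{B}_\lambda$ into Smith-type block-diagonal form with invariants $s_1\mid\cdots\mid s_{r'}$, express the rank of $\mathcal{Z}^{\rm bl}_{\hat\omega}(\fS,\lambda)$ over its center as $\prod_i\bigl(N'/\gcd(N',s_i)\bigr)^2$ (the paper cites this as \cite[Equation~(8.2)]{KaruoWangToAppear}), equate with Theorem~\ref{intro-1}(b), and vary the root of unity to pin down the $s_i$. The one genuine difference is in this last uniqueness step. The paper isolates it as the abstract Lemma~\ref{lem-key-eq}: two divisor chains $(k_i)$, $(l_i)$ satisfying $\prod_i m/\gcd(k_i,m)=\prod_i m/\gcd(l_i,m)$ for all $m$ must coincide, proved slickly by plugging in $m=k_rl_r$ and $m=k_r$ and inducting on $r$. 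You instead run a direct $p$-adic valuation argument, specializing $N''$ to prime powers. Both are valid; the paper's lemma is shorter and reusable, while your argument makes the dichotomy between primes dividing $n$ and those coprime to $n$---and hence the appearance of exactly $g$ copies of $1$ and $r-g$ copies of $n$---completely explicit. A minor structural difference: the paper reads off $r'=r$ in advance from the kernel description of $\mathcal{B}_\lambda^\circ$ (Proposition~\ref{Prop-central-generic} and Lemma~\ref{lem-summand-B}), whereas you recover it from the rank comparison itself.
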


For $n=2$, Corollary~\ref{intro-cor-1} was established in \cite{representation2,BL07} by different methods.  
For general $n$, the structure of the center and the rank of $\mathcal Z_{\hat\omega}^{\rm bl}(\fS,\lambda)$ were studied in \cite{KaruoWangToAppear} in the case $\partial\fS\neq \emptyset$ (the ``stated case") and when $\fS$ has no interior punctures.  

\subsection{Representations of the balanced Fock--Goncharov algebra}

It was shown in~\cite{KaruoWangToAppear} that, when $\hat\omega$ is a root of unity, every irreducible representation of $\mathcal{Z}^{\rm bl}_{\hat\omega}(\fS,\lambda)$ is uniquely determined by its action of the center $\mathsf{Z}(\mathcal{Z}^{\rm bl}_{\hat\omega}(\fS,\lambda))$, and that the dimension of each irreducible representation equals the square root of the rank of $\mathcal{Z}^{\rm bl}_{\hat\omega}(\fS,\lambda)$ over its center.  
Combining this result with Theorem~\ref{intro-1}, we obtain the following classification of irreducible representations of $\mathcal{Z}^{\rm bl}_{\hat\omega}(\fS,\lambda)$ under assumption \Rlabel{1}.

\begin{theorem}[Theorem~\ref{thm-representation-Fock}]\label{intro-2}
Let $\fS$ be a triangulable connected punctured surface of genus $g$ and $m$ punctures, and let $\mathcal P$ denote the set of punctures of $\fS$.  
Suppose that assumption \Rlabel{1} holds.  
Let 
$$f\colon \mathcal{Z}^{\rm bl}_{\hat\eta}(\fS,\lambda)_d\to \mathbb C$$ be an algebra homomorphism, and let $b(p,i)\in\mathbb C^{*}$ satisfy
\[
b(p,i)^N=f(Z^{{\bf b}(\lambda,p,i)}) \qquad \text{for } 1\leq i\leq n-1,\; p\in\mathcal P.
\]
Then there exists a unique irreducible representation
\[
\rho\colon \mathcal{Z}^{\rm bl}_{\hat\omega}(\fS,\lambda)\;\longrightarrow\; \mathrm{End}_{\mathbb C}(V)
\]
satisfying the following conditions:
\begin{enumerate}
    \item $\rho(\PT(x)) = f(x)\,\Id_V$ for all $x\in \mathcal{Z}^{\rm bl}_{\hat\eta}(\fS,\lambda)_d$, where $\PT$ is the embedding defined in \eqref{intro-eq-def-F};
    \item $\rho(Z^{{\bf b}(\lambda,p,i)}) = b(p,i)\,\Id_V$ for $1\leq i\leq n-1$ and $p\in\mathcal P$.
\end{enumerate}
In particular, 
$$\dim_{\mathbb C}V = d^{g} N^{(n^2-1)(g-1) + \tfrac{1}{2}n(n-1)m}.$$
\end{theorem}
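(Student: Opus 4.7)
The plan is to combine Theorem~\ref{intro-1}(a) with the result of~\cite{KaruoWangToAppear} recalled just before the statement: under \Rlabel{1}, irreducible representations of $\mathcal{Z}^{\rm bl}_{\hat\omega}(\fS,\lambda)$ are in bijection with algebra homomorphisms $\chi\colon \mathsf{Z}(\mathcal{Z}^{\rm bl}_{\hat\omega}(\fS,\lambda))\to\mathbb{C}$, and each such irreducible has dimension equal to the square root of the rank of $\mathcal{Z}^{\rm bl}_{\hat\omega}(\fS,\lambda)$ over its center. It then suffices to show that the data $(f,\{b(p,i)\})$ subject to $b(p,i)^N=f(Z^{{\bf b}(\lambda,p,i)})$ is equivalent to the datum of such a central character.

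To build $\chi$ from $(f,\{b(p,i)\})$, I would use the generating set for the center given by Theorem~\ref{intro-1}(a): the image $\Phi^{\mathbb T}(\mathcal{Z}^{\rm bl}_{\hat\eta}(\fS,\lambda)_d)$ together with the monomials $Z^{{\bf b}(\lambda,p,i)}$. The only possible definition is $\chi(\Phi^{\mathbb T}(x))=f(x)$ and $\chi(Z^{{\bf b}(\lambda,p,i)})=b(p,i)$, so uniqueness of $\chi$ is automatic. For well-definedness, relations internal to $\Phi^{\mathbb T}(\mathcal{Z}^{\rm bl}_{\hat\eta}(\fS,\lambda)_d)$ are respected because $f$ is an algebra homomorphism and $\Phi^{\mathbb T}$ is injective. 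The only cross relations between the two families of generators are the $N$-th power identities
\[
(Z^{{\bf b}(\lambda,p,i)})^{N}=Z^{N{\bf b}(\lambda,p,i)}=\Phi^{\mathbb T}(Z^{{\bf b}(\lambda,p,i)}),
\]
which make sense once one verifies $Z^{{\bf b}(\lambda,p,i)}\in\mathcal{Z}^{\rm bl}_{\hat\eta}(\fS,\lambda)_d$; this is a divisibility check via \Rlabel{1}, starting from the centrality of $Z^{{\bf b}(\lambda,p,i)}$ in $\mathcal{Z}^{\rm bl}_{\hat\omega}(\fS,\lambda)$. The hypothesis $b(p,i)^{N}=f(Z^{{\bf b}(\lambda,p,i)})$ then makes $\chi$ consistent with these identities.

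Applying~\cite{KaruoWangToAppear} to $\chi$ produces a unique irreducible $\rho$ with $\rho|_{\mathsf{Z}}=\chi\cdot\Id_V$, which is exactly the content of conditions (1) and (2). The dimension follows from Theorem~\ref{intro-1}(b): $\dim_{\mathbb{C}}V=\sqrt{d^{2g}N^{2(n^2-1)(g-1)+n(n-1)m}}=d^{g}N^{(n^2-1)(g-1)+\tfrac{1}{2}n(n-1)m}$. The main technical hurdle is the well-definedness step above, i.e.\ ruling out hidden polynomial relations among the central generators beyond the $N$-th power identities displayed. The cleanest resolution is to exhibit $\mathsf{Z}(\mathcal{Z}^{\rm bl}_{\hat\omega}(\fS,\lambda))$ as a free $\Phi^{\mathbb T}(\mathcal{Z}^{\rm bl}_{\hat\eta}(\fS,\lambda)_d)$-module with basis the Weyl-ordered monomials in the $Z^{{\bf b}(\lambda,p,i)}$ whose exponents lie strictly below $N$, reducing relation-checking to a single univariate identity per pair $(p,i)$ and leaving a rank count against Theorem~\ref{intro-1}(b) as the only remaining verification.
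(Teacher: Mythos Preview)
Your proposal is correct and matches the paper's approach: combine the center description (Theorem~\ref{intro-1}(a)), the bijection between irreducibles and central characters from~\cite{KaruoWangToAppear}, and the rank formula (Theorem~\ref{intro-1}(b)). The paper packages your well-definedness step as Lemma~\ref{cor-center-bl-equal}, proving the bijection between central characters and pairs $(f,\{b(p,i)\})$ directly from the group-theoretic fact that $\mathcal B_\lambda^\circ$ is a direct summand of $\mathcal B_{\lambda,d}$ (Lemma~\ref{lem-summand-B}), which is slightly more direct than the rank count you sketch but amounts to the same free-module description of the center over $\Phi^{\mathbb T}(\mathcal Z_{\hat\eta}^{\rm bl}(\fS,\lambda)_d)$.
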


For $n=2$, Theorem~\ref{intro-2} was proved in \cite{representation2,BL07}.

\subsection{Representations of the (projected) ${\rm SL}_n$-skein algebra}

Under assumption \Rlabel{1} and
\[
[n]_\omega! = \prod_{k=1}^n \frac{\omega^k - \omega^{-k}}{\omega-\omega^{-1}} \neq 0,
\]
Kim, L{\^e}, and the author constructed the Frobenius homomorphism for the projected $\SL$-skein algebra \cite{KLW} (see Theorem~\ref{thm-Fro}):
\[
\Phi\colon \cS_{\bar\eta}^{*}(\fS) \longrightarrow \cS_{\bar\omega}^{*}(\fS),
\]
where $\cS^{*}(\fS)$ (see \eqref{def-projected-skein}), called the \emph{projected $\SL$-skein algebra}, is obtained from $\cS(\fS)$ by quotienting out the kernel of the splitting homomorphism (see \S\ref{sub-splitting}).  

It was conjectured in \cite{LS21} that $\cS_{\bar\omega}^{*}(\fS) = \cS_{\bar\omega}(\fS)$.  
This conjecture was confirmed for $n=2,3$ in \cite{le2018triangular,higgins2020triangular}, and for general $n$ with $\bar\omega^n=1$ in \cite{wang2024TQFT}.

Note that $\bar\eta=\pm 1$ under assumption \Rlabel{2}.  
By \cite[Theorems~4.2 and 5.8]{wang2024TQFT}, we have $\cS_{\bar\eta}^{*}(\fS)= \cS_{\bar\eta}(\fS)$ when $\bar\eta=\pm 1$.  
Furthermore, it was shown in \cite{S2001SLn,wang2024TQFT} that in this case there exists an algebra isomorphism
\begin{align}\label{intro-bijection-skein-ring}
\mathcal T\colon \cS_{\bar\eta}(\fS)\;\xrightarrow{\;\cong\;}\; \mathcal{O}_{{\rm SL}_n(\mathbb{C})}(\fS).
\end{align}

Now let $\fS$ be a triangulable punctured surface with a triangulation $\lambda$, and let $\mathcal P$ denote the set of punctures of $\fS$.  
Consider a finite-dimensional irreducible representation
\[
\rho\colon \cS_{\bar\omega}^{*}(\fS)\longrightarrow \mathrm{End}_{\mathbb C}(V).
\]
As shown in \S\ref{subsec-rep-skein}, $\rho$ determines a pair
\[
(\chi,\{s(p,i)\mid 1\leq i\leq n-1,\;p\in\mathcal P\}), \qquad \text{(the \emph{classical shadow} of $\rho$)},
\]
where $\chi\in \mathfrak{X}_{{\rm SL}_n(\mathbb{C})}(\fS)$ and $\{s(p,i)\}\subset \mathbb C$, such that
\begin{align*}
\rho(\Phi(\beta)) &= \mathcal T(\beta)(\chi)\,\Id_V, && \text{for $\beta\in\cS_{\bar\eta}(\fS)$}, \\
\rho([\alpha(p)_i]_{\bar\omega}) &= s(p,i)\,\Id_V, && \text{for $1\leq i\leq n-1$, $p\in\mathcal P$,}
\end{align*}
where each $[\alpha(p)_i]_{\bar\omega}$ is a central element in $\cS_{\bar\omega}^{*}(\fS)$ associated to the puncture $p$ (see \S\ref{subsec-image-loop}).  
These data satisfy the relation
\begin{equation}\label{intro-shadow-relation}
\CT([\alpha(p)_i]_{\bar\eta})(\chi)
=\bar P_{N,i}(s(p,1),\ldots, s(p,n-1)), \qquad
\text{for $1\leq i\leq n-1,\;p\in\mathcal P$},
\end{equation}
where $\bar P_{N,i}$
is defined in \eqref{eq-def-barP}.

We denote by $\overline{\mathcal{T}}$ the composition
\[
\HA(\mathcal{Z}_{\hat\eta}^{\rm bl}(\fS,\lambda),\mathbb C)\xrightarrow{{\rm tr}^*}\HA(\cS_{\bar\eta}(\fS),\mathbb C)\xrightarrow{\mathcal{T}_{*}}\mathfrak{X}_{{\rm SL}_n(\mathbb{C})}(\fS),
\]
where ${\rm tr}^{*}$ is induced by ${\rm tr}_\lambda$ and $\mathcal T_*$ is induced by $\mathcal T^{-1}$.

\begin{theorem}[Theorem~\ref{thm-main-3}]\label{intro-3}
Suppose assumption \Rlabel{2} holds and $[n]_\omega!\neq 0$.  
Let $\fS$ be a triangulable punctured surface with triangulation $\lambda$, and let $\mathcal P$ be its set of punctures.  
Let
\[
\chi\in \mathfrak{X}_{{\rm SL}_n(\mathbb{C})}(\fS), 
\qquad 
\{s(p,i)\mid 1\leq i\leq n-1,\;p\in\mathcal P\}\subset \mathbb C,
\]
satisfy \eqref{intro-shadow-relation} with $\chi\in\im\overline{\CT}$.  
Then there exists an irreducible representation
\[
\bar\rho\colon \mathcal{Z}^{\rm bl}_{\hat\omega}(\fS,\lambda)\;\longrightarrow\; \mathrm{End}_{\mathbb C}(V)
\]
such that the classical shadow of any irreducible sub-representation of
\[
\cS_{\bar\omega}^{*}(\fS)\xrightarrow{\rm tr}\mathcal{Z}^{\rm bl}_{\hat\omega}(\fS,\lambda)\xrightarrow{\bar\rho}\mathrm{End}_{\mathbb C}(V)
\]
is precisely $(\chi,\{s(p,i)\}_{p\in\mathcal P,\,1\leq i\leq n-1})$.
\end{theorem}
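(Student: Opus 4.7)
The plan is to build $\bar\rho$ by invoking the classification in Theorem~\ref{intro-2}, choosing the input data $(f,\{b(p,i)\})$ so as to match the specified classical shadow under pullback through ${\rm tr}_\lambda$, and then to use the centrality of $\Phi(\cS_{\bar\eta}(\fS))$ and of the puncture loops to pass from the whole of $V$ to any irreducible sub-representation.

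First, I would decode the hypothesis $\chi\in\im\overline{\mathcal T}$: by construction of $\overline{\mathcal T}$, there exists an algebra homomorphism $g\colon \mathcal{Z}^{\rm bl}_{\hat\eta}(\fS,\lambda)\to\mathbb C$ with $g\bigl({\rm tr}_\lambda(\beta)\bigr)=\mathcal T(\beta)(\chi)$ for every $\beta\in\cS_{\bar\eta}(\fS)$. A key ingredient, supplied by the earlier sections of the paper, is the containment ${\rm tr}_\lambda\bigl(\cS_{\bar\eta}(\fS)\bigr)\subset \mathcal{Z}^{\rm bl}_{\hat\eta}(\fS,\lambda)_d$, so the restriction $f:=g|_{\mathcal{Z}^{\rm bl}_{\hat\eta}(\fS,\lambda)_d}$ still records the value $\mathcal T(\beta)(\chi)$ on each ${\rm tr}_\lambda(\beta)$. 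Next I would use the explicit description of ${\rm tr}_\lambda([\alpha(p)_i]_{\bar\omega})$ worked out in \S\ref{subsec-image-loop}: it is a Weyl-ordered polynomial in the monomials $Z^{{\bf b}(\lambda,p,j)}$ whose Frobenius image is compatible with the classical polynomial $\bar P_{N,i}$ appearing in \eqref{intro-shadow-relation}. Using \eqref{intro-shadow-relation}, I can select $b(p,i)\in\mathbb C^{*}$ with $b(p,i)^{N}=f(Z^{{\bf b}(\lambda,p,i)})$ in such a way that substituting $b(p,1),\ldots,b(p,n-1)$ into the polynomial description of ${\rm tr}_\lambda([\alpha(p)_i]_{\bar\omega})$ yields exactly $s(p,i)$.

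With $(f,\{b(p,i)\})$ fixed, Theorem~\ref{intro-2} provides the irreducible representation $\bar\rho\colon \mathcal{Z}^{\rm bl}_{\hat\omega}(\fS,\lambda)\to\mathrm{End}_{\mathbb C}(V)$. To identify the classical shadow of any irreducible sub-representation of $\bar\rho\circ{\rm tr}_\lambda$, I would combine condition~(1) of Theorem~\ref{intro-2} with the Frobenius compatibility ${\rm tr}_\lambda\circ\Phi=\Phi^{\mathbb T}\circ{\rm tr}_\lambda$ to compute, for $\beta\in\cS_{\bar\eta}(\fS)$,
\[
\bar\rho\bigl({\rm tr}_\lambda(\Phi(\beta))\bigr)=\bar\rho\bigl(\Phi^{\mathbb T}({\rm tr}_\lambda(\beta))\bigr)=f\bigl({\rm tr}_\lambda(\beta)\bigr)\,\Id_V=\mathcal T(\beta)(\chi)\,\Id_V,
\]
while the calibration of Step~2 gives
\[
\bar\rho\bigl({\rm tr}_\lambda([\alpha(p)_i]_{\bar\omega})\bigr)=s(p,i)\,\Id_V.
\]
Since $\Phi(\beta)$ and $[\alpha(p)_i]_{\bar\omega}$ are central in $\cS^{*}_{\bar\omega}(\fS)$, every irreducible $\cS^{*}_{\bar\omega}(\fS)$-sub-representation of $V$ inherits these scalar actions, and therefore has classical shadow exactly $(\chi,\{s(p,i)\})$.

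The principal obstacle is the calibration of the puncture loops in Step~2: one needs a sufficiently explicit formula for ${\rm tr}_\lambda([\alpha(p)_i]_{\bar\omega})$ as a polynomial in the monomials $Z^{{\bf b}(\lambda,p,j)}$, together with the verification that $\bar P_{N,i}$ in \eqref{intro-shadow-relation} is precisely the Frobenius-level polynomial comparing the $\bar\omega$- and $\bar\eta$-puncture loops. Once that matching is in place, \eqref{intro-shadow-relation} is exactly the consistency condition that permits choosing the $N$-th roots $b(p,i)$ coherently, and the remainder of the argument is a bookkeeping exercise using Theorem~\ref{intro-2} and the compatibility of the quantum trace with the two Frobenius homomorphisms.
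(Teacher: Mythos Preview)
Your proposal is correct and follows essentially the same route as the paper's proof: pick $f$ from $\chi\in\im\overline{\mathcal T}$, calibrate the $N$-th roots $b(p,i)$ so that the puncture loops evaluate to $s(p,i)$, invoke Theorem~\ref{intro-2}, and then read off the classical shadow via the Frobenius compatibility ${\rm tr}_\lambda\circ\Phi=\Phi^{\mathbb T}\circ{\rm tr}_\lambda$. Two small remarks: under \Rlabel{2} one has $d=1$, so $\mathcal Z^{\rm bl}_{\hat\eta}(\fS,\lambda)_d=\mathcal Z^{\rm bl}_{\hat\eta}(\fS,\lambda)$ and your restriction step is vacuous; and the calibration you flag as the main obstacle is carried out in the paper by combining Proposition~\ref{lem-image-loop-trace} (which identifies ${\rm tr}_\lambda([\alpha(p)_k]_{\bar\omega})$ with the $k$-th elementary symmetric polynomial in the monomials $Z^{{\bf d}(\lambda,p,i)}=Z^{{\bf b}(\lambda,p,i)-{\bf b}(\lambda,p,i-1)}$) with Lemma~\ref{lem-solu}, which solves $\bar P_{N,k}(x_1,\dots,x_{n-1})=e_k(s_1,\dots,s_n)$ exactly by taking $N$-th roots.
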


We expect that, for any pair 
\[
(\chi,\{s(p,i)\}_{p\in\mathcal P,\,1\leq i\leq n-1})
\]
satisfying \eqref{intro-shadow-relation}, there exists an irreducible representation of 
$\cS_{\bar\omega}^{*}(\fS)$ whose classical shadow is exactly this pair. Moreover, there exists a sufficiently large family $U$ of such pairs with the property that, for every 
\[
(\chi,\{s(p,i)\}_{p\in\mathcal P,\,1\leq i\leq n-1}) \in U,
\]  
the pair uniquely determines an irreducible representation of $\cS_{\bar\omega}^{*}(\fS)$ of dimension  
\[
d^{g} N^{(n^2-1)(g-1) + \tfrac{1}{2}n(n-1)m},
\]  
where $g$ is the genus of $\fS$ and $m$ is the number of punctures of $\fS$.

For ${\rm SL}_2$, this property was conjectured by Bonahon and Wong~\cite{BW16}, and later proved by Frohman, Kania-Bartoszynska, and L{\^e}~\cite{unicity}. The case of the ${\rm SL}_3$-skein algebra was established by Kim and the author in~\cite{kim2024unicity}.

The following theorem shows that the representation constructed in Theorem~\ref{intro-3} is independent of the choice of triangulation $\lambda$.  
Its proof relies on the balanced quantum coordinate change isomorphism constructed in \cite{KimWang}.

\begin{theorem}[Theorem~\ref{thm-naturality}]\label{intro-4}
Let $\fS$ be a triangulable punctured surface, and let $\lambda$ and $\lambda'$ be two triangulations of $\fS$.  
Let $f\colon \mathcal{Z}^{\rm bl}_{\hat\eta}(\fS,\lambda)\to \mathbb C$ be an algebra homomorphism, and let $b(p,i)\in\mathbb C^{*}$ satisfy
\[
b(p,i)^N=f(Z^{{\bf b}(\lambda,p,i)}) \qquad \text{for $1\leq i\leq n-1$, $p\in\mathcal P$}.
\]

Suppose assumption \Rlabel{2} holds.  
Then, under Condition~(\hyperref[cond:star]{*}) for $(f,\{b(p,i)\})$, there exists an algebra homomorphism
\[
f'\colon \mathcal{Z}^{\rm bl}_{\hat\eta}(\fS,\lambda')\longrightarrow \mathbb C
\]
such that:
\begin{enumerate}[label={\rm (\alph*)}]
\item The following diagram commutes:
\[
\begin{tikzcd}
\cS_{\bar \eta}(\fS) \arrow[r, "{\rm tr}_\lambda"]
\arrow[d, "{\rm tr}_{\lambda'}"]  
& \mathcal{Z}^{\rm bl}_{\hat\eta}(\fS,\lambda) \arrow[d, "f"] \\
\mathcal{Z}^{\rm bl}_{\hat\eta}(\fS,\lambda')
\arrow[r, "f'"] 
&  \BC
\end{tikzcd}
\]
and
\[
f(Z^{{\bf b}(\lambda,p,i)}) \;=\; f'(Z^{{\bf b}(\lambda',p,i)}) \qquad \text{for $p\in \mathcal P,\; 1\leq i\leq n-1$};
\]

\item By Theorem~\ref{intro-2}, the data $f$ (resp. $f'$) together with $(b(p,i))_{1\leq i\leq n-1,\; p\in\mathcal P}$ uniquely determine irreducible representations
\[
\rho\colon  \mathcal{Z}^{\rm bl}_{\hat\omega}(\fS,\lambda) \to \mathrm{End}_{\BC}(V), 
\qquad 
\rho'\colon \mathcal{Z}^{\rm bl}_{\hat\omega}(\fS,\lambda') \to \mathrm{End}_\BC(V').
\]
 As representations of $\cS_{\hat\omega}(\fS)$, the following two are isomorphic:
\[
\cS_{\bar\omega}(\fS)\xrightarrow{{\rm tr}_\lambda}\mathcal{Z}^{\rm bl}_{\hat\omega}(\fS,\lambda)\xrightarrow{\rho}\mathrm{End}_\BC(V), 
\quad
\cS_{\bar\omega}(\fS)\xrightarrow{{\rm tr}_{\lambda'}}\mathcal{Z}^{\rm bl}_{\hat\omega}(\fS,\lambda')\xrightarrow{\rho'}\mathrm{End}_\BC(V').
\]
\end{enumerate}
\end{theorem}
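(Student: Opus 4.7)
The plan is to reduce everything to the balanced quantum coordinate change isomorphism constructed in \cite{KimWang}. That result provides, for any two triangulations $\lambda, \lambda'$ of $\fS$, an algebra isomorphism
\[
\Theta^{\rm bl}_{\lambda,\lambda'}\colon \widetilde{\mathcal Z}^{\rm bl}_{\hat\omega}(\fS,\lambda)\xrightarrow{\;\cong\;}\widetilde{\mathcal Z}^{\rm bl}_{\hat\omega}(\fS,\lambda')
\]
between suitable localizations (fraction skew-fields, or the localizations dictated by Condition~(\hyperref[cond:star]{*})), together with a parallel isomorphism at the $\hat\eta$-level. The key structural properties I will use are: (i) $\Theta^{\rm bl}_{\lambda,\lambda'}$ intertwines the quantum trace maps, i.e.\ $\Theta^{\rm bl}_{\lambda,\lambda'}\circ {\rm tr}_\lambda={\rm tr}_{\lambda'}$; (ii) it intertwines the Frobenius-type embeddings $\Phi^{\mathbb T}$ at the two parameters; and (iii) it carries the puncture monomials to each other, $\Theta^{\rm bl}_{\lambda,\lambda'}(Z^{{\bf b}(\lambda',p,i)})=Z^{{\bf b}(\lambda,p,i)}$, since the vectors ${\bf b}(\lambda,p,i)$ are defined purely in terms of the geometry around $p$ and the coordinate change is trivial away from the flip locus relating $\lambda$ to $\lambda'$ (this last point may require a small verification of flip-formulas, or one can invoke the fact that the image of the loop element $[\alpha(p)_i]$ under ${\rm tr}_\lambda$ is, up to Weyl ordering, $Z^{{\bf b}(\lambda,p,i)}$).

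For part (a), I define $f'$ as the pullback $f\circ(\Theta^{\rm bl}_{\lambda,\lambda',\hat\eta})^{-1}$; Condition~(\hyperref[cond:star]{*}) is precisely the hypothesis needed to ensure that this rational pullback descends to an honest algebra homomorphism on $\mathcal Z^{\rm bl}_{\hat\eta}(\fS,\lambda')$. Commutativity of the square with ${\rm tr}_\lambda,{\rm tr}_{\lambda'}$ is then tautological from property~(i), and the equality $f(Z^{{\bf b}(\lambda,p,i)})=f'(Z^{{\bf b}(\lambda',p,i)})$ follows from property~(iii).

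For part (b), I transport $\rho'$ by $\Theta^{\rm bl}_{\lambda,\lambda',\hat\omega}$: Condition~(\hyperref[cond:star]{*}) again ensures the pullback $\rho'\circ\Theta^{\rm bl}_{\lambda,\lambda',\hat\omega}$ is a well-defined representation of $\mathcal Z^{\rm bl}_{\hat\omega}(\fS,\lambda)$ on $V'$. I verify that this pullback satisfies the two defining conditions of Theorem~\ref{intro-2} for the pair $(f,\{b(p,i)\})$: on $\Phi^{\mathbb T}(x)$ with $x\in\mathcal Z^{\rm bl}_{\hat\eta}(\fS,\lambda)_d$ it acts by $f(x)\,\Id$ (using property~(ii) plus the definition of $f'$), and on $Z^{{\bf b}(\lambda,p,i)}$ it acts by $b(p,i)\,\Id$ (using property~(iii) and the hypothesis on $b(p,i)$). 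By the uniqueness clause of Theorem~\ref{intro-2}, this pullback is isomorphic to $\rho$ as a $\mathcal Z^{\rm bl}_{\hat\omega}(\fS,\lambda)$-module. Composing the resulting isomorphism $V\cong V'$ with ${\rm tr}_\lambda$ and invoking property~(i) at the $\hat\omega$-level yields the isomorphism of $\cS_{\bar\omega}(\fS)$-representations claimed in (b).

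The main obstacle, and the place Condition~(\hyperref[cond:star]{*}) is really needed, is the localization issue: $\Theta^{\rm bl}_{\lambda,\lambda'}$ is not an isomorphism of the algebras themselves but only of their appropriate (skew) localizations, so pulling back representations and algebra homomorphisms requires that the relevant denominators act invertibly — this is exactly what Condition~(\hyperref[cond:star]{*}) encodes. A secondary technical point is to check property~(iii) rigorously for arbitrary flip sequences connecting $\lambda$ and $\lambda'$; because puncture loops are central and transform under flips in a controlled way (and the balanced coordinate change is built to preserve quantum traces of skein elements), this reduces to an explicit flip check, but it is the one genuinely combinatorial verification required by the argument.
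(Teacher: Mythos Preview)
Your high-level plan---transport everything through the balanced quantum coordinate change $\Theta_{\lambda\lambda'}$ of \cite{KimWang}---matches the paper's strategy, but there are real gaps in how you use Condition~(\hyperref[cond:star]{*}) and in your justification of property~(iii).

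\textbf{The role of Condition~(\hyperref[cond:star]{*})(1).} You never use the mutable-balanced extension $h\colon\mathsf Z(\mathcal Z^{\rm mbl}_{\hat\omega}(\fS,\lambda))\to\mathbb C$. The paper's proof begins, for part~(b), by lifting $\rho$ to an irreducible representation $\bar\rho$ of the larger algebra $\mathcal Z^{\rm mbl}_{\hat\omega}(\fS,\lambda)$ with central character $h$ (using Corollary~\ref{prop-representation-Fock} for the dimension count). This is not cosmetic: the coordinate change $\Theta_{\lambda\lambda'}$ is a composition of single mutations $\nu_{v_j}^{\hat\omega}$, and for $n>2$ the intermediate seeds $\mathcal D_j$ have no ``balanced'' subalgebra---only a mutable-balanced one (see the Remark following Theorem~\ref{thm-naturality}). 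The inductive transport (Lemma~\ref{lem-key-final}) must therefore be carried out at the mutable-balanced level, and only at the endpoints does one restrict back to $\mathcal Z^{\rm bl}$.

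\textbf{The $\hat\eta$-to-$\hat\omega$ link.} You assert that Condition~(\hyperref[cond:star]{*}) ``ensures'' the denominators of $\Theta_{\lambda\lambda'}^{\hat\omega}$ act invertibly under $\rho'$, but you give no mechanism. The actual argument is an eigenvalue computation: the denominators are products of factors $1+\omega^{2r-1}X_{v_j}$, and one needs $-\omega^{1-2r}$ not to be an eigenvalue of $\bar\rho_{j-1}(X_{v_j})$. Since $\bar\rho_{j-1}(X_{v_j})^N=g_{j-1}(X_{v_j})\,\Id$, this reduces to $(-\omega)^N\neq g_{j-1}(X_{v_j})$. The identity $(-\omega)^N=-\eta$ (checked separately for $N$ even and odd) then converts this into the compatibility condition $g_{j-1}(X_{v_j})\neq -\eta$ from Condition~(\hyperref[cond:star]{*})(2). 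This step is the essential bridge between the two quantum parameters and cannot be suppressed.

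\textbf{Property (iii).} Your suggested shortcut---that ${\rm tr}_\lambda([\alpha(p)_i])$ is ``up to Weyl ordering'' the single monomial $Z^{{\bf b}(\lambda,p,i)}$---is incorrect: Proposition~\ref{lem-image-loop-trace} expresses it as a \emph{sum} over $\mathbb J_i$ of Weyl-ordered monomials. The paper proves $\Theta_{\lambda\lambda'}(Z^{{\bf b}(\lambda',p,i)})=Z^{{\bf b}(\lambda,p,i)}$ separately (Proposition~\ref{prop-loop-nature}) by a cutting argument reducing to corner arcs in $\mathbb P_4$ and invoking naturality of the trace (Theorem~\ref{thm-naturality-trace}(d)).

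Finally, the paper does not use the uniqueness clause of Theorem~\ref{intro-2}; instead it constructs $\rho'$ explicitly as the restriction of the transported $\bar\rho_r$, and shows $\rho'\circ{\rm tr}_{\lambda'}=\rho\circ{\rm tr}_\lambda$ as equal maps (not merely isomorphic). Your uniqueness route could be made to work once the gaps above are filled, but it requires an extra irreducibility check for the pullback.
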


Now we state the Condition~(\hyperref[cond:star]{*}).
When $\lambda'$ is obtained from $\lambda$ by a flip (see Figure~\ref{flip}), 
it is well known that there exists a sequence of $\mathcal X$-mutations 
$\mu_{v_r},\ldots,\mu_{v_2},\mu_{v_1}$ \cite{FG06,GS19}
(see \S\ref{subsec-coordinate-change} or \cite[Figure~3]{KimWang}) such that  
\begin{align}\label{intro-mutation}
\Gamma_{\lambda'} \;=\; \mu_{v_r}\cdots \mu_{v_2}\mu_{v_1}(\Gamma_\lambda),    
\end{align}
where each $\mu_{v_i}$ is the mutation of the corresponding quiver at the vertex $v_i$  (see \S\ref{subsec-mutation}).
Since any two triangulations $\lambda$ and $\lambda'$ are related by a sequence of flips, 
it follows that for arbitrary triangulations $\lambda$ and $\lambda'$ there exists a sequence of $\mathcal X$-mutations 
$\mu_{v_m},\ldots,\mu_{v_2},\mu_{v_1}$ satisfying  
\[
\Gamma_{\lambda'} \;=\; \mu_{v_m}\cdots \mu_{v_2}\mu_{v_1}(\Gamma_\lambda).    
\]
The precise mutation sequence depends on the chosen sequence of flips connecting $\lambda$ to $\lambda'$, together with the local mutation sequence from \eqref{intro-mutation}.  

Define  
\begin{align*}
    \mathcal Z_{\hat\omega}^{\rm mbl}(\fS,\lambda)
    := \mathrm{span}_{\mathbb C}\Bigl\{
        Z^{\bf k}\;\Big|\; 
        {\bf k}\in \mathbb Z^{V_\lambda},\ 
        \sum_{u\in V_\lambda} {\bf k}(u)\, Q_\lambda(u,v)\in n\mathbb Z
        \ \text{for all } v\in V_\lambda
    \Bigr\},
\end{align*}
where $Z^{\bf k}$ is defined in \eqref{def-mmmmmmm}.
It is shown in \S\ref{sec-naturality} that  
\[
\mathcal Z_{\hat\omega}^{\rm bl}(\fS,\lambda)\subset \mathcal Z_{\hat\omega}^{\rm mbl}(\fS,\lambda)\text{ (\cite{KimWang})},
\qquad
Z^{{\bf b}(\lambda,p,i)}\in 
\mathsf{Z}\!\left(\mathcal Z_{\hat\omega}^{\rm mbl}(\fS,\lambda)\right)
\quad \text{for $1\leq i\leq n-1$, $p\in\mathcal P$},
\]
where $\mathsf{Z}(\mathcal Z_{\hat\omega}^{\rm mbl}(\fS,\lambda))$ denotes the center of $\mathcal Z_{\hat\omega}^{\rm mbl}(\fS,\lambda)$.  

We define  
\[
L\colon 
\mathcal Z_{\hat\eta}^{\rm bl}(\fS,\lambda)
\xrightarrow{\ \Phi^{\mathbb T}\ }
\mathcal Z_{\hat\omega}^{\rm bl}(\fS,\lambda)
\hookrightarrow 
\mathcal Z_{\hat\omega}^{\rm mbl}(\fS,\lambda).
\]

\medskip

\noindent\textbf{Condition~(\hyperref[cond:star]{*}):}\label{cond:star}  
There exists an algebra homomorphism  
\[
h\colon \mathsf{Z}(\mathcal Z_{\hat\omega}^{\rm mbl}(\fS,\lambda))\longrightarrow \mathbb C
\]
such that:  
\begin{enumerate}
    \item $f = h\circ L$, and  
    $b(p,i) = h\!\left(Z^{{\bf b}(\lambda,p,i)}\right)$ for $1\leq i\leq n-1$, $p\in\mathcal P$;

    \item $f|_{\mathcal X_{\eta}(\fS,\lambda)}$ is compatible with the mutation sequence $(\mu_{v_1}^\eta,\ldots,\mu_{v_m}^\eta)$ (see Definition~\ref{def-compatible}), where $\mathcal X_{\eta}(\fS,\lambda)$, defined in \eqref{intro-X}, is regarded as a subalgebra of $\mathcal Z_{\hat\eta}^{\rm bl}(\fS,\lambda)$ via the embedding \eqref{intro-embedding}.
\end{enumerate}

\medskip

For $n=2$, Theorem~\ref{intro-3} was proved in \cite{representation2}, and 
Theorem~\ref{intro-4} was proved in in \cite{representation3}.

\medskip

{\bf
Acknowledgments}: 
The author would like to thank Hyun Kyu Kim for the helpful discussion. 
Part of this research was carried out while the author was a Dual PhD student at the University of Groningen (The Netherlands) and Nanyang Technological University (Singapore), supported by a PhD scholarship from the University of Groningen and a research scholarship from Nanyang Technological University.  
Z.W. was supported by a KIAS Individual Grant (MG104701) at the Korea Institute for Advanced Study.

\section{Preliminaries}
In this section, we review the reduced stated ${\rm SL}_n$-skein algebra as presented in \cite{LS21,LY23}.  
We also introduce the splitting homomorphisms and the quantum trace maps for  
reduced stated ${\rm SL}_n$-skein algebras.

\subsection{Reduced stated $\SL$-skein algebras}
\label{sec-sub-def-stated-skein}

Let $\hat\omega^{\frac{1}{2}}\in \mathbb C^{*}$. Set $\bar \omega = \omega^{n}$ and $\omega = \omega^{n^2}$ with $\bar\omega^{\frac{1}{2n}} = \omega^{\frac{1}{2}}$ and 
$\omega^{\frac{1}{2n^2}} = \omega^{\frac{1}{2}}$.  
Define the following constants:
\begin{align*}
\mathbbm{c}_{i}= (-\omega)^{n-i} \omega^{\frac{n-1}{2n}},\quad
\mathbbm{t}= (-1)^{n-1} \omega^{\frac{n^2-1}{n}},\quad 
\mathbbm{a} =   \omega^{\frac{n+1-2n^2}{4}}.
\end{align*}

\begin{definition}
    \label{def:surface}
    By a {\bf pb surface} we mean a surface obtained from an oriented compact surface $\overline{\fS}$ by removing a finite set $\mathcal{P}$ of points such that $\mathcal{P}$ has a non-empty intersection with each boundary component of $\overline{\fS}$. The members of $\mathcal{P}$ are called {\bf punctures} of $\fS$. 
    If $\fS$ has empty boundary, 
    we say that $\fS$ is a {\bf punctured surface}, which is of primary interest in the present paper. 
    If $\mathcal{P}=\emptyset$, we say that $\fS$ is a {\bf closed surface}.

    An embedding $c\colon(0,1)\rightarrow \fS$ is called an {\bf ideal arc} if both
$\bar c(0)$ and $\bar c(1)$ are punctures, where $\bar c\colon [0,1] \to \overline{\fS}$ is the `closure' of $c$.

\end{definition}

\def\Si{\fS}

For any positive integer $k$, we use $\mathbb P_k$ to denote the pb surface obtained from the closed disk by removing $k$ punctures from the boundary. 
We call $\mathbb P_1$ the {\bf monogon}.

Consider the 3-manifold $\Si \times (-1,1)$, the thickened surface. For a point $(x,t) \in \Si \times (-1,1)$, the value $t$ is called the {\bf height} of this point. An {\bf $n$-web} $\alpha$ in $\Si\times(-1,1)$ is a disjoint union of oriented closed curves and a directed finite graph properly embedded into $\Si\times(-1,1)$, satisfying the following requirements:
\begin{enumerate}
    \item $\alpha$ only contains $1$-valent or $n$-valent vertices. Each $n$-valent vertex is a source or a  sink. The set of $1$-valent vertices is denoted as $\partial \alpha$, which are called \textbf{endpoints} of $\alpha$. For any boundary component $c$ of $\Si$, we require that the points of  $\partial\alpha\cap (c\times(-1,1))$ have mutually distinct heights.
    \item Every edge of the graph is an embedded oriented  closed interval  in $\Si\times(-1,1)$.
    \item $\alpha$ is equipped with a transversal \textbf{framing}. 
    \item The set of half-edges at each $n$-valent vertex is equipped with a  cyclic order. 
    \item $\partial \alpha$ is contained in $\partial\Si\times (-1,1)$ and the framing at these endpoints is given by the positive direction of $(-1,1)$.
\end{enumerate}
We will consider $n$-webs up to (ambient) \textbf{isotopy} which are continuous deformations of $n$-webs in their class. 
The empty $n$-web, denoted by $\emptyset$, is also considered as an $n$-web, with the convention that $\emptyset$ is only isotopic to itself. 

A {\bf state} for $\alpha$ is a map $s\colon\partial\alpha\rightarrow \{1,2,\cdots,n\}$. A {\bf stated $n$-web} in $\Si\times(-1,1)$ is an $n$-web equipped with a state.

By regarding $\fS$ as $\fS\times\{0\}$, there is a projection $\text{pr}\colon\fS\times(-1,1)\rightarrow \fS.$
We say the (stated) $n$-web $\alpha$ is in {\bf vertical position} if 
\begin{enumerate}
    \item the framing at everywhere is given by the positive direction of $(-1,1)$,
    \item $\alpha$ is in general position with respect to the projection  $\text{pr}\colon \Si\times(-1,1)\rightarrow \Si\times\{0\}$,
    \item at every $n$-valent vertex, the cyclic order of half-edges as the image of $\text{pr}$ is given by the positive orientation of $\Si$ (drawn counter-clockwise in pictures).
\end{enumerate}

For every (stated) $n$-web $\alpha$, we can isotope $\alpha$ to be in vertical position. For each boundary component $c$ of $\fS$, the heights of $\partial\alpha\cap (c\times(-1,1))$ determine a linear order on  $c\cap \text{pr}(\alpha)$.
Then a {\bf (stated) $n$-web diagram} of $\alpha$ is $\text{pr}(\alpha)$ equipped with the usual over/underpassing information at each double point (called a crossing) and a linear order on $c\cap \text{pr}(\alpha)$ for each boundary component $c$ of $\fS$.

Let $S_n$ denote the permutation group on the set $\{1,2,\cdots,n\}$. 
For an integer $i\in\{1,2,\cdots,n\}$, we use $\bar{i}$ to denote $n+1-i$.

\def\M {M,\cN}
\def\BC{\mathbb C}

The \textbf{stated $\SL$-skein algebra} $\cS_{\hat\omega}^{\rm st}(\fS)$ of $\fS$ is
the quotient module of the $\BC$-vector space freely generated by the set 
 of all isotopy classes of stated 
$n$-webs in $\fS\times (-1,1)$ subject to  relations \eqref{w.cross}-\eqref{wzh.eight}.

\beq\label{w.cross}
\omega^{\frac{1}{n}} 
\raisebox{-.20in}{

\begin{tikzpicture}
\tikzset{->-/.style=

{decoration={markings,mark=at position #1 with

{\arrow{latex}}},postaction={decorate}}}
\filldraw[draw=white,fill=gray!20] (-0,-0.2) rectangle (1, 1.2);
\draw [line width =1pt,decoration={markings, mark=at position 0.5 with {\arrow{>}}},postaction={decorate}](0.6,0.6)--(1,1);
\draw [line width =1pt,decoration={markings, mark=at position 0.5 with {\arrow{>}}},postaction={decorate}](0.6,0.4)--(1,0);
\draw[line width =1pt] (0,0)--(0.4,0.4);
\draw[line width =1pt] (0,1)--(0.4,0.6);
\draw[line width =1pt] (0.4,0.6)--(0.6,0.4);
\end{tikzpicture}
}
- \omega^{-\frac {1}{n}}
\raisebox{-.20in}{
\begin{tikzpicture}
\tikzset{->-/.style=

{decoration={markings,mark=at position #1 with

{\arrow{latex}}},postaction={decorate}}}
\filldraw[draw=white,fill=gray!20] (-0,-0.2) rectangle (1, 1.2);
\draw [line width =1pt,decoration={markings, mark=at position 0.5 with {\arrow{>}}},postaction={decorate}](0.6,0.6)--(1,1);
\draw [line width =1pt,decoration={markings, mark=at position 0.5 with {\arrow{>}}},postaction={decorate}](0.6,0.4)--(1,0);
\draw[line width =1pt] (0,0)--(0.4,0.4);
\draw[line width =1pt] (0,1)--(0.4,0.6);
\draw[line width =1pt] (0.6,0.6)--(0.4,0.4);
\end{tikzpicture}
}
= (\omega-\omega^{-1})
\raisebox{-.20in}{

\begin{tikzpicture}
\tikzset{->-/.style=

{decoration={markings,mark=at position #1 with

{\arrow{latex}}},postaction={decorate}}}
\filldraw[draw=white,fill=gray!20] (-0,-0.2) rectangle (1, 1.2);
\draw [line width =1pt,decoration={markings, mark=at position 0.5 with {\arrow{>}}},postaction={decorate}](0,0.8)--(1,0.8);
\draw [line width =1pt,decoration={markings, mark=at position 0.5 with {\arrow{>}}},postaction={decorate}](0,0.2)--(1,0.2);
\end{tikzpicture}
},
\eeq 
\beq\label{w.twist}
\raisebox{-.15in}{
\begin{tikzpicture}
\tikzset{->-/.style=
{decoration={markings,mark=at position #1 with
{\arrow{latex}}},postaction={decorate}}}
\filldraw[draw=white,fill=gray!20] (-1,-0.35) rectangle (0.6, 0.65);
\draw [line width =1pt,decoration={markings, mark=at position 0.5 with {\arrow{>}}},postaction={decorate}](-1,0)--(-0.25,0);
\draw [color = black, line width =1pt](0,0)--(0.6,0);
\draw [color = black, line width =1pt] (0.166 ,0.08) arc (-37:270:0.2);
\end{tikzpicture}}
= \mathbbm{t}
\raisebox{-.15in}{
\begin{tikzpicture}
\tikzset{->-/.style=
{decoration={markings,mark=at position #1 with
{\arrow{latex}}},postaction={decorate}}}
\filldraw[draw=white,fill=gray!20] (-1,-0.5) rectangle (0.6, 0.5);
\draw [line width =1pt,decoration={markings, mark=at position 0.5 with {\arrow{>}}},postaction={decorate}](-1,0)--(-0.25,0);
\draw [color = black, line width =1pt](-0.25,0)--(0.6,0);
\end{tikzpicture}}
,  
\eeq
\beq\label{w.unknot}
\raisebox{-.20in}{
\begin{tikzpicture}
\tikzset{->-/.style=
{decoration={markings,mark=at position #1 with
{\arrow{latex}}},postaction={decorate}}}
\filldraw[draw=white,fill=gray!20] (0,0) rectangle (1,1);
\draw [line width =1pt,decoration={markings, mark=at position 0.5 with {\arrow{>}}},postaction={decorate}](0.45,0.8)--(0.55,0.8);
\draw[line width =1pt] (0.5 ,0.5) circle (0.3);
\end{tikzpicture}}
= (-1)^{n-1} [n]_\omega\ 
\raisebox{-.20in}{
\begin{tikzpicture}
\tikzset{->-/.style=
{decoration={markings,mark=at position #1 with
{\arrow{latex}}},postaction={decorate}}}
\filldraw[draw=white,fill=gray!20] (0,0) rectangle (1,1);
\end{tikzpicture}}
,\ \text{where}\ [n]_{\omega}=\frac{\omega^n-\omega^{-n}}{\omega-\omega^{-1}},
\eeq
\beq\label{wzh.four}
\raisebox{-.30in}{
\begin{tikzpicture}
\tikzset{->-/.style=
{decoration={markings,mark=at position #1 with
{\arrow{latex}}},postaction={decorate}}}
\filldraw[draw=white,fill=gray!20] (-1,-0.7) rectangle (1.2,1.3);
\draw [line width =1pt,decoration={markings, mark=at position 0.5 with {\arrow{>}}},postaction={decorate}](-1,1)--(0,0);
\draw [line width =1pt,decoration={markings, mark=at position 0.5 with {\arrow{>}}},postaction={decorate}](-1,0)--(0,0);
\draw [line width =1pt,decoration={markings, mark=at position 0.5 with {\arrow{>}}},postaction={decorate}](-1,-0.4)--(0,0);
\draw [line width =1pt,decoration={markings, mark=at position 0.5 with {\arrow{<}}},postaction={decorate}](1.2,1)  --(0.2,0);
\draw [line width =1pt,decoration={markings, mark=at position 0.5 with {\arrow{<}}},postaction={decorate}](1.2,0)  --(0.2,0);
\draw [line width =1pt,decoration={markings, mark=at position 0.5 with {\arrow{<}}},postaction={decorate}](1.2,-0.4)--(0.2,0);
\node  at(-0.8,0.5) {$\vdots$};
\node  at(1,0.5) {$\vdots$};
\end{tikzpicture}}=(-\omega)^{\frac{n(n-1)}{2}}\cdot \sum_{\sigma\in S_n}
(-\omega^{\frac{1-n}n})^{\ell(\sigma)} \raisebox{-.30in}{
\begin{tikzpicture}
\tikzset{->-/.style=
{decoration={markings,mark=at position #1 with
{\arrow{latex}}},postaction={decorate}}}
\filldraw[draw=white,fill=gray!20] (-1,-0.7) rectangle (1.2,1.3);
\draw [line width =1pt,decoration={markings, mark=at position 0.5 with {\arrow{>}}},postaction={decorate}](-1,1)--(0,0);
\draw [line width =1pt,decoration={markings, mark=at position 0.5 with {\arrow{>}}},postaction={decorate}](-1,0)--(0,0);
\draw [line width =1pt,decoration={markings, mark=at position 0.5 with {\arrow{>}}},postaction={decorate}](-1,-0.4)--(0,0);
\draw [line width =1pt,decoration={markings, mark=at position 0.5 with {\arrow{<}}},postaction={decorate}](1.2,1)  --(0.2,0);
\draw [line width =1pt,decoration={markings, mark=at position 0.5 with {\arrow{<}}},postaction={decorate}](1.2,0)  --(0.2,0);
\draw [line width =1pt,decoration={markings, mark=at position 0.5 with {\arrow{<}}},postaction={decorate}](1.2,-0.4)--(0.2,0);
\node  at(-0.8,0.5) {$\vdots$};
\node  at(1,0.5) {$\vdots$};
\filldraw[draw=black,fill=gray!20,line width =1pt]  (0.1,0.3) ellipse (0.4 and 0.7);
\node  at(0.1,0.3){$\sigma_{+}$};
\end{tikzpicture}},
\eeq
where the ellipse enclosing $\sigma_+$  is the minimum crossing positive braid representing a permutation $\sigma\in S_n$ and $\ell(\sigma)=\#\{(i,j)\mid 1\leq i<j\leq n,\ \sigma(i)>\sigma(j)\}$ is the length of $\sigma\in S_n$.

\beq\label{wzh.five}
   \raisebox{-.30in}{
\begin{tikzpicture}
\tikzset{->-/.style=
{decoration={markings,mark=at position #1 with
{\arrow{latex}}},postaction={decorate}}}
\filldraw[draw=white,fill=gray!20] (-1,-0.7) rectangle (0.2,1.3);
\draw [line width =1pt](-1,1)--(0,0);
\draw [line width =1pt](-1,0)--(0,0);
\draw [line width =1pt](-1,-0.4)--(0,0);
\draw [line width =1.5pt](0.2,1.3)--(0.2,-0.7);
\node  at(-0.8,0.5) {$\vdots$};
\filldraw[fill=white,line width =0.8pt] (-0.5 ,0.5) circle (0.07);
\filldraw[fill=white,line width =0.8pt] (-0.5 ,0) circle (0.07);
\filldraw[fill=white,line width =0.8pt] (-0.5 ,-0.2) circle (0.07);
\end{tikzpicture}}
   = 
   \mathbbm{a} \sum_{\sigma \in \fS_n} (-\omega)^{\ell(\sigma)}\,  \raisebox{-.30in}{
\begin{tikzpicture}
\tikzset{->-/.style=
{decoration={markings,mark=at position #1 with
{\arrow{latex}}},postaction={decorate}}}
\filldraw[draw=white,fill=gray!20] (-1,-0.7) rectangle (0.2,1.3);
\draw [line width =1pt](-1,1)--(0.2,1);
\draw [line width =1pt](-1,0)--(0.2,0);
\draw [line width =1pt](-1,-0.4)--(0.2,-0.4);
\draw [line width =1.5pt,decoration={markings, mark=at position 1 with {\arrow{>}}},postaction={decorate}](0.2,1.3)--(0.2,-0.7);
\node  at(-0.8,0.5) {$\vdots$};
\filldraw[fill=white,line width =0.8pt] (-0.5 ,1) circle (0.07);
\filldraw[fill=white,line width =0.8pt] (-0.5 ,0) circle (0.07);
\filldraw[fill=white,line width =0.8pt] (-0.5 ,-0.4) circle (0.07);
\node [right] at(0.2,1) {$\sigma(n)$};
\node [right] at(0.2,0) {$\sigma(2)$};
\node [right] at(0.2,-0.4){$\sigma(1)$};
\end{tikzpicture}},
\eeq
\beq \label{wzh.six}
\raisebox{-.20in}{
\begin{tikzpicture}
\tikzset{->-/.style=
{decoration={markings,mark=at position #1 with
{\arrow{latex}}},postaction={decorate}}}
\filldraw[draw=white,fill=gray!20] (-0.7,-0.7) rectangle (0,0.7);
\draw [line width =1.5pt,decoration={markings, mark=at position 1 with {\arrow{>}}},postaction={decorate}](0,0.7)--(0,-0.7);
\draw [color = black, line width =1pt] (0 ,0.3) arc (90:270:0.5 and 0.3);
\node [right]  at(0,0.3) {$i$};
\node [right] at(0,-0.3){$j$};
\filldraw[fill=white,line width =0.8pt] (-0.5 ,0) circle (0.07);
\end{tikzpicture}}   = \delta_{\bar j,i }\,  \mathbbm{c}_{i} \raisebox{-.20in}{
\begin{tikzpicture}
\tikzset{->-/.style=
{decoration={markings,mark=at position #1 with
{\arrow{latex}}},postaction={decorate}}}
\filldraw[draw=white,fill=gray!20] (-0.7,-0.7) rectangle (0,0.7);
\draw [line width =1.5pt](0,0.7)--(0,-0.7);
\end{tikzpicture}},
\eeq
\beq \label{wzh.seven}
\raisebox{-.20in}{
\begin{tikzpicture}
\tikzset{->-/.style=
{decoration={markings,mark=at position #1 with
{\arrow{latex}}},postaction={decorate}}}
\filldraw[draw=white,fill=gray!20] (-0.7,-0.7) rectangle (0,0.7);
\draw [line width =1.5pt](0,0.7)--(0,-0.7);
\draw [color = black, line width =1pt] (-0.7 ,-0.3) arc (-90:90:0.5 and 0.3);
\filldraw[fill=white,line width =0.8pt] (-0.55 ,0.26) circle (0.07);
\end{tikzpicture}}
= \sum_{i=1}^n  (\mathbbm{c}_{\bar i})^{-1}\, \raisebox{-.20in}{
\begin{tikzpicture}
\tikzset{->-/.style=
{decoration={markings,mark=at position #1 with
{\arrow{latex}}},postaction={decorate}}}
\filldraw[draw=white,fill=gray!20] (-0.7,-0.7) rectangle (0,0.7);
\draw [line width =1.5pt,decoration={markings, mark=at position 1 with {\arrow{>}}},postaction={decorate}](0,0.7)--(0,-0.7);
\draw [line width =1pt](-0.7,0.3)--(0,0.3);
\draw [line width =1pt](-0.7,-0.3)--(0,-0.3);
\filldraw[fill=white,line width =0.8pt] (-0.3 ,0.3) circle (0.07);
\filldraw[fill=black,line width =0.8pt] (-0.3 ,-0.3) circle (0.07);
\node [right]  at(0,0.3) {$i$};
\node [right]  at(0,-0.3) {$\bar{i}$};
\end{tikzpicture}},
\eeq
\beq\label{wzh.eight}
\raisebox{-.20in}{

\begin{tikzpicture}
\tikzset{->-/.style=

{decoration={markings,mark=at position #1 with

{\arrow{latex}}},postaction={decorate}}}
\filldraw[draw=white,fill=gray!20] (-0,-0.2) rectangle (1, 1.2);
\draw [line width =1.5pt,decoration={markings, mark=at position 1 with {\arrow{>}}},postaction={decorate}](1,1.2)--(1,-0.2);
\draw [line width =1pt](0.6,0.6)--(1,1);
\draw [line width =1pt](0.6,0.4)--(1,0);
\draw[line width =1pt] (0,0)--(0.4,0.4);
\draw[line width =1pt] (0,1)--(0.4,0.6);
\draw[line width =1pt] (0.4,0.6)--(0.6,0.4);
\filldraw[fill=white,line width =0.8pt] (0.2 ,0.2) circle (0.07);
\filldraw[fill=white,line width =0.8pt] (0.2 ,0.8) circle (0.07);
\node [right]  at(1,1) {$i$};
\node [right]  at(1,0) {$j$};
\end{tikzpicture}
} =\omega^{-\frac{1}{n}}\left(\delta_{{j<i} }(\omega-\omega^{-1})\raisebox{-.20in}{

\begin{tikzpicture}
\tikzset{->-/.style=

{decoration={markings,mark=at position #1 with

{\arrow{latex}}},postaction={decorate}}}
\filldraw[draw=white,fill=gray!20] (-0,-0.2) rectangle (1, 1.2);
\draw [line width =1.5pt,decoration={markings, mark=at position 1 with {\arrow{>}}},postaction={decorate}](1,1.2)--(1,-0.2);
\draw [line width =1pt](0,0.8)--(1,0.8);
\draw [line width =1pt](0,0.2)--(1,0.2);
\filldraw[fill=white,line width =0.8pt] (0.2 ,0.8) circle (0.07);
\filldraw[fill=white,line width =0.8pt] (0.2 ,0.2) circle (0.07);
\node [right]  at(1,0.8) {$i$};
\node [right]  at(1,0.2) {$j$};
\end{tikzpicture}
}+\omega^{\delta_{i,j}}\raisebox{-.20in}{

\begin{tikzpicture}
\tikzset{->-/.style=

{decoration={markings,mark=at position #1 with

{\arrow{latex}}},postaction={decorate}}}
\filldraw[draw=white,fill=gray!20] (-0,-0.2) rectangle (1, 1.2);
\draw [line width =1.5pt,decoration={markings, mark=at position 1 with {\arrow{>}}},postaction={decorate}](1,1.2)--(1,-0.2);
\draw [line width =1pt](0,0.8)--(1,0.8);
\draw [line width =1pt](0,0.2)--(1,0.2);
\filldraw[fill=white,line width =0.8pt] (0.2 ,0.8) circle (0.07);
\filldraw[fill=white,line width =0.8pt] (0.2 ,0.2) circle (0.07);
\node [right]  at(1,0.8) {$j$};
\node [right]  at(1,0.2) {$i$};
\end{tikzpicture}
}\right),
\eeq
where   
$\delta_{j<i}= 
\begin{cases}
1  & j<i\\
0 & \text{otherwise}
\end{cases},\ 
\delta_{i,j}= 
\begin{cases} 
1  & i=j\\
0  & \text{otherwise}
\end{cases}$, and small white dots represent an arbitrary orientation of the edges (left-to-right or right-to-left), consistent for the entire equation. The black dot represents the opposite orientation. When a boundary edge of a shaded area is directed, the direction indicates the height order of the endpoints of the diagrams on that directed line, where going along the direction increases the height, and the involved endpoints are consecutive in the height order. The height order outside the drawn part can be arbitrary.

 \def \Sv{\cS_n(\Si,\mathbbm{v})}

 The algebra structure for $\cS_{\hat\omega}^{\rm st}(\fS)$ is given by stacking the stated $n$-webs, i.e. for any two stated $n$-webs $\alpha,\alpha'
 \subset\fS\times(-1,1)$, the product $\alpha\alpha'$ is defined by stacking $\alpha$ above $\alpha'$. That is, if $\alpha \subset \fS\times(0,1)$ and $\alpha' \subset \fS\times (-1,0)$, we have $\alpha \alpha' = \alpha \cup \alpha'$.

For a boundary puncture $p$ (i.e., a puncture contained in $\partial\fS$) of a pb surface $\fS$,  
the corner arcs $C(p)_{ij}$ and $\cev{C}(p)_{ij}$ are stated arcs, as illustrated in Figure \ref{Fig;badarc}.
For a boundary puncture $p$ which is not on a monogon component of $\fS$, set 
$$C_p=\{C(p)_{ij}\mid i<j\},\quad\cev{C}_p=\{\cev{C}(p)_{ij}\mid i<j\}.$$  
Each element of $C_p\cup \cev{C}_p$ is called a \emph{bad arc} at $p$. 
\begin{figure}[h]
    \centering
    \includegraphics[width=150pt]{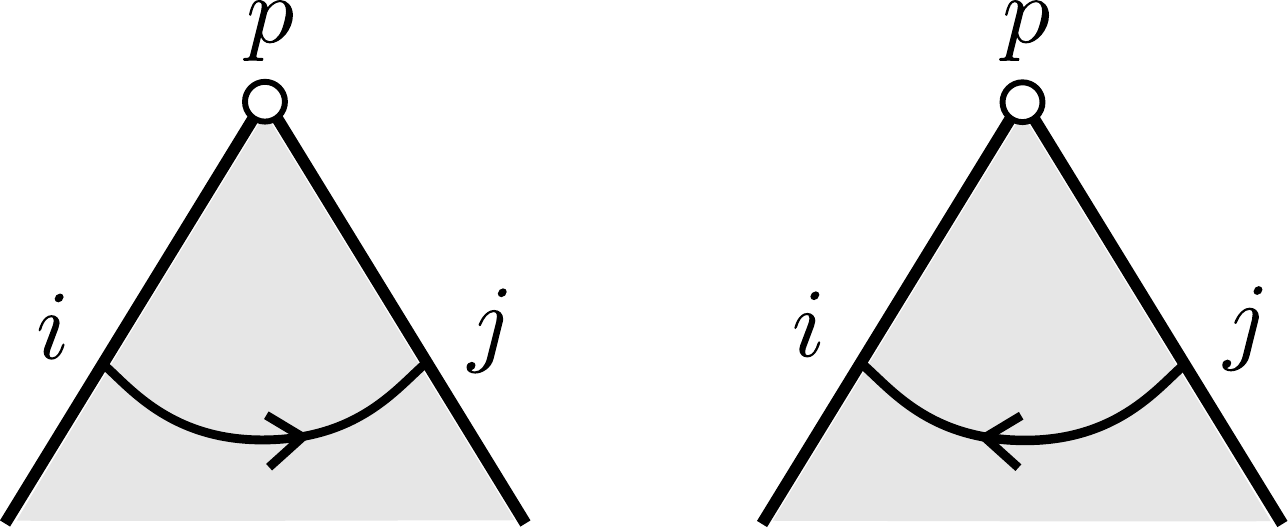}
    \caption{The left is $C(p)_{ij}$ and the right is $\cev{C}(p)_{ij}$.}\label{Fig;badarc}
\end{figure}

For a pb surface $\fS$,  $$\overline \cS_{\hat\omega}^{\rm st}(\fS) = \cS_{\hat\omega}^{\rm st}(\fS)/I^{\text{bad}}$$ 
is called the \textbf{reduced stated $\SL$-skein algebra}, defined in \cite{LY23}, where $I^{\text{bad}}$ is the two-sided ideal of $\cS_{\hat\omega}^{\rm st}(\fS)$ generated by all bad arcs. 

When $\partial\fS = \emptyset$, we have 
$\overline{\cS}_{\hat\omega}^{\rm st}(\fS)=\cS_{\hat\omega}^{\rm st}(\fS)$,  
and the definition of $\cS_{\hat\omega}^{\rm st}(\fS)$ depends only on $\bar\omega$.  
For this reason, we often denote $\cS_{\hat\omega}^{\rm st}(\fS)$ simply by $\cS_{\bar\omega}(\fS)$ when $\partial\fS = \emptyset$,  
although in certain contexts we may still retain the notation $\cS_{\hat\omega}^{\rm st}(\fS)$.  
In this case, we refer to the reduced stated ${\rm SL}_n$-skein algebra simply as the {\bf ${\rm SL}_n$-skein algebra}.



\subsection{The splitting homomorphism}\label{sub-splitting}
\def\cut{\mathsf{Cut}}
\def\pr{{\bf pr}}

Let $e$ be an ideal arc of a pb surface $\fS$ contained in the interior of $\fS$.  
Cutting $\fS$ along $e$ produces a new pb surface $\cut_e(\fS)$ with two copies $e_1,e_2$ of $e$, so that  
${\fS}= \cut_e(\fS)/(e_1=e_2)$. Denote by $\pr_e$ the projection from $\cut_e(\fS)$ to $\fS$.  

Suppose $\alpha$ is a stated $n$-web diagram in $\fS$ transverse to $e$.  
Let $s \colon e\cap\alpha \to \{1,2,\dots,n\}$ be a map, and let $h$ be a linear order on $e\cap\alpha$.  
Then there is a lifted diagram $\alpha(h,s)$ of a stated $n$-web in $\cut_e(\fS)$.  
For $i=1,2$, the heights of the endpoints of $\alpha(h,s)$ on $e_i$ are induced from $h$ (via $\pr_e$),  
and their states are induced from $s$ (via $\pr_e$).  
The splitting homomorphism $\mathbb S_e \colon \cS^{\rm st}_{\hat\omega}(\cut_e(\fS)) \longrightarrow \cS^{\rm st}_{\hat\omega}(\cut_e(\fS))$ is then defined by  
\begin{align}\label{eq-def-splitting}
    \mathbb S_e(\alpha) = \sum_{s \colon \alpha \cap e \to \{1,\dots, n\}} \alpha(h,s). 
\end{align}
Moreover, $\mathbb S_e$ is an algebra homomorphism \cite{LS21}.  

Note that $\mathbb S_e$ sends bad arcs to bad arcs.  
Hence it induces an algebra homomorphism  
\[
   \mathbb S_e \colon \rdS \longrightarrow \rS^{\rm st}(\cut_e(\fS)),
\]  
which we continue to denote by $\mathbb S_e$.


\def\bT{\mathbb T}

\def\VV{V}

\subsection{Quantum torus and its $n$-th root of version}\label{sub-quantum-torus}
In the next subsection, we will review the $\SL$ quantum trace map from \cite{LY23},  
which is an algebra homomorphism from the reduced stated $\SL$-skein algebra to a suitable quantum torus.  
In the present subsection, we introduce the relevant definitions of the quantum torus.

Let $V$ be a finite set, and $Q$ be an anti-symmetric matrix $Q\colon V\times V\rightarrow \frac{1}{2}\mathbb Z$.
Define the quantum torus
\begin{equation*}
\bT_\omega(Q) = \BC \langle
X_v^{\pm 1}, v \in V \rangle / (
X_v 
X_{v'}= \omega^{\, 2 Q(v,v')} 
X_{v'} 
X_v \text{ for } v,v'\in V),
\end{equation*}
and its $n$-th root of version as follows:
\begin{equation*}
\bT_{\hat \omega}(Q) = \BC \langle
Z_v^{\pm 1}, v \in V \rangle / (
Z_v 
Z_{v'}= \hat\omega^{\, 2 Q(v,v')} 
Z_{v'} 
Z_v \text{ for } v,v'\in V ).
\end{equation*}
There is an algebra embedding from $\bT_\omega(Q)$
to $\bT_{\hat{\omega}}(Q)$ defined by $X_v\mapsto Z_v^n$
for $v\in V$.

A useful notion is the Weyl-ordered product, which is a certain normalization of a Laurent monomial defined as follows: for any $v_1,\ldots,v_r \in V$ and $a_1,\ldots,a_r \in \mathbb{Z}$,
\begin{align}
\label{Weyl_ordering-X-1}
\left[ X_{v_1}^{a_1} X_{v_2}^{a_2} \cdots X_{v_r}^{a_r} \right] := \omega^{-\sum_{i<j} a_i a_jQ(v_i,v_j)} X_{v_1}^{a_1} X_{v_2}^{a_2} \cdots X_{v_r}^{a_r}\\
\label{Weyl_ordering-Z-1}
\left[ Z_{v_1}^{a_1} Z_{v_2}^{a_2} \cdots Z_{v_r}^{a_r} \right] := \hat \omega^{-\sum_{i<j}a_i a_jQ(v_i,v_j)} Z_{v_1}^{a_1} Z_{v_2}^{a_2} \cdots Z_{v_r}^{a_r}.
\end{align}
In particular, for ${\bf t} = (t_v)_{v\in \VV} \in \mathbb{Z}^{V}$, the following notation for the corresponding Weyl-ordered Laurent monomial will become handy:
\begin{align}\label{def-mmmmmmm}
    X^{\bf t} := \left[ \prod_{v\in \VV} X_v^{t_v}\right],\quad
Z^{\bf t} := \left[ \prod_{v\in \VV} Z_v^{t_v}\right].
\end{align}
Due to the property of Weyl-ordered products, note that these elements are independent of the choice of the order of the product inside the bracket.
The element $X^{\bf t}$ (or $Z^{\bf t}$) is called the {\bf monomial}. It is well-known that $\{X^{\bf t}\mid {\bf t}\in \mathbb Z^{V}\}$ (resp.
$\{Z^{\bf t}\mid {\bf t}\in \mathbb Z^{V}\}$) is a basis of $\bT_{\omega}(Q)$ (resp.
$\bT_{\hat\omega}(Q)$), called the {\bf monomial basis}.

For any ${\bf k},{\bf t}\in\mathbb Z^V$, it is straightforward to verify that
\begin{align}\label{eq-prod-vector-q}
    X^{\bf k} X^{\bf t} = \omega^{2{\bf k}Q{\bf t}^T} X^{\bf t} X^{\bf k} \text{ and }
Z^{\bf k} Z^{\bf t} = \hat\omega^{2{\bf k}Q{\bf t}^T} Z^{\bf t} Z^{\bf k},
\end{align}
where ${\bf k}$ and ${\bf t}$ are regarded as row vectors. 

Let $\Lambda$ be a subgroup of $\mathbb Z^V$. Define 
\begin{align*}
    \mathbb T_{\hat\omega}(Q;\Lambda):=\text{span}_{\BC}\{Z^{\bf k}\mid {\bf k}\in\Lambda\}.
\end{align*}
Then $\mathbb T_{\hat\omega}(Q;\Lambda)$ is a subalgebra of $\bT_{\hat{\omega}}(Q)$.
We have the following lemma regarding the center of $\mathbb T_{\hat\omega}(Q;\Lambda)$.

\begin{lemma}\cite[Lemmas~3.1 and 3.2]{KaruoWangToAppear}\label{lem-center-torus}
(a) When $\hat \omega$ is a not a root of unity, then the center of $\mathbb T_{\hat\omega}(Q;\Lambda)$ is 
    $$\text{span}_{\BC}\{Z^{\bf k}\mid {\bf k} Q{\bf t}^T = 0\in\mathbb Z\text{ for all }{\bf t}\in\Lambda\}.$$

(b) When ${\hat{\omega}}^2$ is a root of unity of order $N''$, then the center of $\mathbb T_{\hat\omega}(Q;\Lambda)$ is 
$\mathbb T_{\hat\omega}(Q;\Lambda_{N''})$, where
    $$\Lambda_{N''}=\{{\bf k}\in\Lambda\mid {\bf k} Q{\bf t}^T = 0\in\mathbb Z_{N''}\text{ for all }{\bf t}\in\Lambda\}.$$

(c) When ${\hat{\omega}}^2$ is a root of unity of order $N''$, then 
$\mathbb T_{\hat\omega}(Q;\Lambda)$ is a free module over its center with rank
$\left|\dfrac{\Lambda}{\Lambda_{N''}}\right|.$
    
\end{lemma}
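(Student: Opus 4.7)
The plan is to read off centrality directly from the commutation relation~\eqref{eq-prod-vector-q}, using the fact that $\{Z^{\bf k}\mid {\bf k}\in\Lambda\}$ is a $\mathbb C$-basis of $\mathbb T_{\hat\omega}(Q;\Lambda)$ (inherited from the monomial basis of the ambient quantum torus: an element of $\bT_{\hat\omega}(Q)$ lies in $\mathbb T_{\hat\omega}(Q;\Lambda)$ iff every monomial appearing has exponent in $\Lambda$). Write an arbitrary element as $x=\sum_{{\bf k}\in\Lambda}c_{\bf k}Z^{\bf k}$, with only finitely many $c_{\bf k}\neq 0$. For each generator $Z^{\bf t}$ with ${\bf t}\in\Lambda$, equation~\eqref{eq-prod-vector-q} gives
$$Z^{\bf t}x-xZ^{\bf t}=\sum_{{\bf k}\in\Lambda}c_{\bf k}\bigl(\hat\omega^{-2{\bf t}Q{\bf k}^T}-1\bigr)Z^{\bf t}Z^{\bf k}.$$
Each $Z^{\bf t}Z^{\bf k}$ is a nonzero scalar multiple of the Weyl monomial $Z^{{\bf t}+{\bf k}}$, and as ${\bf k}$ varies in the support, the ${\bf t}+{\bf k}$ are distinct, so these are $\mathbb C$-linearly independent. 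Centrality of $x$ is therefore equivalent to
$$c_{\bf k}\bigl(\hat\omega^{2{\bf k}Q{\bf t}^T}-1\bigr)=0 \qquad \forall\,{\bf k},{\bf t}\in\Lambda.$$

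For part (a), since $\hat\omega$ is not a root of unity, the bracketed factor vanishes iff ${\bf k}Q{\bf t}^T=0$ in $\mathbb Z$, giving exactly the stated span. For part (b), the factor vanishes iff ${\bf k}Q{\bf t}^T\equiv 0\pmod{N''}$, since $\hat\omega^2$ has order $N''$; this recovers the description $\mathbb T_{\hat\omega}(Q;\Lambda_{N''})$.

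For part (c), I would choose a set of coset representatives $\{{\bf k}_\alpha\}_{\alpha\in\Lambda/\Lambda_{N''}}$ and show that $\{Z^{{\bf k}_\alpha}\}_\alpha$ is a free basis of $\mathbb T_{\hat\omega}(Q;\Lambda)$ over the central subalgebra $\mathbb T_{\hat\omega}(Q;\Lambda_{N''})$. For spanning, write any ${\bf k}\in\Lambda$ as ${\bf k}={\bf k}_\alpha+{\bf m}$ with ${\bf m}\in\Lambda_{N''}$; Weyl-reordering yields $Z^{\bf k}=\hat\omega^{-{\bf k}_\alpha Q{\bf m}^T}Z^{{\bf k}_\alpha}Z^{\bf m}$, and $Z^{\bf m}$ is central by part~(b). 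For linear independence, a relation $\sum_\alpha z_\alpha Z^{{\bf k}_\alpha}=0$ with $z_\alpha=\sum_{{\bf m}}c_{\alpha,{\bf m}}Z^{\bf m}\in\mathbb T_{\hat\omega}(Q;\Lambda_{N''})$ expands into a $\mathbb C$-linear combination of monomials $Z^{{\bf k}_\alpha+{\bf m}}$ whose exponents are pairwise distinct (the ${\bf k}_\alpha$ lie in distinct cosets of $\Lambda_{N''}$), so every $c_{\alpha,{\bf m}}$ must vanish. The rank is then $|\Lambda/\Lambda_{N''}|$ by definition.

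I do not anticipate any serious obstacle: this is a routine quantum-torus-over-its-center argument. The only mildly delicate point is keeping track of the Weyl-ordering scalars when converting between $Z^{{\bf k}_\alpha}Z^{\bf m}$ and $Z^{{\bf k}_\alpha+{\bf m}}$, but these prefactors are always nonzero powers of $\hat\omega^{\frac12}$ and so play no role in either the spanning or the independence step.
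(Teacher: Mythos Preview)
Your argument is correct and is the standard quantum-torus computation. Note that the paper itself does not supply a proof of this lemma: it is simply cited from \cite{KaruoWangToAppear} (Lemmas~3.1 and~3.2), so there is no in-paper proof to compare against. Your approach is exactly the expected one and almost certainly coincides with what appears in the cited reference.
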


Suppose that $\hat\omega$ is a root of unity.
We denote by ${\bf irrep}$ the set of irreducible representations of $\mathbb T_{\hat\omega}(Q;\Lambda)$ (considered up to isomorphism).
Let $(\rho\colon \mathbb T_{\hat\omega}(Q;\Lambda)\rightarrow \text{End}_\BC(V))\in {\bf irrep}$. 
By Schur's lemma, for any $x\in\mathsf{Z}(\mathbb T_{\hat\omega}(Q;\Lambda))$ we have 
$\rho(x) = r_{\rho}(x)\Id_{V}$ for some $r_{\rho}(x)\in\mathbb C$, where $\mathsf{Z}(\mathbb T_{\hat\omega}(Q;\Lambda))$ is the center of $\mathbb T_{\hat\omega}(Q;\Lambda)$.
It follows that $r_{\rho}\in \text{Hom}_{\text{Alg}}(\mathsf{Z}(\mathbb T_{\hat\omega}(Q;\Lambda)),\mathbb C)$, the set of algebra homomorphisms from $\mathsf{Z}(\mathbb T_{\hat\omega}(Q;\Lambda))$ to $\BC$.

\begin{lemma}\cite[Lemma 8.4]{KaruoWangToAppear}\label{lem-irre-Azumaya}
Suppose that $\hat\omega$ is a root of unity.
    The following map is a bijection
\begin{align*}
    {\bf irrep}\rightarrow \text{Hom}_{\text{Alg}}(\mathsf{Z}(\mathbb T_{\hat\omega}(Q;\Lambda)),\mathbb C),\quad
    \rho\mapsto r_{\rho}.
\end{align*}
Moreover, the square of the dimension of every irreducible representation of $\mathbb T_{\hat\omega}(Q;\Lambda)$ equals the rank of $\mathbb T_{\hat\omega}(Q;\Lambda)$ over $\mathsf{Z}(\mathbb T_{\hat\omega}(Q;\Lambda))$.
\end{lemma}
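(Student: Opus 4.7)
The plan is to deduce the lemma from the fact that $\mathbb T_{\hat\omega}(Q;\Lambda)$ is Azumaya over its center, i.e.\ every central-character fiber is a full matrix algebra. By Lemma~\ref{lem-center-torus}(c), $\mathbb T_{\hat\omega}(Q;\Lambda)$ is a free module over $\mathsf{Z}(\mathbb T_{\hat\omega}(Q;\Lambda))$ of rank $R:=|\Lambda/\Lambda_{N''}|$; pick a set of coset representatives ${\bf k}_1,\dots,{\bf k}_R$ of $\Lambda/\Lambda_{N''}$ (with ${\bf k}_1=0$), so that $\{Z^{{\bf k}_i}\}_{i=1}^R$ is a free basis.

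First, for each $r\in \HA(\mathsf{Z}(\mathbb T_{\hat\omega}(Q;\Lambda)),\mathbb C)$ I would form the fiber
\[
A_r:=\mathbb T_{\hat\omega}(Q;\Lambda)\otimes_{\mathsf{Z}(\mathbb T_{\hat\omega}(Q;\Lambda))}\mathbb C_r,
\]
a $\mathbb C$-algebra of dimension $R$ with basis the images $\bar Z^{{\bf k}_i}$. From the commutation relation \eqref{eq-prod-vector-q}, these images satisfy
\[
\bar Z^{{\bf k}_i}\bar Z^{{\bf k}_j}=\hat\omega^{2{\bf k}_i Q{\bf k}_j^T}\bar Z^{{\bf k}_j}\bar Z^{{\bf k}_i},
\]
so $A_r$ is a twisted group algebra of the finite abelian group $\Lambda/\Lambda_{N''}$ with cocycle determined by the bilinear pairing
\[
\overline{Q}\colon \Lambda/\Lambda_{N''}\times \Lambda/\Lambda_{N''}\longrightarrow \mathbb Z_{N''},\qquad ([{\bf k}],[{\bf t}])\mapsto {\bf k}Q{\bf t}^T\ \mathrm{mod}\ N''.
\]
By the very definition of $\Lambda_{N''}$ in Lemma~\ref{lem-center-torus}(b), this pairing is non-degenerate: an element $[{\bf k}]$ paired trivially with every $[{\bf t}]$ would satisfy ${\bf k}\in \Lambda_{N''}$, hence $[{\bf k}]=0$.

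Next I would invoke the standard fact that a twisted group algebra of a finite abelian group with non-degenerate alternating cocycle valued in roots of unity is a central simple algebra over $\mathbb C$. Concretely, non-degeneracy of $\overline{Q}$ forces the only elements of $A_r$ commuting with all $\bar Z^{{\bf k}}$ to be scalars, so the center of $A_r$ is $\mathbb C$; combined with $\dim_\mathbb C A_r=R$, Artin--Wedderburn gives $A_r\cong M_d(\mathbb C)$ for some $d$ with $d^2=R$. Consequently $A_r$ has a unique isomorphism class of irreducible representations, of dimension $d=\sqrt{R}$.

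Finally I would package this into the statement. For the injectivity and surjectivity of $\rho\mapsto r_\rho$: given any $\rho\in{\bf irrep}$, Schur's lemma produces the character $r_\rho$ and $\rho$ factors through $A_{r_\rho}\cong M_d(\mathbb C)$, so $\rho$ is determined up to isomorphism by $r_\rho$ (injectivity) and every $r$ arises by taking the unique irreducible of $A_r$ (surjectivity). The dimension count $\dim_\mathbb C\rho=d=\sqrt{R}$ then gives the final assertion. The only genuinely nontrivial step in this plan is the central-simplicity of $A_r$, which in turn rests entirely on the non-degeneracy of $\overline{Q}$; everything else is formal manipulation of the freeness of $\mathbb T_{\hat\omega}(Q;\Lambda)$ over its center together with Schur and Artin--Wedderburn.
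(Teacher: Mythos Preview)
The paper does not give its own proof of this lemma; it is quoted from \cite{KaruoWangToAppear} without argument, so there is nothing to compare against on the paper's side.

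Your proposal is the standard Azumaya argument and is correct. One small point worth making explicit: to pass from ``the center of $A_r$ is $\mathbb C$'' to ``$A_r\cong M_d(\mathbb C)$'' via Artin--Wedderburn you need semisimplicity of $A_r$. This holds because $A_r$ is a twisted group algebra of a finite group over $\mathbb C$, hence semisimple by the twisted Maschke theorem (alternatively: each basis element $\bar Z^{{\bf k}_i}$ is a unit in $A_r$, since $Z^{{\bf k}_i}Z^{-{\bf k}_i}=1$ by antisymmetry of $Q$, and a graded algebra whose homogeneous components are spanned by units has no nonzero nilpotent ideals). With semisimplicity in hand, the trivial center forces a single Wedderburn block and the rest follows as you wrote.
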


\def\bZ{\mathbb Z}

\subsection{Quantum trace maps}\label{subsec-quantum-trace-map}

\begin{figure}[h]
    \centering
    \includegraphics[width=260pt]{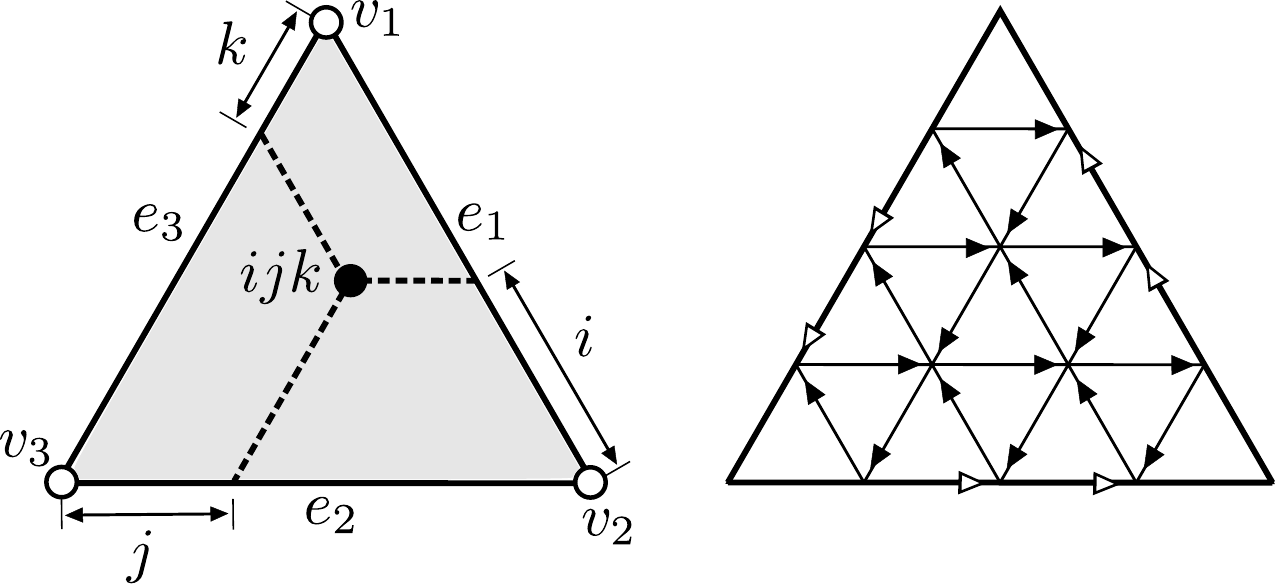}
    \caption{Barycentric coordinates $ijk$ and a $4$-triangulation with its quiver}\label{Fig;coord_ijk}
\end{figure}

In this subsection we review the ($X$-version) ${\rm SL}_n$ quantum trace map constructed in \cite{LY23}, which is an algebra homomorphism from the reduced stated $\SL$-skein algebra of a `triangulable' pb surface $\fS$ to a quantum torus algebra associated to a special quiver built from any choice of an `(ideal) triangulation' of $\fS$.

To begin with, consider barycentric coordinates for an ideal triangle $\bP_3$ so that
\begin{equation}\label{eq-coordinate-P3}
\bP_3=\{(i,j,k)\in\bR^3\mid i,j,k\geq 0,\ i+j+k=n\}\setminus\{(0,0,n),(0,n,0),(n,0,0)\}, 
\end{equation}
where $(i,j,k)$ (or $ijk$ for simplicity) are the barycentric coordinates. 
Let $v_1=(n,0,0)$, $v_2=(0,n,0)$, $v_3=(0,0,n)$. 
Let $e_i$ denote the edge on $\partial \bP_3$ whose endpoints are $v_i$ and $v_{i+1}$. 
See Figure \ref{Fig;coord_ijk}.

The \textbf{$n$-triangulation} of $\bP_3$ is the subdivision of $\bP_3$ into $n^2$ small triangles using lines $i,j,k=\text{constant integers}$. 
For the $n$-triangulation of $\bP_3$, the vertices and edges of all small triangles, except for $v_1,v_2,v_3$ and the small edges adjacent to them, form a quiver $\Gamma_{\bP_3}$.
An \textbf{arrow} is the direction of a small edge defined as follows. If a small edge $e$ is in the boundary $\partial\bP_3$ then $e$ has the counterclockwise direction of $\partial \bP_3$. If $e$ is interior then its direction is the same with that of a boundary edge of $\bP_3$ parallel to $e$. Assign weight $\frac{1}{2}$ to any boundary arrow and weight $1$ to any interior arrow.

Let $V_{\bP_3}$ be the set of all
points with integer barycentric coordinates of $\bP_3$:
\begin{align}
V_{\bP_3} = \{ijk \in \bP_3 \mid i, j, k \in \bZ\}.
\end{align}
Elements of $V_{\bP_3}$ are called \textbf{small vertices}, and small vertices on the boundary of $\bP_3$ are called the \textbf{edge vertices}. 

A pb surface $\fS$ is {\bf triangulable} if 
no component of 
$\fS$ is 
one the following cases:  
\begin{enumerate}[label={\rm (\arabic*)}]
     \item the closed surface,
     
    \item the monogon $\mathbb{P}_1$, 

    \item the bigon $\mathbb{P}_2$,

    \item once or twice punctured sphere
\end{enumerate}

A {\bf triangulation} $\lambda$ of $\fS$ is a collection of disjoint ideal arcs in $\fS$ with the following properties: (1) any two arcs in $\lambda$ are not isotopic; (2) $\lambda$ is maximal under condition (1); (3) every puncture is adjacent to at least two ideal arcs.
Our definition of triangulation excludes the so-called self-folded triangles.
We will call each ideal arc in $\lambda$ an {\bf edge} of $\lambda$.
If an edge is isotopic to a boundary component of $\fS$, we call such an edge a {\bf boundary edge}.
We use $\mathbb F_{\lambda}$ to denote the set of faces after we cut $\fS$ along all ideal arcs in $\lambda$ that are not boundary edges.
It is well-known that any triangulable surface admits a triangulation.

Suppose that $\fS$ is a triangulable pb surface with a triangulation $\lambda$.
By cutting $\fS$ along all edges not isotopic to a boundary edge, we have a disjoint union of ideal triangles. Each triangle is called a \textbf{face} of $\lambda$. Then
\begin{equation}\label{eq.glue}
\fS = \Big( \bigsqcup_{\tau\in\mathbb F_\lambda} \tau \Big) /\sim,
\end{equation}
where each face $\tau$ is regarded as a copy of $\bP_3$, and $\sim$ is the identification of edges of the faces to recover $\lambda$. 
Each face $\tau$ is characterized by a \textbf{characteristic map} $f_\tau\colon \mathbb P_3 \to \fS$, which is a homeomorphism when we restrict $f_\tau$ to the interior of $\mathbb P_3$  or the interior of each edge of $\mathbb P_3$.

An \textbf{$n$-triangulation} of $\lambda$ is a collection of $n$-triangulations of the faces $\tau$ which are compatible with the gluing $\sim$,  where compatibility means, for any edges $b$ and $b'$ glued via $\sim$, the vertices on $b$ and $b'$ are identified. Define
$$V_\lambda=\bigcup_{\tau\in\mathbb F_\lambda} V_\tau, \quad V_\tau=f_\tau(V_{\mathbb P_3}).$$
The images of the weighted quivers $\Gamma_{\mathbb P_3}$ by $f_\tau$ form a quiver $\Gamma_\lambda$ on $\fS$.
Note that when edges $b$ and $b'$ are glued, a small edge on $b$ and the corresponding small edge of $b'$ have opposite directions, i.e. the resulting arrows are of weight $0$. 

For any two $v,v'\in V_\lambda$, define
$$
a_\lambda(v,v') = \begin{cases} w \quad & \text{if there is an arrow from $v$ to $v'$ of weight $w$},\\
0 &\text{if there is no arrow from $v$ to $v'$.} 
\end{cases}$$
Let $Q_\lambda\colon V_\lambda\times V_\lambda \to \frac{1}{2}\bZ$ be the signed adjacency matrix of the weighted quiver $\Gamma_\lambda$ defined by 
\begin{equation}\label{eq-sign-matrix-ad}
Q_\lambda(v,v') = a_\lambda(v,v') - a_\lambda(v',v).
\end{equation}
Especially we use $Q_{\bP_3}$ to denote $Q_\lambda$ when $\fS=\bP_3$.  

\def\bT{\mathbb T}

\def\tr{{\rm tr}_{\lambda}}
\def\X{\mathcal X_{\omega}(\fS,\lambda)}
\def\Y{\mathcal Y_{q}(\fS,\lambda)}
\def\bX{\mathcal Z_{\hat\omega}^{\rm bl}(\fS,\lambda)}
\def\bXe{\mathcal Z_{\hat\eta}^{\rm bl}(\fS,\lambda)}
\def\bk{{\bf k}}
\def\V{V_\lambda}

The \textbf{Fock-Goncharov algebra} is the quantum torus algebra associated to $Q_\lambda$, i.e.:
\begin{equation*}
\mathcal{X}_{\omega}(\fS,\lambda)= \bT_\omega(Q_\lambda) = \BC \langle
X_v^{\pm 1}, v \in V_\lambda \rangle / (
X_v 
X_{v'}= \omega^{\, 2 Q_\lambda(v,v')} 
X_{v'} 
X_v \text{ for } v,v'\in V_\lambda ).
\end{equation*}
The \textbf{$n$-th root Fock-Goncharov algebra} is the quantum torus algebra defined as follows:
\begin{equation*}
\mathcal{Z}_{\hat\omega}(\fS,\lambda)= \bT_{\hat\omega}(Q_\lambda) = \BC \langle
Z_v^{\pm 1}, v \in V_\lambda \rangle / (
Z_v 
Z_{v'}= \hat\omega^{\, 2 Q_\lambda(v,v')} 
Z_{v'} 
Z_v \text{ for } v,v'\in V_\lambda ).
\end{equation*}

Recall that $\omega= \hat\omega^{n^2}$. There is an algebra embedding
from 
$\mathcal{X}_{\omega}(\fS,\lambda)$ to 
$\mathcal{Z}_{\hat\omega}(\fS,\lambda)$ given by 
$$X_v \mapsto Z_v^n$$
for $v\in \V$.
We will regard 
$\mathcal{X}_{\omega}(\fS,\lambda)$ as a subalgebra of 
$\mathcal{Z}_{\hat\omega}(\fS,\lambda)$.

Let ${\bf k}_i\colon V_{\bP_3} \to\bZ\ (i=1,2,3)$ be the functions defined by
\begin{equation}
{\bf k}_1(ijk)=i,\quad {\bf k}_2(ijk)=j,\quad {\bf k}_3(ijk)=k.\label{def:proj}
\end{equation} 
Let $\mathcal B_{\bP_3}\subset\bZ^{V_{\bP_3}}$ be the subgroup generated by ${\bf k}_1,{\bf k}_2,{\bf k}_3$ and $(n\bZ)^{V_{\bP_3}}$. Elements in $\mathcal{B}_{\bP_3}$ are called \textbf{balanced}. 

A vector $\bk\in\bZ^{\V}$ is \textbf{balanced} if its pullback 
\begin{align}\label{eq-pull-back}
    {\bf k}_\tau:=f_\tau^\ast\bk
\end{align}
to ${\bP_3}$ is balanced for every face of $\lambda$, where for every face $\tau$ and its characteristic map  $f_\tau\colon\mathbb P_3\to\fS$, the pullback $f_\tau^\ast\bk$ is a vector $\V\to\bZ$ given by $f_\tau^\ast\bk(v)=\bk(f_\tau(v))$. 
Let $\mathcal B_\lambda$ denote the subgroup of $\bZ^{\V}$ generated by all the balanced vectors.

The proof of \cite[Lemma 3.11]{KimWang} implies
the following:
\begin{lemma}\label{lem-balanced-property}
    We have $$\mathcal B_\lambda\subset\{{\bf k}\in \mathbb Z^{V_\lambda}\mid {\bf k}Q_\lambda\in (n\mathbb Z)^{V_\lambda}\}.$$
\end{lemma}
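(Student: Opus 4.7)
The plan is to prove the containment vertex-by-vertex, since the condition $\mathbf{k}Q_\lambda\in(n\mathbb Z)^{V_\lambda}$ is $\mathbb Z$-linear in $\mathbf{k}$. Concretely, I would fix $v\in V_\lambda$ and a balanced vector $\mathbf{k}\in\mathcal B_\lambda$, and show $(\mathbf{k}Q_\lambda)(v)\in n\mathbb Z$ by localizing the computation over the faces of $\lambda$. From the construction of $\Gamma_\lambda$ as the gluing of the local triangle quivers $\Gamma_{\mathbb P_3}$, with boundary arrows on glued edges cancelling by orientation reversal, one gets the matrix decomposition $Q_\lambda(u,v)=\sum_\tau Q_{\mathbb P_3}(f_\tau^{-1}(u),f_\tau^{-1}(v))$ (over faces $\tau$ containing both $u$ and $v$), and hence
\[
(\mathbf{k}Q_\lambda)(v)=\sum_{\tau\in\mathbb F_\lambda,\,v\in V_\tau}(\mathbf{k}_\tau Q_{\mathbb P_3})(v_\tau),\qquad v_\tau=f_\tau^{-1}(v),\ \mathbf{k}_\tau=f_\tau^*\mathbf{k}\in\mathcal B_{\mathbb P_3}.
\]

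Next I would run the local analysis on a single triangle. Each $\mathbf{k}_\tau\in\mathcal B_{\mathbb P_3}$ decomposes as $\mathbf{k}_\tau=c_{\tau,1}\mathbf{k}_1+c_{\tau,2}\mathbf{k}_2+c_{\tau,3}\mathbf{k}_3+n\mathbf{m}_\tau$ with $c_{\tau,s}\in\mathbb Z$ and $\mathbf{m}_\tau\in\mathbb Z^{V_{\mathbb P_3}}$. A direct case analysis at each small vertex $v_0\in V_{\mathbb P_3}$, summing the contributions of its (at most six) small neighbors with weights $\tfrac12$ on boundary arrows and $1$ on interior arrows, shows that $(\mathbf{k}_sQ_{\mathbb P_3})(v_0)$ vanishes except when $v_0$ is one of the two small vertices adjacent to an excluded corner of $\mathbb P_3$, where the value is $\pm\tfrac{n}{2}$ and the two adjacent vertices yield opposite signs. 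The remaining piece $n(\mathbf{m}_\tau Q_{\mathbb P_3})(v_0)$ is automatically in $\tfrac{n}{2}\mathbb Z$, with the half-integer residue coming only from boundary arrows of $\mathbb P_3$.

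Finally I would assemble these local data. The half-integer residues cancel upon summation: along a shared triangulation edge $e$, the boundary small arrows of the two adjacent face quivers carry opposite orientations, so the $\tfrac{n}{2}$-parts coming from the $n\mathbf{m}_\tau$-terms of the two faces sum to an integer at each small vertex on $e$ away from a puncture; around a puncture $p$, the near-corner $\pm\tfrac{n}{2}$ contributions from the $\mathbf{k}_s$-terms of consecutive faces incident to $p$ pair up and cancel, using that $\mathbf{k}_\tau$ and $\mathbf{k}_{\tau'}$ agree as functions on $V_\tau\cap V_{\tau'}$. Combining these two cancellations with the fact that $(\mathbf{k}_sQ_{\mathbb P_3})(v_\tau)=0$ at all non-corner-adjacent vertices, the full sum $(\mathbf{k}Q_\lambda)(v)$ lands in $n\mathbb Z$.

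The main obstacle is the near-puncture bookkeeping: tracking how the local coefficients $c_{\tau,s}$ vary as $\tau$ cycles around a puncture and verifying that the corresponding $\pm\tfrac{n}{2}$ contributions really do cancel requires using the global balance constraint $\mathbf{k}\in\mathcal B_\lambda$ in a careful way. This is essentially the argument of \cite[Lemma~3.11]{KimWang}, which is why the result is stated as a consequence of that proof rather than reproved from scratch.
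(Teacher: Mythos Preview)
Your proposal is correct and aligns with the paper's own treatment: the paper does not give an independent proof but simply states that the lemma follows from the proof of \cite[Lemma~3.11]{KimWang}, and your sketch is precisely an outline of that argument (localize over faces, compute $(\mathbf{k}_sQ_{\mathbb P_3})$ explicitly, then assemble using the edge-agreement constraints), as you yourself note at the end. One small imprecision: the cancellation of the $\tfrac{n}{2}$-residues along a shared edge is not a consequence of orientation reversal alone applied to the $n\mathbf{m}_\tau$-terms, but rather follows once you observe that the boundary-arrow contributions to $(\mathbf{k}_\tau Q_{\mathbb P_3})(v_\tau)$ involve the values $\mathbf{k}(u^\pm)$ themselves (which agree for the two faces) with opposite signs, after which the remaining interior-arrow contributions reduce modulo $n$ to the difference $c_{b}-c_{a}$ of the face coefficients, and these cancel by the edge-matching constraint $c_{b_1}-c_{a_1}\equiv -(c_{b_2}-c_{a_2})\pmod n$.
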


\def\lt{\text{lt}}

The \textbf{balanced Fock-Goncharov algebra} \cite{LY23} is the monomial subalgebra
\begin{align}\label{def-balanced}
    \bX=\text{span}_{\BC}\{Z^\bk\mid \bk\in\mathcal B_\lambda\}\subset
\mathcal{Z}_{\hat\omega}(\fS,\lambda).
\end{align}
It is easy to verify that $
\mathcal{X}_{\omega}(\fS,\lambda) \subset \bX$, and that $\{Z^\bk\mid \bk\in\mathcal B_\lambda\}$ is a basis of $\bX$.

Suppose $v,v'\in V_\lambda$, If $v$ and $v'$ are not contained in the same boundary edge, define 
\begin{align}\label{def-Hn-non-boundary}
    H_\lambda(v,v')=-Q_\lambda(v,v').
\end{align}
If $v$ and $v'$ are contained in the same boundary edge, define 
\begin{align}\label{def-Hn-boundary}
    H_\lambda(v,v')=\begin{cases}
    1 & \text{when $v=v'$},\\
    -1 & \text{when there is an arrow from $v$ to $v'$},\\
    0 & \text{otherwise.}
\end{cases}.
\end{align}

The following lemma provides a necessary and sufficient condition for a vector to be balanced in the case where $\fS$ has no interior punctures.

\begin{lemma}\cite[Lemma 11.9 and Proposition 11.10]{LY23}\label{lem-matrix-HK}
Suppose that $\fS$ contains no interior punctures.
We have the following:
\begin{enumerate}[label={\rm (\alph*)}]  
    \item There exists a unique matrix  $K_\lambda\colon V_\lambda\times V_\lambda\to\mathbb Z$
    such that $H_\lambda K_\lambda = nI$.

    \item ${\bf k}\in\mathbb Z^{V_\lambda}$ is balanced if and only if ${\bf k}={\bf c}K_\lambda$ for some ${\bf c}\in\mathbb Z^{V_\lambda}$.

     \item ${\bf k}\in\mathbb Z^{V_\lambda}$ is balanced if and only if ${\bf k}H_\lambda\in (n\mathbb Z)^{V_\lambda}$.
\end{enumerate}
   
\end{lemma}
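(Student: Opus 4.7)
The plan is to proceed by first establishing the forward direction of parts (b)/(c) via a local-to-global argument over the faces of $\lambda$, and then constructing the matrix $K_\lambda$ to secure (a) and close the loop. The three parts are tightly linked: for a square integer matrix $K_\lambda$, the identity $H_\lambda K_\lambda = nI$ forces $K_\lambda H_\lambda = nI$ as well, so once (a) holds, (b) and (c) are equivalent by elementary linear algebra (if ${\bf k} = {\bf c}K_\lambda$ then ${\bf k}H_\lambda = n{\bf c} \in (n\mathbb Z)^{V_\lambda}$, and conversely any ${\bf k}H_\lambda = n{\bf c}$ can be rewritten as ${\bf k} = {\bf c}K_\lambda$). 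Uniqueness in (a) is automatic once $H_\lambda$ is shown invertible over $\mathbb Q$. The substantive work is thus to produce an integer $K_\lambda$ and to characterize $\mathcal B_\lambda$ as $\{{\bf k}\mid {\bf k}H_\lambda \in (n\mathbb Z)^{V_\lambda}\}$.

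For the forward implication ``${\bf k}\in\mathcal B_\lambda \Rightarrow {\bf k}H_\lambda \in (n\mathbb Z)^{V_\lambda}$'', I would work face by face. On a single ideal triangle $\mathbb P_3$, let $H_{\mathbb P_3}$ be the matrix defined by treating every edge of $\mathbb P_3$ as a boundary edge in \eqref{def-Hn-non-boundary}--\eqref{def-Hn-boundary}. A direct calculation on the three barycentric generators ${\bf k}_1, {\bf k}_2, {\bf k}_3$ shows ${\bf k}_i H_{\mathbb P_3} \in (n\mathbb Z)^{V_{\mathbb P_3}}$, and since $(n\mathbb Z)^{V_{\mathbb P_3}}$ is preserved by right multiplication by $H_{\mathbb P_3}$, this gives ${\bf k}_\tau H_\tau \in (n\mathbb Z)^{V_\tau}$ for every face $\tau$ and every ${\bf k}\in\mathcal B_\lambda$. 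To glue these local statements, I would verify the identity
\[
({\bf k}H_\lambda)(v') \;=\; \sum_{\tau \,:\, v'\in V_\tau} ({\bf k}_\tau H_\tau)(f_\tau^{-1}(v')) \;+\; D(v'),
\qquad v'\in V_\lambda,
\]
where $D(v') = 0$ unless $v'$ lies on an interior edge $e$ of $\lambda$, in which case
\[
D(v') \;=\; -2\,{\bf k}(v') + {\bf k}(v'_{\mathrm{prev}}) + {\bf k}(v'_{\mathrm{next}}),
\]
with $v'_{\mathrm{prev}}, v'_{\mathrm{next}}\in V_\lambda$ the neighbours of $v'$ along $e$ (absent corner neighbours contributing $0$). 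Plugging the balanced decomposition of ${\bf k}_\tau$ on $e$---values linear in the two barycentric coordinates along $e$ modulo $n\mathbb Z$---causes the coefficients $-2,1,1$ to annihilate the linear part, so $D(v') \in n\mathbb Z$, and ${\bf k}H_\lambda \in (n\mathbb Z)^{V_\lambda}$ follows.

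The main obstacle is the construction of the integer matrix $K_\lambda$ together with the converse inclusion. I would aim to produce, for each $v\in V_\lambda$, a balanced vector ${\bf K}_v \in \mathcal B_\lambda$ satisfying ${\bf K}_v H_\lambda = n\,e_v$; letting the $v$-th row of $K_\lambda$ be ${\bf K}_v$ then simultaneously yields (a) (with rows manifestly balanced) and the reverse implication (any ${\bf k}$ with ${\bf k}H_\lambda = n{\bf c}$ becomes ${\bf k} = {\bf c}K_\lambda$, an integer combination of balanced rows). The candidate ${\bf K}_v$ would be built by first inverting $H_{\mathbb P_3}$ explicitly on each ideal triangle---writing each row of $nH_{\mathbb P_3}^{-1}$ as an integer combination of ${\bf k}_1, {\bf k}_2, {\bf k}_3$ and elements of $(n\mathbb Z)^{V_{\mathbb P_3}}$---and then patching these local rows across the interior edges of $\lambda$. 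The hypothesis that $\fS$ has no interior punctures is essential here: triangulability then forces $\partial\fS\neq\emptyset$, and the patching can be organized inductively starting from a boundary edge of $\fS$; an interior puncture would introduce a cohomological monodromy obstruction to a consistent global choice. The technical heart of the argument is verifying that the patched ${\bf K}_v$ is globally balanced and satisfies ${\bf K}_v H_\lambda = n\,e_v$, which reduces to an edge-by-edge consistency check involving the same discrepancy $D(v')$ that appeared in the forward direction.
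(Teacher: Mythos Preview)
The paper does not prove this lemma: it is stated with a citation to \cite[Lemma~11.9 and Proposition~11.10]{LY23} and used as a black box throughout (notably in the proofs of Proposition~\ref{Prop-central-generic} and Theorem~\ref{thm-center-balanced-root-of-unity}). There is therefore no proof in the present paper to compare your proposal against.

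That said, your outline is a reasonable reconstruction of how such a result is typically established. The reduction of (b) and (c) to (a) via $H_\lambda K_\lambda = nI \Rightarrow K_\lambda H_\lambda = nI$ is correct once invertibility over $\mathbb Q$ is known. Your local-to-global strategy for the forward implication of (c) is sound in spirit, though the discrepancy term $D(v')$ you write down deserves more care: the difference between the boundary-edge rule \eqref{def-Hn-boundary} on each triangle and the interior rule \eqref{def-Hn-non-boundary} after gluing is not symmetric in the way your $-2,1,1$ coefficients suggest, because the boundary rule is not symmetric (it distinguishes the direction of the arrow). You should recompute $D(v')$ directly from the definitions rather than guess its form. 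For the construction of $K_\lambda$, your idea of building the rows ${\bf K}_v$ by patching local inverses of $H_{\mathbb P_3}$ and using the absence of interior punctures to kill a monodromy obstruction is the right intuition, but the sketch is thin at exactly the point where the hypothesis is used; you would need to make precise what quantity is being transported around a puncture and why it returns to itself only when no interior puncture is enclosed. If you want to see the actual argument, consult the cited sections of \cite{LY23}.
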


\begin{theorem}[\cite{BW11,LY22} for $n=2$; \cite{Kim20} for $n=3$; \cite{LY23} for general $n$]\label{thm.quantum_trace}
    Suppose that $\fS$ is a triangulable pb surface with a triangulation $\lambda$. Then the following hold:

    \begin{enumerate}[label={\rm (\alph*)}]
    \item There is an algebra homomorphism
    $$\tr \colon \rdS\rightarrow
    \mathcal{Z}_{\hat\omega}(\fS,\lambda),$$
    called the {\bf (reduced) quantum trace map},
    such that $\im\tr\subset \bX.$

    \item  
    $\tr$ is injective when $n=2$ or when $n=3$.

    \item 
    $\tr$ is injective when $\fS$ is a polygon.
    \end{enumerate}
\end{theorem}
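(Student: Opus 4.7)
The plan is to construct $\tr$ triangle by triangle and then assemble the pieces via the splitting homomorphism of \S\ref{sub-splitting}. The starting point is to define a ``triangle trace'' map for a single ideal triangle $\mathbb P_3$: a homomorphism $\tr_{\mathbb P_3}\colon \overline{\cS}_{\hat\omega}^{\rm st}(\mathbb P_3)\to \mathcal Z_{\hat\omega}(\mathbb P_3,\lambda_3)$, where $\lambda_3$ is the trivial triangulation. One writes down explicit images on the ``elementary'' stated webs in $\mathbb P_3$ (stated arcs between two different edges, together with edge-to-itself arcs and corner arcs), chosen so that the ``bad arcs'' map to $0$, and then checks that the defining relations \eqref{w.cross}--\eqref{wzh.eight} are respected. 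This is mechanical but case-intensive and is really where all the quantum group input lives; for $n=2,3$ one can follow the explicit formulas of Bonahon--Wong and Kim, and for general $n$ one extracts the formulas from the Reshetikhin--Turaev-type functor on $U_q(\mathfrak{sl}_n)$-webs as in \cite{LY23}.

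Second, for a general triangulable $(\fS,\lambda)$, let $\cut_\lambda(\fS)=\bigsqcup_{\tau\in\mathbb F_\lambda}\tau$ be the cut surface and let
\[
\mathbb S_\lambda\colon \rdS \longrightarrow \bigotimes_{\tau\in\mathbb F_\lambda} \overline{\cS}_{\hat\omega}^{\rm st}(\tau)
\]
be the iterated splitting homomorphism along all non-boundary edges of $\lambda$; by \S\ref{sub-splitting} this is a well-defined algebra map. Tensoring the triangle trace maps gives
\[
\bigotimes_\tau \tr_\tau\colon \bigotimes_\tau \overline{\cS}_{\hat\omega}^{\rm st}(\tau)\longrightarrow \bigotimes_\tau \mathcal Z_{\hat\omega}(\tau,\lambda_3),
\]
and one defines $\tr:=(\bigotimes_\tau \tr_\tau)\circ \mathbb S_\lambda$. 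The target quantum torus, after accounting for the sign cancellation of oppositely oriented arrows on glued edges (the weight-$\tfrac12+\tfrac12=1$ arrows on one side cancel those on the other, producing the weight-$0$ identification used to build $\Gamma_\lambda$), is precisely $\mathcal Z_{\hat\omega}(\fS,\lambda)$. That $\tr$ is independent of the order of splittings is forced by the coassociativity of $\mathbb S$ proved in \cite{LS21}.

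Third, to see that $\im \tr\subset \bX$, one has to see that for every stated web $\alpha$ whose boundary lies only on boundary edges of $\fS$, the sum $\mathbb S_\lambda(\alpha)=\sum_s \alpha(h,s)$ only produces tuples of states on each internal edge that encode a legal ``weight diagram,'' and that under $\bigotimes_\tau \tr_\tau$ every such summand lands in the monomial span $\{Z^{\bf k}\mid {\bf k}_\tau\in\mathcal B_\tau\}$ for each $\tau$. The generator ${\bf k}_i$ of $\mathcal B_{\mathbb P_3}$ corresponds to putting an $i$-weighted state on edge $e_i$, so it suffices to check this on elementary webs, which is what the explicit formulas for $\tr_{\mathbb P_3}$ give.

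For parts (b) and (c), the approach is to exhibit a basis of $\rdS$ (for $n=2,3$, one has the bangles/web bases of \cite{BW11,Kim20}; for a polygon the reduced stated skein algebra is already well-understood and has an explicit basis of non-crossing diagrams) and analyze the highest-degree monomial of $\tr(\beta)$ for a basis element $\beta$ using a suitable lexicographic or degree filtration on $\mathcal Z_{\hat\omega}(\fS,\lambda)$. A distinct-leading-term argument then yields linear independence of $\{\tr(\beta)\}_\beta$. The genuinely hard part, and the main obstacle, is the first paragraph: producing the triangle trace $\tr_{\mathbb P_3}$ with the correct coefficients so that the full tangle of relations \eqref{w.cross}--\eqref{wzh.eight} holds simultaneously. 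Once $\tr_{\mathbb P_3}$ is in hand (together with the compatibility lemma showing bad arcs go to $0$ and that the states on internal edges sum up correctly), the global assembly via $\mathbb S_\lambda$ and the balancedness check reduce to bookkeeping.
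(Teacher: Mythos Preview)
This theorem is not proved in the present paper; it is stated with citations to \cite{BW11,LY22,Kim20,LY23} and used as a black box. So there is no ``paper's own proof'' to compare against. That said, your outline is broadly in the spirit of the cited constructions: define a trace on a single triangle, then assemble via the splitting homomorphism.

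One point does need tightening. The composite $(\bigotimes_\tau \tr_\tau)\circ \mathbb S_\lambda$ lands in $\bigotimes_\tau \mathcal Z_{\hat\omega}(\tau,\lambda_3)$, not directly in $\mathcal Z_{\hat\omega}(\fS,\lambda)$. The latter only \emph{embeds} into the former via the map $\mathcal S_\lambda$ of \eqref{eq-splitting-torus}--\eqref{cutting_homomorphism_for_Z_omega}, and you must check that the image of the skein algebra actually lies in $\im\mathcal S_\lambda$, i.e.\ that on each glued edge the contributions from the two adjacent triangles match up to the diagonal $Z_v\mapsto Z_{v'}Z_{v''}$. The ``sign cancellation of oppositely oriented arrows'' you invoke is a statement about the quiver $\Gamma_\lambda$, not about this algebra-level compatibility; it does not by itself give you a map into $\mathcal Z_{\hat\omega}(\fS,\lambda)$. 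In \cite{LY23} this issue is circumvented by first constructing an $A$-version trace (into a quantum torus where the edge gluing is transparent) and then passing to the $X$-version by an explicit monomial transformation; the balancedness statement $\im\tr\subset\bX$ also comes out of that transfer rather than from an ad hoc check on elementary webs. Your direct approach can be made to work, but it requires this additional edge-matching verification, which is where much of the actual labor lies.
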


\subsection{Compatibility between the splitting homomorphism and the quantum trace map}

Let $\fS$ be a triangulable pb surface with a triangulation $\lambda$, and let $e$ be a non-boundary edge of $\lambda$.  
As in \S\ref{sub-splitting}, denote by $\cut_e(\fS)$ the pb surface obtained from $\fS$ by cutting along $e$, and let
\begin{align}\label{def-pr-cut-fs}
\pr_e \colon \cut_e(\fS) \to \fS
\end{align}
be the projection map, so that $\pr_e^{-1}(e)$ consists of two ideal arcs $e',e''$ of $\cut_e(\fS)$.  
In \S\ref{sub-splitting} we recalled the splitting homomorphism for reduced stated ${\rm SL}_n$-skein algebras,
\[
\mathbb{S}_e : \rS^{\rm st}(\fS) \to \rS^{\rm st}(\cut_e(\fS)).
\]

We now introduce an analogous “splitting homomorphism” for the ($n$-th root) Fock–Goncharov algebras.  
Note that 
\begin{align}\label{induced-ideal-tri}
    \lambda_e = (\lambda\setminus\{e\}) \cup \{e',e''\}
\end{align}
is a triangulation of $\cut_e(\fS)$, which we call the triangulation induced from $\lambda$.  
For a small vertex $v \in V_\lambda$ lying on $e$, its preimage under $\pr_e$ is $\pr^{-1}_{e}(v) = \{v',v''\}$, where $v',v''$ are small vertices of $\lambda_e$.  

There is an algebra embedding
\begin{align}\label{eq-splitting-torus}
    \mathcal S_e : \mathcal{Z}_{\hat\omega}(\fS,\lambda)
    \longrightarrow \mathcal{Z}_{\hat\omega}(\cut_e(\fS),\lambda_e)
\end{align}
defined on the generators $Z_v$ for $v \in V_\lambda$ by
\begin{align}\label{cutting_homomorphism_for_Z_omega}
    \mathcal S_e(Z_v) =
    \begin{cases}
        Z_v, & v \notin e, \\[6pt]
        [Z_{v'} Z_{v''}], & v \in e \text{ with } \pr^{-1}_{e}(v) = \{v',v''\},
    \end{cases}
\end{align}
where $[\cdot]$ denotes the Weyl-ordered product \eqref{Weyl_ordering-Z-1}.  
Since self-folded triangles are excluded in this paper, $Z_{v'}$ commutes with $Z_{v''}$ in the second case, so in fact
$\mathcal{S}_e(Z_v) = Z_{v'} Z_{v''}.$

The following statement is about the compatibility of the quantum trace map ${\rm tr}_\lambda 
   $ (Theorem \ref{thm.quantum_trace}) with the splitting homomorphisms.

\begin{theorem}[\cite{LY23}]\label{thm-trace-cut}
    The following diagram commutes
    \begin{equation*}
\begin{tikzcd}
\rdS \arrow[r, "\mathbb S_e"]
\arrow[d, "\tr"]  
&  \overline{\cS}_{\hat\omega}^{\rm st}(\cut_e(\fS)) \arrow[d, "{\rm tr}_{\lambda_e}"] \\
 \mathcal{Z}_{\hat\omega}(\fS,\lambda)
 \arrow[r, "\mathcal S_e"] 
&  
\mathcal{Z}_{\hat\omega}(\cut_e(\fS),\lambda_e),
\end{tikzcd}
\end{equation*}
    where $\mathbb S_e$ is the splitting homomorphism defined in \S\ref{sub-splitting}.
\end{theorem}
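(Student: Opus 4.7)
The plan is to reduce the compatibility claim to the very construction of ${\rm tr}_\lambda$ in \cite{LY23}, which is built triangle by triangle using splittings. Let $\mathcal E$ denote the set of non-boundary edges of $\lambda$. Cutting $\fS$ along every edge in $\mathcal E$ yields the disjoint union $\cut(\fS)=\bigsqcup_{\tau\in\mathbb F_\lambda}\tau$, equipped with the trivial triangulation $\lambda^{(0)}$ consisting of these individual triangles. The total cutting factors through the single cut at $e$, namely $\fS\xrightarrow{\pr_e}\cut_e(\fS)\xrightarrow{\pr'}\cut(\fS)$, where $\pr'$ cuts along the remaining edges $\mathcal E\setminus\{e\}$. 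Since both the stated-skein and the Fock--Goncharov splittings are associative with respect to cutting along disjoint ideal arcs, we obtain factorizations $\mathbb S=\mathbb S'\circ\mathbb S_e$ and $\mathcal S=\mathcal S'\circ\mathcal S_e$, where $\mathbb S,\mathcal S$ are the full splittings associated with $\mathcal E$.

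The key input is that, by the construction in \cite{LY23}, ${\rm tr}_\lambda$ is characterized by commutativity of the \emph{total} diagram:
\[
\mathcal S\circ{\rm tr}_\lambda \;=\; {\rm tr}_{\lambda^{(0)}}\circ\mathbb S,
\]
and likewise $\mathcal S'\circ{\rm tr}_{\lambda_e}\;=\;{\rm tr}_{\lambda^{(0)}}\circ\mathbb S'$ after applying the same recipe to the pb surface $\cut_e(\fS)$ with triangulation $\lambda_e$. Chaining these two identities gives
\[
\mathcal S'\circ\mathcal S_e\circ{\rm tr}_\lambda \;=\; {\rm tr}_{\lambda^{(0)}}\circ\mathbb S'\circ\mathbb S_e \;=\; \mathcal S'\circ{\rm tr}_{\lambda_e}\circ\mathbb S_e,
\]
and cancelling the injective map $\mathcal S'$ on the left yields $\mathcal S_e\circ{\rm tr}_\lambda={\rm tr}_{\lambda_e}\circ\mathbb S_e$, as desired.

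The injectivity of $\mathcal S'$ is a routine technical point: on the monomial basis $\{Z^{\bf k}\mid {\bf k}\in\mathbb Z^{V_\lambda}\}$, $\mathcal S'$ sends $Z^{\bf k}$ to a Weyl-ordered monomial over the preimage vertices, and because gluing identifies small vertices with cancellation of oppositely oriented arrows (producing weight-$0$ arrows between each pair $v',v''$ of split vertices), no nontrivial scalar identifications arise; thus $\mathcal S'$ carries the monomial basis into the monomial basis of $\mathcal Z_{\hat\omega}(\cut(\fS),\lambda^{(0)})$. The main obstacle is therefore structural rather than computational: one must verify that the construction of ${\rm tr}_\lambda$ in \cite{LY23} is indeed assembled triangle-by-triangle in this manner, so that the total-cutting compatibility holds essentially by fiat. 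Were one to forgo this structural appeal, the alternative would be a direct verification on a generating set of $\rdS$: take $\alpha$ a stated $n$-web in vertical position, transverse to $e$; the skein-side formula $\mathbb S_e(\alpha)=\sum_s \alpha(h,s)$ and the Fock--Goncharov formula $\mathcal S_e(Z_v)=Z_{v'}Z_{v''}$ (with no $\hat\omega$-factor, since $Z_{v'}$ and $Z_{v''}$ commute in the self-folded-free setting) then match term by term in the local state-sum description of the triangle-level trace applied to the two triangles sharing $e$.
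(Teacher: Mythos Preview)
The paper does not provide its own proof of this theorem; it is stated as a result from \cite{LY23} and cited without argument. Your proposal is a reasonable reconstruction of why the statement holds, and it is essentially the standard argument: the quantum trace ${\rm tr}_\lambda$ in \cite{LY23} is defined precisely so that the total-splitting diagram commutes, and factoring the total cut through the single cut at $e$, together with injectivity of $\mathcal S'$, gives the one-edge compatibility.

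One small caveat: your final paragraph's alternative ``direct verification on a generating set'' is more of a gesture than an argument, since the triangle-level trace is itself defined via a state-sum and matching it to $\mathcal S_e$ requires exactly the structural input you already invoked. But since the paper offers no proof to compare against, your main reduction argument is the right one and there is nothing further to contrast.
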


\def\Sx{\cS_{\bar\omega}}
\def\An{\mathsf{A}}
\def\BN{\mathbb N}
\def\BC{\mathbb C}
\def\W{x_1,\cdots,x_{n-1}}
\def\BX{\mathbb C[X]}
\def\BCX{\BC[x_1, \dots, x_n]}

\section{Frobenius maps of the projected ${\rm SL}_n$-skein algebras}
In this section we assume that $\fS$ is a punctured surface, i.e. $\partial\fS=\emptyset$.  

We recall the Frobenius homomorphism of \cite{KLW} (Theorem~\ref{thm-Fro}), defined for the projected $\SL$-skein algebra (see \eqref{def-projected-skein}).  
In \S\ref{sec-rep-sln} we will apply this Frobenius homomorphism to study the representation theory of the projected $\SL$-skein algebra at roots of unity.


\subsection{Elementary polynomials}

We use the notation
\[
\BX := \BC[x_1^{\pm1}, \dots, x_n^{\pm1}] \big/ (x_1x_2\cdots x_n=1).
\]
The permutation group $S_n$ acts naturally on $\BC[x_1^{\pm1}, \dots, x_n^{\pm1}]$ by
\[
\sigma(x_i) = x_{\sigma(i)} \qquad (1 \leq i \leq n,\ \sigma \in S_n),
\]
and this action descends to an action on $\BX$.  
We denote by $\BX^{S_n}$ the subalgebra of $\BX$ consisting of $S_n$-invariant elements.  

For each $1 \leq i \leq n-1$, define the $i$-th elementary symmetric polynomial
\begin{align}\label{elementary_symmetric_polynomial}
e_i = \sum_{1 \leq j_1 < \cdots < j_i \leq n} x_{j_1}\cdots x_{j_i} \in \BX.
\end{align} 
It is well known that 
$\BX^{S_n} = \BC[e_1, \dots, e_{n-1}]$ \cite{Macdonald}.

For $k \in \BN$, the $\BC$-algebra endomorphism of $\BC[x_1, \dots, x_n]$ defined by 
\[
x_i \mapsto x_i^k \qquad (i=1,\dots,n)
\]
descends to a $\BC$-algebra endomorphism
\begin{align}\label{Adams}
\Psi_k : \BX^{S_n} \to \BX^{S_n},
\end{align}
called the {\bf $k$-th Adams operation} \cite{BtD}.  
Since $\BX^{S_n} = \BC[e_1, \dots, e_{n-1}]$, for each $k \in \BN$ and $1 \leq i \leq n-1$, there exists a unique polynomial  
\[
\bar P_{k,i} \in \BZ[y_1,\dots,y_{n-1}]
\]
such that
\begin{equation}\label{eq-def-barP}
\Psi_k(e_i) = \bar P_{k,i}(e_1,\dots,e_{n-1}).
\end{equation}
This polynomial $\bar P_{k,i}$ is called the {\em reduced power elementary polynomial} in \cite{BH23}.  

Now let $c_1,\dots,c_n \in \BC$ with $c_1\cdots c_n = 1$.  
For each $1 \leq k \leq n$, define
\[
e_k(c_1,\dots,c_n) := \sum_{1 \leq i_1 < \cdots < i_k \leq n} c_{i_1}\cdots c_{i_k}.
\]

The following lemma will be used in \S\ref{sec-rep-sln}.
\begin{lemma}\label{lem-solu}
Let $s_i\in\mathbb C$ for $1\leq i\leq n$, with $s_1\cdots s_{n}=1$. Let $m$ be a positive integer.  
Consider the system of equations
\begin{equation}\label{eq-system}
        \bar P_{m,k}(x_1,\dots,x_{n-1}) \;=\; e_k(s_1,\dots,s_{n}), 
        \quad 1\leq k\leq n-1.
\end{equation}
Then the set of solutions $(x_1,\dots,x_{n-1})$ is given by
\begin{align*}
    \Bigl\{(e_1(c_1,\dots,c_{n}),\dots,e_{n-1}(c_1,\dots,c_{n})) 
    \;\Bigm|\;
    c_i\in\mathbb C,\; c_i^m = s_i \text{ for } 1\leq i\leq n, 
    \text{and } c_1\cdots c_{n}=1 \Bigr\}.
\end{align*}
\end{lemma}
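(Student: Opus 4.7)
The plan is to prove both inclusions of the claimed solution set, with the defining identity of $\bar P_{m,k}$ doing all the work. Namely, the identity $\Psi_m(e_i) = \bar P_{m,i}(e_1,\dots,e_{n-1})$ together with $\Psi_m(e_i) = e_i(x_1^m,\dots,x_n^m)$ yields, upon specializing $x_i \mapsto c_i$ for any $n$-tuple $(c_1,\dots,c_n)$ with $c_1\cdots c_n = 1$,
\[
\bar P_{m,k}\bigl(e_1(c_1,\dots,c_n),\dots,e_{n-1}(c_1,\dots,c_n)\bigr) = e_k(c_1^m,\dots,c_n^m).
\]

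The ``$\supseteq$'' inclusion is then immediate: given $c_i$ with $c_i^m = s_i$ and $\prod_i c_i = 1$, set $x_k = e_k(c_1,\dots,c_n)$ and apply the identity above to get $\bar P_{m,k}(x_1,\dots,x_{n-1}) = e_k(c_1^m,\dots,c_n^m) = e_k(s_1,\dots,s_n)$, exactly as required.

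For the nontrivial ``$\subseteq$'' inclusion, starting from a solution $(x_1,\dots,x_{n-1})$, I would reconstruct candidate $c_i$'s via Vieta. Define
\[
p(t) = t^n + \sum_{k=1}^{n-1}(-1)^k x_k\, t^{n-k} + (-1)^n \in \mathbb C[t]
\]
and let $c_1,\dots,c_n \in \mathbb C$ be its roots with multiplicity. Vieta's formulas give $e_k(c_1,\dots,c_n) = x_k$ for $1 \le k \le n-1$ and $c_1\cdots c_n = 1$, so in particular every $c_i$ is nonzero. Applying the crucial identity together with the hypothesis produces, for $1 \le k \le n-1$,
\[
e_k(c_1^m,\dots,c_n^m) = \bar P_{m,k}(x_1,\dots,x_{n-1}) = e_k(s_1,\dots,s_n),
\]
while also $e_n(c_1^m,\dots,c_n^m) = (c_1\cdots c_n)^m = 1 = s_1\cdots s_n = e_n(s_1,\dots,s_n)$. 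Since all elementary symmetric polynomials of $(c_1^m,\dots,c_n^m)$ and $(s_1,\dots,s_n)$ agree, the two $n$-tuples coincide as multisets. Relabeling the $c_i$'s by an appropriate permutation (which does not alter the symmetric quantities $e_k(c_1,\dots,c_n)$) yields $c_i^m = s_i$ for each $i$, and $(x_1,\dots,x_{n-1}) = (e_1(c),\dots,e_{n-1}(c))$ as desired.

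I do not anticipate a genuine obstacle: the argument reduces to Vieta's theorem plus the observation that an unordered $n$-tuple in $\mathbb C$ is determined by its elementary symmetric polynomials. The one step requiring a word of care is the passage from multiset equality to the labeled identity $c_i^m = s_i$, which is immediate because the target expressions $e_k(c_1,\dots,c_n)$ are invariant under permutations of the $c_i$.
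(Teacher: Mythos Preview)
Your proof is correct and follows essentially the same approach as the paper: both reconstruct the $c_i$'s as the roots of the monic polynomial with coefficients $(-1)^k x_k$ and constant term $(-1)^n$, then use the defining identity of $\bar P_{m,k}$ to reduce the system to the multiset equality $\{c_1^m,\dots,c_n^m\}=\{s_1,\dots,s_n\}$, finishing with permutation invariance of the elementary symmetric functions. The paper's version packages the same computation via $\det(xI-C^m)$ for $C=\mathrm{diag}(c_1,\dots,c_n)$, but this is purely cosmetic.
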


\begin{proof}
Let $(d_1,\dots,d_{n-1})\in\mathbb C^{n-1}$ and $d_n=1$. Suppose that 
$c_1,\dots,c_n\in\mathbb C$ are the $n$ roots of the polynomial
\(
x^n+\sum_{k=1}^{n}(-1)^k d_k x^{n-k}=0.
\)
Then
\[
e_k(c_1,\dots,c_{n})=d_k \quad (1\leq k\leq n).
\]
Setting $C=\mathrm{diag}(c_1,\dots,c_{n})$, we have $C\in\SL(\mathbb C)$.  

Now consider
\begin{align*}
\det(xI-C^m)
    &= \prod_{i=1}^n (x-c_i^m) \\
    &= x^n + \sum_{k=1}^{n-1} (-1)^k e_k(c_1^m,\dots,c_n^m)\,x^{n-k} + (-1)^n,
\end{align*}
where $I$ is the identity matrix.
By the definition of $\bar P_{m,k}$ (see \eqref{eq-def-barP}), this can be rewritten as
\begin{align*}
\det(xI-C^m)
    &= x^n + \sum_{k=1}^{n-1} (-1)^k \bar P_{m,k}\bigl(e_1(c_1,\dots,c_n),\dots,e_{n-1}(c_1,\dots,c_n)\bigr)\, x^{n-k} + (-1)^n \\
    &= x^n + \sum_{k=1}^{n-1} (-1)^k \bar P_{m,k}(d_1,\dots,d_{n-1})\, x^{n-k} + (-1)^n.
\end{align*}

Hence $(x_1,\dots,x_{n-1})=(d_1,\dots,d_{n-1})$ is a solution of \eqref{eq-system} if and only if
\[
\det(xI-C^m) 
= x^n + \sum_{k=1}^{n-1} (-1)^k e_k(s_1,\dots,s_n)\, x^{n-k} + (-1)^n 
= (x-s_1)\cdots(x-s_n).
\]
Since $\det(xI-C^m)=\prod_{i=1}^n(x-c_i^m)$, this is equivalent to the condition
\[
\{s_1,\dots,s_n\}=\{c_1^m,\dots,c_n^m\}.
\]
Finally, note that $(d_1,\dots,d_{n-1})$ depends only on the elementary symmetric functions of the $c_i$, and hence is independent of how we index $c_i$.  
This completes the proof.

\end{proof}

\subsection{Threading along a knot}

We use $\mathsf A$ to denote the twice-punctured sphere, which is diffeomorphic to a once-punctured open disc; see Figure \ref{fig-twice-punctured-sphere}(a).
There is an $n$-web $\alpha$ in $\mathsf{A}\times (-1,1)$ as shown in Figure \ref{fig-twice-punctured-sphere}(b).

\begin{figure}
\centering
\begin{tikzpicture}[baseline=0cm,every node/.style={inner sep=2pt}]
\draw[wall,fill=gray!20,dotted] (0,0) circle[radius=1.3];
    \draw[fill=white] (0,0) circle[radius=0.1];
\path (0,-1.8)node{(a)};

\begin{scope}[xshift=3.5cm]
\draw[wall,fill=gray!20,dotted] (0,0) circle[radius=1.3];
    \draw[fill=white] (0,0) circle[radius=0.1];
    \draw[decoration={markings, mark=at position 0.25 with {\arrow{>}}}, postaction={decorate}] 
        (0,0) circle[radius=0.5];
    \path (0,-1.8)node{(b)};
\end{scope}

\begin{scope}[xshift=7cm]
[baseline=0cm,every node/.style={inner sep=2pt}]
\draw[wall,fill=gray!20,dotted] (0,0) circle[radius=1.3];
    \draw[fill=white] (0,0) circle[radius=0.1];
    \draw[postaction={decorate, decoration={markings, mark=at position 0.5 with {\arrow{>}}}}] (1,0) parabola bend (0,1) (-1,0);
    \draw[postaction={decorate, decoration={markings, mark=at position 0.5 with {\arrow{>}}}}] 
        (1,0) parabola bend (0,0.25) (-1,0);
    \node at (0.15,0.7) {$\vdots$};
    \node at (-0.15,0.65) {$k$};
    \draw[postaction={decorate, decoration={markings, mark=at position 0.5 with {\arrow{>}}}}] (1,0) parabola bend (0,-1) (-1,0);
    \draw[postaction={decorate, decoration={markings, mark=at position 0.5 with {\arrow{>}}}}] 
        (1,0) parabola bend (0,-0.25) (-1,0);
    \node at (0,-0.55) {$n-k$};
\path (0,-1.8)node{(c)};
\end{scope}
\end{tikzpicture}
\caption{(a) The twice punctured sphere $\mathsf A$. (b) A knot in $\mathsf A$. (c) The $n$-web diagram $\alpha_k'$ in $\mathsf{A}$.}\label{fig-twice-punctured-sphere}
\end{figure}
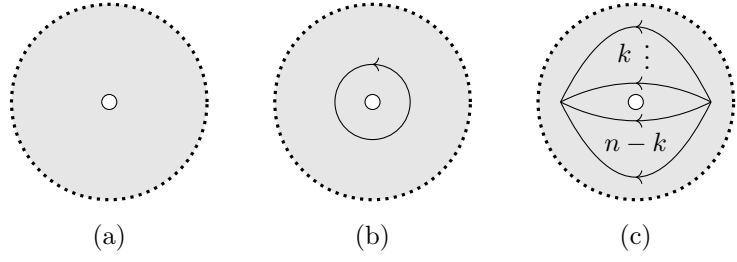

\def\SqA{\cS_{\bar\omega}(\mathsf{A})}
\def\An{\mathsf{A}}
\def\al{\alpha}


Assume that $[n]_{\omega}!:=\prod_{1\leq k\leq n}[k]_{\omega}\neq 0$, where $[k]_{\omega}
=(\omega^n-\omega^{-n})/(\omega-\omega^{-1})$.
For each $1\leq k\leq n-1$, let $\al'_k\in \SqA$ represented by the $n$-web in Figure \ref{fig-twice-punctured-sphere}(c), and let
\begin{align}\label{eq-alphak-loop}
	\left[\alpha_k\right]_{\bar\omega} :=
		(-1)^{\binom{k}{2}+\binom{n-k}{2}}\frac{1}{[n-k]_\omega![k]_\omega!}\alpha_k'  \in \cS_{\bar\omega}(\An)
\end{align}

\begin{remark}
    It is shown in \cite{KLW,cremaschi2024monomial,queffelec2018sutured} that
    $$\cS_{\bar\omega}(\An)=\mathbb C\left[\left[\alpha_1\right]_{\bar\omega},\cdots,\left[\alpha_{n-1}\right]_{\bar\omega}\right].$$
\end{remark}

An $n$-web $\beta$ in $\fS\times (-1,1)$ is called a {\bf framed oriented knot} if $\beta$ consists of a single oriented circle. 
Then there exists an embedding $f_\beta\colon \mathsf{A}\times (-1,1)\rightarrow \fS\times (-1,1)$ into the interior of $\fS\times (-1,1)$ such that 
$f_\beta(\alpha) = \beta$.
The embedding $f_{\beta}$ induces a $\mathbb C$-linear map
\begin{align}\label{def-embedding-D}
    (f_\beta)_{*}\colon \cS_{\bar\omega}(\mathsf{A})\rightarrow\Sx(\fS),
    \quad W\mapsto f_\beta(W)
    \text{ for any $n$-web $W$ in $\mathsf{A}\times (-1,1)$.}
\end{align}

\begin{definition}\label{def.threading_of_element}
Let $\fS$ be a pb surface, and let $\beta$ be a framed oriented knot in $\fS\times (-1,1)$.
For $z\in \cS_{\bar\omega}(\An)$ define the {\bf result of threading $z$ along $\beta$} as
$$
\gamma *_{\bar\omega} z := (f_\beta)_*(z) \in \cS_{\bar\omega}(\fS).
$$

\def\BC{\mathbb C}

For each $1\leq k\leq n-1$, define
$$\left[\beta_k\right]_{\bar\omega}:=\beta\ast_{\bar\omega} [\alpha_k]_{\bar\omega}\in \cS_{\bar\omega}(\fS).$$

Let $
 P\in \BC[y_1, \dots, y_{n-1}]$ and $
 \beta$ be a framed oriented 
 knot in $\fS\times (-1,1)$. 
 Define the {\bf threading of $
 P$ along $
 \beta$} by
$$ 
\left[\beta^{[P]} \right]_{\bar\omega}:= \beta *_{\bar\omega} P(\left[\alpha_1\right]_{\bar\omega},\cdots,\left[\alpha_{n-1}\right]_{\bar\omega}) \in \cS_{\bar\omega}(\fS).
$$
\end{definition}

Note that $$\left[\beta^{[P]}\right]_{\bar\omega} = P(\left[\beta_1\right]_{\bar\omega},\cdots,\left[\beta_{n-1}\right]_{\bar\omega})$$
when $\beta$ is represented by a crossingless oriented circle diagram in $\fS$.

\def\cK{\mathcal K}
\def\BC{\mathbb C}
\def\Sx{\cS_{\bar\omega}}

\subsection{Projected  ${\rm SL}_n$-skein algebras}
Let $\fS$ be a connected punctured surface with at least one puncture, and let
$p$ be a puncture of $\fS$. 
Let $c_p$ be a trivial ideal arc at $p$. This means the two endpoints of $c_p$ are both $p$ and $c_p$ bounds an embedded monogon $D_p$; see Figure \ref{fig_cp}. By removing the interior of $D_p$ from $\fS$, we get 
a new pb surface $\fS'$, where $c_p$ now becomes a boundary edge.  
With the identification $\fS=\fS\setminus D_p$ 
there is a unique morphism $f: \fS \embed \fS'$, such that $f$ is the identity outside a small neighborhood of $D_p$. One can imagine that $f$ `enlarges' the puncture $p$.

For a non-zero $\hat\omega^{\frac{1}{2}}\in \BC$, it is straightforward to verify that the $\mathbb C$-linear map 
$$f_*: \cS_{\bar \omega}(\fS) \to \cS_{\hat\omega}^{\rm st}(\fS'),
\quad W\mapsto f(W)
    \text{ for any $n$-web diagram $W$ in $\fS$}$$
is an algebra homomorphism.
Let $\cK_{\hat\omega, p}$ be the kernel of $f_{*}$.

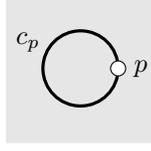
\begin{figure}
	\centering
	\begin{tikzpicture}
\draw[draw=white,fill=gray!20]  (-1,-1) rectangle (1,1);
    \draw[wall,fill=gray!20] (0,0) circle[radius=0.5];
    \draw[fill=white] (0.5,0) circle[radius=0.1];
    \node at (0.8,0) {$p$};
    \node [above] at (-0.7,0.1) {$c_p$};
\end{tikzpicture}
	\caption{The arc $c_p$ and the enclosed monogon $D_p$.}\label{fig_cp}
\end{figure}

\def\hxi{\omega}
\def\Cut{\mathsf{Cut}}
\def\SxS{\Sx(\fS)}
\def\SxsS{\Sx^{\ast}(\fS)}

In \cite[Section 9]{LS21}, it is proved that  $\cK_{\hat{\omega},p}$ does not depend on the choice of a puncture $p$, and we denote it by $\cK_{\hat{\omega},\fS}$. Define the {\bf projected ${\rm SL}_n$-skein algebra} by 
\begin{align}\label{def-projected-skein}
    \SxsS:= \SxS/\cK_{\hat{\omega},\fS}.
\end{align}
For $\beta \in \SxS$, we use the same symbol $\beta$ to denote its image under the projection $\SxS \to \SxsS$.

Currently, the projected ${\rm SL}_n$-skein algebra is defined only for connected punctured surfaces with at least one puncture. 
Let $\fS$ be a punctured surface with connected components $\fS_1,\dots,\fS_k$, each containing at least one puncture. 
We then define
\[
\Sx^{\ast}(\fS) := \bigotimes_{i=1}^k \Sx^{\ast}(\fS_i).
\]

\begin{remark}
   In \cite{KLW}, the projected ${\rm SL}_n$-skein algebra is defined for all punctured surfaces. 
Since our focus excludes closed surfaces, we do not introduce the projected  ${\rm SL}_n$-skein algebra in that case.

It was conjectured in \cite{LS21} that the projected ${\rm SL}_n$-skein algebra coincides with the ${\rm SL}_n$-skein algebra.  
This conjecture was confirmed in \cite{le2018triangular,higgins2020triangular} for $n=2,3$, and in
\cite{wang2024TQFT} for general $n$ in the case $\omega^{n}=1$.

\end{remark}

\def\bSS{\overline{\cS}_{\bar\omega}(\fS)}

Note that $\overline{\cS}_{\hat\omega}^{\rm st}(\fS) = \cS_{\bar\omega}(\fS)$ when $\fS$ has empty boundary.  
The following result shows that the quantum trace map for $\cS_{\bar\omega}(\fS)$ naturally induces a quantum trace map for $\cS_{\bar\omega}^{*}(\fS)$.

\begin{lemma}\cite[Lemma 8.9]{KLW}
    Assume that $\fS$ is a triangulable punctured surface with a triangulation $\lambda$.
    The quantum trace map $\tr\colon
    \cS_{\bar \omega}(\fS)\rightarrow
    \mathcal Z_{\hat \omega}(\fS,\lambda)$ of Theorem \ref{thm.quantum_trace}(a)
    induces a quantum trace map for the projected $\SL$-skein algebra
    \begin{align}
        \label{eq.X-quantum_trace_projected}
        \tr\colon
    \cS_{\bar\omega}^{*}(\fS)\rightarrow
    \mathcal Z_{\hat\omega}(\fS,\lambda).
    \end{align}
\end{lemma}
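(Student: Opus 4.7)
The goal is to show that $\tr_\lambda$ vanishes on the kernel $\cK_{\hat\omega,\fS}$, so that it descends to a well-defined algebra homomorphism on the quotient $\cS_{\bar\omega}^{*}(\fS)=\cS_{\bar\omega}(\fS)/\cK_{\hat\omega,\fS}$. The plan is to combine the compatibility of $\tr_\lambda$ with the splitting homomorphisms (Theorem~\ref{thm-trace-cut}) with the characterization of $\cK_{\hat\omega,\fS}$ as the kernel of the skein splitting along any ideal arc of $\fS$, which is \cite[Corollary~9.2]{LS21}.

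Concretely, I would first pick any non-boundary edge $e$ of $\lambda$; such an edge exists since $\fS$ is triangulable. By \cite[Corollary~9.2]{LS21},
$$
\cK_{\hat\omega,\fS} \;=\; \ker\!\left(\mathbb S_e \colon \cS_{\bar\omega}(\fS) \longrightarrow \overline{\cS}_{\hat\omega}^{\rm st}(\cut_e(\fS))\right).
$$
For any $x\in \cK_{\hat\omega,\fS}$ we then have $\mathbb S_e(x)=0$, and the commutative square of Theorem~\ref{thm-trace-cut} yields
$$
\mathcal S_e(\tr_\lambda(x)) \;=\; \tr_{\lambda_e}(\mathbb S_e(x)) \;=\; 0.
$$
Since the Fock--Goncharov splitting $\mathcal S_e$ of~\eqref{eq-splitting-torus} is an algebra embedding, it is injective, and therefore $\tr_\lambda(x)=0$. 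This shows $\cK_{\hat\omega,\fS}\subset \ker \tr_\lambda$, so $\tr_\lambda$ factors through the quotient, producing the desired map~\eqref{eq.X-quantum_trace_projected}.

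The only nontrivial ingredient is the identification $\ker \mathbb S_e = \cK_{\hat\omega,\fS}$; once this is available, the rest is a two-step diagram chase using the already-stated results. An alternative route, avoiding this identification, would be to work directly from the enlargement definition of $\cK_{\hat\omega,p}$ by extending $\lambda$ to a triangulation $\lambda'$ of $\fS'=\fS\setminus\mathrm{Int}(D_p)$ and constructing an injective algebra map $\mathcal Z_{\hat\omega}(\fS,\lambda)\hookrightarrow \mathcal Z_{\hat\omega}(\fS',\lambda')$ that intertwines $\tr_\lambda$ with $\tr_{\lambda'}\circ f_*$; this would be more cumbersome because it requires tracking how the weight-$1$ interior arrows near $p$ become weight-$\tfrac12$ boundary arrows along $c_p$, so the splitting route above should be preferred.
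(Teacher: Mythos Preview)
The paper does not supply its own proof of this lemma; it is quoted verbatim from \cite[Lemma~8.9]{KLW}. Your argument is correct and is essentially the standard one: use the compatibility square of Theorem~\ref{thm-trace-cut} together with the identification of $\cK_{\hat\omega,\fS}$ as the kernel of the splitting map from \cite[Corollary~9.2]{LS21}, then conclude via injectivity of $\mathcal S_e$. One small imprecision: \cite[Corollary~9.2]{LS21} identifies $\cK_{\hat\omega,\fS}$ with the kernel of the splitting into the \emph{non-reduced} stated skein algebra $\cS_{\hat\omega}^{\rm st}(\cut_e(\fS))$, whereas the map appearing in Theorem~\ref{thm-trace-cut} lands in the reduced algebra $\overline{\cS}_{\hat\omega}^{\rm st}(\cut_e(\fS))$. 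This does not affect your argument, since you only need the inclusion $\cK_{\hat\omega,\fS}\subset\ker\mathbb S_e$, and the reduced splitting factors through the non-reduced one.
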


\def\PT{\Phi^{\mathbb T}}


\def\homega{\hat{\omega}}
\def\heta{\hat{\eta}}

Let $\fS$ be a triangulable punctured surface with a triangulation $\lambda$.
When we have the assumption \Rlabel{1},
it is straightforward to verify that there exists an algebra embedding 
\begin{align*}
    \Phi^{\mathbb T}\colon \mathcal{Z}_{\hat\eta}(\fS,\lambda)\rightarrow
    \mathcal{Z}_{\homega}(\fS,\lambda),\quad
    Z^{\bf k}\mapsto Z^{N{\bf k}}\text{ for ${\bf k}\in\mathbb Z^{V_\lambda}.$}
\end{align*}
Obviously, this algebra embedding induces an algebra embedding
\begin{align}\label{eq-def-F}
    \Phi^{\mathbb T}\colon \mathcal{Z}_{\heta}^{\rm bl}(\fS,\lambda)\rightarrow
    \mathcal{Z}^{\rm bl}_{\homega}(\fS,\lambda).
\end{align}

\begin{theorem}
[\cite{KLW}]\label{thm-Fro}
     Let $\fS$ be a punctured surface such that each connected component contains at least one puncture.
    Suppose that we have the assumption \Rlabel{1} and 
    $[n]_\omega!\neq 0$.
     There exists a unique algebra homomorphism 
     $$\Phi\colon \cS_{\bar \eta}^{\ast}(\fS)\rightarrow \cS_{\bar\omega}^{\ast}(\fS)\quad\text{(called the {\bf Frobenius homomorphism})}$$
     with the following properties:

     \begin{enumerate}[label={\rm (\alph*)}]
 %
         \item If $\beta$ is a framed oriented knot in $\fS\times (-1,1)$, then
         $$\Phi(\left[\beta_k\right]_{\bar\eta}) = \left[\beta^{[\bar P_{N,k}]}\right]_{\bar\omega}\in\cS_{\bar\omega}^{*}(\fS),$$
         where $1\leq k\leq n-1$.

\item If we have the assumption \Rlabel{2}, then 
$\im\Phi\subset\mathsf{Z}(\cS_{\bar\omega}^{*}(\fS))$.

\item Suppose that $\fS$ is a triangulable punctured surface with a triangulation $\lambda$. Then
$\Phi$
is compatible with $\PT$ \eqref{eq-def-F} for quantum tori via 
the quantum traces for projected skein algebras in \eqref{eq.X-quantum_trace_projected}, meaning 
\begin{align*}
\tr \circ 
\Phi = \PT \circ \tr. 
\end{align*}
     \end{enumerate}
\end{theorem}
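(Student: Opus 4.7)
The plan is to construct $\Phi$ by prescribing it on a generating family via the threading formula in~(a), and then verify that this specification extends to an algebra homomorphism by pulling back through the quantum trace map. Uniqueness is immediate: the algebra $\cS_{\bar\eta}^{*}(\fS)$ is generated by the threaded knots $\{[\beta_k]_{\bar\eta}\}$ together with the $n$-valent vertex coupons dictated by the skein relations, and the formula~(a) determines $\Phi$ on all of these. For existence, the core technical step is to establish, for every triangulable $\fS$ with triangulation $\lambda$, every framed oriented knot $\beta$, and every $1\leq k\leq n-1$, the compatibility identity
\[
\PT\bigl(\tr_\lambda([\beta_k]_{\bar\eta})\bigr) \;=\; \tr_\lambda\bigl([\beta^{[\bar P_{N,k}]}]_{\bar\omega}\bigr)
\qquad \text{in } \mathcal{Z}_{\hat\omega}(\fS,\lambda).
\]

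By the compatibility of the quantum trace with splitting (Theorem~\ref{thm-trace-cut}), this identity reduces, after cutting out a small twice-punctured sphere $\mathsf A$ containing $\beta$, to the local identity
\[
\PT\bigl(\tr([\alpha_k]_{\bar\eta})\bigr) \;=\; \tr\bigl([\alpha^{[\bar P_{N,k}]}]_{\bar\omega}\bigr)
\]
in the Fock--Goncharov algebra of a triangulation of $\mathsf A$. The key diagrammatic calculation is to express $\tr([\alpha_k]_{\bar\omega})$ as $e_k(z_1,\ldots,z_n)$ for a canonical tuple of pairwise-commuting monomials $z_1,\ldots,z_n$ in the quantum torus satisfying $z_1\cdots z_n = 1$. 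Since $\PT$ acts by $z_i\mapsto z_i^N$, the identity becomes $\Psi_N(e_k) = \bar P_{N,k}(e_1,\ldots,e_{n-1})$, which holds by the definition \eqref{eq-def-barP} of the reduced power elementary polynomial. Once this identity is in hand, $\Phi$ is constructed on triangulable components by declaring it on threaded knots via~(a); the identity above, combined with the injectivity of $\tr_\lambda$ on polygons (Theorem~\ref{thm.quantum_trace}(c)) together with splitting arguments, guarantees that this extends consistently to an algebra homomorphism on all of $\cS_{\bar\eta}^{*}(\fS)$. For the twice-punctured sphere, $\cS_{\bar\omega}(\mathsf A)=\mathbb C[[\alpha_1]_{\bar\omega},\ldots,[\alpha_{n-1}]_{\bar\omega}]$ is a polynomial algebra so~(a) defines $\Phi$ directly, and the remaining non-triangulable punctured components are handled by a standard embedding/gluing argument using bigons.

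Properties~(a) and~(c) hold by construction. For~(b), assumption \Rlabel{2} gives $d=1$, hence $\bar\eta=\pm 1$, so $\cS_{\bar\eta}^{*}(\fS)$ is essentially commutative. A direct computation in the quantum torus, using Lemma~\ref{lem-balanced-property}, shows that $\PT(\bXe)\subset\mathsf Z(\bX)$: for any ${\bf k},{\bf t}\in\mathcal B_\lambda$ one has ${\bf k}Q_\lambda{\bf t}^T\in n\mathbb Z$, and $d=1$ forces $N''\mid Nn$, so $\hat\omega^{2N{\bf k}Q_\lambda{\bf t}^T}=1$. Combining this with~(c) and the injectivity of $\tr_\lambda$ on projected skein algebras, one concludes that $\im\Phi$ is central in $\cS_{\bar\omega}^{*}(\fS)$. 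The main obstacle in this plan is the explicit identification of $\tr([\alpha_k])$ with the $k$-th elementary symmetric polynomial in a canonical $n$-tuple of commuting monomials on a triangulation of $\mathsf A$; this is the nontrivial skein-diagrammatic verification carried out in \cite{KLW}, and every other step follows from it by relatively formal manipulations.
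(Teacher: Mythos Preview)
The paper does not prove this theorem; it is stated as a citation from \cite{KLW} with no argument given, so there is no proof in the paper to compare your proposal against.

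That said, your sketch has genuine gaps worth flagging. First, the twice-punctured sphere $\mathsf A$ is explicitly excluded from the class of triangulable surfaces (see the list in \S\ref{subsec-quantum-trace-map}), so there is no ``triangulation of $\mathsf A$'' and no associated Fock--Goncharov algebra; your proposed reduction of the compatibility identity to a local computation on $\mathsf A$ via the quantum trace cannot proceed as written. The splitting argument you invoke cuts the surface along ideal arcs of a triangulation of $\fS$, which does not isolate an annular neighborhood of $\beta$ as a standalone triangulated piece. Second, your uniqueness claim is incomplete: the skein algebra is generated by $n$-webs, which include $n$-valent sinks and sources, not just framed knots, and formula~(a) only prescribes $\Phi$ on threaded knots. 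Your remark that ``the $n$-valent vertex coupons [are] dictated by the skein relations'' does not explain how $\Phi$ is determined on a general web, and this is not automatic. The construction in \cite{KLW} proceeds instead through the stated skein algebra framework, where boundary arcs provide a tractable generating set and compatibility with the splitting homomorphism does the heavy lifting; the threading formula~(a) is then a consequence rather than the defining input.

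Your argument for~(b) is essentially correct once~(c) is established: the quantum trace on the projected skein algebra is injective (it factors through the injective splitting map followed by the injective trace on polygons, Theorem~\ref{thm.quantum_trace}(c)), so centrality in $\bX$ pulls back to centrality in $\cS_{\bar\omega}^*(\fS)$.
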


\begin{remark}
    For $n=2$, the Frobenius homomorphism was constructed in \cite{BW16}, and for $n=3$ with $d=1$, it was constructed in \cite{higgins2025miraculous}.
\end{remark}

\section{The center and the rank of the balanced Fock-Goncharov algebra}\label{sec-rank-center}

It is well known that irreducible representations of a quantum torus at roots of unity are uniquely determined by the action of its center~\cite{KaruoWangToAppear} (see Lemma~\ref{lem-irre-Azumaya}). 
Motivated by this fact, we determine the center of the balanced Fock–Goncharov algebra: in the generic case, the description is given in Corollary~\ref{cor-center-generic}, while in the root of unity case it is provided in Theorem~\ref{thm-center-balanced-root-of-unity}.  
In Theorem~\ref{thm-rank-Z}, we compute the rank of the balanced Fock–Goncharov algebra over its center, which equals the dimension of each of its irreducible representations.  
Theorems~\ref{thm-center-balanced-root-of-unity} and \ref{thm-rank-Z} are the two central and technically most substantial results of this paper.  
In \S\ref{sec-rep-torus-b}, we apply these two theorems to classify irreducible representations of the balanced Fock–Goncharov algebra.

\subsection{The center of the balanced Fock-Goncharov algebra when $\omega$ is not a root of unity}\label{sec-sub-center-bal-generic}
We begin this subsection by introducing the central elements of the balanced Fock–Goncharov algebra in the case where $\omega$ is not a root of unity. For each puncture of $\fS$, there are $n-1$ associated central elements. Moreover, we will see that the image of an oriented loop around a puncture $p$ under the quantum trace map lies in the subalgebra generated by these $n-1$ central elements (Proposition \ref{lem-image-loop-trace}).

Suppose that $p$ is a puncture of an ideal triangle $\tau$ (i.e., $\tau=\mathbb P_3$). Recall that we defined a quiver $\Gamma_{\tau}$ with vertex set $V_{\tau}$, see Figure \ref{Fig;coord_ijk}.
As in \eqref{eq-coordinate-P3}, we introduced  barycentric coordinates for $\tau=\mathbb P_3$. Then the coordinate of $p$ is $v_1=(n,0,0)$, $v_2=(0,n,0)$, or $v_3=(0,0,n)$.
For $1\leq i\leq n-1$, we define 
${\bf a}(\tau,p,i)=(a_v)_{v\in V_\tau}\in \mathbb Z^{V_\tau}$ as the following:
Suppose that $p=v_j$ for some $j\in\{1,2,3\}$.
Then
\begin{align*}
    a_v=\begin{cases}
        1 &
        {\bf k}_j(v) = n-i,\\
        0 & \text{otherwise},
    \end{cases}
\end{align*}
where ${\bf k}_j$ is defined in \eqref{def:proj}.
It is easy to check that
\begin{align}\label{eq-relation-k-a-p}
    \sum_{1\leq t\leq n-1} (n-t){\bf a}(\tau,p,t) = {\bf k}_j.
\end{align}

For each $1\leq i\leq n-1$, define 
 \begin{align}\label{eq-ba-relation-k}
     {\bf b}(\tau,p,i):=
    \left(\sum_{1\leq t\leq n-i-1} n(t+i-n){\bf a}(\tau,p,t) \right) + (n-i){\bf k}_j\in\mathcal B_\tau,
 \end{align}
 where $\mathcal B_\tau\subset \mathbb Z^{V_\tau}$ is the balanced part.
For each $1\leq i\leq n$, define
\begin{align}\label{def-vector-c-tau}
    {\bf c}(\tau,p,i):=
    \sum_{1\leq t\leq i-1}{\bf a}(\tau,p,t).
\end{align}
Then, for each $1\leq i\leq n$, we have
\begin{align}\label{def-vector-c-tau1}
\sum_{1\leq t\leq i}
{\bf c}(\tau,p,i)= 
    \sum_{1\leq t\leq i-1}(i-t){\bf a}(\tau,p,t).
\end{align}
 Note that ${\bf c}(\tau,p,1)={\bf 0}$. Although its value is zero, this notation remains useful for later reference.

We will show that ${\bf b}(\tau,p,i)$ (resp. ${\bf a}(\tau,p,i)$), for $1\leq i\leq n-1$ and $p\in{v_1,v_2,v_3}$, are $\mathbb Z$-linearly independent.
Before giving the proof, we first establish the following result.

\begin{lemma}\label{lem-ijk-sum-n}
   Suppose $t_1,\dots,t_{n-1}$ are integers satisfying 
$t_i+t_j+t_k=0  \text{ whenever } i+j+k=n,$
and $t_s+t_{n-s}=0  \text{ for all } 1\leq s\leq n-1.$
Then $t_s=0 \text{ for all } 1\leq s\leq n-1$.
\end{lemma}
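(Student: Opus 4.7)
The plan is to first show by induction that $t_s = s\,t_1$ for every $s \in \{1, \dots, n-1\}$, and then to use the reflection symmetry $t_s+t_{n-s}=0$ to deduce $n t_1 = 0$ in $\mathbb Z$, which forces $t_1 = 0$ and hence $t_s = 0$ for all $s$.

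Since the first hypothesis involves triples $(i,j,k)$ with $i,j,k \in \{1,\dots,n-1\}$ and $i+j+k=n$, it is non-vacuous only for $n \geq 3$. For $n=1$ the statement is empty, and for $n=2$ the only hypothesis is $t_1+t_1=0$, which already gives $t_1=0$. So I would assume $n\geq 3$ henceforth. For each $s\in\{1,\dots,n-2\}$, the triple $(1,s,n-s-1)$ is admissible, so the first hypothesis yields
\[
t_1 + t_s + t_{n-s-1} = 0.
\]
Applying the symmetry hypothesis $t_{n-s-1} = -t_{s+1}$ transforms this into the recursion
\[
t_{s+1} = t_s + t_1, \qquad s = 1, 2, \dots, n-2.
\]
By a straightforward induction on $s$, this forces $t_s = s\, t_1$ for every $s \in \{1,\dots, n-1\}$.

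Finally, specializing the reflection symmetry to $s=1$ yields $t_1 + t_{n-1} = 0$, i.e.\ $t_1 + (n-1)t_1 = n\,t_1 = 0$ in $\mathbb Z$; since $n \geq 3$, this forces $t_1 = 0$, and therefore $t_s = s\, t_1 = 0$ for all $s$. (Alternatively, one could skip the symmetry at the final step and instead plug $t_s = s\,t_1$ back into any single triple relation $t_i+t_j+t_k = 0$ with $i+j+k=n$, which immediately gives $n\,t_1 = 0$.) There is no real obstacle here — the only point to check carefully is that the recursion covers all indices $s = 2, \dots, n-1$, which it does since $s-1$ ranges over $\{1, \dots, n-2\}$ exactly when $s$ ranges over $\{2, \dots, n-1\}$.
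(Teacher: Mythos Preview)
Your proof is correct and follows the same underlying idea as the paper: establish $t_s = s\,t_1$ and then derive $n\,t_1 = 0$. Your execution is cleaner, though: by running the recursion $t_{s+1}=t_s+t_1$ uniformly over all $s\in\{1,\dots,n-2\}$, you avoid the paper's even/odd case split (the paper only pushes the induction up to $s\approx n/2$, handles the remaining half via the symmetry $t_{n-s}=-t_s$, and then uses a case-specific triple to extract a vanishing condition on $t_1$). Both arguments are short, but yours is the more economical one.
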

\begin{proof}

    {\bf Case 1} when $n$ is even:
    Since $t_{\frac{n}{2}}+t_{\frac{n}{2}}=0$, we have $t_{\frac{n}{2}}=0$.
    We have $2t_1+t_{n-2} = t_2+t_{n-2}=0$, which shows that $t_2=2t_1$.
    We have  $t_1+t_{2}+t_{n-3} = t_3+t_{n-3}=0$, which shows that $t_3=t_2+t_1=3t_1$.
    Inductively, we have 
    $t_s=st_1$ for $1\leq s\leq \frac{n}{2}-1$.
    For any $1\leq s\leq \frac{n}{2}-1$, we have 
    $t_{n-s} + t_s=0$, which implies that 
    $t_{n-s}=-st_1$.
    We have $t_1+t_{\frac{n}{2}-1} + t_{\frac{n}{2}} =t_1+t_{\frac{n}{2}-1}= 0$ and 
    $t_{\frac{n}{2}-1}+t_{\frac{n}{2}+1}=0$, which shows that $t_1=t_{\frac{n}{2}+1}$. Meanwhile, we have $t_{\frac{n}{2}+1}=-(\frac{n}{2}-1)t_1$. This shows that $t_1=0$ and 
    $t_s=0$ for $1\leq s\leq n-1$.

    {\bf Case 2} when $n$ is odd: As in the case 1, we can show that $t_s=st_1$ and $t_{n-s}=-st_1$ for $1\leq s\leq \frac{n-1}{2}$.
    Since $2t_{\frac{n-1}{2}} + t_1= nt_1=0$, then $t_1=0$. This shows that $t_1=0$ and 
    $t_s=0$ for $1\leq s\leq n-1$.
\end{proof}

\begin{lemma}\label{lem-independence-P3}
Let $\tau$ be an ideal triangle, then
we have
    \begin{enumerate}[label={\rm (\alph*)}]
        \item\label{lem-independence-P3-a}  ${\bf a}(\tau,p,i)$, for $1\leq i\leq n-1$ and $p\in\{v_1,v_2,v_3\}$, are $\mathbb Z$-linearly independent;

        \item  ${\bf b}(\tau,p,i)$, for $1\leq i\leq n-1$ and $p\in\{v_1,v_2,v_3\}$, are $\mathbb Z$-linearly independent.
    \end{enumerate}
\end{lemma}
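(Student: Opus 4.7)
The plan is to prove (a) first by evaluating a putative integer linear relation at the various vertices of $V_\tau$, and then to deduce (b) from (a) by a change-of-basis argument applied at each puncture separately.

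For (a), I would start from a hypothetical relation $\sum_{j=1}^{3}\sum_{i=1}^{n-1} c_{j,i}\,{\bf a}(\tau,v_j,i)=0$. Because ${\bf a}(\tau,v_j,i)(v)=\delta_{{\bf k}_j(v),\,n-i}$ and ${\bf k}_j(v)=0$ cannot equal $n-i$ for any $i\in\{1,\dots,n-1\}$, evaluating at a vertex with barycentric coordinates $(a,b,c)$ retains only the terms whose puncture index corresponds to a nonzero coordinate. Specializing to the three families of edge vertices (where exactly one of $a,b,c$ vanishes) produces, after reindexing, the three relations
\[
 c_{1,s}+c_{2,n-s}=0,\qquad c_{2,s}+c_{3,n-s}=0,\qquad c_{1,s}+c_{3,n-s}=0\qquad(1\le s\le n-1).
\]
These together force $c_{1,s}=c_{2,s}=c_{3,s}$ and the anti-symmetry $c_{2,s}+c_{2,n-s}=0$. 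Specializing the relation next at an interior vertex $(a,b,c)$ with $a,b,c\ge 1$ and using these identities yields $c_{2,n-a}+c_{2,n-b}+c_{2,n-c}=0$ whenever $a+b+c=n$, which by the anti-symmetry becomes $c_{2,a}+c_{2,b}+c_{2,c}=0$ for all $a,b,c\ge 1$ with $a+b+c=n$. Setting $t_s=c_{2,s}$, both hypotheses of Lemma~\ref{lem-ijk-sum-n} are satisfied, so $t_s=0$ for every $s$; the edge relations then force $c_{1,\cdot}=c_{3,\cdot}=0$ as well.

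For (b), substituting the identity ${\bf k}_j=\sum_{t=1}^{n-1}(n-t)\,{\bf a}(\tau,p,t)$ of \eqref{eq-relation-k-a-p} into the definition \eqref{eq-ba-relation-k} of ${\bf b}(\tau,p,i)$ and collecting the coefficient of each ${\bf a}(\tau,p,t)$ yields
\[
 {\bf b}(\tau,p,i)=\sum_{t=1}^{n-1}M_{i,t}\,{\bf a}(\tau,p,t),\qquad M_{i,t}=\begin{cases} i\,t & \text{if $1\le t\le n-i$,}\\ (n-i)(n-t) & \text{if $n-i\le t\le n-1$,}\end{cases}
\]
with the two formulas agreeing at $t=n-i$. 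Hence for each fixed puncture $p$, the vectors ${\bf b}(\tau,p,\cdot)$ lie in the $\mathbb Q$-span of the vectors ${\bf a}(\tau,p,\cdot)$, and (b) reduces to showing that the $(n-1)\times(n-1)$ matrix $M=(M_{i,t})$ is invertible over $\mathbb Q$: given any integer relation $\sum_{j,i}c_{j,i}\,{\bf b}(\tau,v_j,i)=0$, expanding each ${\bf b}$ produces $\sum_{j,t}d_{j,t}\,{\bf a}(\tau,v_j,t)=0$ with $d_{j,t}=\sum_i c_{j,i}\,M_{i,t}$; part (a) forces $d_{j,t}=0$, and invertibility of $M$ then forces $c_{j,i}=0$.

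To prove that $M$ is invertible, I would observe that each row $M_{i,\cdot}$ is a discrete tent function with a single corner at $t=n-i$ and Dirichlet boundary values $M_{i,0}=M_{i,n}=0$; a direct case check gives
\[
 M_{i,t+1}-2M_{i,t}+M_{i,t-1}=-n\,\delta_{t,\,n-i}\qquad(1\le t\le n-1).
\]
Equivalently, $L M^{T}=n P$, where $L$ is the $(n-1)\times(n-1)$ tridiagonal second-difference matrix (with $2$'s on the diagonal and $-1$'s on the sub- and super-diagonals) and $P$ is the anti-diagonal permutation matrix $P_{t,i}=\delta_{t,\,n-i}$. Since $\det L=n\neq 0$ and $P$ is a permutation, $M$ is invertible. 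The main obstacle is part (a), where the right case analysis must be found to align the resulting relations with the hypotheses of Lemma~\ref{lem-ijk-sum-n}; part (b) is then a routine linear-algebra reduction to (a) via the identification of $M$ with the Green's function of the Dirichlet discrete Laplacian.
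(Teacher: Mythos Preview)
Your proof of (a) is correct and essentially identical to the paper's: both evaluate the putative relation at vertices, use the edge vertices to force $c_{1,s}=c_{2,s}=c_{3,s}$ and the anti-symmetry $c_{2,s}+c_{2,n-s}=0$, and then use the interior vertices together with Lemma~\ref{lem-ijk-sum-n} to conclude.

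For (b) your argument is correct but takes a genuinely different route. The paper introduces the partial sums ${\bf c}(\tau,p,i)=\sum_{t<i}{\bf a}(\tau,p,t)$, observes from (a) that $\{{\bf c}(\tau,v_j,i)\}_{2\le i\le n}$ are independent, expresses each ${\bf b}$ in terms of the ${\bf c}$'s via \eqref{eq-vectors-b-c}, and then inverts this relation over $\mathbb Q$ via the telescoping identities \eqref{eq-vectors-b-c-new}. You instead compute the transition matrix $M$ from $\{{\bf a}(\tau,p,\cdot)\}$ to $\{{\bf b}(\tau,p,\cdot)\}$ directly and identify it as the Green's function of the discrete Dirichlet Laplacian on $\{1,\dots,n-1\}$, giving $LM^T=nP$ and hence invertibility. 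Your approach is self-contained and yields a clean structural reason (the tent-function/Green's-function identity) for why $M$ is nonsingular; the paper's approach, while slightly less direct, has the advantage that the intermediate vectors ${\bf c}(\tau,p,i)$ and the identity \eqref{eq-vectors-b-c-new} are reused later (e.g.\ in the proof of Lemma~\ref{lem-Ke-b} and in Proposition~\ref{lem-image-loop-trace}).
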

\begin{proof}
    (a) Suppose $\sum_{j=1,2,3}\sum_{1\leq i\leq n-1} k_{j,i}{\bf a}(\tau,v_j,i)=0$, where $k_{j,i}\in\mathbb Z$ for $j=1,2,3$ and $1\leq i\leq n-1$.
    Recall that every vertex $v_0\in V_\tau$ has coordinate $(i_0,j_0,k_0)$, where $i_0,j_0,k_0\in\{0,1,\cdots,n-1\}$ and  $i_0+j_0+k_0=n$.
    The entry of $\sum_{j=1,2,3}\sum_{1\leq i\leq n-1} k_{j,i}{\bf a}(\tau,v_j,i)$, labeled by the vertex $v_0=(i_0,j_0,k_0)$, is $k_{1,n-i_0}+k_{2,n-j_0}+k_{3,n-k_0}$,
    where $k_{j,n}=0$ for $j=1,2,3$.
    Then we have 
    \begin{align}\label{eq-ijk=equal-zero}
        k_{1,n-i_0}+k_{2,n-j_0}+k_{3,n-k_0}=0.
    \end{align}
    Set $i_0=0$, $j_0=0$, or $k_0=0$  in \eqref{eq-ijk=equal-zero}, we have 
    \begin{equation}\label{eq-conjugate-n-ijk}
        \begin{split}
             k_{2,n-j_0} +  k_{3,j_0} = 0,\quad
             k_{1,n-i_0} +  k_{3,i_0} = 0,\quad
            k_{1,n-i_0} +  k_{2,i_0} = 0.
        \end{split}
    \end{equation}
    Comparing equations in \eqref{eq-conjugate-n-ijk}, we have 
    $k_{1,s}=k_{2,s}=k_{3,s}$ for $1\leq s\leq n-1$. Then equation \eqref{eq-ijk=equal-zero} implies that $ k_{j,n-i_0}+k_{j,n-j_0}+k_{j,n-k_0}=0$ for $j=1,2,3$.
    Then Lemma \ref{lem-ijk-sum-n} shows that 
    $k_{j,i} = 0$ for $j=1,2,3$ and $1\leq i\leq n-1$. This completes the proof.

    (b) It follows from 
    (a) that ${\bf c}(\tau,v_j,i)$ (see \eqref{def-vector-c-tau}), for $j=1,2,3$ and $2\leq i\leq n$, are $\mathbb Z$-linearly independent.
    Equations \eqref{eq-relation-k-a-p}, \eqref{eq-ba-relation-k}, and \eqref{def-vector-c-tau1} show that 
    \begin{equation}\label{eq-vectors-b-c}
    \begin{split}
       {\bf b}(\tau,v_j,i) &=  (n-i)\sum_{1\leq t\leq n} {\bf c}(\tau,v_j,t) - n \sum_{1\leq t\leq n-i} {\bf c}(\tau,v_j,t)\\
       &= (n-i)\sum_{2\leq t\leq n} {\bf c}(\tau,v_j,t) - n \sum_{2\leq t\leq n-i} {\bf c}(\tau,v_j,t).\\
    \end{split}
    \end{equation}
For each $j=1,2,3$,
Equation \eqref{eq-vectors-b-c} implies 
\begin{align}\label{eq-vectors-b-c-new}
    \begin{cases}
        {\bf b}(\tau,v_j,1)=n {\bf c}(\tau,v_j,n) - \sum_{2\leq t\leq n} {\bf c}(\tau,v_j,t),\\
        {\bf b}(\tau,v_j,i)-{\bf b}(\tau,v_j,i-1)=n {\bf c}(\tau,v_j,n+1-i) - \sum_{2\leq t\leq n} {\bf c}(\tau,v_j,t)
        \text{ for $2\leq i\leq n-1$,}\\
         {\bf b}(\tau,v_j,n-1)
        =\sum_{2\leq t\leq n} {\bf c}(\tau,v_j,t).
    \end{cases}
\end{align}
    This shows that each ${\bf c}(\tau,v_{j_0},i_0)$ is a $\mathbb Q$-linear combination of ${\bf b}(\tau,v_j,i)$ for $j=1,2,3$ and $1\leq i\leq n-1$. Then the $\mathbb Z$-linear independence of 
    $\{{\bf c}(\tau,v_j,i)\mid j=1,2,3,\;2\leq i\leq n\}$ implies the 
    $\mathbb Z$-linear independence of 
    $\{{\bf b}(\tau,v_j,i)\mid j=1,2,3,\;1\leq i\leq n-1\}$.
    
\end{proof}

Let $\fS$ be a triangulable pb surface with a triangulation $\lambda$.  
By cutting $\fS$ along all non-boundary edges of $\lambda$, we obtain a collection of ideal triangles, denoted by $\mathbb F_\lambda$.  
There is a natural projection
\[
{\bf pr}_\lambda \colon \bigsqcup_{\tau\in\mathbb F_\lambda} \tau \;\longrightarrow\; \fS 
= \Big( \bigsqcup_{\tau\in\mathbb F_\lambda} \tau \Big) /\!\!\sim,
\]
as in \eqref{eq.glue}.  

There is a group embedding
\[
\iota \colon \mathbb Z^{V_\lambda} \;\hookrightarrow\; \bigoplus_{\tau\in\mathbb F_\lambda} \mathbb Z^{V_\tau},
\qquad 
{\bf k}\mapsto \sum_{\tau\in\mathbb F_\lambda}{\bf k}_\tau,
\]
where ${\bf k}_\tau$ is defined in \eqref{eq-pull-back}.
Moreover, $\iota$ restricts to a group embedding
\begin{align}\label{def-iota-emb}
    \iota \colon \mathcal B_\lambda \;\hookrightarrow\; \bigoplus_{\tau\in\mathbb F_\lambda} \mathcal B_\tau,
\qquad 
{\bf k}\mapsto \sum_{\tau\in\mathbb F_\lambda}{\bf k}_\tau.
\end{align}

Now fix a puncture $p$ of $\fS$ and $1\leq i\leq n-1$.  
We will define elements ${\bf a}(\lambda,p,i)\in\mathbb Z^{V_\lambda}$ and ${\bf b}(\lambda,p,i)\in\mathcal B_\lambda$.  
Suppose that 
\[
{\bf pr}_\lambda^{-1}(\{p\}) = \{p_1,\dots,p_m\},
\]
and let $\tau_j\in\mathbb F_\lambda$ denote the ideal triangle containing $p_j$ (note that some $\tau_j$ may coincide).  
For each $1\leq j\leq m$, the element ${\bf a}(\tau_j,p_j,i)\in\mathbb Z^{V_{\tau_j}}$ can be regarded naturally as an element of $\bigoplus_{\tau\in\mathbb F_\lambda} \mathbb Z^{V_\tau}$.  
It follows from the definition of ${\bf a}(\tau_j,p_j,i)$ that
\[
\sum_{j=1}^m {\bf a}(\tau_j,p_j,i) \;\in\; \im \iota.
\]
Since $\iota$ is injective, there exists a unique ${\bf a}(\lambda,p,i)\in \mathbb Z^{V_\lambda}$ such that
\begin{equation}\label{def-a-l-p-i}
    \iota\big({\bf a}(\lambda,p,i)\big) 
    \;=\; \sum_{j=1}^m {\bf a}(\tau_j,p_j,i).
\end{equation}
In the same way, we define elements ${\bf b}(\lambda,p,i), {\bf c}(\lambda,p,i)\in \mathbb Z^{V_\lambda}$.  
Finally, by \eqref{eq-ba-relation-k}, one has 
\(
{\bf b}(\lambda,p,i)\in\mathcal B_\lambda.
\)

\begin{lemma}\label{lem-free-ab}
Let $\fS$ be a triangulable pb surface with a triangulation $\lambda$, and let $\mathcal P$ be the set of punctures of $\fS$.
We have
    \begin{enumerate}[label={\rm (\alph*)}]
        \item  ${\bf a}(\lambda,p,i)$, for $1\leq i\leq n-1$ and  $p\in\mathcal P$, are $\mathbb Z$-linearly independent;

        \item  ${\bf b}(\lambda,p,i)$, for $1\leq i\leq n-1$ and  $p\in\mathcal P$, are $\mathbb Z$-linearly independent;
    \end{enumerate}
\end{lemma}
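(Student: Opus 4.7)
The plan is to reduce both statements to the local independence result proved in Lemma~\ref{lem-independence-P3} by using the injectivity of the map $\iota$ from \eqref{def-iota-emb}. The main observation is that the construction of ${\bf a}(\lambda,p,i)$ (resp.\ ${\bf b}(\lambda,p,i)$) in \eqref{def-a-l-p-i} assembles, face-by-face, the local vectors ${\bf a}(\tau,v_s^\tau,i)$ (resp.\ ${\bf b}(\tau,v_s^\tau,i)$) from the triangles $\tau \in \mathbb F_\lambda$ whose corner $v_s^\tau$ is a preimage of $p$ under $\mathbf{pr}_\lambda$. Since $\iota$ is injective, a linear relation among global vectors descends to independent relations on each $\mathbb Z^{V_\tau}$-summand, where local independence applies.

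\medskip

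\noindent\textit{Step 1: Reduce a global relation to a face-wise relation.} Suppose $\sum_{p\in\mathcal P}\sum_{1\leq i\leq n-1} k_{p,i}\,{\bf a}(\lambda,p,i)=0$ with $k_{p,i}\in\mathbb Z$. Apply $\iota$ and project onto the summand $\mathbb Z^{V_\tau}$ for each $\tau\in\mathbb F_\lambda$. By \eqref{def-a-l-p-i}, the contribution of ${\bf a}(\lambda,p,i)$ to $\mathbb Z^{V_\tau}$ equals $\sum_{v_s^\tau:\,\mathbf{pr}_\lambda(v_s^\tau)=p}{\bf a}(\tau,v_s^\tau,i)$, where the sum is over corners $v_s^\tau\in\{v_1^\tau,v_2^\tau,v_3^\tau\}$ projecting to $p$. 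Hence the projection to $\mathbb Z^{V_\tau}$ becomes
\[
\sum_{s=1}^{3}\sum_{1\leq i\leq n-1} k_{\mathbf{pr}_\lambda(v_s^\tau),\,i}\,{\bf a}(\tau,v_s^\tau,i) \;=\;0.
\]

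\medskip

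\noindent\textit{Step 2: Apply local independence and conclude.} By Lemma~\ref{lem-independence-P3}(a), the vectors $\{{\bf a}(\tau,v_s^\tau,i)\mid s\in\{1,2,3\},\,1\leq i\leq n-1\}$ are $\mathbb Z$-linearly independent in $\mathbb Z^{V_\tau}$, so each coefficient $k_{\mathbf{pr}_\lambda(v_s^\tau),i}$ vanishes. Since every puncture $p\in\mathcal P$ arises as $\mathbf{pr}_\lambda(v_s^\tau)$ for at least one corner of some triangle $\tau\in\mathbb F_\lambda$ (a consequence of the definition of a triangulation), we conclude $k_{p,i}=0$ for all $p$ and $i$. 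This proves part (a). Part (b) follows by exactly the same argument with ${\bf a}$ replaced by ${\bf b}$, invoking Lemma~\ref{lem-independence-P3}(b) in place of Lemma~\ref{lem-independence-P3}(a).

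\medskip

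\noindent\textit{Expected difficulty.} There is essentially no obstacle: the nontrivial combinatorial content has already been carried out locally in Lemma~\ref{lem-independence-P3}. The only point deserving care is bookkeeping the identification between the global indexing by $(p,i)\in\mathcal P\times\{1,\ldots,n-1\}$ and the local indexing by corners of $\tau$ under the projection $\mathbf{pr}_\lambda$; once this is made precise, injectivity of $\iota$ together with face-wise independence delivers both parts immediately.
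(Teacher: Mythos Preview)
Your proposal is correct and follows essentially the same approach as the paper: both apply the injective map $\iota$ of \eqref{def-iota-emb}, pass to the face-wise components in $\bigoplus_{\tau\in\mathbb F_\lambda}\mathbb Z^{V_\tau}$, and then invoke the local independence Lemma~\ref{lem-independence-P3}. The paper phrases the final step slightly differently---using the disjointness of the preimages $\mathbf{pr}_\lambda^{-1}(p)$ for distinct punctures $p$ rather than projecting to each $\tau$ separately---but the content is the same.
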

\begin{proof}
Since the proving techniques for (a) and (b) are the same, we only prove (a).
Suppose that $\mathcal P=\{p^1,\dots,p^m\}$ and that 
${\bf pr}_\lambda^{-1}(p^j) = \{p^j_1,\dots,p^j_{t_j}\}$ for some positive integer $t_j$, for each $1\leq j\leq m$. 
Note that 
\begin{align}\label{eq-intersection-emptyset}
    \{p^j_1,\dots,p^j_{t_j}\}\cap \{p^s_1,\dots,p^s_{t_s}\}=\emptyset
\end{align}
for $1\leq j\neq s\leq m$.
For each $1\leq j\leq m$ and $1\leq s\leq t_j$, suppose that $p^j_s$ is contained in the ideal triangle 
$\tau^j_s\in\mathbb F_\lambda$. 
By definition of ${\bf a}(\lambda,p^j,i)$, we have 
\[
\iota\big({\bf a}(\lambda,p^j,i)\big) \;=\; \sum_{s=1}^{t_j} {\bf a}(\tau^j_s,p^j_s,i), \quad 
1\leq j\leq m,\; 1\leq i\leq n-1.
\]

Now suppose that 
\[
\sum_{j=1}^m \sum_{i=1}^{n-1} k_{j,i}\,{\bf a}(\lambda,p^j,i)=0
\]
for some integers $k_{j,i}$. 
Applying $\iota$, this gives
\[
\sum_{j=1}^m \sum_{i=1}^{n-1} k_{j,i} 
\sum_{s=1}^{t_j} {\bf a}(\tau^j_s,p^j_s,i)=0.
\]
By Lemma~\ref{lem-independence-P3}(a) together with 
\eqref{eq-intersection-emptyset}, it follows that 
$k_{j,i}=0$ for all $1\leq j\leq m$ and $1\leq i\leq n-1$. 
This proves the $\mathbb Z$-linear independence.

\end{proof}

\begin{lemma}\label{lem-loop-center}
Let $\fS$ be a triangulable pb surface with a triangulation $\lambda$.
    For any puncture $p$ of $\fS$ and any $1\leq i\leq n-1$, the element $Z^{{\bf a}(\lambda,p,i)}$ is contained in the center of $\mathcal Z_{\hat\omega}(\fS,\lambda)$.
\end{lemma}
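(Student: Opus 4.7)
By the commutation relation \eqref{eq-prod-vector-q}, the element $Z^{{\bf a}(\lambda,p,i)}$ lies in the center of $\mathcal Z_{\hat\omega}(\fS,\lambda)$ if and only if ${\bf a}(\lambda,p,i)\, Q_\lambda = 0$ in $(\tfrac12\mathbb Z)^{V_\lambda}$. The plan is to verify this equality by analysing, for each $v \in V_\lambda$, the contributions to $\sum_{u} {\bf a}(\lambda,p,i)(u)\, Q_\lambda(u,v)$ from the triangles of $\lambda$ meeting $p$. The key structural observation is that in the global quiver $\Gamma_\lambda$ the weight-$\tfrac12$ boundary arrows on each shared edge of $\lambda$ cancel pairwise with the opposite boundary arrows of the adjacent triangle, so only the interior small edges of each triangle enter $Q_\lambda(u,v)$ when $u,v$ lie in the interior of $\fS$.

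In the local step, fix $\tau \in \mathbb F_\lambda$ with $p = v_j$ a corner, and let $L_i(\tau) = \{u \in V_\tau : {\bf k}_j(u) = n-i\}$, the line parallel to the edge of $\tau$ opposite $p$. Define
\[
c_\tau(v) \;:=\; \sum_{u \in L_i(\tau)} Q^{\mathrm{int}}_\tau(u,v) \qquad (v \in V_\tau),
\]
where $Q^{\mathrm{int}}_\tau$ records only the interior arrows of $\Gamma_\tau$. Using that the arrows of $\Gamma_\tau$ come in three edge-parallel families --- along $L_i(\tau)$ itself (parallel to the edge opposite $p$ and oriented uniformly around $p$), and transversely parallel to the two edges of $\tau$ containing $p$ --- a direct case analysis shows that $c_\tau(v)$ vanishes except possibly at four special vertices: the two endpoints of $L_i(\tau)$ on the two edges of $\tau$ at $p$ (with values $\mp 1$), and the two vertices of $L_{i-1}(\tau)$ on the same two edges (with values $\pm 1$). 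The signs are arranged so that the two-term pattern on one edge at $p$ is the negation of the pattern on the other; this sign flip reflects the cyclic orientation of the $L_i$-arrows around $p$. The degenerate cases $i = 1$ or $i = n-1$ simply drop the corresponding entries, which are positioned at corners of $\tau$ excluded from $V_\tau$.

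In the global step, list the triangles $\tau_1,\dots,\tau_m$ meeting $p$ in counterclockwise cyclic order around $p$. Because $Q_\lambda$ is assembled from the interior arrows of the $Q_{\tau_s}$'s, one computes
\[
\bigl({\bf a}(\lambda,p,i)\, Q_\lambda\bigr)(v) \;=\; \sum_{s \,:\, v \in \tau_s} c_{\tau_s}(v)
\]
for each $v \in V_\lambda$, where the sum is over triangles containing a preimage of $v$. Away from $\tau_1,\dots,\tau_m$ the summand is zero, and by the local step the only $v$ for which the summand is possibly nonzero at some $\tau_s$ are the vertices of $L_{i-1}(\lambda) \cup L_i(\lambda)$ lying on an edge of $\lambda$ incident to $p$. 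On each such shared edge $\tau_s \cap \tau_{s+1}$, the orientation of $\fS$ identifies the ``outgoing-at-$p$'' edge of $\tau_s$ with the ``incoming-at-$p$'' edge of $\tau_{s+1}$, so by the sign-flip of the local step the contribution from $\tau_s$ is exactly the negative of the contribution from $\tau_{s+1}$; they cancel vertex by vertex. Summing these cancellations around $p$ yields ${\bf a}(\lambda,p,i)\, Q_\lambda = 0$, so $Z^{{\bf a}(\lambda,p,i)}$ is central. The main technical step is the local case analysis establishing the $\mp 1, \pm 1$ pattern for $c_\tau$; once this is in hand, the global cancellation is a direct consequence of the orientation and gluing conventions for $\Gamma_\lambda$.
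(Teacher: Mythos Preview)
Your approach is the same as the paper's: both prove centrality by verifying ${\bf a}(\lambda,p,i)\,Q_\lambda = 0$. The paper declares this ``straightforward from the definition'' and gives no details; you have supplied the underlying triangle-by-triangle computation and telescoping cancellation, which is correct.

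One caveat on scope: your global step arranges the triangles meeting $p$ in \emph{cyclic} order and cancels the local $\pm1$ contributions on each shared edge incident to $p$. This works exactly when $p$ is an interior puncture, so that every edge at $p$ is shared and the triangles close up into a cycle. For a boundary puncture of a pb surface the triangles at $p$ form an open fan, the two end-contributions have nothing to cancel against, and the identity in fact fails: already for $\fS=\mathbb P_3$, $n=2$, $p=v_1$ one computes ${\bf a}(\lambda,p,1)\,Q_\lambda = (-1,1,0)\neq 0$. Thus the lemma as literally stated (for all punctures of a pb surface) is too strong; your argument establishes it for interior punctures, which is precisely how it is used in the paper (Proposition~\ref{Prop-central-generic} and all subsequent results are for punctured surfaces, where every puncture is interior).
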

\begin{proof}
    From the definition of ${\bf a}(\lambda,p,i)$, it is straightforward to verify that 
    ${\bf a}(\lambda,p,i) Q_{\lambda}= {\bf 0}$.
    This completes the proof. 
\end{proof}

\def\Bla{\mathcal B_{\lambda}}

The following proposition investigates the kernel of the anti-symmetric bilinear form
\begin{align}\label{anti-bilinear}
    (\;,\;)\colon \mathcal B_\lambda \times \mathcal B_\lambda \longrightarrow \mathbb Z,
    \quad ({\bf k}, {\bf t}) \longmapsto {\bf k} Q_\lambda {\bf t}^T.
\end{align}
In view of \eqref{eq-prod-vector-q} and Lemma \ref{lem-center-torus},
this analysis is a crucial step toward formulating the center of $\mathcal Z_{\hat\omega}(\fS,\lambda)$. 
As observed in the cases $n=2,3$, the bilinear form \eqref{anti-bilinear} reflects the product behavior of basis elements through their ``highest terms'' under the quantum trace map 
\cite{BW11,LY22,unicity,Kim20,kim2024unicity}. 
Thus, the study of \eqref{anti-bilinear} plays an important role in understanding both the structure of the center and the representation theory of the $\SL$-skein algebra. 
The following shows that its kernel is freely generated by the elements ${\bf b}(\lambda,p,i)$ associated to the punctures of~$\fS$.

We will further investigate the bilinear form \eqref{anti-bilinear} in \S\ref{sub-sec-rank-Z} from a different perspective.

\begin{proposition}\label{Prop-central-generic}
    Let $\fS$ be a triangulable punctured surface with a triangulation $\lambda$. 
    We use $\mathcal P$ to denote the set of punctures of $\fS$.
    Then the subgroup  $$\{{\bf k}\in \mathcal B_\lambda\mid {\bf k} Q_\lambda {\bf t}^T =0\text{ for all }{\bf t}\in \mathcal B_\lambda\}$$
    of $\mathcal B_\lambda$ is freely generated by
    $\{{\bf b}(\lambda,p,i)\mid 1\leq i\leq n-1,\; p\in\mathcal P\}$.
\end{proposition}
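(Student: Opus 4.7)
The plan is to reduce the statement to a description of $\mathcal B_\lambda \cap \ker Q_\lambda$ and then to perform a local balance analysis at each puncture. Since $\mathcal B_\lambda$ contains $(n\mathbb Z)^{V_\lambda}$ (the restriction to each face lies in $(n\mathbb Z)^{V_\tau}\subset\mathcal B_\tau$) and therefore has finite index in $\mathbb Z^{V_\lambda}$, the condition ``${\bf k}\,Q_\lambda{\bf t}^T=0$ for all ${\bf t}\in\mathcal B_\lambda$'' is equivalent, by $\mathbb Q$-linearity, to ${\bf k}\,Q_\lambda=0$; so the subgroup in question is $\mathcal B_\lambda\cap\ker Q_\lambda$. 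Each ${\bf b}(\lambda,p,i)$ belongs to it: balance is the content of \eqref{eq-ba-relation-k}, and the formula expresses ${\bf b}(\lambda,p,i)$ as an integer combination of the ${\bf a}(\lambda,p,t)$'s with coefficient matrix $M_{i,t}=\min\bigl(it,(n-i)(n-t)\bigr)$, so Lemma~\ref{lem-loop-center} yields ${\bf b}(\lambda,p,i)\,Q_\lambda=0$. The $\mathbb Z$-linear independence of the ${\bf b}$'s is Lemma~\ref{lem-free-ab}(b).

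It remains to show that every ${\bf k}\in\mathcal B_\lambda\cap\ker Q_\lambda$ is an integer combination of the ${\bf b}$'s. A rank count shows that $(\ker Q_\lambda)\otimes\mathbb Q$ is $\mathbb Q$-spanned by the $(n-1)|\mathcal P|$ vectors ${\bf a}(\lambda,p,i)$: these are already $\mathbb Q$-independent by Lemma~\ref{lem-free-ab}(a), and the equality $\mathrm{rank}(Q_\lambda)=|V_\lambda|-(n-1)|\mathcal P|$ can be checked by splitting $\fS$ successively along non-boundary edges via \eqref{eq-splitting-torus} and invoking Lemma~\ref{lem-matrix-HK} on the resulting polygonal pieces (each split adds only Casimirs associated with the new boundary, never with a puncture). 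Hence ${\bf k}=\sum_{p,i}c_{p,i}\,{\bf a}(\lambda,p,i)$ uniquely with $c_{p,i}\in\mathbb Q$, and the task reduces to showing that for each puncture $p$ the tuple $(c_{p,t})_{t=1}^{n-1}$ lies in the $\mathbb Z$-column-span $M\cdot\mathbb Z^{n-1}$.

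This last step is performed face-by-face. Fix a face $\tau$ with corners $v_1,v_2,v_3$ mapping to punctures $p_1,p_2,p_3$ under ${\bf pr}_\lambda$. The restriction ${\bf k}_\tau=\sum_{r,i}c_{p_r,i}\,{\bf a}(\tau,v_r,i)$ must lie in $\mathcal B_\tau=\mathrm{span}_{\mathbb Z}\{{\bf k}_1,{\bf k}_2,{\bf k}_3\}+(n\mathbb Z)^{V_\tau}$, that is, ${\bf k}_\tau=\sum_j\alpha_j{\bf k}_j+n{\bf m}$ with $\alpha_j\in\mathbb Z$ and ${\bf m}\in\mathbb Z^{V_\tau}$. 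Substituting ${\bf k}_j=\sum_t(n-t)\,{\bf a}(\tau,v_j,t)$ and setting $e_{r,i}=c_{p_r,i}-\alpha_r(n-i)$, one evaluates at each small vertex of $\tau$: a boundary vertex on the edge between $v_r$ and $v_{r'}$ receives contributions from only those two corners and yields integrality of the corresponding sum $c_{p_r,s}+c_{p_{r'},n-s}$ together with $e_{r,s}+e_{r',n-s}\equiv 0\pmod n$, while interior vertices (all three coordinates nonzero) impose the ``triple'' congruences $e_{r_1,s_1}+e_{r_2,s_2}+e_{r_3,s_3}\equiv 0\pmod n$ whenever $s_1+s_2+s_3=2n$. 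Combining the two families forces, for each $r$, the existence of an integer $\beta_r$ with $c_{p_r,i}\equiv i\beta_r\pmod n$. Collecting these relations over the triangles around a fixed puncture $p$ places $(c_{p,t})_t$ in the lattice of integer vectors with $c_{p,t}\equiv t\beta_p\pmod n$, and a direct computation with $M$---using that the $i$-th column of $M$ reduces mod $n$ to $(it)_t$, together with $|\det M|=n^{n-2}$ and the concrete integrality of $n\,M^{-1}$ (so that $n\mathbb Z^{n-1}\subset M\cdot\mathbb Z^{n-1}$)---identifies this lattice with $M\cdot\mathbb Z^{n-1}$, completing the proof. The main obstacle is precisely this last step: integrality of each individual $c_{p,i}$ is not visible from any single face and must be assembled from the single-corner constraints at boundary vertices spread across all faces adjacent to $p$, after which the algebraic identification of the resulting lattice with $M\cdot\mathbb Z^{n-1}$ rests on the arithmetic of $M$.
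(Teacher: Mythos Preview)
Your approach is genuinely different from the paper's, and while the strategy is plausible, the step you yourself flag as ``the main obstacle'' is not actually carried out.

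The paper never introduces rational coefficients. It works edge-by-edge: for each $e\in\lambda$ it passes to the quadrilateral $\mathbb P_{4,e}$ formed by the two adjacent triangles and uses the matrix $K_e$ of Lemma~\ref{lem-matrix-HK} (so that any balanced vector on $\mathbb P_{4,e}$ is ${\bf c}K_e$ with ${\bf c}\in\mathbb Z^{V_e}$) together with the block form $K_eQ_e=\left(\begin{smallmatrix}-nI&*\\O&*\end{smallmatrix}\right)$ from~\cite{KaruoWangToAppear}. Testing against interior balanced vectors of $\mathbb P_{4,e}$ (which lift to $\mathcal B_\lambda$) kills the interior part of ${\bf c}$, so $\iota_2({\bf k})$ is an \emph{integer} combination of the boundary rows $K_e(v,\bullet)$. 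A separate quantum-trace computation (Lemma~\ref{lem-Ke-b}) identifies each such row with ${\bf b}(\mathbb P_{4,e},p_j,n-i)$, and an induction on a counting invariant $|S_{\bf k}|$---propagating the coefficients around each puncture from face to face via the shared quadrilaterals---peels off the ${\bf b}$-contributions one at a time. Balancedness enters only through Lemma~\ref{lem-matrix-HK}(b); no congruence chase is needed.

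In your route, after writing ${\bf k}=\sum_{p,i}c_{p,i}\,{\bf a}(\lambda,p,i)$ with $c_{p,i}\in\mathbb Q$, two real gaps remain. First, the rank identity $\dim_{\mathbb Q}\ker Q_\lambda=(n-1)|\mathcal P|$ is only sketched; the cutting maps $\mathcal S_e$ do not send $\ker Q_\lambda$ into $\bigoplus_\tau\ker Q_\tau$ (one only gets vanishing against the image of $\iota$), so ``each split adds only boundary Casimirs'' is not justified as written. Second---and this is the core problem---neither the integrality of the $c_{p,i}$ nor the congruence $c_{p,i}\equiv i\beta_p\pmod n$ is derived. Per-face constraints yield only sums such as $c_{p_r,s}+c_{p_{r'},n-s}\in\mathbb Z$ and triple sums in $n\mathbb Z$; for instance on the once-punctured torus with $n=3$ these allow $(c_{p,1},c_{p,2})=(\tfrac13,\tfrac23)$, and it is only the \emph{balance} condition on ${\bf k}_\tau$ (not mere integrality of ${\bf k}$) that excludes this---a verification you do not supply. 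Your mod-$n$ analysis then presupposes the very integrality you have not established. The arithmetic facts about $M$ ($|\det M|=n^{n-2}$, integrality of $nM^{-1}$, and the resulting lattice identification) are correct but are also left as unproven assertions.
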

\begin{proof}

Lemma \ref{lem-loop-center} implies that 
${\bf a}(\lambda,p,i) Q_\lambda = {\bf 0}$ for every 
$1 \leq i \leq n-1$ and $p \in \mathcal P$. 
Consequently,
\[
\{{\bf b}(\lambda,p,i)\mid 1\leq i\leq n-1,\; p\in\mathcal P\}
   \subset \{{\bf k}\in \mathcal B_\lambda \mid {\bf k} Q_\lambda {\bf t}^T = 0 
   \ \text{for all }{\bf t}\in \mathcal B_\lambda\}.
\]
Moreover, Lemma~\ref{lem-free-ab} guarantees that the set 
$\{{\bf b}(\lambda,p,i)\mid 1\leq i\leq n-1,\; p\in\mathcal P\}$ 
is $\mathbb Z$-linearly independent.  
Now let ${\bf k}\in \mathcal B_\lambda$ satisfy 
${\bf k} Q_\lambda {\bf t}^T = 0$ for all ${\bf t}\in \mathcal B_\lambda$. 
To prove the proposition, it remains to show that ${\bf k}$ lies in the 
$\mathbb Z$-span of 
$\{{\bf b}(\lambda,p,i)\mid 1\leq i\leq n-1,\; p\in\mathcal P\}$.

    Let $e$ be an ideal arc in $\lambda$. Suppose that $\tau,\tau'$ are the two ideal triangles in $\mathbb F_\lambda$ adjacent to $e$. Our definition of the triangulation guarantees $\tau\neq\tau'$.
    Suppose the two edges of $\tau$ distinct from $e$ are $e_1, e_2$, and the two edges of $\tau'$ distinct from $e$ are $e_3, e_4$.
   Note that some of $e_1,e_2,e_3,e_4$ may coincide.
Cutting $\fS$ along $e_1,e_2,e_3,e_4$, we obtain $\fS'\bigsqcup\mathbb P_{4,e}$, where $\fS'$ is a pb surface and $\mathbb P_{4,e}=\mathbb P_4$ is the component containing the edge $e$.
    The triangulation $\lambda$ of $\fS$ induces a triangulation $\lambda'$ of $\fS'$. Let $\lambda_e$ denote the set of the edge $e$ together with the four boundary edges of $\mathbb P_{4,e}$. 
    \begin{align}\label{def-B-Q-V}
        \text{We use $\mathcal B_e$, $Q_e$, and $V_e$ to denote $\mathcal B_{\lambda_e}$, $Q_{\lambda_e}$, and $V_{\lambda_e}$ respectively.}
    \end{align}
    There is a group embedding
    $\iota_e\colon \Bla\rightarrow\mathcal B_{\lambda'}\oplus\mathcal B_e$ satisfying
    $$\mathcal S_{e_1}\circ\mathcal S_e\circ\mathcal S_e\circ\mathcal S_{e_4}(Z^{\bf k})=Z^{\iota_e({\bf k})} \quad\text{(see \eqref{cutting_homomorphism_for_Z_omega})}$$
    for any ${\bf k}\in\mathcal B_\lambda.$
    Then there are group homomorphisms 
    \begin{align}\label{def-iota-1-2}
        \text{$\iota_1\colon\Bla\rightarrow \mathcal B_{\lambda'}$ and $\iota_2\colon\Bla\rightarrow \mathcal B_{e}$
    such that $\iota_e({\bf k})=(\iota_1({\bf k}),\iota_2({\bf k}))$ for any ${\bf k}\in \Bla.$}
    \end{align}
    The well-definedness of the algebra embedding in \eqref{cutting_homomorphism_for_Z_omega} shows  
    \begin{align}\label{eq-anti-form-equal}
        {\bf k}_1Q_{\lambda}{\bf k}_2^T
    =(\iota_1({\bf k}_1), \iota_2({\bf k}_1))
    \begin{pmatrix}
        Q_{\lambda'} & O\\
        O & Q_e
    \end{pmatrix}
     \begin{pmatrix}
        \iota_1({\bf k}_2)^T  \\
        \iota_2({\bf k}_2)^T  
    \end{pmatrix},
    \end{align}
    for any ${\bf k}_1,{\bf k}_2\in\Bla$, where $O$ represents the zero matrix.

    We use $\mathring{V}_e$ to denote the subset of $V_e$ consisting of vertices contained in the interior of $\mathbb P_{4,e}$.
    Note that $\mathring{V}_e$ is also a subset of $V_\lambda$. Define 
    \begin{align}\label{def-lambda-ee}
        \Lambda_e:=\{{\bf t}\in\Bla\mid{\bf t}(v)=0\text{ for any }v\in V_\lambda\setminus\mathring{V}_e\}.
    \end{align}
    For any ${\bf t}\in\Lambda_e$, 
    we have
    \begin{equation}\label{eq-k-t-zero}
    \begin{split}
        {\bf k} Q_\lambda {\bf t}^T
        &=(\iota_1({\bf k}), \iota_2({\bf k}))
    \begin{pmatrix}
        Q_{\lambda'} & O\\
        O & Q_e
    \end{pmatrix}
     \begin{pmatrix}
        \iota_1({\bf t})^T \\
        \iota_2({\bf t})^T 
    \end{pmatrix}\quad (\because \mbox{\eqref{eq-anti-form-equal}}) \\
        &=(\iota_1({\bf k}), \iota_2({\bf k}))
    \begin{pmatrix}
        Q_{\lambda'} & O\\
        O & Q_e
    \end{pmatrix}
     \begin{pmatrix}
        O \\
        \iota_2({\bf t})^T  
    \end{pmatrix}\quad (\because \mbox{${\bf t}\in \Lambda_e$}) \\
        &=\iota_2({\bf k}) Q_e \iota_2({\bf t})^T\\
        &=0.
        \end{split}
    \end{equation}
Since $n\mathbb Z^{V_\lambda}\subset\mathcal B_\lambda$, we obtain 
\[
   \iota_2({\bf k}) \, Q_e 
   \begin{pmatrix}
      nI \\[2pt]
      O
   \end{pmatrix} = {\bf 0},
\]
where $I$ is the identity matrix indexed by $\mathring{V}_e$, and $O$ is the zero matrix of the form $(V_e\setminus \mathring{V}_e)\times \mathring{V}_e$.

   Put
    \begin{align}\label{eq-K-e-def}
        K_e=K_{\lambda_e},
    \end{align}
    where $K_{\lambda_e}$ is the one in Lemma~\ref{lem-matrix-HK}(a).
    According to Lemma \ref{lem-matrix-HK}(b), there exists a vector ${\bf c}\in\mathbb Z^{V_e}$ such that $\iota_2({\bf k}) = {\bf c}K_e$. Then we have 
    \begin{align}\label{eq-cKQ}
        {\bf c}K_e Q_e 
        \begin{pmatrix}
        nI \\
        O  
    \end{pmatrix}={\bf 0}.
    \end{align}
   It follows from \cite[Equation (7.1)]{KaruoWangToAppear} that 
   \begin{align}\label{eq-KQ}
       K_e Q_e=
       \begin{pmatrix}
           -nI & *\\
           O & *
       \end{pmatrix},
   \end{align}
   where columns and rows are divided into 
   $(\mathring{V}_e,V_e\setminus\mathring{V}_e)$  (note that $K_e$ equals $\overline{\mathsf{K}}_\mu$ in \cite{KaruoWangToAppear} and 
   $Q_e$ equals $\frac{1}{2}\overline{\mathsf{Q}}_\mu$ in \cite{KaruoWangToAppear}). 
   Assume that ${\bf c}=({\bf c}_1,{\bf c}_2)$, where ${\bf c}_1\in\mathbb Z^{\mathring{V}_e}$
   and ${\bf c}_2\in\mathbb Z^{V_e\setminus\mathring{V}_e}$.
   Then equations \eqref{eq-cKQ} and \eqref{eq-KQ} imply that 
   \begin{align}\label{eq-nc1-zero}
       {\bf 0}=({\bf c}_1,{\bf c}_2)
       \begin{pmatrix}
           -nI & *\\
           O & *
       \end{pmatrix}
       \begin{pmatrix}
        nI \\
        O  
    \end{pmatrix}=
    (-n^2{\bf c}_1, {\bf 0}).
   \end{align}
   Equation~\eqref{eq-nc1-zero} shows that ${\bf c}_1={\bf 0}$. 
Hence we may write
\[
   \iota_2({\bf k}) = ({\bf 0}, {\bf c}_2) K_e,
\]
which implies that $\iota_2({\bf k})$ is a $\mathbb Z$-linear combination of the row vectors 
$K_e(v,\bullet)$ with $v\in V_e\setminus\mathring{V}_e$. 
Here $K_e(v,\bullet)$ denotes the row of $K_e$ indexed by $v$.

 \begin{figure}[htbp]
    \centering
        \centering
        \includegraphics[width=7cm]{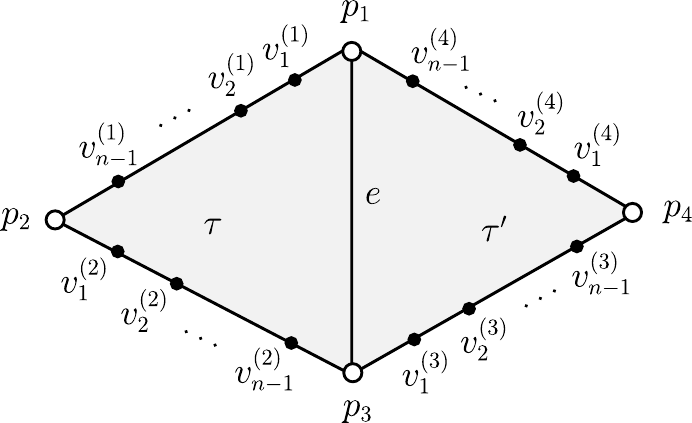}
        \caption{Labelings for $\mathbb P_{4,e}$.}
        \label{P4-boundary-vertices}
\end{figure}

The four punctures of $\mathbb P_{4,e}$, the two ideal triangles of $\mathbb P_{4,e}$, 
and the vertices in $V_e\setminus\mathring{V}_e$ are labeled as in 
Figure~\ref{P4-boundary-vertices}. 
Let $f_e$ denote the group embedding 
\[
    f_e \colon \mathcal B_e \longrightarrow \mathcal B_\tau \oplus \mathcal B_{\tau'}
\]
induced by cutting $\mathbb P_{4,e}$ along the edge $e$. 
It follows immediately from the construction that
\[
    ({\bf b}(\tau,p_1,i), {\bf b}(\tau',p_1,i)),\quad
    ({\bf b}(\tau,p_2,i), {\bf 0}),\quad
    ({\bf b}(\tau,p_3,i), {\bf b}(\tau',p_3,i)),\quad
    ({\bf 0}, {\bf b}(\tau',p_4,i)) \;\in\; \im f_e
\]
for each $1\leq i\leq n-1$. 
Hence, for every $1\leq i\leq n-1$ and $j=1,2,3,4$, 
there exists an element 
${\bf b}(\mathbb P_{4,e},p_j,i)\in\mathcal B_e$ such that 
\begin{align}\label{def-P4-b}
    f_e\bigl({\bf b}(\mathbb P_{4,e},p_j,i)\bigr)=
    \begin{cases}
        ({\bf b}(\tau,p_1,i), {\bf b}(\tau',p_1,i)) & j=1,\\[4pt]
        ({\bf b}(\tau,p_2,i), {\bf 0}) & j=2,\\[4pt]
        ({\bf b}(\tau,p_3,i), {\bf b}(\tau',p_3,i)) & j=3,\\[4pt]
        ({\bf 0}, {\bf b}(\tau',p_4,i)) & j=4.
    \end{cases}
\end{align}

   We will use the following lemma to complete the proof. 
The proof of this lemma is deferred to \S\ref{sub-proof-of-Lemma}.

    \begin{lemma}\label{lem-Ke-b}
        For $1\leq i\leq n-1$ and $j=1,2,3,4$, we have $$K_e(v_i^{(j)},\bullet)
        ={\bf b}(\mathbb P_{4,e},p_j,n-i)\in\mathcal B_e.$$
    \end{lemma}

     For each $\mu\in\mathbb F_\lambda$, define 
     $$g_{\mu}\colon\Bla\rightarrow
     \oplus_{\tau\in\mathbb F_\lambda}\mathcal B_{\tau}\rightarrow \mathcal B_\mu,$$
     where the first map is the group embedding induced by cutting all ideal arcs in $\lambda$, and the second map is the projection.
     For each $\mu\in\mathbb F_\lambda$, we label the three punctures of $\mu$ as $p_{j,\mu}$, $j=1,2,3$.
     Since, for each ideal arc $e\in\lambda$, the vector $\iota_2({\bf k})$ is a $\mathbb Z$-linear combination of the row vectors 
$K_e(v,\bullet)$ with $v\in V_e\setminus\mathring{V}_e$, 
Lemma~\ref{lem-Ke-b} implies that 
$g_{\mu}({\bf k})$ can be expressed as a $\mathbb Z$-linear combination of the elements ${\bf b}(\mu,p_{j,\mu},i)$ 
for $1\leq i\leq n-1$ and $1\leq j\leq 3$.
    Define
     \begin{align*}
         S_{\bf k}:=\{(\mu,i,j)\mid& \mu\in\mathbb F_\lambda,\,1\leq i\leq n-1, \,j=1,2,3,\\
         &\text{and the coefficient of ${\bf b}(\mu,p_{j,\mu},i)$ in $g_\mu({\bf k})$ is nonzero}\}.
     \end{align*}
     
     We will show that ${\bf k}$ is a $\mathbb Z$-linear sum of elements in 
$\{{\bf b}(\lambda,p,i)\mid 1\leq i\leq n-1,\; p\in\mathcal P\}$ using the induction on $|S_{\bf k}|$. When $|S_{\bf k}|=0$, we have ${\bf k}={\bf 0}$ because $g_\mu(k)={\bf 0}$ for any $\mu\in\mathbb F_\lambda$. 
Assume that $|S_{\bf k}|>0$. Then there exist
$\mu_0\in\mathbb F_\lambda$, $1\leq i_0\leq n-1$ and $j_0=1,2,3$ such that the coefficient of ${\bf b}(\mu_0,p_{j_0,\mu_0},i_0)$ in $g_\mu({\bf k})$ is nonzero.
Suppose that this coefficient is $z\in\mathbb Z\setminus\{0\}$.
We use ${\bf pr}_\lambda$ to denote the projection from $\bigsqcup_{\tau\in \mathbb F_\lambda}\tau \rightarrow\fS$.
Suppose ${\bf pr}_\lambda(p_{j_0,\mu_0}) = p_0\in\mathcal P$.
We label the edges and ideal triangles adjacent to $p_0$ as in Figure \ref{puncture-p}. 
Assume that $({\bf pr}_{\lambda})^{-1}(p_0)=\{p_1,\cdots,p_m\}$ and $p_t$ is contained in $\tau_t$ for each $1\leq t\leq m$.
Without loss of generality, we suppose $\mu_0=\tau_1$ and $p_{j_0,\mu_0}=p_1$.
With the repeated applications of Lemma \ref{lem-Ke-b} on $e=e_2,e_3,\cdots,e_m$, we have that the coefficient of ${\bf b}(\tau_t,p_{t},i_0)$ in $g_{\tau_t}({\bf k})$ is also $z$ for $t=2,3,\cdots,m$.
Set ${\bf k}'={\bf k}-z{\bf b}(\lambda,p_0,i_0)$.
Then ${\bf k}' Q_\lambda {\bf t}^T =0\text{ for all }{\bf t}\in \mathcal B_\lambda$ and 
$|S_{{\bf k}'}|<|S_{\bf k}|$. 
Then ${\bf k}'$ is a $\mathbb Z$-linear sum of elements in 
$\{{\bf b}(\lambda,p,i)\mid 1\leq i\leq n-1,\; p\in\mathcal P\}$. Thus so is ${\bf k}$.

\begin{figure}[htbp]
    \centering
        \centering
        \includegraphics[width=4cm]{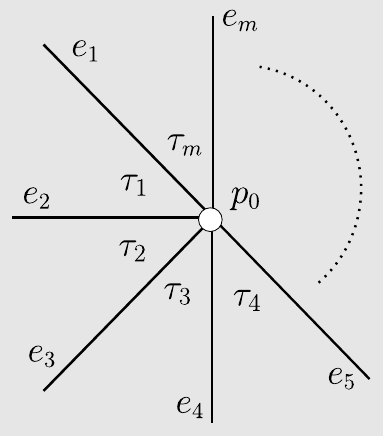}
        \caption{Labelings of the edges and ideal triangles adjacent to $p_0$.}
        \label{puncture-p}
\end{figure}

\end{proof}

For an algebra $\mathcal A$, we will use $\mathsf{Z}(\mathcal A)$ to denote the center of $\mathcal A$.
Lemma~\ref{lem-center-torus}(a), Lemma~\ref{lem-free-ab}(b), and
Proposition~\ref{Prop-central-generic} imply the following.

\begin{corollary}\label{cor-center-generic}
    Let $\fS$ be a triangulable punctured surface with a triangulation $\lambda$. 
    We use $\mathcal P$ to denote the set of punctures of $\fS$.
    Suppose that $\omega$ is not a root of unity. Then:
    \begin{enumerate}[label={\rm (\alph*)}]
        \item $\{{\bf b}(\lambda,p,i)\mid p\in\mathcal P,1\leq i\leq n-1\}$ is a basis of the abelian group
        $\{{\bf k}\in\mathcal B_\lambda\mid
{\bf k}Q_\lambda= {\bf 0}\}$.

\item $\mathsf{Z}(\mathcal Z_{\homega}^{\rm bl}(\fS,\lambda))=\mathbb C[Z^{{\bf b}(\lambda,p,i)}\mid 1\leq i\leq n-1,\; p\in\mathcal P]$.
    \end{enumerate}
\end{corollary}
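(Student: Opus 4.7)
The plan is to derive the corollary by assembling Lemma~\ref{lem-center-torus}(a), Lemma~\ref{lem-free-ab}(b), and Proposition~\ref{Prop-central-generic}, treating part (a) as the identification of two a priori distinct subgroups of $\mathcal B_\lambda$:
$$S_1=\{{\bf k}\in\mathcal B_\lambda\mid {\bf k}Q_\lambda={\bf 0}\},\qquad
S_2=\{{\bf k}\in\mathcal B_\lambda\mid {\bf k}Q_\lambda{\bf t}^T=0\ \text{for all }{\bf t}\in\mathcal B_\lambda\}.$$
Clearly $S_1\subset S_2$, and Proposition~\ref{Prop-central-generic} provides a free generating set $\{{\bf b}(\lambda,p,i)\}_{p,i}$ for $S_2$. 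The only non-trivial step for (a) is to show that each generator ${\bf b}(\lambda,p,i)$ already lies in the smaller set $S_1$; once this is done, the chain $\{{\bf b}(\lambda,p,i)\}\subset S_1\subset S_2$ combined with the fact that $\{{\bf b}(\lambda,p,i)\}$ spans $S_2$ forces $S_1=S_2$, and Lemma~\ref{lem-free-ab}(b) then confirms that these vectors form a $\mathbb Z$-basis of $S_1$.

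To verify ${\bf b}(\lambda,p,i)\in S_1$, I would appeal directly to the defining formula
$${\bf b}(\lambda,p,i)=\Bigl(\sum_{1\le t\le n-i-1}n(t+i-n){\bf a}(\lambda,p,t)\Bigr)+(n-i)\sum_{1\le t\le n-1}(n-t){\bf a}(\lambda,p,t),$$
which expresses ${\bf b}(\lambda,p,i)$ as a $\mathbb Z$-linear combination of the vectors ${\bf a}(\lambda,p,t)$. Each of these satisfies ${\bf a}(\lambda,p,t)Q_\lambda={\bf 0}$ by the proof of Lemma~\ref{lem-loop-center}, so ${\bf b}(\lambda,p,i)Q_\lambda={\bf 0}$ follows immediately. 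This completes the argument for (a).

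For (b), the hypothesis that $\omega$ is not a root of unity transfers to $\hat\omega$ via $\omega=\hat\omega^{n^2}$, so Lemma~\ref{lem-center-torus}(a) applies to the quantum torus $\mathcal Z_{\hat\omega}^{\rm bl}(\fS,\lambda)=\mathbb T_{\hat\omega}(Q_\lambda;\mathcal B_\lambda)$ and identifies its center with $\operatorname{span}_{\mathbb C}\{Z^{\bf k}\mid {\bf k}\in S_2\}$. By (a), every such ${\bf k}$ decomposes as $\sum_{p,i}c_{p,i}{\bf b}(\lambda,p,i)$ with $c_{p,i}\in\mathbb Z$; since ${\bf b}(\lambda,p,i)Q_\lambda={\bf 0}$, formula \eqref{eq-prod-vector-q} yields $Z^{{\bf k}_1+{\bf k}_2}=Z^{{\bf k}_1}Z^{{\bf k}_2}$ for all ${\bf k}_1,{\bf k}_2\in S_1$ without any commutation phase. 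Consequently $Z^{\bf k}=\prod_{p,i}\bigl(Z^{{\bf b}(\lambda,p,i)}\bigr)^{c_{p,i}}$, exhibiting the center as the $\mathbb C$-subalgebra generated by the (invertible) elements $Z^{{\bf b}(\lambda,p,i)}$. No genuine obstacle is anticipated: the proof is essentially bookkeeping, and the only substantive observation is that the weak perpendicular $S_2$ coincides with the strong kernel $S_1$, forced by the fact that the generators ${\bf b}(\lambda,p,i)$ inherit annihilation from their constituent ${\bf a}(\lambda,p,t)$'s.
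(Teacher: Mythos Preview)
Your proposal is correct and follows the same approach as the paper, which simply cites Lemma~\ref{lem-center-torus}(a), Lemma~\ref{lem-free-ab}(b), and Proposition~\ref{Prop-central-generic} without further comment. The only point you make explicit that the paper leaves tacit is the identification $S_1=S_2$; note that this also follows immediately from $(n\mathbb Z)^{V_\lambda}\subset\mathcal B_\lambda$, since ${\bf k}Q_\lambda{\bf t}^T=0$ for all ${\bf t}\in\mathcal B_\lambda$ then forces each coordinate of ${\bf k}Q_\lambda$ to vanish.
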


\subsection{Proof of Lemma \ref{lem-Ke-b}}\label{sub-proof-of-Lemma}

\begin{proof}[Proof of Lemma \ref{lem-Ke-b}]
For $1\leq i\leq n-1$ and $j=1,2,3,4$, 
similarly as ${\bf b}(\mathbb P_{4,e},p_j,i)$, we can define ${\bf a}(\mathbb P_{4,e},p_j,i)$ such that \eqref{def-P4-b} holds with ${\bf b}(\text{-})$ replaced by ${\bf a}(\text{-})$.
For $1\leq i\leq n$ and $j=1,2,3,4$,
define 
    \begin{align}\label{def-c-p-i-P4}
        {\bf c}(\mathbb P_{4,e},p_j,i):=
        \sum_{1\leq t\leq i-1} {\bf a}(\mathbb P_{4,e},p_j,t),
    \end{align}
and $${\bf c} (\mathbb P_{4,e},p_j):=\sum_{1\leq t\leq n} {\bf c}(\mathbb P_{4,e},p_j,t).$$
For each \( j = 1,2,3,4 \), let \( \alpha_j \) denote the corner arc in \( \mathbb{P}_{4,e} \) surrounding \( p_j \), oriented counterclockwise around \( p_j \). For each $t\in\{1,2,\cdots,n\}$, we use $\alpha_j^{(t)}$ to denote the stated arc in $\mathbb P_{4,e}$ obtained by equipping the two endpoints of $\alpha_j$ with the state $t$.
Then we have the following two lemmas.
\begin{lemma}\cite[Lemma 4.5]{KimWang}\label{lem-trace-arc-p4}
    For $1\leq t\leq n$ and $j=1,2,3,4$,  we have 
$${\rm tr}_{\lambda_e}(\alpha_j^{(t)}) = Z^{n{\bf c}(\mathbb P_{4,e},p_j,t)- {\bf c} (\mathbb P_{4,e},p_j)} \in\mathcal Z_{\hat\omega}^{\rm bl}(\mathbb P_{4,e},\lambda_e).$$ 
\end{lemma}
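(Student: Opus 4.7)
The plan is to reduce the computation of ${\rm tr}_{\lambda_e}(\alpha_j^{(t)})$ to a calculation inside individual ideal triangles by invoking the compatibility of the quantum trace with the splitting homomorphism (Theorem~\ref{thm-trace-cut}), and then to reassemble the pieces using the definitions of ${\bf a}(\mathbb P_{4,e},p_j,i)$ and ${\bf c}(\mathbb P_{4,e},p_j,i)$. Two cases arise, depending on whether the corner arc $\alpha_j$ crosses the diagonal $e$.

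When $j\in\{2,4\}$, the corner arc $\alpha_j$ lies entirely in the single ideal triangle $\tau$ (resp.\ $\tau'$), so the computation takes place in $\overline{\cS}^{\rm st}_{\hat\omega}(\tau)$. The explicit description of the $\SL$ quantum trace of a stated corner arc in an ideal triangle from \cite{LY23} gives ${\rm tr}_{\lambda}(\alpha_j^{(t)})=Z^{n{\bf c}(\tau,p_j,t)-{\bf c}(\tau,p_j)}$. Using \eqref{def-P4-b} and \eqref{def-c-p-i-P4}, which encode exactly how ${\bf a}(\tau,p_j,i)$ and ${\bf a}(\tau',p_j,i)$ glue along $e$, the resulting Weyl-ordered monomial is precisely $Z^{n{\bf c}(\mathbb P_{4,e},p_j,t)-{\bf c}(\mathbb P_{4,e},p_j)}$ (one of the two factors being trivial since $\alpha_j$ has empty intersection with the other triangle).

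When $j\in\{1,3\}$, the corner arc $\alpha_j$ crosses $e$ exactly once. Apply the splitting homomorphism $\mathbb S_e$ and Theorem~\ref{thm-trace-cut}: the splitting sum is indexed by the state $s\in\{1,\dots,n\}$ at the single point $\alpha_j\cap e$, and the two resulting pieces are stated corner arcs in $\tau$ and $\tau'$, each with one endpoint in state $t$ and the other in state $s$. All summands with $s\neq t$ become bad arcs in $\overline{\cS}^{\rm st}_{\hat\omega}(\tau)$ or $\overline{\cS}^{\rm st}_{\hat\omega}(\tau')$, hence vanish after passing to the reduced quotient. The single surviving summand (with $s=t$) is handled by the triangle formula above, and its image under $\mathcal S_e$ is a product of two Weyl-ordered monomials supported in $\tau$ and $\tau'$ respectively.

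The main obstacle is the Weyl-ordering bookkeeping across $e$: one must verify that
\[
Z^{n{\bf c}(\tau,p_j,t)-{\bf c}(\tau,p_j)}\cdot Z^{n{\bf c}(\tau',p_j,t)-{\bf c}(\tau',p_j)}
\;=\;Z^{n{\bf c}(\mathbb P_{4,e},p_j,t)-{\bf c}(\mathbb P_{4,e},p_j)},
\]
with no residual $\hat\omega$-factor. By \eqref{eq-prod-vector-q}, the discrepancy is controlled by the $Q_{\lambda_e}$-pairing between the supports of the two pieces, which live on opposite sides of $e$ and meet only along small vertices of $e$. This pairing vanishes because at each such vertex the arrows contributed from $\tau$ and from $\tau'$ have opposite orientations and hence cancel in the gluing construction of $\Gamma_{\lambda_e}$ (as recalled just after \eqref{eq.glue}). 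Once this cancellation is verified, the formula for $j\in\{1,3\}$ follows, completing the proof.
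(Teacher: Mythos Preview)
The paper does not supply its own proof of this lemma; it is quoted as a black box from \cite[Lemma~4.5]{KimWang}. Your sketch reconstructs a plausible proof, and the overall architecture---split along $e$ via Theorem~\ref{thm-trace-cut}, kill the $s\neq t$ summands using the bad-arc relations in the reduced skein algebra, and apply the single-triangle trace formula from \cite{LY23}---is sound and is the standard route.

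There is, however, a muddle in your Weyl-ordering step. The displayed identity
\[
Z^{n{\bf c}(\tau,p_j,t)-{\bf c}(\tau,p_j)}\cdot Z^{n{\bf c}(\tau',p_j,t)-{\bf c}(\tau',p_j)}
\;=\;Z^{n{\bf c}(\mathbb P_{4,e},p_j,t)-{\bf c}(\mathbb P_{4,e},p_j)}
\]
equates elements of different algebras: the left side lives in $\mathcal Z_{\hat\omega}(\tau)\otimes\mathcal Z_{\hat\omega}(\tau')$, the right side in $\mathcal Z_{\hat\omega}^{\rm bl}(\mathbb P_{4,e},\lambda_e)$. What you actually need is that $\mathcal S_e$ applied to the right side equals the left side, i.e.\ that $\mathcal S_e(Z^{\bf u})=Z^{\iota({\bf u})}$ with no phase. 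Your appeal to the ``$Q_{\lambda_e}$-pairing between the supports of the two pieces'' and to arrow cancellation along $e$ is not the correct mechanism here: arrow cancellation on $e$ is what makes $\mathcal S_e$ a \emph{well-defined} algebra map in the first place, not what governs its interaction with Weyl ordering. The clean fix is either: (i) observe that in the cut surface the two tensor factors commute, so $Z^{{\bf v}_\tau}\otimes Z^{{\bf v}_{\tau'}}=Z^{({\bf v}_\tau,{\bf v}_{\tau'})}$ directly from the block-diagonal form of $Q_\tau\oplus Q_{\tau'}$, with no cross term to cancel; or (ii) note that once $\mathcal S_e$ is a homomorphism, the scalar $c({\bf k})$ in $\mathcal S_e(Z^{\bf k})=c({\bf k})Z^{\iota({\bf k})}$ is a group character on $\mathbb Z^{V_{\lambda_e}}$ (using \eqref{eq-anti-form-equal}), and it equals $1$ on each basis vector by \eqref{cutting_homomorphism_for_Z_omega}. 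Either route closes the gap; the paper itself uses this fact implicitly (see the proof of Proposition~\ref{lem-image-loop-trace}).
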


It is shown in \cite[Lemma~7.6(a)]{LY23} that 
$\alpha_j^{(t_1)} \alpha_j^{(t_2)}= \alpha_j^{(t_2)} \alpha_j^{(t_1)}$ 
in $\overline{\cS}_{\hat\omega}^{\rm st}(\mathbb P_{4,e})$, 
for all $1\leq t_1,t_2\leq n$ and $1\leq j\leq 4$.
Hence, in $\mathcal Z_{\hat\omega}^{\rm bl}(\mathbb P_{4,e},\lambda_e)$ we have
\begin{align}\label{commute-P-4}
    {\rm tr}_{\lambda_e}(\alpha_j^{(t_1)}) \,
    {\rm tr}_{\lambda_e}(\alpha_j^{(t_2)})
    = 
    {\rm tr}_{\lambda_e}(\alpha_j^{(t_2)}) \,
    {\rm tr}_{\lambda_e}(\alpha_j^{(t_1)})
\end{align}
for all $1\leq t_1,t_2\leq n$ and $1\leq j\leq 4$.

\begin{lemma}\cite[Lemma 4.13 and Equation (237)]{LY23}\label{lem-Z-tr}
For $1\leq i\leq n-1$ and $j=1,2,3,4$,  we have 
$$Z^{K_e(v_i^{(j)},\bullet)}=\prod_{t=i+1}^n{\rm tr}_{\lambda_e}(\alpha_j^{(t)})\in\mathcal Z_{\hat\omega}^{\rm bl}(\mathbb P_{4,e},\lambda_e).$$

\end{lemma}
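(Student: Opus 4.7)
The plan is to evaluate the right-hand side as a single Weyl-ordered monomial $Z^{\bf r}$ and then identify the exponent vector ${\bf r}$ with the row $K_e(v_i^{(j)},\bullet)$ via the characterization of $K_e$ from Lemma~\ref{lem-matrix-HK}(a). First, Lemma~\ref{lem-trace-arc-p4} rewrites each factor as ${\rm tr}_{\lambda_e}(\alpha_j^{(t)})=Z^{{\bf a}_t}$ with ${\bf a}_t := n\,{\bf c}(\mathbb P_{4,e},p_j,t) - {\bf c}(\mathbb P_{4,e},p_j)$. The commutativity \eqref{commute-P-4}, combined with the universal formula $Z^{\bf a}Z^{\bf b}=\hat\omega^{2{\bf a}Q_e{\bf b}^T}Z^{\bf b}Z^{\bf a}$ from \eqref{eq-prod-vector-q}, forces $\hat\omega^{2{\bf a}_{t_1}Q_e{\bf a}_{t_2}^T}=1$ for every $\hat\omega^{1/2}$; since the vectors ${\bf a}_t$ are independent of $\hat\omega^{1/2}$, this upgrades to the universal integer identity ${\bf a}_{t_1}Q_e{\bf a}_{t_2}^T=0$. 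Hence the factors commute as quantum-torus monomials with trivial phase, and
$$\prod_{t=i+1}^n {\rm tr}_{\lambda_e}(\alpha_j^{(t)})=Z^{\bf r}, \qquad {\bf r}:=\sum_{t=i+1}^n {\bf a}_t.$$
A direct rearrangement in terms of the vectors ${\bf a}(\mathbb P_{4,e},p_j,s)$ then gives the closed form
$${\bf r} = (n-i)\sum_{s=1}^{i} s\,{\bf a}(\mathbb P_{4,e},p_j,s) + i\sum_{s=i+1}^{n-1}(n-s)\,{\bf a}(\mathbb P_{4,e},p_j,s).$$

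Next, because $H_{\lambda_e}$ is square and invertible over $\mathbb Q$ (as $H_{\lambda_e}K_e=nI$), the identity $K_eH_{\lambda_e}=nI$ also holds, so ${\bf r}=K_e(v_i^{(j)},\bullet)$ is equivalent to the single row equality ${\bf r}\,H_{\lambda_e} = n\,\delta_{v_i^{(j)}}^T$. I would verify this directly using the explicit description \eqref{def-Hn-non-boundary}--\eqref{def-Hn-boundary} of $H_{\lambda_e}$ together with the combinatorial interpretation of ${\bf a}(\mathbb P_{4,e},p_j,s)$ as the indicator vector of the ``line at distance $s$ from $p_j$'' (cf.\ Figure~\ref{Fig;distance}). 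The pairing splits over columns of $H_{\lambda_e}$ according to whether the two vertices are interior, where $H_{\lambda_e}=-Q_{\lambda_e}$ is read off the quiver $\Gamma_{\lambda_e}$, or on the same boundary edge, where the $\pm 1$ correction of \eqref{def-Hn-boundary} enters; in both regimes, the contributions from adjacent distance lines telescope and cancel pairwise, leaving only a contribution of $n$ at the vertex $v_i^{(j)}$.

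The main obstacle will be carrying out this telescoping verification cleanly. Since $\mathbb P_{4,e}$ has four boundary edges and the quiver is not uniform around its four punctures, one must split into cases according to whether $p_j$ is an endpoint of the diagonal $e$ (so $j\in\{1,3\}$) or a ``side'' puncture ($j\in\{2,4\}$), and each case further splits along the two triangles of $\mathbb P_{4,e}$. The computation is local and finite, but the bookkeeping is delicate: one must track which small vertices lie on the boundary of $\mathbb P_{4,e}$ versus in its interior, and check that the weight $(n-i)s$ or $i(n-s)$ attached to each distance line interacts correctly with the $\pm 1$ boundary contributions in $H_{\lambda_e}$. An appealing shortcut --- observing that ${\bf r}$ already matches ${\bf b}(\mathbb P_{4,e},p_j,n-i)$ and then invoking Lemma~\ref{lem-Ke-b} --- is \emph{not} available here, since Lemma~\ref{lem-Ke-b} itself depends on Lemma~\ref{lem-Z-tr}, so the direct verification against $H_{\lambda_e}$ is the only non-circular route.
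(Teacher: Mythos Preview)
The paper does not prove this lemma; it is quoted verbatim from \cite[Lemma~4.13 and Equation~(237)]{LY23} and used as a black box inside the proof of Lemma~\ref{lem-Ke-b}. So there is no in-paper argument to compare against---your proposal is an attempt at an independent derivation of a cited result.

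Your strategy is sound and the closed form for ${\bf r}$ is correct. One caution on dependencies: you invoke Lemma~\ref{lem-trace-arc-p4}, which is itself quoted from \cite{KimWang}, a paper that postdates \cite{LY23}. You should confirm that \cite{KimWang}'s Lemma~4.5 does not rest on \cite[Lemma~4.13]{LY23}, or else the circularity you carefully avoided with Lemma~\ref{lem-Ke-b} reappears one level up in the literature. In practice the quantum trace of a single corner arc is computed directly from the definition of ${\rm tr}$, so this is almost certainly fine, but it is worth checking. The remaining verification ${\bf r}\,H_{\lambda_e}=n\,\delta_{v_i^{(j)}}$ is indeed a finite combinatorial check; your telescoping outline is plausible but not yet a proof. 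If you do carry it out, note that the four cases $j=1,2,3,4$ reduce to two by the reflection symmetry of $\mathbb P_{4,e}$ across the diagonal $e$.
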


For $1\leq i\leq n-1$ and $j=1,2,3,4$, we have 
\begin{equation}\label{eq-ZKvb}
    \begin{split}       Z^{K_e(v_i^{(j)},\bullet)}&=\prod_{t=i+1}^n{\rm tr}_{\lambda_e}(\alpha_j^{(t)})\\
&=Z^{\sum_{t=i+1}^n (n{\bf c}(\mathbb P_{4,e},p_j,t)- {\bf c} (\mathbb P_{4,e},p_j))}
\quad (\because \mbox{Lemma~\ref{lem-trace-arc-p4} and \eqref{commute-P-4}})\\
&= Z^{i{\bf c} (\mathbb P_{4,e},p_j)-
n\sum_{t=1}^i {\bf c} (\mathbb P_{4,e},p_j,t)}\\
&= Z^{{\bf b}(\mathbb P_{4,e},p_j,n-i)} \quad (\because \mbox{\eqref{eq-vectors-b-c}}).
    \end{split}
\end{equation}
This shows that $K_e(v_i^{(j)},\bullet)={\bf b}(\mathbb P_{4,e},p_j,n-i).$ 

\end{proof}

\subsection{The center of the balanced Fock-Goncharov algebra when $\omega$ is a root of unity}
Let $\fS$ be a triangulable punctured surface equipped with a triangulation $\lambda$.
Recall that under assumption \Rlabel{1}, there exists an algebra embedding 
\begin{align}\label{eq-new-def}
    \PT\colon \mathcal{Z}_{\heta}^{\rm bl}(\fS,\lambda)\rightarrow
    \mathcal{Z}^{\rm bl}_{\homega}(\fS,\lambda),\quad
    Z^{\bf k}\mapsto Z^{N{\bf k}}
\end{align}
for ${\bf k}\in\mathcal B_\lambda.$
We will prove that when $\hat\omega$ is a root of unity, the center 
$\mathsf{Z}(\mathcal{Z}_{\heta}^{\rm bl}(\fS,\lambda))$
is generated by the central elements lying in $\im\PT$ together with all $Z^{{\bf b}(\lambda,p,i)}$.  

To describe the central elements arising from $\im\PT$, we introduce the following notation:  
for any positive integer $m$, define
\begin{align}\label{def-B-lambda-m}
    \mathcal B_{\lambda,m}
    :=\{{\bf k}\in \mathcal B_\lambda\mid 
    {\bf k} Q_\lambda {\bf t}^T=0 \;\;\text{in}\;\; \mathbb Z_{mn}\;\; \text{for all }{\bf t}\in\mathcal B_\lambda\}.
\end{align}

The following theorem is our first main theorem. 
It provides an explicit description of the center 
$\mathsf{Z}(\mathcal{Z}_{\heta}^{\rm bl}(\fS,\lambda))$ when $\hat\omega$ is a root of unity. 
In \S\ref{sec-rep-torus-b}, this description will be used to classify the irreducible representations of 
$\mathcal{Z}_{\heta}^{\rm bl}(\fS,\lambda)$.  
As noted in Lemma~\ref{lem-center-torus}(b), determining the center 
$\mathsf{Z}(\mathcal{Z}_{\heta}^{\rm bl}(\fS,\lambda))$ reduces to analyzing the subgroup
\begin{align}\label{eq-define-subgroup}
       B_{N''}:= \{{\bf k}\in\mathcal B_\lambda\mid {\bf k} Q_\lambda {\bf t}^T=0\in\mathbb Z_{N''}\text{ for all }{\bf t}\in\mathcal B_\lambda\}.
\end{align}
Moreover, the study of $B_{N''}$ plays a key role in formulating the center of the ${\rm SL}_n$-skein algebra, 
as illustrated in the cases $n=2,3$ \cite{unicity,kim2024unicity}.

\begin{theorem}\label{thm-center-balanced-root-of-unity}
Assume \Rlabel{1}.  
Let $\fS$ be a triangulable punctured surface with a triangulation $\lambda$, and let $\PT$ be the algebra embedding defined in \eqref{eq-new-def}.  
Then the center of $\mathcal{Z}^{\rm bl}_{\homega}(\fS,\lambda)$, viewed as a subalgebra of $\mathcal{Z}^{\rm bl}_{\homega}(\fS,\lambda)$, is generated by 
$$\{\Phi^{\mathbb T}(Z^{\bf k})\mid{\bf k}\in \mathcal B_{\lambda,d}\}$$
together with the elements $$Z^{{\bf b}(\lambda,p,i)}$$ for $1\leq i\leq n-1$ and $p\in\mathcal P$, where $\mathcal P$ denotes the set of punctures of $\fS$.
\end{theorem}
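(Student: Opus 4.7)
The plan is to reduce the theorem, via Lemma~\ref{lem-center-torus}(b), to the purely group-theoretic equality
\[
\Lambda_0 \;=\; N\,\mathcal B_{\lambda,d}\;+\;\operatorname{span}_{\mathbb Z}\{{\bf b}(\lambda,p,i):p\in\mathcal P,\ 1\leq i\leq n-1\},
\]
where $\Lambda_0:=\{{\bf k}\in\mathcal B_\lambda\mid {\bf k}Q_\lambda {\bf t}^T\equiv 0\pmod{N''}\text{ for all }{\bf t}\in\mathcal B_\lambda\}$; by Lemma~\ref{lem-center-torus}(b) the center of $\mathcal Z^{\rm bl}_{\hat\omega}(\fS,\lambda)$ is the $\mathbb C$-span of $Z^{\bf k}$ with ${\bf k}\in\Lambda_0$. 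The inclusion $\supseteq$ is purely arithmetic: Lemma~\ref{lem-loop-center} gives ${\bf a}(\lambda,p,i)Q_\lambda={\bf 0}$, hence ${\bf b}(\lambda,p,i)\in\Lambda_0$ trivially; and for ${\bf k}'\in\mathcal B_{\lambda,d}$ one has $N{\bf k}'Q_\lambda{\bf t}^T\in Ndn\mathbb Z\subseteq N''\mathbb Z$ because $N''=Nd\gcd(N'',n)$ with $\gcd(N'',n)\mid n$.

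For the nontrivial inclusion $\subseteq$, I would adapt the local-to-global induction of Proposition~\ref{Prop-central-generic} from the generic kernel setting to the root-of-unity center setting. Given ${\bf k}\in\Lambda_0$, for each interior edge $e\in\lambda$ cut along the four outer edges of the quadrilateral $\mathbb P_{4,e}$ to obtain $\iota_2({\bf k})\in\mathcal B_e$, and write $\iota_2({\bf k})={\bf c}K_e$ via Lemma~\ref{lem-matrix-HK}(b), with ${\bf c}=({\bf c}_1,{\bf c}_2)$ split between interior and boundary vertices of $\mathbb P_{4,e}$. Testing the congruence ${\bf k}Q_\lambda{\bf t}^T\equiv 0\pmod{N''}$ against the local balanced vector ${\bf t}=n{\bf 1}_v$ for $v\in \mathring V_e$ and invoking the matrix identity \eqref{eq-KQ} refines equation \eqref{eq-nc1-zero} to the congruence $n^2{\bf c}_1(v)\equiv 0\pmod{N''}$. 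A prime-by-prime computation using $N''=Nd\gcd(N'',n)$, $d=\gcd(N',n)$, and the consequent coprimality $\gcd(N,n/d)=1$ shows that $N''/\gcd(N'',n^2)=N$, hence ${\bf c}_1\in N\mathbb Z^{\mathring V_e}$ for every edge $e$.

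With this $N$-divisibility of the interior coefficients established, I would then replay the inductive subtraction step at the end of Proposition~\ref{Prop-central-generic} essentially verbatim. Lemma~\ref{lem-Ke-b} identifies each boundary $K_e$-row with a local puncture vector ${\bf b}(\mathbb P_{4,e},p_j,n-i)$, and these local vectors glue around each puncture $p\in\mathcal P$ into the global ${\bf b}(\lambda,p,i)$. Iteratively picking a face-puncture pair whose boundary ${\bf b}$-coefficient in $\iota_2({\bf k})$ is nonzero and subtracting the corresponding integer multiple of ${\bf b}(\lambda,p_0,i_0)$ eliminates the boundary contributions on every quadrilateral simultaneously. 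The residual ${\bf k}-\sum c_{p,i}{\bf b}(\lambda,p,i)$ then has $\iota_2$-image in $N\mathcal B_e$ for every $e$; since balancedness is a face-local property and the embedding \eqref{def-iota-emb} is injective, the residual lifts to $N{\bf k}''$ with ${\bf k}''\in\mathcal B_\lambda$. Combined with ${\bf k}\in\Lambda_0$, the arithmetic of the previous paragraph forces ${\bf k}''\in\mathcal B_{\lambda,d}$. The main obstacle is precisely the combinatorial bookkeeping in this final step: one must verify that the local ${\bf b}$-coefficients across adjacent quadrilaterals can be chosen coherently so that the subtraction produces a globally (not merely locally) $N$-divisible residue. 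This is the content of the induction on $|S_{\bf k}|$ carried out at the end of Proposition~\ref{Prop-central-generic}, and it transfers to the present setting because once ${\bf c}_1\in N\mathbb Z^{\mathring V_e}$, all mismatches between adjacent quadrilaterals already lie in $N\mathcal B_\lambda+\operatorname{span}_{\mathbb Z}\{{\bf b}(\lambda,p,i)\}$.
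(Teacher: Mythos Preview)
Your proposal is correct and follows essentially the same route as the paper's proof: reduce to the lattice statement via Lemma~\ref{lem-center-torus}(b), localize to each quadrilateral $\mathbb P_{4,e}$, use the $K_e$-factorization and \eqref{eq-KQ} to deduce $N\mid{\bf c}_1$, then run the $|S_{\bf k}|$-induction of Proposition~\ref{Prop-central-generic} modulo $N$ to peel off a $\mathbb Z$-combination of the ${\bf b}(\lambda,p,i)$ and leave a residue in $N\mathcal B_\lambda$. Your explicit verification that $N''/\gcd(N'',n^2)=N$ is a step the paper glosses over; conversely, for the final check that the residue $N{\bf k}''$ has ${\bf k}''\in\mathcal B_{\lambda,d}$ you should invoke Lemma~\ref{lem-balanced-property} to get divisibility by $\operatorname{lcm}(N'',n)=Ndn$ (rather than ``the arithmetic of the previous paragraph''), which is precisely the paper's argument.
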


The following proof makes extensive use of the notations and techniques introduced in the proof of Proposition \ref{Prop-central-generic}. For clarity, readers may wish to review that proof beforehand.

\begin{proof}
By Lemma~\ref{lem-center-torus}, it suffices to show that the subgroup $B_{N''}$ (see \eqref{eq-define-subgroup}) is generated by  
\[
\{N{\bf c}\mid {\bf c}\in\mathcal B_{\lambda,d}\} 
\quad\text{and}\quad 
\{{\bf b}(\lambda,p,i)\mid 1\leq i\leq n-1,\; p\in\mathcal P\}.
\]
It is straightforward to verify that
\[
\{N{\bf c}\mid {\bf c}\in\mathcal B_{\lambda,d}\}\;\cup\; \{{\bf b}(\lambda,p,i)\mid 1\leq i\leq n-1,\; p\in\mathcal P\}
\;\subseteq\; B_{N''}.
\]

Now let ${\bf k}\in B_{N''}$.  
    Let $e$ be an ideal arc in $\lambda$. 
We use the notation from the proof of Proposition~\ref{Prop-central-generic}.  
For any ${\bf t}\in\Lambda_e$ (see \eqref{def-lambda-ee}), as shown in \eqref{eq-k-t-zero}, we have  
\[
{\bf k} Q_\lambda {\bf t}^T
=\iota_2({\bf k}) Q_e \iota_2({\bf t})^T
=0 \in \mathbb Z_{N''} \quad \text{(see \eqref{def-B-Q-V} for $Q_e$ and \eqref{def-iota-1-2} for $\iota_2$)}.
\]

By Lemma~\ref{lem-matrix-HK}(b), there exists ${\bf c}\in\mathbb Z^{V_e}$ (see \eqref{def-B-Q-V} for $V_e$) such that 
\[
\iota_2({\bf k})={\bf c}K_e\quad (\text{see \eqref{eq-K-e-def} for $K_e$}).
\]
Hence,  
\begin{align}\label{new-eq-cKQ}
{\bf c}K_e Q_e 
\begin{pmatrix}
nI \\
O  
\end{pmatrix}
={\bf 0}\in\mathbb Z_{N''},
\end{align}
where ${\bf 0}$ denotes a zero row vector in $\mathbb Z_{N''}$.  
Write ${\bf c}=({\bf c}_1,{\bf c}_2)$ with ${\bf c}_1\in\mathbb Z^{\mathring{V}_e}$ and ${\bf c}_2\in\mathbb Z^{V_e\setminus\mathring{V}_e}$, where $\mathring{V}_e$ is a subset of $V_e$ consisting of vertices contained in the interior of $\mathbb P_{4,e}=\mathbb P_4$.  
From \eqref{new-eq-cKQ} and \eqref{eq-KQ} we obtain  
\begin{align}\label{new-eq-nc1-zero}
({\bf c}_1,{\bf c}_2)
\begin{pmatrix}
-nI & * \\
O   & *
\end{pmatrix}
\begin{pmatrix}
nI \\
O  
\end{pmatrix}
=(-n^2{\bf c}_1, {\bf 0})={\bf 0}\in\mathbb Z_{N''}.
\end{align}
Thus ${\bf c}_1={\bf 0}$ in $\mathbb Z_N$. Note that   
\begin{align}\label{eq-iota-two-zero}
\iota_2({\bf k})=({\bf c}_1,{\bf c}_2)K_e.
\end{align}
This shows that, in $\mathbb Z_N$, the vector $\iota_2({\bf k})$ is a $\mathbb Z_N$-linear combination of $K_e(v,\bullet)$ with $v\in V_e\setminus\mathring{V}_e$.
Here $K_e(v,\bullet)$ denotes the row of $K_e$ indexed by $v$.

For each $\mu\in\lambda$, define  
\[
g_{\mu}\colon\mathcal B_\lambda
\rightarrow \bigoplus_{\tau\in\mathbb F_\lambda}\mathcal B_{\tau}
\rightarrow \mathcal B_\mu,
\]
where the first map is induced by cutting along all ideal arcs in $\lambda$, and the second is the natural projection.  
Label the three punctures of each $\mu\in\mathbb F_\lambda$ by $p_{j,\mu}$, $j=1,2,3$.  
Since, for each ideal arc $e\in\lambda$, the vector $\iota_2({\bf k})$ is a $\mathbb Z_N$-linear combination of the row vectors 
$K_e(v,\bullet)$ with $v\in V_e\setminus\mathring{V}_e$, 
Lemma~\ref{lem-Ke-b} implies that, in $\mathbb Z_N$, the element $g_{\mu}({\bf k})$ is a $\mathbb Z_N$-linear combination of ${\bf b}(\mu,p_{j,\mu},i)$ with $1\leq i\leq n-1$, $j=1,2,3$.  

Define  
\begin{align*}
S_{\bf k}
:=\{(\mu,i,j)\mid &\; \mu\in\mathbb F_\lambda,\; 1\leq i\leq n-1,\; j=1,2,3, \\
&\text{and the coefficient of ${\bf b}(\mu,p_{j,\mu},i)$ in $g_\mu({\bf k})$ is nonzero in $\mathbb Z_N$}\}.
\end{align*}

We now show that ${\bf k}={\bf k}_1+{\bf k}_2$, where ${\bf k}_1$ is a $\mathbb Z$-linear combination of $\{{\bf b}(\lambda,p,i)\}$ and $S_{{\bf k}_2}=\emptyset$.  
We argue by induction on $|S_{\bf k}|$.  

 If $|S_{\bf k}|=0$, set ${\bf k}_1={\bf 0}$ and ${\bf k}_2={\bf k}$.  
Suppose $|S_{\bf k}|>0$. Then there exist $\mu_0\in\mathbb F_\lambda$, $1\leq i_0\leq n-1$, and $j_0\in\{1,2,3\}$ such that the coefficient of ${\bf b}(\mu_0,p_{j_0,\mu_0},i_0)$ in $g_{\mu_0}({\bf k})$ is nonzero in $\mathbb Z_N$.  
Let this coefficient be represented by some $z\in\mathbb Z\setminus\{0\}$.  
Let ${\bf pr}_\lambda\colon\bigsqcup_{\tau\in \mathbb F_\lambda}\tau \to \fS$ be the projection.  
Suppose ${\bf pr}_\lambda(p_{j_0,\mu_0}) = p_0\in\mathcal P$.  
Label the edges and ideal triangles adjacent to $p_0$ as in Figure~\ref{puncture-p}, and write $({\bf pr}_\lambda)^{-1}(p_0)=\{p_1,\dots,p_m\}$ with $p_t\in\tau_t$.  
Without loss of generality, take $\mu_0=\tau_1$ and $p_{j_0,\mu_0}=p_1$.  
By repeated applications of Lemma~\ref{lem-Ke-b} for $e=e_2,\dots,e_m$, we find that the coefficient of ${\bf b}(\tau_t,p_t,i_0)$ in $g_{\tau_t}({\bf k})$ is also $z\in\mathbb Z_N$ for each $t=2,\dots,m$.  
Set ${\bf k}'={\bf k}-z{\bf b}(\lambda,p_0,i_0)$.  
Then ${\bf k}'Q_\lambda{\bf t}^T=0$ for all ${\bf t}\in \mathcal B_\lambda$, and $|S_{{\bf k}'}|<|S_{\bf k}|$.  
This completes the inductive step.

Thus we may write ${\bf k}={\bf k}_1+{\bf k}_2$, where ${\bf k}_1$ is a $\mathbb Z$-linear sum of $\{{\bf b}(\lambda,p,i)\}$ and $S_{{\bf k}_2}=\emptyset$.  
Let $e$ be an ideal arc in $\lambda$.
As in \eqref{eq-iota-two-zero}, there exist ${\bf d}_1\in\mathbb Z^{\mathring{V}_e}$ and ${\bf d}_2\in\mathbb Z^{V_e\setminus \mathring{V}_e}$ with ${\bf d}_1={\bf 0}$ in $\mathbb Z_N$ and  
\[
\iota_2({\bf k}_2)=({\bf d}_1,{\bf d}_2)K_e.
\]
Since $S_{{\bf k}_2}=\emptyset$, it follows that ${\bf d}_2={\bf 0}$ in $\mathbb Z_N$ and hence ${\bf k}_2=N{\bf s}$ for some ${\bf s}\in\mathbb Z^{V_\lambda}$ (this is because $e$ could be any ideal arc in $\lambda$).  
Moreover,  
\[
\iota_2({\bf s})=\tfrac{1}{N}({\bf d}_1,{\bf d}_2)K_e\in\mathbb Z^{V_e} \text{ and } \tfrac{1}{N}({\bf d}_1,{\bf d}_2)\in\mathbb Z^{V_e}.
\]
By Lemma~\ref{lem-matrix-HK}(b), we have $\iota_2({\bf s})\in\mathcal B_e$ (see \eqref{def-B-Q-V} for $\mathcal B_e$), which implies ${\bf s}\in\mathcal B_\lambda$ (since $e$ is arbitrary).  
As ${\bf k}_2Q_\lambda{\bf t}^T=0$ in both $\mathbb Z_{N''}$ and $\mathbb Z_{n}$ (Lemma~\ref{lem-balanced-property}) for all ${\bf t}\in \mathcal B_\lambda$, it follows that
${\bf k}_2Q_\lambda{\bf t}^T=0$ in $\mathbb Z_{(nN'')/d'}$ for all ${\bf t}\in \mathcal B_\lambda$, where $d'=\gcd(N'',n)$.
Consequently,
${\bf s}Q_\lambda{\bf t}^T=0$ in $\mathbb Z_{dn}$ for all ${\bf t}\in \mathcal B_\lambda$, which shows that ${\bf s}\in\mathcal B_{\lambda,d}$. 
Therefore ${\bf k}_2=N{\bf s}$ with ${\bf s}\in\mathcal B_{\lambda,d}$.  

This completes the proof.
\end{proof}

\begin{remark}
    At present, a suitable `good basis' of the ${\rm SL}_n$-skein algebra is not available for computing its center. 
Once such a basis is established, the techniques used in the proof of Theorem~\ref{thm-center-balanced-root-of-unity} 
can be applied to determine the center of the ${\rm SL}_n$-skein algebra.

\end{remark}

For any positive integer $d$, 
define
\begin{align}\label{def-torus-d}
    \mathcal Z_{\hat{\omega}}^{\rm bl}(\fS,\lambda)_d:=
    \text{span}_{\BC}\{Z^{\bf k}\mid
    {\bf k}\in\mathcal B_{\lambda,d}\},
\end{align}
where $\mathcal B_{\lambda,d}$ is defined in \eqref{def-B-lambda-m}.
Obviously,  $\mathcal Z_{\hat{\omega}}^{\rm bl}(\fS,\lambda)_d$ is a subalgebra of $\mathcal Z_{\hat{\omega}}^{\rm bl}(\fS,\lambda)$.
Theorem \ref{thm-center-balanced-root-of-unity} implies the following.

\begin{corollary}\label{cor-center-Z-two}
    Suppose that we have the  assumption \Rlabel{1}.
For a triangulable punctured surface $\fS$ with a triangulation $\lambda$, let $\PT$ be the algebra embedding defined in \eqref{eq-new-def}.
Then we have the following:
\begin{enumerate}[label={\rm (\alph*)}]
    \item the center of 
    $\mathcal Z_{\hat{\eta}}^{\rm bl}(\fS,\lambda)$ is generated by
    $\mathcal Z_{\hat{\eta}}^{\rm bl}(\fS,\lambda)_d$ and $Z^{{\bf b}(\lambda,p,i)}$,
    where $1\leq i\leq n-1$ and $p\in\mathcal P$;

    \item the center of 
    $\mathcal Z_{\hat{\omega}}^{\rm bl}(\fS,\lambda)$ is generated by
    $\PT(\mathcal Z_{\hat{\eta}}^{\rm bl}(\fS,\lambda)_d)$ and $Z^{{\bf b}(\lambda,p,i)}$,
    where $1\leq i\leq n-1$ and $p\in\mathcal P$.
\end{enumerate}
\end{corollary}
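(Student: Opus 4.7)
The plan is to apply Theorem~\ref{thm-center-balanced-root-of-unity} twice: once directly to prove (b), and once with $\hat\omega$ replaced by $\hat\eta$ to prove (a).

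For (b), I would observe that this is essentially a reformulation of Theorem~\ref{thm-center-balanced-root-of-unity}. By $\BC$-linearity of $\PT$ and the definition of $\mathcal Z_{\hat\eta}^{\rm bl}(\fS,\lambda)_d$ in \eqref{def-torus-d}, the set $\{\PT(Z^{\bf k})\mid {\bf k}\in \mathcal B_{\lambda,d}\}$ spans the subalgebra $\PT(\mathcal Z_{\hat\eta}^{\rm bl}(\fS,\lambda)_d)$ of $\mathcal Z_{\hat\omega}^{\rm bl}(\fS,\lambda)$. Since Theorem~\ref{thm-center-balanced-root-of-unity} asserts that the center of $\mathcal Z_{\hat\omega}^{\rm bl}(\fS,\lambda)$ is generated as a subalgebra by this set together with the elements $Z^{{\bf b}(\lambda,p,i)}$ for $1\leq i\leq n-1$ and $p\in\mathcal P$, the statement (b) follows immediately.

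For (a), I would apply Theorem~\ref{thm-center-balanced-root-of-unity} with $\hat\omega$ replaced by $\hat\eta$. The first task is to verify that assumption \Rlabel{1} holds for $\hat\eta$: since $\hat\eta^{1/2}=(\hat\omega^{1/2})^{N^2}$ is still a root of unity, one may denote by $M''$ the order of $\hat\eta^2$, and let $M'$, $M$, and $d_\eta$ be the corresponding analogs of $N'$, $N$, and $d$. The key observation is that $\eta^2=(\omega^{N^2})^2=(\omega^2)^{N^2}=1$, because $\omega^2$ has order $N$. Hence $M=1$, so the analog of $\PT$ for $\hat\eta$ (namely the map $Z^{\bf k}\mapsto Z^{M{\bf k}}$) reduces to the identity embedding on $\mathcal Z_{\hat\eta}^{\rm bl}(\fS,\lambda)$. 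Applying Theorem~\ref{thm-center-balanced-root-of-unity} to $\hat\eta$ then gives: the center of $\mathcal Z_{\hat\eta}^{\rm bl}(\fS,\lambda)$ is generated by $\mathcal Z_{\hat\eta}^{\rm bl}(\fS,\lambda)_d$ (with $d$ interpreted as the analog $d_\eta$ for $\hat\eta$) together with the elements $Z^{{\bf b}(\lambda,p,i)}$, which is precisely (a).

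I do not expect any genuine technical obstacle beyond careful bookkeeping: tracking how the integers $N''$, $N'$, $N$, $d$ of assumption \Rlabel{1} transform when the quantum parameter is changed from $\hat\omega$ to $\hat\eta$, and confirming that Theorem~\ref{thm-center-balanced-root-of-unity} remains applicable. The crucial simplification is the identity $\eta^2=1$, which forces $M=1$ and thereby collapses the analog of $\PT$ to the identity map, so the conclusion in (a) takes the simpler form of spanning by $\mathcal Z_{\hat\eta}^{\rm bl}(\fS,\lambda)_d$ without any additional embedding.
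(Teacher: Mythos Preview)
Your plan matches the paper's own argument, which consists of the single line ``Theorem~\ref{thm-center-balanced-root-of-unity} implies the following.'' Part (b) is indeed an immediate restatement, and your idea for (a) --- reapply Theorem~\ref{thm-center-balanced-root-of-unity} with the quantum parameter $\hat\omega$ replaced by $\hat\eta$, noting that $\eta^2=(\omega^2)^{N^2}=1$ forces the analog of $N$ to be $1$ so that the corresponding $\Phi^{\mathbb T}$ collapses to the identity --- is exactly the intended reading.

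There is, however, one piece of bookkeeping you wave past. You write ``with $d$ interpreted as the analog $d_\eta$ for $\hat\eta$'' and then declare this to be ``precisely (a)'', but (a) is stated with the original $d=\gcd(N',n)$ from \Rlabel{1}, not with $d_\eta$. Running the arithmetic: $\bar\eta^2=(\bar\omega^2)^{N^2}$ has order $M'=N'/\gcd(N',N^2)=Nd/(N\gcd(d,N))=d/\gcd(d,N)$, and since $d\mid n$ this gives $d_\eta=\gcd(M',n)=d/\gcd(d,N)$. This equals $d$ only when $\gcd(d,N)=1$, which is not guaranteed by \Rlabel{1} alone (for instance $n=4$, $N''=32$ yields $N'=8$, $d=4$, $N=2$, $d_\eta=2$). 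So what your argument literally produces is that the center is generated by $\mathcal Z_{\hat\eta}^{\rm bl}(\fS,\lambda)_{d_\eta}$ together with the $Z^{{\bf b}(\lambda,p,i)}$; when $d_\eta<d$ and the surface has positive genus (so that by Proposition~\ref{prop-bilinear-key} the form $\langle\cdot,\cdot\rangle$ hits units in $\mathbb Z_n$), this is strictly larger than what (a) claims. Under \Rlabel{2} one has $d=1=d_\eta$ and the discrepancy vanishes, and that is the only regime in which the paper subsequently invokes part (a).
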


\subsection{The rank of $\mathcal Z_{\hat{\omega}}^{\rm bl}(\fS,\lambda)$ over its center}\label{sub-sec-rank-Z}
\def\OfS{\overline{\fS}}
\def\Hnn{H_1(\overline{\fS},\mathbb Z_n)}
\def\BB{\mathcal{B}_\lambda}

Let $\fS$ be a triangulable connected punctured surface with a triangulation $\lambda$.  
In this subsection, we compute the rank of $\mathcal Z_{\hat{\omega}}^{\rm bl}(\fS,\lambda)$ over its center under the assumption \Rlabel{1}.  
It is well known that this rank equals the square of the dimension of any irreducible representation of $\mathcal Z_{\hat{\omega}}^{\rm bl}(\fS,\lambda)$ \cite[Lemma~8.4]{KaruoWangToAppear} (see Lemma~\ref{lem-irre-Azumaya}).  
Hence, this rank plays a crucial role in understanding the representation theory of $\mathcal Z_{\hat{\omega}}^{\rm bl}(\fS,\lambda)$. In particular, for the cases $n=2,3$, it has been observed that this rank coincides with the rank of the $\SL$-skein algebra over its center \cite{frohman2021dimension,kim2024unicity}.

Define
\begin{align}
\label{def-B-lambda-circ}
    \mathcal B_\lambda^{\circ}:=\{{\bf k}\in\mathcal B_\lambda\mid
    {\bf k}Q_\lambda=0\}.
\end{align}
Corollary~\ref{cor-center-generic}(a) shows that $\{{\bf b}(\lambda,p,i)\mid p\in\mathcal P,1\leq i\leq n-1\}$ forms a basis of $\mathcal B_\lambda^{\circ}$.  
Moreover, Lemma~\ref{lem-center-torus}(c) and Theorem~\ref{thm-center-balanced-root-of-unity} imply that
 \begin{align}\label{eq-rank-quotient-equality}
     \text{the rank of $\mathcal Z_{\hat{\omega}}(\fS,\lambda)$ over its center}=\left|\dfrac{\mathcal B_\lambda}{B_{N''}} \right|,
 \end{align}
 where $B_{N''}= N \mathcal B_{\lambda,d} +  \mathcal B_\lambda^{\circ}$ and 
 $\mathcal B_{\lambda,d}$ is defined in 
\eqref{eq-define-subgroup}.  
In the remainder of this subsection, we compute \eqref{eq-rank-quotient-equality}.

We first define a surjective group homomorphism
\begin{align}\label{def-zeta}
    \zeta\colon\mathcal B_\lambda\rightarrow H_1(\overline{\fS},\mathbb Z_n)
    \qquad\text{(see Definition \ref{def:surface} for $\OfS$).}
\end{align}

For each edge $e$ of $\lambda$, we take
two disjoint copies $e'$, $e''$ of $e$ such that no two edges in 
$\bigcup_{e\in\lambda}\{e',e''\}$ intersect. We write
\begin{align*}
    \widehat{\lambda} := \bigcup_{e\in\lambda} \{e',e''\}
\end{align*}
and call it the {\bf split  triangulation} of $\lambda$.  
Cutting $\fS$ along the edges of $\widehat{\lambda}$ yields a collection of $\mathbb P_2$ and $\mathbb P_3$.  
Each edge $e\in\lambda$ corresponds to a $\mathbb P_2$ in $\hat\lambda$, denoted by $B_e$; each triangle $\tau$ in $\lambda$ corresponds to a triangle $\hat\tau$ in $\hat\lambda$, denoted by $\hat\tau$.

Let ${\bf k}\in \BB$.  
For each $\tau\in\mathbb F_\lambda$, recall that we defined the pullback ${\bf k}_\tau\in \mathcal B_\tau$ of ${\bf k}$ by the characteristic map of $\tau$ (see \eqref{eq-pull-back}).  
Label the three vertices of $\tau$ by $v_1,v_2,v_3$.  
Since ${\bf k}_\tau$ is balanced, we may write
\begin{align}\label{eq-k-tau-s1-s2-s3}
    {\bf k}_\tau = s_1{\bf k}_1 + s_2{\bf k}_2 + s_3{\bf k}_3\in \mathbb Z_n^{V_\tau},
\end{align}
where each $s_i\in\mathbb Z$ and each ${\bf k}_i$ is defined in \eqref{def:proj}.  
Inside $\hat\tau$, the element $\zeta({\bf k})\in\Hnn$ is represented by the red curves in the left picture of Figure~\ref{Zn}, where the number next to each red oriented curve indicates the number of copies of that curve.

For each $e\in\lambda$, suppose that the two triangles adjacent to $e$ are $\tau$ and $\tau'$.  
Since our triangulation excludes self-folded triangles, we have $\tau\neq\tau'$.  
Then inside $\hat\tau\cup B_e\cup \hat\tau'$, the element $\zeta({\bf k})\in\Hnn$ is represented by the red components in the right picture of Figure~\ref{Zn}, where again the number next to each red oriented curve indicates the number of copies of that curve.

We now verify that $\zeta({\bf k})$ is a well-defined element of $\Hnn$.  
For each $e\in\lambda$, suppose the vertices of $\hat\tau$ and $\hat\tau'$ are labeled as in Figure~\ref{Zn}.  
Assume that
$${\bf k}_\tau = s_1{\bf k}_1 + s_2{\bf k}_2 + s_3{\bf k}_3\in \mathbb Z_n^{V_\tau}
\quad\text{and}\quad
{\bf k}_{\tau'} = s_1'{\bf k}_1 + s_2'{\bf k}_2 + s_3'{\bf k}_3\in \mathbb Z_n^{V_{\tau'}}.$$
Let the small vertices on $e$ be labeled $u_1,\dots,u_{n-1}$ in the direction from $v_1$ to $v_2$.  
Then the coordinate of $u_i$ in $\tau$ (resp. $\tau'$) is
$(n-i,i,0)$ (resp. $(n-i,0,i)$).  
Thus
$$(n-i)s_1 + is_2=(n-i) s_1' + i s_3'\in\mathbb Z_n,
\quad\text{for $1\leq i\leq n-1$}.$$
It follows that
$$s_2-s_1=s_3'-s_1'\in\mathbb Z_n.$$
Hence $\zeta({\bf k})$ is an element of $\Hnn$.

Next we show that for each $\tau\in\mathbb F_\lambda$, the element $\zeta({\bf k})$ is independent of the choice of $s_1,s_2,s_3$ in \eqref{eq-k-tau-s1-s2-s3}.  
Suppose that
\begin{align*}
    {\bf k}_\tau = s_1{\bf k}_1 + s_2{\bf k}_2 + s_3{\bf k}_3
    =t_1{\bf k}_1 + t_2{\bf k}_2 + t_3{\bf k}_3
    \in \mathbb Z_n^{V_\tau}.
\end{align*}
Then
$$(s_1-t_1){\bf k}_1 +(s_2-t_2){\bf k}_2 + (s_3-t_3){\bf k}_3={\bf 0}\in \mathbb Z_n^{V_\tau}.$$
From this we deduce
\begin{align}\label{eq-s-t-zn}
    s_1-t_1= s_2-t_2= s_3-t_3\in\mathbb Z_n.
\end{align}
It is straightforward to check that
\begin{align}\label{eq-zero-Hn}
    \begin{array}{c}\includegraphics[scale=0.8]{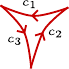}\end{array}
    =0\in\Hnn\quad\text{when $c_1=c_2=c_3\in\mathbb Z_n$}.
\end{align}
Equations \eqref{eq-s-t-zn} and \eqref{eq-zero-Hn} show that $\zeta({\bf k})\in\Hnn$ is independent of the choice of $s_1,s_2,s_3$.

\begin{figure}
	\centering
	\includegraphics[width=13cm]{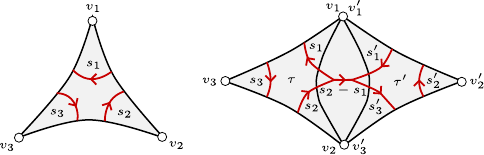}
	\caption{Local pictures of $\zeta({\bf k})$ in $\hat\tau$ and $\hat\tau\cup B_e\cup\hat{\tau}'$, where the number next to each red oriented curve indicates the number of copies of that curve. The vertices of $\hat\tau$ (resp. $\hat\tau'$) are labeled by $v_1,v_2,v_3$ (resp. $v_1',v_2',v_3'$).}
	\label{Zn}
\end{figure}

\def\BBB{\mathcal B_\lambda}

\begin{lemma}\label{lem-surj-zeta-def}
    The group homomorphism $\zeta$ in \eqref{def-zeta} is surjective.
\end{lemma}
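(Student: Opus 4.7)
The plan is to show that every element of $H_1(\overline{\fS},\mathbb Z_n)$ lies in the image of $\zeta$ by constructing, for each class, an explicit preimage associated to a representing multicurve. Given $\alpha\in H_1(\overline{\fS},\mathbb Z_n)$, I would choose an oriented multicurve $\gamma$ on $\overline{\fS}$ representing $\alpha$. After isotopy, $\gamma$ may be assumed to lie in $\fS$, be transverse to all edges of $\lambda$, and (by a standard tightening argument, pushing bigons off edges and unwrapping essential arcs via homotopies in $\overline{\fS}$) have the property that in each triangle $\tau\in\mathbb F_\lambda$ with vertices $v_1,v_2,v_3$, the intersection $\gamma\cap\tau$ is a disjoint union of embedded arcs, each with endpoints on two \emph{distinct} edges of $\tau$ and hence ``near'' a unique vertex $v_j$. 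Using the orientation of $\gamma$ and the global orientation of $\fS$, I define $a_j(\tau)\in\mathbb Z$ to be the signed count of arcs of $\gamma\cap\tau$ near $v_j$ (positive for counterclockwise around $v_j$, negative for clockwise), and I set
\begin{align*}
    {\bf k}_{\gamma,\tau}:=a_1(\tau){\bf k}_1+a_2(\tau){\bf k}_2+a_3(\tau){\bf k}_3\in \mathbb Z^{V_\tau},
\end{align*}
where ${\bf k}_1,{\bf k}_2,{\bf k}_3$ are the coordinate functions of \eqref{def:proj}; then I define ${\bf k}_\gamma\in\mathbb Z^{V_\lambda}$ to be the unique vector whose pullback to each triangle $\tau$ equals ${\bf k}_{\gamma,\tau}$.

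The crux is to verify that these pullbacks are compatible on every shared edge $e$, so that ${\bf k}_\gamma$ is a well-defined element of $\mathbb Z^{V_\lambda}$ which, being locally a $\mathbb Z$-combination of ${\bf k}_1,{\bf k}_2,{\bf k}_3$, automatically lies in $\mathcal B_\lambda$. Using the labeling of Figure~\ref{Zn} (so that $v_1=v_1'$ and $v_2=v_3'$), the small vertex $u_i$ on $e$ receives the value $(n-i)a_1(\tau)+i\,a_2(\tau)$ from the $\tau$-side and $(n-i)a_1'(\tau')+i\,a_3'(\tau')$ from the $\tau'$-side. These agree as integers for all $1\le i\le n-1$ provided $a_1(\tau)=a_1'(\tau')$ and $a_2(\tau)=a_3'(\tau')$, and both equalities hold because the arc endpoints of $\gamma$ on $e$ in the half closer to the common puncture $v_1=v_1'$ are in bijection with arcs near $v_1$ in $\tau$ and with arcs near $v_1'$ in $\tau'$, with matching signs thanks to the consistent global orientation of $\fS$; the argument for the half near $v_2=v_3'$ is analogous.

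Having shown ${\bf k}_\gamma\in\mathcal B_\lambda$, the description of $\zeta$ via the red-arc picture in Figure~\ref{Zn} shows directly that $\zeta({\bf k}_\gamma)$ is represented locally in each triangle by the same signed collection of arcs as $\gamma\cap\tau$, whence $\zeta({\bf k}_\gamma)=[\gamma]=\alpha$, establishing surjectivity. The principal technical obstacle will be pinning down the sign/orientation conventions carefully enough that the signed arc counts $a_j(\tau)$ match across shared edges with the correct signs (so that ${\bf k}_\gamma$ is well-defined and its $\zeta$-image equals $[\gamma]$), together with justifying the initial tightening of $\gamma$ that eliminates arcs with both endpoints on the same edge of $\tau$.
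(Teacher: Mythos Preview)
Your approach is the same as the paper's up to defining the triangle-wise pieces ${\bf k}_{\gamma,\tau}$, but the edge-compatibility step has a genuine gap: the equalities $a_1(\tau)=a_1'(\tau')$ and $a_2(\tau)=a_3'(\tau')$ need not hold, and your justification is flawed.

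Consider a single strand of $\gamma$ crossing $e$. In $\tau$ it is a corner arc near $v_1$ or near $v_2$; in $\tau'$ it is a corner arc near $v_1'$ or near $v_3'$. Nothing forces these corners to match. Concretely, take a curve that crosses $e$ exactly once and, in the quadrilateral $\tau\cup_e\tau'$, runs from the edge $\{v_1,v_3\}$ of $\tau$ to the edge $\{v_3',v_2'\}$ of $\tau'$: then in $\tau$ the arc is near $v_1$ (so $a_1(\tau)=\pm1$, $a_2(\tau)=0$) while in $\tau'$ it is near $v_3'$ (so $a_1'(\tau')=0$, $a_3'(\tau')=\mp1$). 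The values $(n-i)a_1(\tau)+i\,a_2(\tau)=\pm(n-i)$ and $(n-i)a_1'(\tau')+i\,a_3'(\tau')=\mp i$ differ by $\pm n$, not by $0$. Your ``half closer to $v_1$'' reasoning fails because the partition of crossing points on $e$ into ``near $v_1$'' and ``near $v_2$'' is dictated by what $\gamma$ does \emph{inside} each triangle; the counts from the $\tau$-side and the $\tau'$-side need not split $\gamma\cap e$ at the same point. No amount of tightening (even allowing homotopies in $\overline{\fS}$) forces this globally, since fixing one edge may break another.

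What \emph{is} true---and what the paper uses---is that the discrepancy on each small vertex of $e$ lies in $n\mathbb Z$; this follows from the cycle condition $a_2(\tau)-a_1(\tau)\equiv a_3'(\tau')-a_1'(\tau')\pmod n$ encoding that $\gamma$ is closed in $\overline{\fS}$. The paper therefore records this discrepancy as an explicit correction term $C_e\in(n\mathbb Z)^{V_{\tau'}}\subset\mathcal B_{\tau'}$ on each edge and adds $\sum_e C_e$ to $\sum_\tau C_\tau$, so that the total lands in $\operatorname{im}\iota$ and defines an element ${\bf c}\in\mathcal B_\lambda$. Since each $C_e$ is divisible by $n$, it is invisible to $\zeta$, and $\zeta({\bf c})=C$. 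Your argument can be repaired by inserting exactly this correction step.
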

\begin{proof}
Let $C$ be an element of $\Hnn$.  
We may choose a good representative of $C$ such that, for each triangle $\tau \in \mathbb F_\lambda$, the local picture of $C$ in $\tau$ is
\[
\begin{array}{c}
    \includegraphics[scale=0.4]{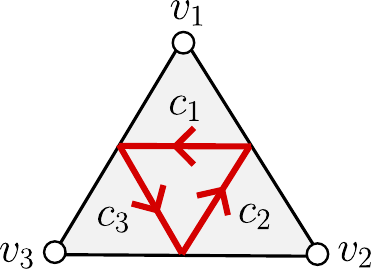}
\end{array}
\]
where $c_i$ denotes the multiplicity of the corresponding curve and each $v_i$ labels a puncture of $\tau$.  
Define
\[
    C_\tau = c_1{\bf k}_1 + c_2{\bf k}_2 + c_3{\bf k}_3 \in \mathcal B_\tau.
\]

We orient all edges of $\lambda$.  
For each $e \in \lambda$, let $\tau'$ and $\tau''$ be the two triangles adjacent to $e$, with $\tau'$ lying on the right when traversing $e$ according to its orientation.  
Label the small vertices of $e$ by $u_1,\dots,u_{n-1}$ in the order induced by the orientation of $e$.  
Assume that the local picture of $C$ in $\tau' \cup \tau''$ is
\[
\begin{array}{c}
    \includegraphics[scale=0.4]{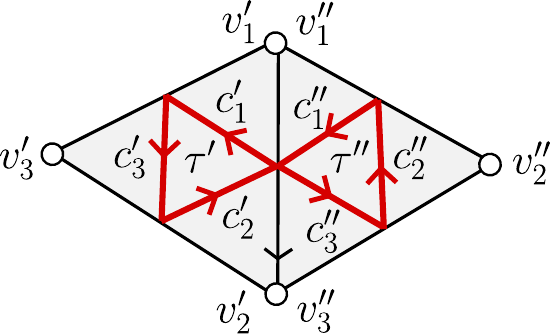}
\end{array},
\]
where the vertices of $\tau'$ and $\tau''$ are labeled.  
Since $C \in \Hnn$, we have
\[
    c_3'' - c_2' - c_1'' + c_1'=0 \in \mathbb Z_n.
\]
Hence,
\begin{align}\label{def-Ce}
    C_{\tau''}(u_i) - C_{\tau'}(u_i)
    &= (n-i)(c_1''-c_1') + i(c_3''-c_2') \notag \\
    &= n(c_1''-c_1') + i(c_3'' - c_2' - c_1'' + c_1') \notag \\
    &= 0 \in \mathbb Z_n.
\end{align}
Define $C_e \in \mathbb Z^{V_{\tau'}}$ by
\[
    C_e(v) =
    \begin{cases}
        C_{\tau''}(u_i) - C_{\tau'}(u_i), & v = u_i \text{ for } 1 \leq i \leq n-1, \\
        0, & \text{otherwise.}
    \end{cases}
\]
Equation~\eqref{def-Ce} shows that $C_e \in \mathcal B_\tau$.

Recall that there is a group embedding
\[
    \iota \colon \mathcal B_\lambda \hookrightarrow \bigoplus_{\tau \in \mathbb F_\lambda} \mathcal B_\tau,\quad
    {\bf k}\mapsto \sum_{\tau\in\mathbb F_\lambda} {\bf k}_\tau
    \qquad \text{(see \eqref{def-iota-emb})}.
\]
It follows immediately that
\[
    \sum_{\tau \in \mathbb F_\lambda} C_\tau +
    \sum_{e \in \lambda} C_e \in \im\iota.
\]
Let ${\bf c} \in \mathcal B_\lambda$ be such that
\[
    \iota({\bf c}) = \sum_{\tau \in \mathbb F_\lambda} C_\tau +
    \sum_{e \in \lambda} C_e.
\]
By the definitions of $C_\tau$ and $C_e$, we conclude that
\[
    \zeta({\bf c}) = C.
\]

\end{proof}

It follows from Lemma~\ref{lem-balanced-property} that there exists an anti-symmetric $\mathbb Z$-bilinear form
\begin{align}\label{eq-bracket-bal}
    \langle\cdot,\cdot\rangle\colon \BBB\times \BBB\rightarrow\mathbb Z,
    \qquad \langle {\bf k}, {\bf t}\rangle\mapsto \frac{1}{n}{\bf k}Q_\lambda{\bf t}^T.
\end{align}
Moreover, it is well known that there is a non-degenerate, anti-symmetric $\mathbb{Z}_n$-bilinear form
\begin{align}\label{eq-bracket-Z3}
    \langle\cdot,\cdot\rangle_n \colon 
    H_1(\overline{\fS},\mathbb{Z}_n)\times H_1(\overline{\fS},\mathbb{Z}_n)
    \longrightarrow \mathbb{Z}_n ,
\end{align}
defined by the algebraic intersection number modulo $n$, where we adopt the convention that, when computing the intersection number, the element on the right of $\langle\cdot,\cdot\rangle_n$ is positioned above the element on the left.

The following proposition represents a key technical and crucial step in computing the rank of $\mathcal Z_{\hat\omega}^{\rm bl}(\fS,\lambda)$ over its center. It asserts that the bilinear forms in \eqref{eq-bracket-bal} and \eqref{eq-bracket-Z3} are compatible under the group homomorphism $\zeta$.

\def\iV{\mathring{V}}
\def\pV{\partial V}

\begin{proposition}\label{prop-bilinear-key}
    For any two elements
    ${\bf k},{\bf h}\in \mathcal B_\lambda$, we have 
    $$\langle {\bf k}, {\bf h}\rangle =
    \langle \zeta({\bf k}),\zeta({\bf h})\rangle_n\in\mathbb Z_n$$
\end{proposition}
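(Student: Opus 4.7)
The plan is to decompose both sides into local contributions from each triangle of $\lambda$ and to match them. Both sides are $\mathbb Z$-bilinear in $({\bf k},{\bf h})$, so it will suffice to verify the identity on a well-chosen family of generators.

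For the left-hand side I would write $\widetilde Q_\tau\colon V_\lambda\times V_\lambda\to \tfrac12\mathbb Z$ for the signed adjacency matrix of the ideal triangle $\tau\in\mathbb F_\lambda$, extended by zero outside $V_\tau\times V_\tau$. Since $\partial\fS=\emptyset$, every edge of $\lambda$ is shared by two triangles, and the two weight-$\tfrac12$ boundary arrows along such an edge have opposite orientations and therefore cancel in the sum. This gives $Q_\lambda = \sum_{\tau\in\mathbb F_\lambda}\widetilde Q_\tau$ and hence
$$\langle {\bf k},{\bf h}\rangle \;=\; \sum_{\tau\in\mathbb F_\lambda}\tfrac{1}{n}\,{\bf k}_\tau\,Q_\tau\,{\bf h}_\tau^T,$$
where individual summands may be half-integers but their half-integer parts cancel in pairs across shared edges. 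For the right-hand side I would represent $\zeta({\bf k})$ and $\zeta({\bf h})$ in $\OfS$ by the curves of Figure~\ref{Zn}, perturb $\zeta({\bf h})$ so that the two multicurves are transverse, and push any crossings occurring inside the bigons $B_e$ into the adjacent triangles via a small isotopy within $\OfS$. This yields
$$\langle \zeta({\bf k}),\zeta({\bf h})\rangle_n \;=\; \sum_{\tau\in\mathbb F_\lambda} I_\tau({\bf k},{\bf h}) \;\in\; \mathbb Z_n,$$
where $I_\tau({\bf k},{\bf h})$ is the signed count of intersections inside the open triangle $\tau$.

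The remaining step is the local identity $\tfrac{1}{n}{\bf k}_\tau Q_\tau {\bf h}_\tau^T \equiv I_\tau({\bf k},{\bf h})\pmod n$, together with a verification that the leftover half-integer boundary contributions cancel correctly when paired across adjacent triangles. By bilinearity and the fact that both sides vanish on $n\mathbb Z^{V_\tau}\subset \mathcal B_\tau$ modulo $n$, this reduces to the nine cases
$$\tfrac{1}{n}\,{\bf k}_i Q_{\mathbb P_3}{\bf k}_j^T \;\equiv\; I_{\mathbb P_3}({\bf k}_i,{\bf k}_j)\pmod n,\qquad i,j\in\{1,2,3\}.$$
The right-hand side is read off Figure~\ref{Zn}, and the left-hand side is a finite arrow count on $\Gamma_{\mathbb P_3}$, which is streamlined by the $\mathbb Z/3$ rotational symmetry cycling $(v_1,v_2,v_3)$ together with the relation ${\bf k}_1+{\bf k}_2+{\bf k}_3 = n\mathbf 1$.

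The hard part will be the edge-by-edge bookkeeping: combining the half-integer boundary contributions with the geometric perturbations used to make the representing curves transverse, in such a way that both decompositions agree locally rather than merely globally. In particular one must check that the connector arcs in each $B_e$ can be routed so that the contribution of the two adjacent triangles to both sides match simultaneously. Once this is handled, the remaining verification inside $\mathbb P_3$ is a short and explicit combinatorial calculation.
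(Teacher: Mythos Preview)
Your overall localization strategy matches the paper's first step, but the reduction to the nine cases $\tfrac1n{\bf k}_iQ_{\mathbb P_3}{\bf k}_j^T$ has a real gap: the claim that ``both sides vanish on $n\mathbb Z^{V_\tau}$ modulo $n$'' is false at the triangle level. If ${\bf k}_\tau=n{\bf b}_\tau$, then $\tfrac1n{\bf k}_\tau Q_\tau{\bf h}_\tau^T={\bf b}_\tau Q_\tau{\bf h}_\tau^T$, and this is \emph{not} congruent to $0$ modulo $n$ in general, because for ${\bf h}_\tau\in\mathcal B_\tau$ the vector ${\bf h}_\tau Q_\tau$ fails to lie in $(n\mathbb Z)^{V_\tau}$ at boundary vertices (Lemma~\ref{lem-matrix-HK}(c) controls ${\bf h}_\tau H_\tau$, not ${\bf h}_\tau Q_\tau$, and these differ on the boundary by the matrix $P_\tau$ of \eqref{eq-matrix-P-tau}). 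So the $n{\bf b}_\tau$ part is not a separate ``half-integer boundary'' correction that can be bookkept away after the nine cases; it genuinely interacts with the ${\bf k}_i$-coefficients of the \emph{neighbouring} triangle via the edge-compatibility ${\bf k}_{\tau'}(u_i)={\bf k}_{\tau''}(u_i)$, which forces the relation $(n-i)(s_1'-s_1'')+i(s_2'-s_3'')=n({\bf b}_{\tau''}(u_i)-{\bf b}_{\tau'}(u_i))$ of \eqref{eq-n-i-s-c}. The cancellation of ${\bf b}$-dependent terms happens only through this relation, so it cannot be separated from the $s_i$-computation.

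The paper accordingly does \emph{not} attempt a triangle-local identity. It writes $Q_\tau=H_\tau^T+P_\tau$, kills the $H_\tau^T$-part via Lemma~\ref{lem-matrix-HK}(c), observes that all surviving terms live on $\partial V_\tau$, and then regroups the whole sum by \emph{edges} rather than triangles. For each edge $e$ it combines the contributions of the two adjacent triangles and uses \eqref{eq-n-i-s-c} to carry out the explicit calculation \eqref{key111}--\eqref{key333}; the result is $(c_2-c_1)(s_1'-s_1'')\in\mathbb Z_n$, which is then matched directly with the single intersection point of $\zeta({\bf k})$ and $\zeta({\bf h})$ in the bigon $B_e$ (Figure~\ref{Fig;Zn-number}). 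Your proposed per-triangle intersection count $I_\tau$ is also ill-defined without specifying into which triangle each bigon crossing is pushed, so the paper's edge-based matching is both where the content lies and the natural way to make the geometric side well-defined.
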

\begin{proof}
    Let ${\bf k}, {\bf h}\in \mathcal B_\lambda$.
    Recall that there is an algebra embedding
    \begin{align*}
       \mathcal Z_{\hat{\omega}}^{\rm bl}(\fS,\lambda)\hookrightarrow \otimes_{\tau\in\mathbb F_\lambda}
       \mathcal Z_{\hat{\omega}}^{\rm bl}(\tau),\quad
       Z^{\bf t}\mapsto \otimes_{\tau\in\mathbb F_\lambda} Z^{{\bf t}_\tau}\text{ for ${\bf t}\in\mathcal B_\lambda$}
        \qquad\text{(see \eqref{eq-splitting-torus}).}
    \end{align*}
    This embedding implies
    \begin{align}\label{eq-kh-QQQQQ}
        {\bf k}Q_\lambda{\bf h}^T
        =\sum_{\tau\in\mathbb F_\lambda}{\bf k}_\tau Q_\tau {\bf h}_\tau^T\qquad \text{for any ${\bf k},{\bf h}\in\mathcal B_\lambda$}.
    \end{align}

    For $\tau\in\mathbb F_\lambda$, label the three vertices of $\tau$.
    Suppose that 
    ${\bf k}_\tau=s_1{\bf k}+s_2{\bf k}_2 + s_3{\bf k}_3 + n{\bf b}_\tau\in\mathcal B_\tau.$
    Then
    \begin{align}\label{eq-ah-n-bQh}
        {\bf k}_\tau Q_\tau {\bf h}_\tau^T
        ={\bf a}_\tau{\bf h}_\tau^T + n{\bf b}_\tau Q_\tau{\bf h}_\tau^T,
    \end{align}
    where $ {\bf a}_\tau= (s_1{\bf k}+s_2{\bf k}_2 + s_3{\bf k}_3)Q_\tau.$
    We use $\mathring{V}_\tau$ to denote the set of small vertices in the interior of $\tau$, and $\partial V_\tau$ to denote $V_\tau\setminus\mathring{V}_\tau$.
    It is straightforward to verify that
    ${\bf a}_\tau(v)=0$ for every $v\in\iV_\tau$.
    Hence,
    \begin{align}\label{eq-ah-tau}
        {\bf a}_\tau{\bf h}_\tau^T
        =\sum_{v\in\pV_\tau}
        {\bf a}_\tau(v) {\bf h}_\tau(v).
    \end{align}

    Define $P_\tau= Q_\tau-H_\tau^T$, where $H_\tau$ is given in \eqref{def-Hn-non-boundary} and \eqref{def-Hn-boundary}.
    Then $P_\tau(u,v)=0$ unless $u$ and $v$ lie on the same boundary component of $\tau$. If they do, we have
    \begin{align}\label{eq-matrix-P-tau}
        P_\tau(u,v)=
        \begin{cases}
            -1 & u=v,\\
            \frac{1}{2} & \text{if there is an arrow between $u$ and $v$,}\\
            0 & \text{otherwise.}
        \end{cases}
    \end{align}
    Define ${\bf d}_\tau={\bf b}_\tau P_\tau$.
    Then
    \begin{equation}\label{eq-bQh-P}
        \begin{split}
            {\bf b}_\tau Q_\tau {\bf h}_\tau^T
            &={\bf b}_\tau H_\tau^T {\bf h}_\tau^T +
            {\bf b}_\tau P_\tau {\bf h}_\tau^T\\
            &= {\bf b}_\tau P_\tau {\bf h}_\tau^T \in\mathbb Z_n 
            \quad \text{($\because$ Lemma~\ref{lem-matrix-HK}(c) and ${\bf h}_\tau\in\mathcal B_\tau$)}\\
            &=\sum_{\substack{\text{$u$ and $v$ lie on the same}\\ \text{boundary component of $\tau$}}} {\bf b}_\tau (u) P_\tau(u,v) {\bf h}_\tau (v)\in\mathbb Z_n\\
            &=\sum_{v\in\pV_\tau} {\bf d}_\tau(v){\bf h}_\tau(v)\in\mathbb Z_n.
        \end{split}
    \end{equation}

Equations \eqref{eq-kh-QQQQQ}, \eqref{eq-ah-n-bQh}, \eqref{eq-ah-tau}, and \eqref{eq-bQh-P} together yield
\begin{align}\label{eq-kkkkkkk}
   \frac{1}{n}\, {\bf k}_\tau Q_\tau {\bf h}_\tau^T=\frac{1}{n}
   \sum_{e\in\lambda}\left(
   \sum_{\tau\in\mathbb F_\lambda\text{ adjacent to $e$}}\left(
   \sum_{\text{small vertex }v\in e}
   {\bf a}_\tau(v) {\bf h}_\tau(v) + n {\bf d}_\tau(v) {\bf h}_\tau(v)
   \right)\right)
   \in\mathbb Z_n.
\end{align}

Let $e\in\lambda$, and orient $e$.
Let $\tau'$ and $\tau''$ be the two triangles adjacent to $e$, with $\tau'$ lying on the right when traversing $e$ according to this orientation.  
Label the small vertices of $e$ by $u_1,\dots,u_{n-1}$ in the order induced by the orientation of $e$ (see Figure~\ref{Fig;labeling-uwx}).  
Label the small vertices in $\tau'$ (resp. $\tau''$) connecting $u_1,\dots,u_{n-1}$ by
$x_1,\cdots,x_n$ (resp. $w_1,\cdots,w_n$)
(see Figure~\ref{Fig;labeling-uwx}).
The three punctures of $\tau'$ (resp. $\tau''$) are denoted by $v_1',v_2',v_3'$ (resp. $v_1'',v_2'',v_3''$) (see Figure~\ref{Fig;labeling-uwx}).  

\begin{figure}[h]
    \centering
    \includegraphics[width=100pt]{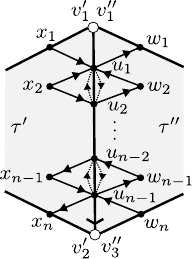}
    \caption{Labeling of small vertices in $\tau'$ and $\tau''$.}\label{Fig;labeling-uwx}
\end{figure}

Suppose that 
\begin{align}
    {\bf k}_{\tau'}=s_1'{\bf k}_1 + s_2'{\bf k}_2 + s_3'{\bf k}_3 + n{\bf b}_{\tau'}
    \text{ and }
    {\bf k}_{\tau''}=s_1''{\bf k}_1 + s_2''{\bf k}_2 + s_3''{\bf k}_3 + n{\bf b}_{\tau''}.
\end{align}
Define 
\begin{align*}
    {\bf c}_{\tau'}= s_1'{\bf k}_1 + s_2'{\bf k}_2 + s_3'{\bf k}_3,\quad
    {\bf a}_{\tau'}={\bf c}_{\tau'} Q_{\tau'};\\
    {\bf c}_{\tau''}= s_1''{\bf k}_1 + s_2''{\bf k}_2 + s_3''{\bf k}_3,\quad
    {\bf a}_{\tau''}={\bf c}_{\tau''} Q_{\tau''}.
\end{align*}
Note that the quiver $\Gamma_{\tau'}$ (resp. $\Gamma_{\tau''}$) is the left (resp. right) part of Figure~\ref{Fig;labeling-uwx}.
For each $1\leq i\leq n-1$, we have
\begin{align*}
    {\bf a}_{\tau'}(u_i)
    &= {\bf c}_{\tau'}(x_i) - {\bf c}_{\tau'}(x_{i+1})
    +\frac{1}{2} {\bf c}_{\tau'}(u_{i+1})
    -\frac{1}{2} {\bf c}_{\tau'}(u_{i-1})
    =s_1'-s_2'+\frac{1}{2} {\bf c}_{\tau'}(u_{i+1})
    -\frac{1}{2} {\bf c}_{\tau'}(u_{i-1}),\\
    {\bf a}_{\tau''}(u_i)
    &=-{\bf c}_{\tau''}(w_i) + {\bf c}_{\tau''}(w_{i+1})
    -\frac{1}{2} {\bf c}_{\tau''}(u_{i+1})
    +\frac{1}{2} {\bf c}_{\tau''}(u_{i-1})
    =-s_1''+s_3''-\frac{1}{2} {\bf c}_{\tau''}(u_{i+1})
    +\frac{1}{2} {\bf c}_{\tau''}(u_{i-1}),
\end{align*}
where ${\bf c}_{\tau'}(u_0) = {\bf c}_{\tau'}(u_n)={\bf c}_{\tau''}(u_0)= {\bf c}_{\tau''}(u_n)=0$.

Define 
\begin{align*}
    {\bf d}_{\tau'}= {\bf b}_{\tau'} P_{\tau'},\qquad
    {\bf d}_{\tau''}= {\bf b}_{\tau''} P_{\tau''}.
\end{align*}
For each $1\leq i\leq n-1$, we have
\begin{align*}
    {\bf d}_{\tau'}(u_i)
    &=-{\bf b}_{\tau'}(u_i) +\frac{1}{2}{\bf b}_{\tau'}(u_{i+1}) + \frac{1}{2}{\bf b}_{\tau'}(u_{i-1}),\\
    {\bf d}_{\tau''}(u_i)
    &=-{\bf b}_{\tau''}(u_i) +\frac{1}{2}{\bf b}_{\tau''}(u_{i+1}) + \frac{1}{2}{\bf b}_{\tau''}(u_{i-1}),
\end{align*}
where ${\bf b}_{\tau'}(u_0) = {\bf b}_{\tau'}(u_n)={\bf b}_{\tau''}(u_0)= {\bf b}_{\tau''}(u_n)=0$.

For each $1\leq i\leq n-1$, we have
$${\bf c}_{\tau'}(u_i) + n{\bf b}_{\tau'} (u_i)= {\bf c}_{\tau''}(u_i) + n{\bf b}_{\tau''} (u_i).$$ 
Hence, 
\begin{align}\label{eq-n-i-s-c}
(n-i)(s_1'-s_1'') + i(s_2'-s_3'')=
    {\bf c}_{\tau'}(u_i)- {\bf c}_{\tau''}(u_i)
    =n({\bf b}_{\tau''}(u_i)- {\bf b}_{\tau'}(u_i)).
\end{align}

It follows that
\begin{equation}\label{eq-1111111}
    \begin{split}
        &{\bf a}_{\tau'}(u_i) + n{\bf d}_{\tau'}(u_i) + {\bf a}_{\tau''}(u_i) + n{\bf d}_{\tau''}(u_i)\\
        =& s_1'-s_2'-s_1''+s_3''+\frac{n}{2}\big({\bf b}_{\tau''}(u_{i+1})-{\bf b}_{\tau'}(u_{i+1})\big)-
        \frac{n}{2}\big({\bf b}_{\tau''}(u_{i-1})-{\bf b}_{\tau'}(u_{i-1})\big)\\
        &-n{\bf b}_{\tau'}(u_i) - n{\bf b}_{\tau''}(u_i)+\frac{n}{2}{\bf b}_{\tau'}(u_{i+1}) + \frac{n}{2}{\bf b}_{\tau'}(u_{i-1})
        +\frac{n}{2}{\bf b}_{\tau''}(u_{i+1}) + \frac{n}{2}{\bf b}_{\tau''}(u_{i-1})\\
        =& s_1'-s_2'-s_1''+s_3'' + 
        n({\bf b}_{\tau'}(u_{i-1}) - {\bf b}_{\tau'}(u_{i})) + 
        n({\bf b}_{\tau''}(u_{i+1}) - {\bf b}_{\tau'}(u_{i})).
    \end{split}
\end{equation}

Equation~\eqref{eq-n-i-s-c} shows that
\begin{align}\label{s1-s2-s3-s4}
   s_1'-s_2'-s_1''+s_3''\in n\mathbb Z.
\end{align}
Suppose that
\begin{align}\label{eq-h-k1-k2-k3}
    {\bf h}_{\tau'} = c_1{\bf k}_1 + c_2{\bf k}_2+
c_3{\bf k}_3\in\mathbb Z_n^{V_{\tau'}}.
\end{align}
Then
\begin{equation}\label{key111}
    \begin{split}
        &\frac{1}{n}\sum_{1\leq i\leq n-1}({\bf a}_{\tau'}(u_i) + n{\bf d}_{\tau'}(u_i)){\bf h}_{\tau'}(u_i)
        + \frac{1}{n}\sum_{1\leq i\leq n-1}({\bf a}_{\tau''}(u_i) + n{\bf d}_{\tau''}(u_i)){\bf h}_{\tau''}(u_i)\\
        =& \frac{1}{n}\sum_{1\leq i\leq n-1}\Big({\bf a}_{\tau'}(u_i) + n{\bf d}_{\tau'}(u_i) + {\bf a}_{\tau''}(u_i) + n{\bf d}_{\tau''}(u_i)
        \Big){\bf h}_{\tau'}(u_i)
        \quad\text{($\because {\bf h}_{\tau'}(u_i)={\bf h}_{\tau''}(u_i)$)}\\
        =&\sum_{1\leq i\leq n-1}\Big({\bf b}_{\tau'}(u_{i-1}) - {\bf b}_{\tau'}(u_{i})+{\bf b}_{\tau''}(u_{i+1}) - {\bf b}_{\tau'}(u_{i})
        \Big){\bf h}_{\tau'}(u_i)\\
        &+ \frac{1}{n}(s_1'-s_2'-s_1''+s_3'')\sum_{1\leq i\leq n-1} {\bf h}_{\tau'}(u_i)
        \quad\text{($\because$\eqref{eq-1111111})}.
    \end{split}
\end{equation}

Next, we compute the two terms separately. First,
\begin{equation}\label{key222}
    \begin{split}
        &\sum_{1\leq i\leq n-1}\Big({\bf b}_{\tau'}(u_{i-1}) - {\bf b}_{\tau'}(u_{i})+{\bf b}_{\tau''}(u_{i+1}) - {\bf b}_{\tau'}(u_{i})
        \Big){\bf h}_{\tau'}(u_i)\\
        =&(c_2-c_1)\sum_{1\leq i\leq n-1}i\Big({\bf b}_{\tau'}(u_{i-1}) - {\bf b}_{\tau'}(u_{i})+{\bf b}_{\tau''}(u_{i+1}) - {\bf b}_{\tau'}(u_{i})
        \Big)\in\mathbb Z_n
        \quad\text{($\because$\eqref{eq-h-k1-k2-k3})}\\
        =& (c_2-c_1)\sum_{1\leq i\leq n-1}
        \big({\bf b}_{\tau'}(u_{i})- {\bf b}_{\tau''}(u_{i}) \big)\in\mathbb Z_n\\
        =& \frac{1}{n}(c_2-c_1)\left(
        (s_1''-s_1')\sum_{1\leq i\leq n-1}(n-i)
        +(s_3''-s_2')\sum_{1\leq i\leq n-1}i
        \right)\in\mathbb Z_n
        \quad\text{($\because$\eqref{eq-n-i-s-c})}\\
        =& \frac{n-1}{2}(c_2-c_1)
        (s_1''-s_1'+s_3''-s_2')\in\mathbb Z_n.
    \end{split}
\end{equation}

On the other hand,
\begin{equation}\label{key333}
    \begin{split}
        &\frac{1}{n}(s_1'-s_2'-s_1''+s_3'')\sum_{1\leq i\leq n-1} {\bf h}_{\tau}(u_i)\\
        =&\frac{1}{n}(s_1'-s_2'-s_1''+s_3'')\sum_{1\leq i\leq n-1}i(c_2-c_1)\in\mathbb Z_n
        \quad\text{($\because$\eqref{s1-s2-s3-s4} and \eqref{eq-h-k1-k2-k3})}\\
        =& \frac{n-1}{2}(c_2-c_1)
        (s_1'-s_2'-s_1''+s_3'')\in\mathbb Z_n.
    \end{split}
\end{equation}

Combining \eqref{key111}, \eqref{key222}, and \eqref{key333}, we obtain
\begin{align*}
    &\frac{1}{n}\sum_{1\leq i\leq n-1}({\bf a}_{\tau'}(u_i) + n{\bf d}_{\tau'}(u_i)){\bf h}_{\tau'}(u_i)
        + \frac{1}{n}\sum_{1\leq i\leq n-1}({\bf a}_{\tau''}(u_i) + n{\bf d}_{\tau''}(u_i)){\bf h}_{\tau''}(u_i)\\
        =&(c_2-c_1)(s_2'-s_3'')
        =(c_2-c_1)(s_1'-s_1'')\in\mathbb Z_n.
\end{align*}

Note that the value $(c_2-c_1)(s_1'-s_1'')$ depends on $e$.  
Therefore, Equation~\eqref{eq-kkkkkkk} shows
\begin{align}\label{prod-k-k-h-Q}
     \frac{1}{n}\, {\bf k}_\tau Q_\tau {\bf h}_\tau^T = \sum_{e\in\lambda}\Big(
     (c_2-c_1)(s_1'-s_1'') \Big)\in\mathbb Z_n.
\end{align}

Finally, observe that the algebraic intersection number between $\zeta({\bf k})$ and $\zeta({\bf h})$ in Figure~\ref{Fig;Zn-number} is precisely $(c_2-c_1)(s_1'-s_1'')$. Thus,
\begin{align}\label{n-prod-k-k-h-Q}
\langle \zeta({\bf k}), \zeta({\bf h}) 
\rangle_n=
    \sum_{e\in\lambda}\Big(
     (c_2-c_1)(s_1'-s_1'') \Big).
\end{align}

Equations~\eqref{prod-k-k-h-Q} and \eqref{n-prod-k-k-h-Q} complete the proof.

\begin{figure}[h]
    \centering
    \includegraphics[width=100pt]{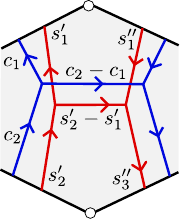}
    \caption{Local configuration of $\zeta({\bf k})$ and $\zeta({\bf h})$ near $e$.}\label{Fig;Zn-number}
\end{figure}
    
\end{proof}

Before discussing Theorem~\ref{thm-rank-Z}, we first prove Corollary~\ref{cor-rank-Bd-la} and Lemmas~\ref{lem-summand-B} and \ref{rank-111}, which will be used in the proof of Theorem~\ref{thm-rank-Z}.

Let $d$ be a positive integer with $d\mid n$.
Define
\begin{align}\label{Hnnd}
    \Hnn_d:=
    \{C\in\Hnn\mid \langle C, D\rangle_n \in d\mathbb Z_n \text{ for any $D\in\Hnn$}\}.
\end{align}
Let $g$ be the genus of $\fS$.
It is well-known that there exists a $\mathbb Z_n$-basis
$\{C_j,D_j\in\Hnn \mid 1\leq j\leq g\}$ of $\Hnn$ such that
$$\langle C_i, D_j\rangle_n= \delta_{ij},
\quad \langle C_i, C_j\rangle_n= 0,\quad\text{and }
\langle D_i, D_j\rangle_n= 0$$
for $1\leq i,j\leq g$.
Hence, we have 
\begin{align}\label{Hnnnnnd}
    \Hnn_d=d\Hnn.
\end{align}

\begin{corollary}\label{cor-rank-Bd-la}
    Let $d$ be a positive integer with
    $d\mid n$. Then we have 
    $$\left| \dfrac{\mathcal B_\lambda}
    {\mathcal B_{\lambda,d}}\right |=d^{2g},$$ where $g$ is the genus of $\fS$.
\end{corollary}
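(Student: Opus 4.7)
The plan is to translate the condition defining $\mathcal{B}_{\lambda,d}$ into a condition on the surface homology via $\zeta$, and then reduce the computation to the well-understood quotient $H_1(\overline{\fS},\mathbb Z_n)/dH_1(\overline{\fS},\mathbb Z_n)$.

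First, I would rewrite $\mathcal{B}_{\lambda,d}$ using the bilinear form \eqref{eq-bracket-bal}. By Lemma~\ref{lem-balanced-property}, $\frac{1}{n}{\bf k}Q_\lambda{\bf t}^T\in\mathbb{Z}$ for all ${\bf k},{\bf t}\in\mathcal B_\lambda$, so the condition ${\bf k}Q_\lambda{\bf t}^T=0$ in $\mathbb{Z}_{dn}$ is equivalent to $\langle{\bf k},{\bf t}\rangle\in d\mathbb{Z}$. Since $d\mid n$, this in turn is equivalent to $\langle{\bf k},{\bf t}\rangle\equiv 0\pmod d$ in $\mathbb{Z}_n$, i.e.\ $\langle{\bf k},{\bf t}\rangle\in d\mathbb{Z}_n$.

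Next, Proposition~\ref{prop-bilinear-key} gives $\langle{\bf k},{\bf t}\rangle = \langle\zeta({\bf k}),\zeta({\bf t})\rangle_n\in\mathbb{Z}_n$, so
\[
\mathcal{B}_{\lambda,d}=\bigl\{{\bf k}\in\mathcal{B}_\lambda\bigm|\langle\zeta({\bf k}),\zeta({\bf t})\rangle_n\in d\mathbb{Z}_n \text{ for all }{\bf t}\in\mathcal{B}_\lambda\bigr\}.
\]
Since $\zeta\colon\mathcal B_\lambda\twoheadrightarrow H_1(\overline{\fS},\mathbb Z_n)$ is surjective (Lemma~\ref{lem-surj-zeta-def}), as ${\bf t}$ ranges over $\mathcal B_\lambda$ the class $\zeta({\bf t})$ ranges over all of $H_1(\overline{\fS},\mathbb Z_n)$. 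Recalling \eqref{Hnnd}, this means exactly $\mathcal{B}_{\lambda,d}=\zeta^{-1}\bigl(H_1(\overline{\fS},\mathbb Z_n)_d\bigr)$. In particular $\ker\zeta\subset\mathcal{B}_{\lambda,d}$, so $\zeta$ induces a group isomorphism
\[
\mathcal{B}_\lambda/\mathcal{B}_{\lambda,d}\;\xrightarrow{\;\cong\;}\;H_1(\overline{\fS},\mathbb Z_n)/H_1(\overline{\fS},\mathbb Z_n)_d.
\]

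Finally, I would invoke \eqref{Hnnnnnd}, which identifies $H_1(\overline{\fS},\mathbb Z_n)_d=dH_1(\overline{\fS},\mathbb Z_n)$ via the standard symplectic basis $\{C_j,D_j\}$. Since $\overline{\fS}$ is a closed oriented surface of genus $g$, $H_1(\overline{\fS},\mathbb Z_n)\cong\mathbb Z_n^{2g}$, so
\[
\bigl|H_1(\overline{\fS},\mathbb Z_n)/dH_1(\overline{\fS},\mathbb Z_n)\bigr|=\bigl|\mathbb Z_n/d\mathbb Z_n\bigr|^{2g}=d^{2g},
\]
which gives the desired conclusion. There is no substantial obstacle left at this stage: the heavy lifting was done in Proposition~\ref{prop-bilinear-key} (identifying the two bilinear forms) and Lemma~\ref{lem-surj-zeta-def} (surjectivity of $\zeta$), so the corollary is essentially a clean algebraic consequence of those two results combined with the standard structure of $H_1$ of a closed surface with $\mathbb Z_n$-coefficients.
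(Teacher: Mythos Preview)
Your proof is correct and follows essentially the same approach as the paper. The paper's proof is terser---it simply asserts that Proposition~\ref{prop-bilinear-key} yields the group isomorphism $\mathcal{B}_\lambda/\mathcal{B}_{\lambda,d}\cong H_1(\overline{\fS},\mathbb Z_n)/H_1(\overline{\fS},\mathbb Z_n)_d$ and then invokes \eqref{Hnnnnnd}---whereas you spell out explicitly why that isomorphism holds (rewriting the defining condition via $\langle\cdot,\cdot\rangle$, applying Proposition~\ref{prop-bilinear-key}, and using surjectivity of $\zeta$ to identify $\mathcal{B}_{\lambda,d}=\zeta^{-1}(H_1(\overline{\fS},\mathbb Z_n)_d)$).
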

\begin{proof}

    Proposition~\eqref{prop-bilinear-key} shows that there is a group isomorphism
    between $\dfrac{\mathcal B_\lambda}
    {\mathcal B_{\lambda,d}}$ and 
    $\dfrac{\Hnn}
    {\Hnn_d}$.
    Then we have 
    \begin{align*}
        \left| \dfrac{\mathcal B_\lambda}
    {\mathcal B_{\lambda,d}}\right |
    =\left| \dfrac{\Hnn}
    {\Hnn_d}\right |
=\left| \dfrac{\Hnn}
    {d\Hnn}\right |=d^{2g},
    \end{align*}
    where the second equality comes from \eqref{Hnnnnnd}.
\end{proof}

\def\Zee{\mathcal Z_{\hat\eta}^{\rm bl}(\fS,\lambda)}
\def\Zoo{\mathcal Z_{\hat\omega}^{\rm bl}(\fS,\lambda)}

\begin{lemma}\label{lem-summand-B}
    $\BBB^{\circ}$ is a direct summand of $\mathcal B_{\lambda,d}$,
    where $\BBB^\circ$ is defined in  \eqref{def-B-lambda-circ}.
\end{lemma}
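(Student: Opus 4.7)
The plan is to exhibit $\mathcal B_\lambda^{\circ}$ as the kernel of a projection onto a torsion-free abelian group, and then invoke the standard fact that a short exact sequence of finitely generated abelian groups whose quotient is free always splits. Concretely, since $\mathcal B_\lambda^{\circ}$ and $\mathcal B_{\lambda,d}$ both sit inside $\mathbb Z^{V_\lambda}$, they are finitely generated free abelian groups, so once we know that $\mathcal B_{\lambda,d}/\mathcal B_\lambda^{\circ}$ is torsion-free, it will automatically be free of finite rank, and the short exact sequence
\[
0 \longrightarrow \mathcal B_\lambda^{\circ} \longrightarrow \mathcal B_{\lambda,d} \longrightarrow \mathcal B_{\lambda,d}/\mathcal B_\lambda^{\circ} \longrightarrow 0
\]
will split.

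The first step is to check the inclusion $\mathcal B_\lambda^{\circ} \subset \mathcal B_{\lambda,d}$, which is immediate from the definitions \eqref{def-B-lambda-circ} and \eqref{def-B-lambda-m}: if ${\bf k}Q_\lambda = {\bf 0}$ in $\mathbb Z^{V_\lambda}$, then in particular ${\bf k}Q_\lambda {\bf t}^T = 0$ in $\mathbb Z$, hence vanishes in $\mathbb Z_{dn}$, for every ${\bf t}\in\mathcal B_\lambda$. The second and only substantive step is torsion-freeness of the quotient. Suppose $m[{\bf k}] = 0$ in $\mathcal B_{\lambda,d}/\mathcal B_\lambda^{\circ}$ for some positive integer $m$ and some ${\bf k}\in\mathcal B_{\lambda,d}$; then $m{\bf k}\in \mathcal B_\lambda^{\circ}$, which means $m({\bf k}Q_\lambda) = {\bf 0}$ as a row vector with entries in $\frac{1}{2}\mathbb Z$. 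Since this ambient group is torsion-free, we conclude ${\bf k}Q_\lambda = {\bf 0}$, so ${\bf k}\in \mathcal B_\lambda^{\circ}$ and $[{\bf k}] = 0$.

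I do not foresee any real obstacle. The argument is essentially a one-line torsion-freeness check followed by the standard splitting principle for finitely generated abelian groups, and neither invokes the more delicate bilinear-form analysis of Proposition \ref{prop-bilinear-key} nor any triangulation-dependent construction. The only point that deserves a brief sanity check is that the defining condition ${\bf k}Q_\lambda = {\bf 0}$ of $\mathcal B_\lambda^{\circ}$ is genuinely a statement about vanishing in the torsion-free group $(\tfrac{1}{2}\mathbb Z)^{V_\lambda}$ rather than modulo some integer, which is clear from \eqref{def-B-lambda-circ}.
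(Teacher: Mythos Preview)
Your proof is correct and is essentially the same as the paper's. The paper defines the map $f\colon \mathcal B_{\lambda,d}\to \mathbb Z^{V_\lambda}$, ${\bf k}\mapsto {\bf k}Q_\lambda$, observes that $\ker f=\mathcal B_\lambda^{\circ}$ and that $\im f$ is free (being a subgroup of $\mathbb Z^{V_\lambda}$), and concludes the short exact sequence splits; your torsion-freeness check on the quotient is just a rephrasing of the same observation.
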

\begin{proof}
Obviously, we have $\BBB^\circ\subset\mathcal B_{\lambda,d}$.

     Define the homomorphism
\[
f\colon \mathcal B_{\lambda,d}\rightarrow \mathbb Z^{V_\lambda},\qquad 
{\bf k}\mapsto {\bf k}Q_\lambda.
\]
Proposition~\ref{Prop-central-generic} shows that 
\[
0\rightarrow \BBB^{\circ}\rightarrow
\mathcal B_{\lambda,d}\xrightarrow{f} \im f\longrightarrow 0
\]
is a short exact sequence. Then this short exact sequence is split since $\im f$ is a free abelian group.  
This completes the proof. 
\end{proof}

\begin{lemma}\label{rank-111}
    We have 
    $$\left|\frac{\mathcal B_{\lambda,d}}{N\mathcal B_{\lambda,d}+\mathcal B_{\lambda,d}^\circ}\right|= N^{2(n^2-1)(g-1) + n(n-1)m},$$
    where $g$ is the genus of $\fS$ and $m$ is the number of punctures in $\fS$.
\end{lemma}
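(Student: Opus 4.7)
The plan is to split off the radical $\mathcal B_\lambda^\circ$ of the bilinear form $\langle\cdot,\cdot\rangle$ from $\mathcal B_{\lambda,d}$ and reduce the problem to a rank count. Since any ${\bf k}\in\mathcal B_\lambda^\circ$ satisfies ${\bf k}Q_\lambda={\bf 0}$, it automatically lies in $\mathcal B_{\lambda,d}$; in particular $\mathcal B_{\lambda,d}^\circ=\mathcal B_\lambda^\circ$. Lemma~\ref{lem-summand-B} then lets me fix a free abelian subgroup $\mathcal C\subset \mathcal B_{\lambda,d}$ with $\mathcal B_{\lambda,d}=\mathcal B_\lambda^\circ\oplus \mathcal C$, so that
\[
\frac{\mathcal B_{\lambda,d}}{N\mathcal B_{\lambda,d}+\mathcal B_\lambda^\circ}
\;\cong\;\mathcal C/N\mathcal C\;\cong\;(\mathbb Z/N\mathbb Z)^{\mathrm{rank}\,\mathcal C}.
\]
It therefore suffices to show $\mathrm{rank}\,\mathcal C=2(n^2-1)(g-1)+n(n-1)m$.

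To this end I would compute $\mathrm{rank}\,\mathcal B_{\lambda,d}$ and $\mathrm{rank}\,\mathcal B_\lambda^\circ$ separately. Because $d\,\mathcal B_\lambda\subset \mathcal B_{\lambda,d}$ (since $\langle d{\bf k},{\bf t}\rangle\in d\mathbb Z$), the subgroup $\mathcal B_{\lambda,d}$ has finite index in $\mathcal B_\lambda$ and hence the same rank, which equals $|V_\lambda|$ in view of the inclusions $(n\mathbb Z)^{V_\lambda}\subset \mathcal B_\lambda\subset \mathbb Z^{V_\lambda}$. An Euler-characteristic count for a triangulation of a connected punctured surface of genus $g$ with $m$ punctures gives $|\mathbb F_\lambda|=4g-4+2m$ triangles and $|\lambda|=6g-6+3m$ edges. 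Each ideal triangle contributes $\binom{n-1}{2}$ interior small vertices and $n-1$ small vertices on each of its three edges, and since self-folded triangles are excluded every edge of $\lambda$ is shared by two distinct triangles. Summing,
\[
|V_\lambda|=\binom{n-1}{2}(4g-4+2m)+(n-1)(6g-6+3m)=(n^2-1)(2g-2+m).
\]
On the other hand, Proposition~\ref{Prop-central-generic} together with Lemma~\ref{lem-free-ab}(b) identifies $\{{\bf b}(\lambda,p,i)\mid p\in\mathcal P,\,1\le i\le n-1\}$ as a $\mathbb Z$-basis of $\mathcal B_\lambda^\circ$, giving $\mathrm{rank}\,\mathcal B_\lambda^\circ=(n-1)m$.

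Subtracting,
\[
\mathrm{rank}\,\mathcal C=(n^2-1)(2g-2+m)-(n-1)m=2(n^2-1)(g-1)+n(n-1)m,
\]
which yields the claimed cardinality $N^{2(n^2-1)(g-1)+n(n-1)m}$. The only genuinely non-routine ingredient is Lemma~\ref{lem-summand-B}, already proved; the remainder of the argument is a direct Euler-characteristic vertex count combined with the explicit basis of $\mathcal B_\lambda^\circ$ supplied by Proposition~\ref{Prop-central-generic}. I do not anticipate any substantive obstacle beyond executing the vertex count carefully.
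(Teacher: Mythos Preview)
Your proof is correct and follows essentially the same approach as the paper: both use Lemma~\ref{lem-summand-B} to split off $\mathcal B_\lambda^\circ$ as a direct summand, identify the quotient as $(\mathbb Z/N\mathbb Z)^{|V_\lambda|-(n-1)m}$ via Corollary~\ref{cor-center-generic}(a), and finish with the formula $|V_\lambda|=(n^2-1)(2g-2+m)$. The only cosmetic difference is that you derive $|V_\lambda|$ directly from an Euler-characteristic count, whereas the paper cites \cite[equation~(23)]{LY23}.
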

\begin{proof}
Note that $(n\mathbb Z)^{V_\lambda}\subset \mathcal B_{\lambda,d}\subset \mathbb Z^{V_\lambda}$. Then $\mathcal B_{\lambda,d}$ is a free abelian group of rank $|V_\lambda|$.

    We have
\begin{align*}
   \left|\frac{\mathcal B_{\lambda,d}}{N\mathcal B_{\lambda,d}+\mathcal B_{\lambda,d}^\circ}\right|
   &=\left|\frac{\mathcal B_{\lambda,d}/\mathcal B_{\lambda,d}^\circ}{(N\mathcal B_{\lambda,d}+\mathcal B_{\lambda,d}^\circ)/\mathcal B_{\lambda,d}^\circ}\right|
   =\left|\frac{\mathcal B_{\lambda,d}/\mathcal B_{\lambda,d}^\circ}{N(\mathcal B_{\lambda,d}/\mathcal B_{\lambda,d}^\circ)}\right|
   =N^{\,|V_\lambda|-(n-1)m},
\end{align*}
where the last equality uses Corollary~\ref{cor-center-generic}(a) and Lemma~\ref{lem-summand-B}.  
Finally, \cite[equation (23)]{LY23} shows that 
\begin{align}\label{eq-V-lambda-rank}
    |V_\lambda|=2(n^2-1)(g-1)+(n^2-1)m,
\end{align}
where $V_\lambda$ in this paper corresponds to $\overline{V}_\lambda$ in \cite{LY23}.  
This completes the proof.
\end{proof}

The following theorem describes the rank of $\mathcal Z_{\hat\omega}^{\rm bl}(\fS,\lambda)$ over its center, representing the second main result of this paper.  
For the cases $n=2,3$, this rank coincides with the rank of the $\mathrm{SL}_n$-skein algebra over its center, which in turn equals the square of the maximal dimension of its irreducible representations \cite{kim2024unicity,unicity,frohman2021dimension}.  
Moreover, the techniques developed in the proof of this theorem may be applied to compute the rank of the $\mathrm{SL}_n$-skein algebra over its center.

\begin{theorem}\label{thm-rank-Z}
    Let $\fS$ be a triangulable connected punctured surface with a triangulation $\lambda$.
    Assume that we have the assumption \Rlabel{1}. Then $\Zoo$ is a free module over its center of rank 
    $$d^{2g} N^{2(n^2-1)(g-1) + n(n-1)m},$$ where $m$ is the number of punctures in $\fS$ and $g$ is the genus of $\fS$.
\end{theorem}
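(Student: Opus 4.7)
The plan is to reduce the computation to the two quotient counts that have already been prepared, namely Corollary~\ref{cor-rank-Bd-la} and Lemma~\ref{rank-111}, and then glue them via a short exact sequence of abelian groups.

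First I would invoke Lemma~\ref{lem-center-torus}(c) applied to the quantum torus $\mathcal Z_{\hat\omega}^{\rm bl}(\fS,\lambda)=\mathbb T_{\hat\omega}(Q_\lambda;\mathcal B_\lambda)$. This immediately gives that $\mathcal Z_{\hat\omega}^{\rm bl}(\fS,\lambda)$ is a \emph{free} module over its center (so the freeness part of the statement is automatic), and its rank equals $|\mathcal B_\lambda/B_{N''}|$, where
\[
B_{N''}=\{{\bf k}\in\mathcal B_\lambda\mid {\bf k}Q_\lambda{\bf t}^T=0\in\mathbb Z_{N''}\ \text{for all }{\bf t}\in\mathcal B_\lambda\}.
\]
By Theorem~\ref{thm-center-balanced-root-of-unity} (as already observed in \eqref{eq-rank-quotient-equality}) we have the identification
\[
B_{N''}=N\mathcal B_{\lambda,d}+\mathcal B_\lambda^{\circ}.
\]
Thus it remains to evaluate the single index $|\mathcal B_\lambda/(N\mathcal B_{\lambda,d}+\mathcal B_\lambda^{\circ})|$.

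Next I would observe that $\mathcal B_\lambda^{\circ}\subset \mathcal B_{\lambda,d}$ (trivially, since ${\bf k}Q_\lambda={\bf 0}$ forces ${\bf k}Q_\lambda{\bf t}^T=0$ in $\mathbb Z_{dn}$), and hence $N\mathcal B_{\lambda,d}+\mathcal B_\lambda^{\circ}\subset \mathcal B_{\lambda,d}\subset \mathcal B_\lambda$. This gives a short exact sequence of finite abelian groups
\[
0\longrightarrow\frac{\mathcal B_{\lambda,d}}{N\mathcal B_{\lambda,d}+\mathcal B_\lambda^{\circ}}\longrightarrow\frac{\mathcal B_\lambda}{N\mathcal B_{\lambda,d}+\mathcal B_\lambda^{\circ}}\longrightarrow\frac{\mathcal B_\lambda}{\mathcal B_{\lambda,d}}\longrightarrow 0,
\]
so that
\[
\Bigl|\mathcal B_\lambda/(N\mathcal B_{\lambda,d}+\mathcal B_\lambda^{\circ})\Bigr|=\Bigl|\mathcal B_\lambda/\mathcal B_{\lambda,d}\Bigr|\cdot\Bigl|\mathcal B_{\lambda,d}/(N\mathcal B_{\lambda,d}+\mathcal B_\lambda^{\circ})\Bigr|.
\]

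Finally I would plug in the two indices already computed: Corollary~\ref{cor-rank-Bd-la} gives $|\mathcal B_\lambda/\mathcal B_{\lambda,d}|=d^{2g}$, while Lemma~\ref{rank-111} gives $|\mathcal B_{\lambda,d}/(N\mathcal B_{\lambda,d}+\mathcal B_\lambda^{\circ})|=N^{2(n^2-1)(g-1)+n(n-1)m}$ (using that $\mathcal B_{\lambda,d}^\circ=\mathcal B_\lambda^{\circ}$, as $\mathcal B_\lambda^\circ\subset\mathcal B_{\lambda,d}$). Multiplying yields the claimed rank $d^{2g}\,N^{2(n^2-1)(g-1)+n(n-1)m}$.

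Once the two preparatory results are in hand, the argument is essentially bookkeeping; the substantive work has already been done in the center description (Theorem~\ref{thm-center-balanced-root-of-unity}), in the identification of $|\mathcal B_\lambda/\mathcal B_{\lambda,d}|$ via the bilinear form and its transport through $\zeta$ (Proposition~\ref{prop-bilinear-key}), and in the splitting lemma (Lemma~\ref{lem-summand-B}) underlying Lemma~\ref{rank-111}. So there is no real obstacle in this theorem beyond carefully citing these inputs; the only point that must be checked explicitly is the equality $\mathcal B_\lambda^\circ=\mathcal B_{\lambda,d}^\circ$, which is immediate from $\mathcal B_\lambda^\circ\subset \mathcal B_{\lambda,d}$.
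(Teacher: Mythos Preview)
Your proposal is correct and follows essentially the same approach as the paper's own proof: reduce the rank to $|\mathcal B_\lambda/(N\mathcal B_{\lambda,d}+\mathcal B_\lambda^\circ)|$ via Lemma~\ref{lem-center-torus}(c) and Theorem~\ref{thm-center-balanced-root-of-unity}, then factor this index through $\mathcal B_{\lambda,d}$ and apply Corollary~\ref{cor-rank-Bd-la} and Lemma~\ref{rank-111}. Your explicit remark that $\mathcal B_{\lambda,d}^\circ=\mathcal B_\lambda^\circ$ is a welcome clarification that the paper leaves implicit.
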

\begin{proof}
    Due to equation \eqref{eq-rank-quotient-equality}, it suffices to show 
    \begin{align}\label{eq-rank-quotient-equality-a}
     \left|\dfrac{\mathcal B_\lambda}{N \mathcal B_{\lambda,d} +  \mathcal B_\lambda^{\circ}} \right|= d^{2g} N^{2(n^2-1)(g-1) + n(n-1)m}.
 \end{align}
We have 
\begin{align*}
    \left|\dfrac{\mathcal B_\lambda}{N \mathcal B_{\lambda,d} +  \mathcal B_\lambda^{\circ}} \right|
    = \left|\dfrac{\mathcal B_\lambda}{\mathcal B_{\lambda,d}} \right|
    \left|\dfrac{\mathcal B_{\lambda,d}}{N \mathcal B_{\lambda,d} +  \mathcal B_\lambda^{\circ}} \right|
\end{align*}
Then Corollary~\ref{cor-rank-Bd-la} and 
Lemma~\ref{rank-111} complete the proof.
 
\end{proof}

We conclude this subsection with a remark that, while not directly related to the present paper, is of independent interest.

\begin{remark}\label{rem-d-trace}
    Let $\fS$ be a triangulable punctured surface with a triangulation $\lambda$.
An $n$-web $\alpha$ in $\fS \times (-1,1)$ naturally determines a homology class $[\alpha] \in H_1(\overline{\fS} \times (-1,1), \mathbb Z_n) = H_1(\overline{\fS}, \mathbb Z_n)$ as follows.  
For each loop component of $\alpha$, choose a point and declare it to be a vertex. In this way, $\alpha$ becomes a directed graph, and hence defines a $1$-chain in $\overline{\fS} \times (-1,1)$ with integer coefficients. This $1$-chain is closed modulo $n$, and thus determines an element of $H_1(\overline{\fS} \times (-1,1), \mathbb Z_n)$, independent of the choice of vertices on loop components.  

Define
\[
    \cS_{1}(\fS)_d := \operatorname{span}_{\mathbb C}\bigl\{
        \text{$n$-web $\alpha$ in $\fS \times (-1,1)$ } \,\bigm|\,
        [\alpha] \in \Hnn_d
    \bigr\}.
\]
It is well known that $\cS_{1}(\fS)_d$ is a subalgebra of $\cS_{\bar\omega}(\fS)$ \cite{wang2024TQFT,KLW}.  

Cut $\fS$ into a collection of triangles using all ideal arcs of $\lambda$.  
By combining Theorem~\ref{thm-trace-cut}, Proposition~\ref{prop-bilinear-key}, and \cite[Theorem~10.5]{LY23}, one obtains that the quantum trace map
\[
    \tr \colon \cS_{1}(\fS) \longrightarrow \mathcal Z_1^{\rm bl}(\fS,\lambda)
\]
restricts to
\[
    \tr \colon \cS_{1}(\fS)_d \longrightarrow \mathcal Z_1^{\rm bl}(\fS,\lambda)_d,
\]
where $\mathcal Z_1^{\rm bl}(\fS,\lambda)_d$ is defined in \eqref{def-torus-d}.
\end{remark}

\subsection{The anti-symmetric matrix decomposition}
Let $\fS$ be a triangulable connected punctured surface with a triangulation $\lambda$.
We denote by $\mathcal P$ the set of punctures of $\fS$.  
Lemma~\ref{lem-summand-B} shows that
\[
\mathcal B_\lambda = \mathcal B_\lambda' \oplus \mathcal B_\lambda^\circ
\]
for some subgroup $\mathcal B_\lambda'$ of $\mathcal B_\lambda$.  
Corollary~\ref{cor-center-generic}(a) shows that
\[
\{{\bf b}(\lambda,p,i)\mid p\in\mathcal P, 1\leq i\leq n-1\}
\]
forms a basis of $\mathcal B^\circ$.  
Let $\{{\bf t}_i \mid 1\leq i\leq |V_\lambda|-(n-1)|\mathcal P|\}$ be a basis of $\mathcal B_\lambda'$.  
Then
\[
\{{\bf t}_i \mid 1\leq i\leq |V_\lambda|-(n-1)|\mathcal P|\} \cup \{{\bf b}(\lambda,p,i) \mid p\in\mathcal P, 1\leq i\leq n-1\}
\]
is a basis of $\mathcal B_\lambda$.

Let $C$ denote the anti-symmetric matrix of the bilinear form \eqref{eq-bracket-bal} restricted to $\mathcal B_\lambda'$ with respect to the basis $\{{\bf t}_i\}$.  
Proposition~\ref{Prop-central-generic} implies that the anti-symmetric matrix of the bilinear form on the full basis of $\mathcal B_\lambda$ is
\begin{align}\label{eq-C-O-OO}
\begin{pmatrix}
C & O \\
O & O
\end{pmatrix},
\end{align}
where $C$ is invertible.  
By \cite[Theorem 4.1]{newman1972integral}, there exists another basis $\{{\bf h}_i\mid 1\leq i\leq |V_\lambda|-(n-1)|\mathcal P|\}$ of $\mathcal B_\lambda'$ such that
\begin{align}\label{eq-decom-cccc}
(\langle {\bf h}_i, {\bf h}_j \rangle)_{ij} = \text{diag}\left\{
\begin{pmatrix}
0 & c_1\\
-c_1 & 0
\end{pmatrix},
\ldots,
\begin{pmatrix}
0 & c_r\\
-c_r & 0
\end{pmatrix}
\right\},
\end{align}
where $r=\frac{|V_\lambda|-(n-1)|\mathcal P|}{2}$, $c_i\in \mathbb N \setminus \{0\}$, and $c_j \mid c_{j+1}$ for $1\leq j\leq r-1$.

Suppose $\fS$ has genus $g$ and $m$ punctures.  
Then \eqref{eq-V-lambda-rank} gives
\[
r = (n^2-1)(g-1) + \frac{1}{2} n(n-1) m.
\]

\begin{corollary}\label{lam-decom}
Let $\fS$ be a triangulable connected punctured surface of genus $g$ with $m$ punctures, and let $\lambda$ be a triangulation of $\fS$.  
There exists a basis $\{{\bf h}_i \mid 1\leq i \leq |V_\lambda|\}$ of $\mathcal B_\lambda$ such that
\[
\{{\bf b}(\lambda,p,i) \mid p\in\mathcal P, 1\leq i\leq n-1\} = \{{\bf h}_i \mid |V_\lambda|-(n-1)m+1 \leq i \leq |V_\lambda|\},
\]
and the matrix $(\langle {\bf h}_i,{\bf h}_j \rangle)_{1\leq i,j\leq |V_\lambda|}$ has the block-diagonal form
\[
\text{diag}\Bigl\{
\begin{pmatrix}
0 & s_1\\
-s_1 & 0
\end{pmatrix},
\ldots,
\begin{pmatrix}
0 & s_r\\
-s_r & 0
\end{pmatrix}, 0, \ldots, 0
\Bigr\},
\]
where $r=(n^2-1)(g-1) + \frac{1}{2} n(n-1) m$, and
\[
s_i = \begin{cases}
1 & 1\leq i \leq g,\\
n & g < i \leq r.
\end{cases}
\]
\end{corollary}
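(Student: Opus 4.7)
The plan is to start from the setup immediately preceding the statement: Newman's theorem produces a basis $\{{\bf h}_i\}_{1\le i\le 2r}$ of $\mathcal B_\lambda'$ in which the matrix of $\langle\cdot,\cdot\rangle$ equals $\mathrm{diag}\{(0,c_1;-c_1,0),\ldots,(0,c_r;-c_r,0)\}$ with positive integers $c_1\mid c_2\mid\cdots\mid c_r$; concatenating with the basis $\{{\bf b}(\lambda,p,i)\mid p\in\mathcal P,\,1\le i\le n-1\}$ of $\mathcal B_\lambda^\circ$ immediately produces the block structure claimed. The only remaining task is therefore to identify $c_1=\cdots=c_g=1$ and $c_{g+1}=\cdots=c_r=n$.

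The first step is a mod-$n$ analysis. Since each ${\bf b}(\lambda,p,i)$ is supported near a puncture $p$ and represents a loop bounding a disk in $\overline{\fS}$, one has $\zeta(\mathcal B_\lambda^\circ)=0$, so $\zeta|_{\mathcal B_\lambda'}$ remains surjective onto $H_1(\overline{\fS},\mathbb Z_n)$. Since $\mathcal B_\lambda^\circ\subseteq\mathcal B_{\lambda,d}$ for every $d$, one gets $\mathcal B_{\lambda,d}=\mathcal B_{\lambda,d}'\oplus\mathcal B_\lambda^\circ$ with $\mathcal B_{\lambda,d}':=\mathcal B_{\lambda,d}\cap\mathcal B_\lambda'$. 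In the Newman basis an element $\sum k_j{\bf h}_j$ lies in $\mathcal B_{\lambda,d}'$ iff each partner coordinate is divisible by $d/\gcd(c_i,d)$, so $[\mathcal B_\lambda':\mathcal B_{\lambda,d}']=\prod_{i=1}^{r}(d/\gcd(c_i,d))^2$. By Corollary~\ref{cor-rank-Bd-la} this equals $d^{2g}$ for every $d\mid n$; specializing to $d=p^e$ with $p^e\mid n$ and tracking $p$-adic valuations forces $\gcd(c_i,n)=1$ for $i\le g$ and $n\mid c_i$ for $i>g$.

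The second step, and the main obstacle, is to establish the upper bound $c_r\mid n$, equivalently that the cokernel of the adjoint $\phi\colon\mathcal B_\lambda'\to(\mathcal B_\lambda')^{*}$ is annihilated by $n$. For this the plan is to use the extended pairing $\mathcal B_\lambda\times\mathbb Z^{V_\lambda}\to\mathbb Z$, $({\bf k},{\bf h})\mapsto \frac{1}{n}{\bf k}Q_\lambda{\bf h}^T$, which is integer-valued by Lemma~\ref{lem-balanced-property}. Its adjoint $\psi\colon\mathbb Z^{V_\lambda}\to(\mathcal B_\lambda')^{*}$ (vanishing on $\mathcal B_\lambda^\circ$ on the other side) extends $\phi$, and the inclusion $n\mathbb Z^{V_\lambda}\subseteq\mathcal B_\lambda$ gives $n\psi(\mathbb Z^{V_\lambda})\subseteq\phi(\mathcal B_\lambda')$. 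Hence surjectivity of $\psi$ onto $(\mathcal B_\lambda')^{*}$ forces $n(\mathcal B_\lambda')^{*}\subseteq\phi(\mathcal B_\lambda')$, i.e. $c_r\mid n$. To prove this surjectivity I would argue locally: cutting along each interior edge $e$ produces the quadrilateral $\mathbb P_{4,e}$, which has no interior punctures, so Lemma~\ref{lem-matrix-HK}(a) supplies $K_e$ with $K_eH_e=nI$, and the identification of its rows carried out in the proof of Lemma~\ref{lem-Ke-b} realizes the required dual functionals. Patching these local dualities via the embedding $\iota\colon\mathcal B_\lambda\hookrightarrow\bigoplus_{\tau}\mathcal B_\tau$ of \eqref{def-iota-emb} then yields the global statement.

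Combining the two steps with the divisibility chain $c_1\mid\cdots\mid c_r\mid n$ completes the proof: for $i\le g$, the conditions $c_i\mid n$ and $\gcd(c_i,n)=1$ force $c_i=1$; for $g<i\le r$, the conditions $n\mid c_i\mid n$ force $c_i=n$. Setting $s_i=c_i$ for $1\le i\le r$ yields the block-diagonal matrix of the corollary.
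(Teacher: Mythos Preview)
Your first step is correct: the index formula $\prod_i(d/\gcd(c_i,d))^2=d^{2g}$ for all $d\mid n$, coming from Corollary~\ref{cor-rank-Bd-la}, does force $\gcd(c_i,n)=1$ for $i\le g$ and $n\mid c_i$ for $i>g$, and your prime-power analysis carries this out cleanly. The second step, however, has a genuine gap. Everything hinges on the surjectivity of $\psi\colon\mathbb Z^{V_\lambda}\to(\mathcal B_\lambda')^{*}$, and your sketch (``argue locally using $K_e$ and patch via $\iota$'') does not establish it. The quadrilaterals $\mathbb P_{4,e}$ overlap in complicated ways---boundary vertices of one are interior vertices of its neighbours---and there is no evident mechanism for gluing the local functionals supplied by the rows of $K_e$ into a global element of $(\mathcal B_\lambda')^{*}$. (Incidentally, Lemma~\ref{lem-matrix-HK} gives $H_eK_e=nI$, not $K_eH_e=nI$.) Without this surjectivity you cannot conclude $c_r\mid n$, and then the final combination fails.

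The paper sidesteps this entirely by a different and shorter argument. It invokes Theorem~\ref{thm-rank-Z} together with the rank formula $\bigl(\prod_i N'/\gcd(c_i,N')\bigr)^2$ from \cite[Equation~(8.2)]{KaruoWangToAppear}, valid for \emph{every} positive integer $N'$ (not only divisors of $n$). Matching this against $d^{2g}N^{2r}$ for all $N'$ and applying the number-theoretic uniqueness Lemma~\ref{lem-key-eq} pins down the $c_i$ directly. In effect, the paper trades your attempted structural bound $c_r\mid n$ for the freedom to vary the root-of-unity parameter, which supplies the missing information without any local-to-global patching.
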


\begin{remark}
    For $n=2$, Corollary~\ref{lam-decom} was proved in \cite{representation2,BL07} by different methods.  
\end{remark}

Before proving Corollary~\ref{lam-decom}, we establish the following lemma, which is motivated by \cite[Lemma 8.9]{KaruoWangToAppear}.

\begin{lemma}\label{lem-key-eq}
Let $(k_1,\dots,k_r)$ and $(l_1,\dots,l_r)$ be sequences of positive integers such that $k_i\mid k_{i+1}$ and $l_i\mid l_{i+1}$ for $1\leq i\leq r-1$.  
If
\[
\prod_{i=1}^r \frac{m}{\gcd(k_i,m)} = \prod_{i=1}^r \frac{m}{\gcd(l_i,m)}
\]
for all positive integers $m$, then $k_i = l_i$ for $1\leq i \leq r$.
\end{lemma}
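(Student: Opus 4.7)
The plan is to reduce the identity to a prime-by-prime check by specializing $m$ to prime powers, and then recover each $k_i$ from its sequence of $p$-adic valuations.

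Fix a prime $p$ and set $m = p^s$ for $s \geq 0$. Write $a_i = v_p(k_i)$ and $b_i = v_p(l_i)$; the divisibility hypotheses ensure $a_1 \leq a_2 \leq \cdots \leq a_r$ and $b_1 \leq b_2 \leq \cdots \leq b_r$. Since $\gcd(k_i, p^s) = p^{\min(a_i, s)}$, taking $v_p$ of both sides of the hypothesis gives
\[
\sum_{i=1}^r \max(0,\, s - a_i) \;=\; \sum_{i=1}^r \max(0,\, s - b_i) \qquad \text{for all } s \geq 0.
\]

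Next I would extract the sorted sequences from this identity by finite differencing. Define $f(s) = \sum_i \max(0,\, s - a_i)$ and $g(s) = \sum_i \max(0,\, s - b_i)$. A direct computation gives $f(s+1) - f(s) = \#\{i : a_i \leq s\}$ and similarly for $g$. From $f = g$ one therefore obtains
\[
\#\{i : a_i \leq s\} \;=\; \#\{i : b_i \leq s\} \qquad \text{for all } s \geq 0,
\]
which exactly says that the two multisets $\{a_i\}$ and $\{b_i\}$ coincide. Since both sequences are already non-decreasing, this forces $a_i = b_i$ for each $i$.

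Letting $p$ vary over all primes, we conclude $v_p(k_i) = v_p(l_i)$ for all $p$ and all $i$, hence $k_i = l_i$. I do not anticipate a genuine obstacle here: the only mild subtlety is verifying that the hypothesis for general $m$ specializes cleanly to a purely $p$-local statement, which is automatic because $m/\gcd(k_i, p^s)$ is a power of $p$ so the identity is homogeneous in $p$.
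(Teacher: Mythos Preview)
Your proof is correct, but it follows a different route from the paper's. The paper argues by induction on $r$: it first rewrites the hypothesis as $\prod_i \gcd(k_i,m) = \prod_i \gcd(l_i,m)$, then plugs in the single value $m = k_r l_r$ to get $\prod_i k_i = \prod_i l_i$, and the value $m = k_r$ to force $l_i \mid k_r$ for all $i$ (hence $l_r \mid k_r$, and by symmetry $k_r = l_r$), after which the induction hypothesis handles the remaining $r-1$ terms. Your approach instead localizes at each prime by setting $m = p^s$, turning the identity into an equality of the piecewise-linear functions $s \mapsto \sum_i \max(0,\,s - v_p(k_i))$, and recovers the sorted valuation sequence by a first-difference argument. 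The paper's method is slicker in that it only needs two well-chosen values of $m$ per inductive step; your method is more structural and makes transparent exactly what information the hypothesis carries (namely, the full multiset of $p$-adic valuations at every prime), which would adapt more readily to variants of the statement.
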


\begin{proof}
We prove the lemma by induction on $r$.  
The case $r=1$ is trivial.  
Assume the statement holds for $r=k-1$ and consider $r=k$.  

Note that
\[
\prod_{i=1}^r \frac{m}{\gcd(k_i,m)} = \prod_{i=1}^r \frac{m}{\gcd(l_i,m)}
\quad\Longleftrightarrow\quad
\prod_{i=1}^r \gcd(k_i,m) = \prod_{i=1}^r \gcd(l_i,m).
\]
Setting $m=k_r l_r$, we obtain $\prod_{i=1}^r k_i = \prod_{i=1}^r l_i$.  
Setting $m=k_r$, it follows that $l_i = \gcd(l_i, k_r)$ for $1\leq i \leq r$, hence $l_r\mid k_r$.  
Similarly, $k_r\mid l_r$, so $l_r = k_r$.  
Then the induction hypothesis gives $k_i=l_i$ for $1\leq i \leq r-1$.  
\end{proof}

\begin{proof}[Proof of Corollary~\ref{lam-decom}]
By \eqref{eq-C-O-OO}, it suffices to show that
\begin{align}\label{value-c-i}
    c_i = \begin{cases}
1 & 1\leq i\leq g,\\
n & g < i \leq r,
\end{cases}
\end{align}
where $c_i$ is defined in \eqref{eq-decom-cccc}.  

Assuming \Rlabel{1}, \cite[Equation (8.2)]{KaruoWangToAppear} implies that the rank of $\mathcal Z_{\hat\omega}^{\rm bl}(\fS,\lambda)$ over its center is
\[
\left(\prod_{i=1}^r \frac{N'}{\gcd(c_i, N')}\right)^2.
\]
By Theorem~\ref{thm-rank-Z}, we have
\begin{align}\label{rank-c-i}
    \prod_{i=1}^r \frac{N'}{\gcd(c_i, N')} = d^{2g} N^{2(n^2-1)(g-1) + n(n-1)m}.
\end{align}
When the value of $c_i$ is given by \eqref{value-c-i}, equation~\eqref{rank-c-i} holds.
Applying Lemma~\ref{lem-key-eq} completes the proof since $N'$ is arbitrary.
\end{proof}

\def\hom{\text{Hom}_{\text{Alg}}}
\def\Zee{\mathcal Z_{\hat\eta}^{\rm bl}(\fS,\lambda)}
\def\Zoo{\mathcal Z_{\hat\omega}^{\rm bl}(\fS,\lambda)}
\def\Xee{\mathcal X_{\eta}(\fS,\lambda)}
\def\Xoo{\mathcal X_{\omega}^{\rm bl}(\fS,\lambda)}
\def\Zep{\mathcal Z_{\hat\eta}^{\rm bl}(\fS,\lambda')}
\def\Zop{\mathcal Z_{\hat\omega}^{\rm bl}(\fS,\lambda')}
\def\Xep{\mathcal X_{1}^{\rm bl}(\fS,\lambda')}
\def\Xop{\mathcal X_{\omega}^{\rm bl}(\fS,\lambda')}

\def\BC{\mathbb{C}}
\def\End{\text{End}_{\BC}}
\def\id{{\rm id}}

\def\zll{\mathcal Z_{\hat\omega}^\Lambda(\fS,\lambda)}
\def\zell{\mathcal Z_{\hat\eta}^\Lambda(\fS,\lambda)}

\def\zmm{\mathcal Z_{\hat\omega}^{\rm mbl}(\fS,\lambda)}
\def\zem{\mathcal Z_{\hat\eta}^{\rm mbl}(\fS,\lambda)}
\def\zmp{\mathcal Z_{\hat\omega}^{\rm mbl}(\fS,\lambda')}
\def\zemp{\mathcal Z_{\hat\eta}^{\rm mbl}(\fS,\lambda')}

\section{Representations of the related Fock-Goncharov algebras}\label{sec-rep-torus-b}

In this section,
we classify the irreducible representations of the balanced Fock–Goncharov algebra under the assumption \Rlabel{1} (Theorem~\ref{thm-representation-Fock}).   
In \S\ref{sec-rep-sln}, we will combine Theorem~\ref{thm-representation-Fock} with the quantum trace map to construct irreducible representations of the projected $\SL$-skein algebra.

\def\OOOO{\mathcal Z_{\hat{\omega}}^{\rm bl}(\fS,\lambda)}
\def\EEEE{\mathcal Z_{\hat{\eta}}^{\rm bl}(\fS,\lambda)}

\subsection{Representations of the balanced Fock-Goncharov algebras}

Let $\fS$ be a triangulable punctured surface with a triangulation $\lambda$.
We use $\mathcal P$ to denote the set of punctures of $\fS$.

Suppose that we have the assumption \Rlabel{1}. Recall that there is an algebra embedding
\begin{align}\label{AAAAAAAAAAAA}
\PT\colon\EEEE\rightarrow\OOOO,\quad
Z^{\bf k}\mapsto Z^{N{\bf k}}\text{ for }{\bf k}\in\mathcal B_\lambda.
\end{align}
For any two algebras $A_1$ and $A_2$, we use $\text{Hom}_{\text{Alg}}(A_1,A_2)$ to denote the set of algebra homomorphisms from $A_1$ to $A_2$.
Define 
\begin{align}\label{def-Hom-lambda}
    \text{Hom}_Z:=\{(f,(b(p,i))_{1\leq i\leq n-1,p\in\mathcal P})\mid
f\in \hom(\EEEE_d,\mathbb C), \\
b(p,i)^N=f(Z^{{\bf b}(\lambda,p,i)})\in\mathbb C^{*}\text{ for }1\leq i\leq n-1, p\in\mathcal P\},
\end{align}
where ${\bf b}(\lambda,p,i)$ is defined as in \eqref{def-a-l-p-i} and $\EEEE_d$ is defined in \eqref{def-torus-d}.

\begin{lemma}\label{cor-center-bl-equal}
     Suppose that we have the assumption \Rlabel{1}. Then
     the following map is a bijection
         \begin{align}\label{F-bi-center-ch}
              \hom(\mathsf{Z}(\OOOO),\mathbb C)\rightarrow \text{Hom}_Z,\quad
             g\mapsto (g\circ\PT, (g(Z^{{\bf b}(\lambda,p,i)}))_{1\leq i\leq n-1,p\in\mathcal P}).
         \end{align}
\end{lemma}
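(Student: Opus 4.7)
The plan is to combine the explicit description of $\mathsf{Z}(\OOOO)$ from Corollary~\ref{cor-center-Z-two}(b) with the monomial basis $\{Z^{\bf k} : {\bf k}\in N\mathcal B_{\lambda,d}+\mathcal B_\lambda^\circ\}$ provided by Theorem~\ref{thm-center-balanced-root-of-unity} and Lemma~\ref{lem-center-torus}(b). First I would check that the forward map is well-defined: for $g\in\hom(\mathsf{Z}(\OOOO),\mathbb C)$, the composition $g\circ\PT$ is an algebra homomorphism $\EEEE_d\to\mathbb C$ because $\PT(\EEEE_d)\subset\mathsf{Z}(\OOOO)$; each $g(Z^{{\bf b}(\lambda,p,i)})\in\mathbb C^{*}$ since $Z^{{\bf b}(\lambda,p,i)}$ is a unit; and since ${\bf b}(\lambda,p,i)\in\mathcal B_\lambda^\circ\subset\mathcal B_{\lambda,d}$ and central elements satisfy $(Z^{{\bf b}(\lambda,p,i)})^N=Z^{N{\bf b}(\lambda,p,i)}=\PT(Z^{{\bf b}(\lambda,p,i)})$, applying $g$ gives $g(Z^{{\bf b}(\lambda,p,i)})^N=(g\circ\PT)(Z^{{\bf b}(\lambda,p,i)})$, putting the pair in $\text{Hom}_Z$. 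Injectivity is then immediate from Corollary~\ref{cor-center-Z-two}(b), since $\mathsf{Z}(\OOOO)$ is generated by $\PT(\EEEE_d)\cup\{Z^{{\bf b}(\lambda,p,i)}\}$.

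For surjectivity I would start from a pair $(f,(b(p,i)))\in\text{Hom}_Z$ and construct $g$ on the monomial basis. For each ${\bf k}\in N\mathcal B_{\lambda,d}+\mathcal B_\lambda^\circ$, I would choose a decomposition ${\bf k}=N{\bf c}+{\bf h}$ with ${\bf c}\in\mathcal B_{\lambda,d}$ and ${\bf h}=\sum n_{p,i}\,{\bf b}(\lambda,p,i)$, and set
\[
g(Z^{\bf k}) \;=\; f(Z^{\bf c})\,\prod_{p,i} b(p,i)^{n_{p,i}}.
\]
The main obstacle is well-definedness of this assignment, since the decomposition is non-unique. If ${\bf k}=N{\bf c}_1+{\bf h}_1=N{\bf c}_2+{\bf h}_2$, set ${\bf d}={\bf c}_1-{\bf c}_2$; then $N{\bf d}={\bf h}_2-{\bf h}_1\in\mathcal B_\lambda^\circ$, and torsion-freeness of $\mathbb Z^{V_\lambda}$ forces ${\bf d}\in\mathcal B_\lambda^\circ$, so ${\bf d}=\sum t_{p,i}\,{\bf b}(\lambda,p,i)$ with $N t_{p,i}$ equal to the difference in the $n_{p,i}$-coordinates. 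Because ${\bf b}(\lambda,p,i)$ commute in $\EEEE$ and $f$ is multiplicative, $f(Z^{\bf d})=\prod b(p,i)^{Nt_{p,i}}$, which is precisely the compensating factor required for the two expressions to agree.

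The last step is to verify the quantum torus relation $Z^{{\bf k}_1}Z^{{\bf k}_2}=\hat\omega^{{\bf k}_1 Q_\lambda {\bf k}_2^T}Z^{{\bf k}_1+{\bf k}_2}$. Writing ${\bf k}_j=N{\bf c}_j+{\bf h}_j$ and using ${\bf h}_j Q_\lambda = 0$ together with antisymmetry, all cross terms vanish and ${\bf k}_1 Q_\lambda {\bf k}_2^T = N^2\,{\bf c}_1 Q_\lambda {\bf c}_2^T$. Substituting $\hat\eta=\hat\omega^{N^2}$, the phase produced by expanding $f(Z^{{\bf c}_1})f(Z^{{\bf c}_2})=\hat\eta^{{\bf c}_1 Q_\lambda {\bf c}_2^T}f(Z^{{\bf c}_1+{\bf c}_2})$ matches the phase required on the $\OOOO$-side, while the $b(p,i)$-factors combine trivially since elements of $\mathcal B_\lambda^\circ$ commute with everything. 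Thus $g$ is a well-defined algebra homomorphism, and by construction it maps to $(f,(b(p,i)))$, completing the proof.
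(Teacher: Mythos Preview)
Your argument is correct and essentially parallels the paper's proof. The paper packages the same computation more abstractly: it observes that the assignment $({\bf t},{\bf s})\mapsto N{\bf t}+{\bf s}$ induces a group isomorphism
\[
(\mathcal B_{\lambda,d}\oplus\mathcal B_\lambda^\circ)\big/\bigl(({\bf t},0)\sim(0,N{\bf t})\text{ for }{\bf t}\in\mathcal B_\lambda^\circ\bigr)\;\cong\;B_{N''},
\]
lifts this to an algebra isomorphism
\[
\frac{\EEEE_d\otimes \mathbb C[Z^{{\bf b}(\lambda,p,i)}]}{Z^{{\bf b}(\lambda,p,i)}\otimes 1=1\otimes Z^{N{\bf b}(\lambda,p,i)}}\;\cong\;\mathsf Z(\OOOO),
\]
and then reads off the bijection on $\hom(-,\mathbb C)$ from this pushout description. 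Your well-definedness check (if $N{\bf c}_1+{\bf h}_1=N{\bf c}_2+{\bf h}_2$ then ${\bf c}_1-{\bf c}_2\in\mathcal B_\lambda^\circ$ by torsion-freeness, and the compatibility condition $b(p,i)^N=f(Z^{{\bf b}(\lambda,p,i)})$ absorbs the discrepancy) is exactly the kernel computation hidden in the paper's quotient, and your multiplicativity check is exactly what verifies that the map $X\otimes Y\mapsto\PT(X)Y$ is an algebra homomorphism. One small remark: the paper cites Lemma~\ref{lem-summand-B} (that $\mathcal B_\lambda^\circ$ is a direct summand of $\mathcal B_{\lambda,d}$), but your argument shows this is not strictly needed here, since the implication $N{\bf d}\in\mathcal B_\lambda^\circ\Rightarrow{\bf d}\in\mathcal B_\lambda^\circ$ follows directly from $\mathcal B_\lambda^\circ=\{{\bf k}:{\bf k}Q_\lambda=0\}$ and torsion-freeness.
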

\begin{proof}

Recall that we defined $B_{N''}$ in \eqref{eq-define-subgroup}, $\mathcal B_{\lambda,d}$ in \eqref{def-B-lambda-m}, and $\mathcal B_{\lambda}^\circ$ in \eqref{def-B-lambda-circ}.

    Since $B_{N''}=N\mathcal B_{\lambda,d}+\mathcal B_\lambda^\circ$ (Theorem~\ref{thm-center-balanced-root-of-unity}) and $\mathcal B_\lambda^\circ$ is a direct summand of $\mathcal B_{\lambda,d}$ (Lemma~\ref{lem-summand-B}), we have
\begin{align}\label{eq-group-relation}
   (N\mathcal B_{\lambda,d}\oplus\mathcal B_\lambda^\circ)/\big(({\bf t},0)=(0,N{\bf t})\ \text{for }{\bf t}\in \mathcal B_\lambda^\circ\big) \;\cong\; B_{N''}.
\end{align}
The group isomorphism in \eqref{eq-group-relation} is given by $({\bf t},{\bf s})\mapsto N{\bf t}+{\bf s}$.  
This group isomorphism induces the following algebra isomorphism:
\begin{align*}
   \dfrac{\EEEE_d\otimes \mathbb C[Z^{{\bf b}(\lambda,p,i)}\mid 1\leq i\leq n-1, p\in\mathcal P]}
   {Z^{{\bf b}(\lambda,p,i)}\otimes 1=1\otimes Z^{N{\bf b}(\lambda,p,i)},\ 1\leq i\leq n-1,p\in\mathcal P}
   \;&\cong\; \mathsf{Z}(\OOOO),\\
   X\otimes Y &\longmapsto \PT(X)Y.
\end{align*}
This algebra isomorphism yields the bijection in \eqref{F-bi-center-ch}.
\end{proof}

We are now ready to classify the irreducible representations of $\OOOO$ under the assumption \Rlabel{1}.  
The following result, which constitutes our third main theorem, establishes a bijection between the set of irreducible representations of $\OOOO$ (up to isomorphism) and the set $\text{Hom}_Z$ (see \eqref{def-Hom-lambda}).  
This theorem will be applied in \S\ref{sec-rep-sln} to construct irreducible representations of the projected $\SL$-skein algebra.

\begin{theorem}\label{thm-representation-Fock}
     Let $\fS$ be a triangulable connected punctured surface with genus $g$ and $m$ punctures, and let $\mathcal P$ be the set of punctures of $\fS$.
    Suppose that we have the assumption \Rlabel{1}.
    Let $f\in \hom(\EEEE_d,\BC)$ (see \eqref{def-torus-d} for $\EEEE_d$), and 
    let $b(p,i)\in\mathbb C^{*}$ satisfying $$b(p,i)^N=f(Z^{{\bf b}(\lambda,p,i)})$$ for $1\leq i\leq n-1$ and $p\in\mathcal P$, where ${\bf b}(\lambda,p,i)$ is defined as in \eqref{def-a-l-p-i}.
    There exists a unique irreducible representation $$\rho\colon \OOOO\rightarrow \End(V)$$ 
   satisfying the following conditions:
    \begin{enumerate}
        \item $\rho(\PT(X)) = f(X)\Id_V$ for 
        $X\in \EEEE_d,$ where $\PT$ is defined in \eqref{AAAAAAAAAAAA};
        \item $\rho(Z^{{\bf b}(\lambda,p,i)})
        =b(p,i)\Id_V$ for $1\leq i\leq n-1$ and $p\in\mathcal P$.
    \end{enumerate}
    In particular, the dimension of $V$ is 
    $$d^g N^{(n^2-1)(g-1) + \frac{1}{2}n(n-1)m}.$$
\end{theorem}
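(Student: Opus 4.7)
The plan is to combine the Azumaya-type classification of irreducible representations of a quantum torus at roots of unity (Lemma~\ref{lem-irre-Azumaya}) with the explicit description of the center obtained in Theorem~\ref{thm-center-balanced-root-of-unity} (as reformulated in Lemma~\ref{cor-center-bl-equal}), and then read off the dimension from the rank computation in Theorem~\ref{thm-rank-Z}.

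First, I would observe that the pair $(f,(b(p,i))_{1\leq i\leq n-1,\,p\in\mathcal P})$ is by hypothesis an element of $\mathrm{Hom}_Z$ (see \eqref{def-Hom-lambda}), since $b(p,i)^N=f(Z^{{\bf b}(\lambda,p,i)})$. Applying the inverse of the bijection in Lemma~\ref{cor-center-bl-equal}, there exists a unique algebra homomorphism
\[
 g\colon \mathsf Z\bigl(\mathcal Z_{\hat\omega}^{\rm bl}(\fS,\lambda)\bigr)\longrightarrow \mathbb C
\]
satisfying $g\circ\Phi^{\mathbb T}=f$ on $\mathcal Z_{\hat\eta}^{\rm bl}(\fS,\lambda)_d$ and $g(Z^{{\bf b}(\lambda,p,i)})=b(p,i)$ for every $1\le i\le n-1$, $p\in\mathcal P$. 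Note that this uses crucially the fact that $\Phi^{\mathbb T}(\mathcal Z_{\hat\eta}^{\rm bl}(\fS,\lambda)_d)$ and each $Z^{{\bf b}(\lambda,p,i)}$ lie in $\mathsf Z(\mathcal Z_{\hat\omega}^{\rm bl}(\fS,\lambda))$, which is precisely the content of Theorem~\ref{thm-center-balanced-root-of-unity}.

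Next, since $\mathcal Z_{\hat\omega}^{\rm bl}(\fS,\lambda)=\mathbb T_{\hat\omega}(Q_\lambda;\mathcal B_\lambda)$ is a monomial subalgebra of a quantum torus, Lemma~\ref{lem-irre-Azumaya} gives a bijection between its isomorphism classes of irreducible representations and $\mathrm{Hom}_{\text{Alg}}(\mathsf Z(\mathcal Z_{\hat\omega}^{\rm bl}(\fS,\lambda)),\mathbb C)$. Applied to $g$, this yields an irreducible representation $\rho\colon \mathcal Z_{\hat\omega}^{\rm bl}(\fS,\lambda)\to \mathrm{End}_{\mathbb C}(V)$, unique up to isomorphism, with $\rho(z)=g(z)\,\Id_V$ for every $z$ in the center. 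Specialising $z=\Phi^{\mathbb T}(X)$ for $X\in\mathcal Z_{\hat\eta}^{\rm bl}(\fS,\lambda)_d$ and $z=Z^{{\bf b}(\lambda,p,i)}$ yields exactly conditions (1) and (2) of the theorem, and conversely any $\rho$ satisfying (1) and (2) induces $g$ via Schur's lemma, giving uniqueness.

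Finally, Lemma~\ref{lem-irre-Azumaya} states that $(\dim_{\mathbb C} V)^2$ equals the rank of $\mathcal Z_{\hat\omega}^{\rm bl}(\fS,\lambda)$ over its center. By Theorem~\ref{thm-rank-Z} this rank is $d^{2g}N^{2(n^2-1)(g-1)+n(n-1)m}$, so
\[
 \dim_{\mathbb C}V=d^{g}N^{(n^2-1)(g-1)+\tfrac{1}{2}n(n-1)m},
\]
as claimed. There is essentially no obstacle here: all the hard work—identifying the center, showing it is generated by $\Phi^{\mathbb T}(\mathcal Z_{\hat\eta}^{\rm bl}(\fS,\lambda)_d)$ together with the $Z^{{\bf b}(\lambda,p,i)}$, and computing the rank—has already been carried out in Theorem~\ref{thm-center-balanced-root-of-unity}, Lemma~\ref{cor-center-bl-equal}, and Theorem~\ref{thm-rank-Z}. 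The argument is a straightforward assembly of these ingredients with Lemma~\ref{lem-irre-Azumaya}.
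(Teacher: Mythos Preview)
Your proposal is correct and follows exactly the paper's approach: the paper's proof is a one-line citation of Lemmas~\ref{lem-irre-Azumaya}, \ref{cor-center-bl-equal}, and Theorem~\ref{thm-rank-Z}, and you have simply unpacked how these three results combine.
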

\begin{proof}
It follows from Lemmas~\ref{lem-irre-Azumaya}, \ref{cor-center-bl-equal}, and Theorem~\ref{thm-rank-Z}.
\end{proof}

\begin{remark}
    When $n=2$, Theorem~\ref{thm-representation-Fock} was proved in \cite{representation2} using different techniques.
\end{remark}

\subsection{On other related versions of Fock-Goncharov algebras}

Let $\fS$ be a triangulable connected punctured surface with a triangulation $\lambda$.
Lemma~\ref{lem-balanced-property} shows that 
$$\BB\subset \{{\bf k}\in \mathbb Z^{V_\lambda}\mid {\bf k}Q_\lambda\in (n\mathbb Z)^{V_\lambda}\}.$$
Let $\Lambda$ be a subgroup of $\mathbb Z^{V_\lambda}$ with $$\BB\subset\Lambda\subset \{{\bf k}\in \mathbb Z^{V_\lambda}\mid {\bf k}Q_\lambda\in (n\mathbb Z)^{V_\lambda}\}.$$
Since $n\mathbb Z^{V_\lambda}\subset\Lambda\subset \mathbb Z^{V_\lambda}$, we have that $\Lambda$ is free abelian group of rank $|V_\lambda|$.
Define 
\begin{align}\label{def-Z-lambda}
    \mathcal Z_{\hat\omega}^\Lambda(\fS,\lambda)=\text{span}_{\BC}\{Z^\bk\mid \bk\in\Lambda\}\subset
\mathcal{Z}_{\hat\omega}(\fS,\lambda).
\end{align}
Then $\mathcal Z_{\hat\omega}^\Lambda(\fS,\lambda)$ is a subalgebra of $\mathcal{Z}_{\hat\omega}(\fS,\lambda)$.

In this subsection, we compute the dimension of irreducible representations of 
$\mathcal Z_{\hat\omega}^\Lambda(\fS,\lambda)$ under the assumption~\Rlabel{2} 
(Corollary~\ref{prop-representation-Fock}). 
This result will be applied later in \S\ref{sec-naturality}. 
Before proving Corollary~\ref{prop-representation-Fock}, we first compute the center 
$\mathsf{Z}(\mathcal Z_{\hat\omega}^\Lambda(\fS,\lambda))$ 
under the assumption~\Rlabel{2}.

Define  
\begin{align}\label{def-lambda-circc}
    \Lambda^{\circ}:=\{{\bf k}\in\Lambda\mid
    {\bf k}Q_\lambda= {\bf 0}\}.
\end{align}
The following lemma will be used to compute the rank of 
$\mathcal Z_{\hat\omega}^\Lambda(\fS,\lambda)$ over its center 
$\mathsf{Z}(\mathcal Z_{\hat\omega}^\Lambda(\fS,\lambda))$.

\begin{lemma}\label{lem-direct-lambda}
The following hold:
\begin{enumerate}[label={\rm (\alph*)}]
    \item $\Lambda^{\circ}$ is a free abelian group of rank $(n-1)m$, where $m$ is the number of punctures in $\fS$.
    \item $\Lambda^{\circ}$ is a direct summand of $\Lambda$.
\end{enumerate} 
\end{lemma}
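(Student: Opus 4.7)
The plan is to run two independent arguments, one for each part, mirroring the strategy already used for $\mathcal B_\lambda$ in the preceding section but now for the intermediate lattice $\Lambda$. Throughout, I will use the hypothesis $\mathcal B_\lambda\subset \Lambda\subset\{{\bf k}\in\mathbb Z^{V_\lambda}\mid {\bf k}Q_\lambda\in(n\mathbb Z)^{V_\lambda}\}$ to sandwich $\Lambda^{\circ}$ between a known upper bound and a known lower bound.

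For part (a), I would first record the chain of inclusions
\[
\mathcal B_\lambda^{\circ}\;\subset\;\Lambda^{\circ}\;\subset\;\ker_{\mathbb Z} Q_\lambda,
\]
where $\ker_{\mathbb Z} Q_\lambda:=\{{\bf k}\in\mathbb Z^{V_\lambda}\mid {\bf k}Q_\lambda=0\}$. The lower bound on $\operatorname{rank}\Lambda^{\circ}$ is immediate from Corollary~\ref{cor-center-generic}(a), which exhibits an explicit basis $\{{\bf b}(\lambda,p,i)\mid p\in\mathcal P,\,1\leq i\leq n-1\}$ of $\mathcal B_\lambda^{\circ}$, so $\operatorname{rank}\mathcal B_\lambda^{\circ}=(n-1)m$. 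For the upper bound, I would compute the $\mathbb Q$-rank of $Q_\lambda$ using Corollary~\ref{lam-decom}: that corollary supplies a $\mathbb Z$-basis of $\mathcal B_\lambda$ in which the bilinear form $\langle\cdot,\cdot\rangle=\tfrac{1}{n}Q_\lambda$ has block-diagonal matrix with exactly $r=(n^2-1)(g-1)+\tfrac{1}{2}n(n-1)m$ nonzero $2\times 2$ blocks, so the bilinear form has $\mathbb Q$-rank $2r$. Because $n\mathbb Z^{V_\lambda}\subset \mathcal B_\lambda$, we have $\mathcal B_\lambda\otimes_{\mathbb Z}\mathbb Q=\mathbb Q^{V_\lambda}$, and therefore $Q_\lambda$ itself has $\mathbb Q$-rank $2r$. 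Combining this with \eqref{eq-V-lambda-rank} gives $\dim_{\mathbb Q}\ker Q_\lambda=|V_\lambda|-2r=(n-1)m$, so $\operatorname{rank}_{\mathbb Z}\ker_{\mathbb Z}Q_\lambda=(n-1)m$. Sandwiched between two $\mathbb Z$-lattices of rank $(n-1)m$, $\Lambda^{\circ}$ must also have rank $(n-1)m$, and it is automatically free since it sits inside the torsion-free group $\mathbb Z^{V_\lambda}$.

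For part (b), I would copy the argument of Lemma~\ref{lem-summand-B} verbatim: define the $\mathbb Z$-linear map
\[
f\colon \Lambda\longrightarrow \mathbb Z^{V_\lambda},\qquad {\bf k}\longmapsto {\bf k}Q_\lambda,
\]
whose kernel is $\Lambda^{\circ}$. The image $\operatorname{im}f$ is a subgroup of the free abelian group $\mathbb Z^{V_\lambda}$ and hence is itself free. Consequently the short exact sequence
\[
0\longrightarrow \Lambda^{\circ}\longrightarrow \Lambda\xrightarrow{\;f\;}\operatorname{im}f\longrightarrow 0
\]
splits, and $\Lambda^{\circ}$ is a direct summand of $\Lambda$.

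The only genuine calculation is the rank computation in (a); everything else is bookkeeping. I expect the main subtlety to be the passage from the matrix decomposition in Corollary~\ref{lam-decom}, which is stated on $\mathcal B_\lambda$, to a statement about $Q_\lambda$ on all of $\mathbb Z^{V_\lambda}$. This is handled cleanly by tensoring with $\mathbb Q$, which is legitimate precisely because $n\mathbb Z^{V_\lambda}\subset\mathcal B_\lambda$ forces $\mathcal B_\lambda$ to have full $\mathbb Q$-rank inside $\mathbb Z^{V_\lambda}$.
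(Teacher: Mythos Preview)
Your proof is correct and follows essentially the same approach as the paper: the sandwich $\mathcal B_\lambda^{\circ}\subset\Lambda^{\circ}\subset\ker_{\mathbb Z}Q_\lambda$ for part~(a), and the split short exact sequence argument (identical to Lemma~\ref{lem-summand-B}) for part~(b). The only difference is that for the upper bound in~(a) the paper obtains $\dim_{\mathbb Q}\ker Q_\lambda=(n-1)m$ directly from Corollary~\ref{cor-center-generic}(a) together with $n\mathbb Z^{V_\lambda}\subset\mathcal B_\lambda$ (so $\mathcal B_\lambda^{\circ}\otimes\mathbb Q=\ker_{\mathbb Q}Q_\lambda$), without the detour through Corollary~\ref{lam-decom} and the arithmetic with $|V_\lambda|-2r$; your route is correct but slightly longer.
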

\begin{proof}
(a) Clearly, $\Lambda^{\circ}$ is a subgroup of $\Lambda$. 
Since every subgroup of a free abelian group is itself free, 
it follows that $\Lambda^{\circ}$ is free abelian.  
Moreover, we have
\begin{align*}
    \BB^{\circ}\subset \Lambda^{\circ},
\end{align*}
where $\BB^{\circ}$ is defined in \eqref{def-B-lambda-circ}.
By Corollary~\ref{cor-center-generic}(a), 
$\BB^{\circ}$ is a free abelian group of rank $(n-1)m$.  
Hence the rank of $\Lambda^{\circ}$ is at least $(n-1)m$.

On the other hand, since $n\mathbb Z^{V_\lambda}\subset\BB$,  
Corollary~\ref{cor-center-generic}(a) implies that
\[
\dim_{\mathbb Q}\{{\bf k}\in \mathbb Q^{V_\lambda}\mid 
{\bf k} Q_\lambda={\bf 0}\}=(n-1)m.
\]
Thus the rank of $\Lambda^{\circ}$ is at most $(n-1)m$.  

(b) The proof of Lemma~\ref{lem-summand-B} applies here as well, with 
$\mathcal B_{\lambda,d}$ replaced by $\Lambda$ and $\BBB^\circ$ replaced by $\Lambda^\circ$.

\end{proof}

Suppose that we have the assumption \Rlabel{2}. There is an algebra embedding
\begin{align}\label{eq-fro-Z-lambda}
    \PT\colon\zell\rightarrow\zll,\quad
Z^{\bf k}\mapsto Z^{N{\bf k}}\text{ for }{\bf k}\in\Lambda.
\end{align}

\begin{lemma}\label{cor-center-rank}
     Suppose that we have the assumption \Rlabel{2}. Then:
     \begin{enumerate}[label={\rm (\alph*)}]
         \item As a subalgebra of $\zll$, the center $\mathsf{Z} (\zll)$ is generated by $\im \PT$ and $\{Z^{\bf k}\mid {\bf k}\in\Lambda^\circ\}$.

         \item $\zll$ is a free module over its center $\mathsf{Z}(\zll)$ of rank $N^{2(n^2-1)(g-1) + n(n-1)m}$, where $g$ is the genus of $\fS$ and $m$ is the number of punctures in $\fS$.

     \end{enumerate}
\end{lemma}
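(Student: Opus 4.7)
The plan is to apply Lemma~\ref{lem-center-torus}(b),(c) to $\mathbb T_{\hat\omega}(Q_\lambda;\Lambda)=\zll$ and reduce both (a) and (b) to the structural identity
\[
    \Lambda_{N''}=N\Lambda+\Lambda^\circ,
\]
where $\Lambda_{N''}=\{{\bf k}\in\Lambda\mid {\bf k}Q_\lambda{\bf t}^T\equiv 0\pmod{N''}\ \text{for all}\ {\bf t}\in\Lambda\}$ as in Lemma~\ref{lem-center-torus}. Setting $\langle{\bf k},{\bf t}\rangle:=\tfrac{1}{n}{\bf k}Q_\lambda{\bf t}^T$, which is integer-valued on $\Lambda$ by the defining hypothesis on $\Lambda$, and using $N''/\gcd(N'',n)=N'=N$ under \Rlabel{2}, one may rewrite $\Lambda_{N''}=\{{\bf k}\in\Lambda\mid \langle{\bf k},\cdot\rangle\equiv 0\pmod N\ \text{on}\ \Lambda\}$.

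The inclusion $\supset$ in the identity above is direct: $\Lambda^\circ\subset\Lambda_{N''}$ trivially, and for ${\bf k}\in\Lambda$ one has $N{\bf k}Q_\lambda{\bf t}^T\in nN\mathbb Z\subset N''\mathbb Z$, since $nN=(n/\gcd(N'',n))\,N''$. For $\subset$, I apply Lemma~\ref{lem-direct-lambda} to split $\Lambda=\Lambda'\oplus\Lambda^\circ$; the form restricted to $\Lambda'$ is nondegenerate over $\mathbb Q$. The structure theorem for antisymmetric integer forms provides a basis $\{{\bf h}_1,\dots,{\bf h}_{2r}\}$ of $\Lambda'$ in which $(\langle{\bf h}_i,{\bf h}_j\rangle)_{ij}$ is block-diagonal with blocks $\begin{pmatrix}0&c_j\\-c_j&0\end{pmatrix}$ and $c_1\mid\cdots\mid c_r$. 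Reading $\langle{\bf k},\cdot\rangle\equiv 0\pmod N$ against this basis shows that $\Lambda_{N''}\cap\Lambda'=N\Lambda'$, and hence $\Lambda_{N''}\subset N\Lambda+\Lambda^\circ$, provided one knows $\gcd(c_j,N)=1$ for every $j$.

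The main step, which I expect to be the principal obstacle, is to prove $c_j\mid n^2$ for all $j$; together with $\gcd(n,N)=1$ from $d=1$, this forces $\gcd(c_j,N)=1$. Equivalently, the cokernel of $\psi\colon\Lambda/\Lambda^\circ\to(\Lambda/\Lambda^\circ)^{\ast}$, ${\bf k}\mapsto\langle{\bf k},\cdot\rangle$, should be annihilated by $n^2$. Given any $\phi\in(\Lambda/\Lambda^\circ)^{\ast}$, viewed as a homomorphism $\Lambda\to\mathbb Z$ vanishing on $\Lambda^\circ$, I plan to exploit Corollary~\ref{lam-decom} applied to $\mathcal B_\lambda\subset\Lambda$: since its invariant factors lie in $\{1,n\}$, the cokernel of the analogous map for $\mathcal B_\lambda$ is killed by $n$, producing ${\bf k}_0\in\mathcal B_\lambda$ with $n\phi|_{\mathcal B_\lambda}=\langle{\bf k}_0,\cdot\rangle|_{\mathcal B_\lambda}$. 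Because $n\Lambda\subset n\mathbb Z^{V_\lambda}\subset\mathcal B_\lambda$, the quotient $\Lambda/\mathcal B_\lambda$ is $n$-torsion, so the functional $n\phi-\langle{\bf k}_0,\cdot\rangle$ on $\Lambda$, which vanishes on $\mathcal B_\lambda$, is itself killed by $n$. Therefore $n^2\phi=\langle n{\bf k}_0,\cdot\rangle$ on $\Lambda$ with $n{\bf k}_0\in\mathcal B_\lambda\subset\Lambda$, giving the desired divisibility.

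With the identity established, (a) is immediate since $\mathbb T_{\hat\omega}(Q_\lambda;N\Lambda+\Lambda^\circ)$ is generated as a subalgebra by $\{Z^{N{\bf k}}\mid{\bf k}\in\Lambda\}=\im\PT$ and $\{Z^{{\bf k}}\mid{\bf k}\in\Lambda^\circ\}$. For (b), Lemma~\ref{lem-direct-lambda}(b) together with the identity yields $\Lambda/\Lambda_{N''}\cong(\Lambda/\Lambda^\circ)/N(\Lambda/\Lambda^\circ)\cong(\mathbb Z/N)^{\mathrm{rank}(\Lambda/\Lambda^\circ)}$, and the rank equals $|V_\lambda|-(n-1)m=2(n^2-1)(g-1)+n(n-1)m$ by \eqref{eq-V-lambda-rank}, matching the asserted formula.
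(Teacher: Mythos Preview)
Your proof is correct and reaches the same structural identity $\Lambda_{N''}=N\Lambda+\Lambda^\circ$ as the paper, but the route to the hard inclusion $\Lambda_{N''}\subset N\Lambda+\Lambda^\circ$ is genuinely different. The paper argues elementwise: given ${\bf t}\in\Lambda_{N''}\cap\Lambda'$ it observes that $n{\bf t}\in\mathcal B_\lambda$ lies in $B_{N''}$ (invoking Theorem~\ref{thm-center-balanced-root-of-unity} with $d=1$, which gives $B_{N''}=N\mathcal B_\lambda+\mathcal B_\lambda^\circ$), projects onto the $\Lambda'$-summand to get $n{\bf t}\in N\Lambda'$, and then uses B\'ezout with $\gcd(N,n)=1$ to conclude ${\bf t}\in N\Lambda'$. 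You instead work with the invariant factors $c_j$ of $\langle\cdot,\cdot\rangle$ on $\Lambda'$ and bound them by a power of $n$ via a functional-lifting argument bootstrapped from Corollary~\ref{lam-decom}; since $\gcd(n,N)=1$, this forces $\gcd(c_j,N)=1$, whence the block computation gives $\Lambda_{N''}\cap\Lambda'=N\Lambda'$.

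Two remarks. First, your bound is sharper than you state: the functional $n\phi-\langle{\bf k}_0,\cdot\rangle$ on $\Lambda$ is not merely ``killed by $n$'' but identically zero (it is $\mathbb Z$-valued, vanishes on $\mathcal B_\lambda$, and $\Lambda/\mathcal B_\lambda$ is $n$-torsion), so in fact $c_j\mid n$ for every $j$; your weaker $c_j\mid n^2$ already suffices. Second, the two proofs draw on the same hard input in disguise: the paper uses Theorem~\ref{thm-center-balanced-root-of-unity} directly, while Corollary~\ref{lam-decom} rests on Theorem~\ref{thm-rank-Z}, which in turn relies on Theorem~\ref{thm-center-balanced-root-of-unity}. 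What your approach buys is an explicit structural statement about the form on $\Lambda$ itself (its invariant factors divide $n$), which is not visible in the paper's argument; the paper's approach is more direct and avoids the Smith normal form machinery on $\Lambda'$.
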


\begin{proof}
(a) By Lemma~\ref{lem-center-torus}, it suffices to show that the subgroup 
\begin{align*}
   \Lambda_{N''}:=\{{\bf k}\in\Lambda\mid {\bf k} Q_\lambda {\bf t}^T=0 \in \mathbb Z_{N''}\ \text{for all }{\bf t}\in\Lambda\}
\end{align*}
is generated by $N\Lambda$ and $\Lambda^\circ$.  
Clearly $\Lambda^\circ\subset \Lambda_{N''}$.  
Since $d=1$, we have $N'=N$, and hence $N\Lambda\subset \Lambda_{N''}$ because
$\Lambda \subset \{{\bf k}\in \mathbb Z^{V_\lambda}\mid {\bf k}Q_\lambda\in (n\mathbb Z)^{V_\lambda}\}$.

We first treat the case $\Lambda=\mathcal B_\lambda$.  
From the proof of Theorem~\ref{thm-center-balanced-root-of-unity}, one has
\[
 B_{N''}=N\mathcal B_{\lambda,1} + \mathcal B_{\lambda}^{\circ},
\]
where $B_{N''}$ is defined in \eqref{eq-define-subgroup}, $\mathcal B_{\lambda,1}$ in \eqref{def-B-lambda-m}, and $\mathcal B_{\lambda}^{\circ}$ in \eqref{def-B-lambda-circ}.  
Lemma~\ref{lem-balanced-property} shows that $\mathcal B_\lambda\subset \mathcal B_{\lambda,1}$. Hence 
\begin{align}\label{lambda-one-equal-bl}
    \text{$\mathcal B_\lambda=\mathcal B_{\lambda,1},$}
\end{align}
and consequently
\[
(\mathcal B_\lambda)_{N''}=B_{N''}=N\mathcal B_\lambda+\mathcal B_\lambda^{\circ}.
\]

By Lemma~\ref{lem-direct-lambda}(b), we may write 
$\Lambda=\Lambda^\circ \oplus \Lambda'$.  
Let ${\bf a}\in \Lambda_{N''}$ with ${\bf a}={\bf s}+{\bf t}$, where ${\bf s}\in \Lambda^\circ$ and ${\bf t}\in \Lambda'$.  
Since both ${\bf a}$ and ${\bf s}$ lie in $\Lambda_{N''}$, it follows that ${\bf t}\in \Lambda_{N''}$.  
Hence $n{\bf t}\in B_{N''}$, and so
\[
n{\bf t} = N{\bf t}_1 + {\bf t}_2,
\]
with ${\bf t}_1\in \mathcal B_\lambda \subset \Lambda$ and ${\bf t}_2\in \mathcal B_\lambda^\circ \subset \Lambda^\circ$.  
Decompose ${\bf t}_1={\bf t}_1'+{\bf t}_1''$ with ${\bf t}_1'\in \Lambda'$ and ${\bf t}_1''\in \Lambda^\circ$.  
Then
\[
n{\bf t}=N{\bf t}_1'+(N{\bf t}_1''+{\bf t}_2).
\]
Since $\Lambda=\Lambda^\circ\oplus\Lambda'$, this shows $n{\bf t}=N{\bf t}_1'$.  

Because $\gcd(N',n)=1$ with $N'=N$, there exist $u,v\in\mathbb Z$ such that $uN+vn=1$.  
Thus
\[
{\bf t}=(uN+vn){\bf t}=Nu{\bf t}+vn{\bf t}
= N(u{\bf t}+v{\bf t}_1').
\]
Therefore
\[
{\bf a}={\bf s}+N(u{\bf t}+v{\bf t}_1')\in \Lambda^\circ+N\Lambda.
\]

(c) By \cite[Lemma~3.2]{KaruoWangToAppear} and part (a), $\zll$ is a free module over $\mathsf{Z}(\zll)$ of rank
\[
\Bigl|\frac{\Lambda}{N\Lambda+\Lambda^\circ}\Bigr|.
\]
Then Lemma~\ref{lem-direct-lambda} and the proof of Lemma~\ref{rank-111} complete the proof.

\end{proof}

The following Corollary will be used to prove Theorem~\ref{thm-naturality}.

\begin{corollary}\label{prop-representation-Fock}
 
 Suppose that we have the assumption \Rlabel{2}. Every irreducible representation of $\zll$ has dimension 
 $$N^{(n^2-1)(g-1) + \frac{1}{2}n(n-1)m}.$$
\end{corollary}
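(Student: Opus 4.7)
The plan is straightforward: this corollary should follow directly by combining the rank computation of Lemma~\ref{cor-center-rank}(b) with the standard Azumaya-type fact recorded in Lemma~\ref{lem-irre-Azumaya}.

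First, I would invoke Lemma~\ref{cor-center-rank}(b), which asserts that under assumption \Rlabel{2}, $\zll$ is a free module over its center $\mathsf{Z}(\zll)$ of rank $N^{2(n^2-1)(g-1)+n(n-1)m}$. Note that this statement only requires \Rlabel{2} (which is what we are assuming) and does not depend on the specific choice of $\Lambda$ between $\mathcal B_\lambda$ and $\{{\bf k}\in \mathbb Z^{V_\lambda}\mid {\bf k}Q_\lambda\in (n\mathbb Z)^{V_\lambda}\}$, so it applies to our setting.

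Next, I would apply Lemma~\ref{lem-irre-Azumaya}, which states that for a quantum torus at a root of unity, the square of the dimension of every irreducible representation equals the rank of the algebra over its center. Since $\zll=\mathbb T_{\hat\omega}(Q_\lambda;\Lambda)$ is of precisely this form, Lemma~\ref{lem-irre-Azumaya} applies. Hence for any irreducible representation $\rho\colon \zll \to \mathrm{End}_{\BC}(V)$, we have
\[
(\dim_{\BC} V)^2 \;=\; N^{2(n^2-1)(g-1)+n(n-1)m}.
\]
Taking square roots yields $\dim_{\BC} V = N^{(n^2-1)(g-1) + \frac{1}{2}n(n-1)m}$, which is the desired formula.

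There is essentially no obstacle here, as both of the required inputs are already established earlier in the paper. The only minor subtlety is verifying that the exponent $(n^2-1)(g-1)+\tfrac{1}{2}n(n-1)m$ is a nonnegative integer so that the square root is well defined, but this follows at once because $n(n-1)$ is even and because the rank being a perfect square is guaranteed by Lemma~\ref{lem-irre-Azumaya} itself (every irreducible representation has integer dimension).
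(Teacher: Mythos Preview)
Your proposal is correct and follows essentially the same approach as the paper: the paper's proof simply cites Lemma~\ref{lem-irre-Azumaya} and Lemma~\ref{cor-center-rank}(b), exactly as you do.
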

\begin{proof}
    It follows from Lemma~\ref{lem-irre-Azumaya} and Lemma~\ref{cor-center-rank}(b).
\end{proof}

\begin{remark}
    Lemma~\ref{cor-center-rank} and Corollary~\ref{prop-representation-Fock} can be easily generalized to any subgroup $\Lambda$ satisfying 
    $\mathcal B_\lambda\subset \Lambda$ provided that $\gcd(N'',n)=1$ in the root of assumption \Rlabel{2}.
\end{remark}

\section{Representations of the projected ${\rm SL}_n$-skein algebra}\label{sec-rep-sln}
In this section, under assumption~\Rlabel{2}, we show that every irreducible representation of the projected $\SL$-skein algebra determines a point of the $\SL$-character variety (see \eqref{eq-Sl3-character}) via the Frobenius homomorphism in Theorem~\ref{thm-Fro} (Proposition~\ref{prop-classical-shadow}). This point is called the \emph{classical shadow} of the irreducible representation.  

When the surface is a triangulable punctured surface, we combine Theorem~\ref{thm-representation-Fock} with the quantum trace map to prove that for a `large' subset $U$ of the $\SL$-character variety, every element of $U$ arises as the classical shadow of some irreducible representation of the projected $\SL$-skein algebra (Theorem~\ref{thm-main-3}), which is the main result of this section.  
In \S\ref{sec-naturality}, we further show that, under mild conditions, the representation constructed in Theorem~\ref{thm-main-3} is independent of the choice of ideal triangulation $\lambda$.

\subsection{The image of peripheral loops under the quantum trace map}\label{subsec-image-loop}
Throughout this subsection, $\fS$ denotes a triangulable pb surface with a triangulation $\lambda$, and $\mathcal P$ denotes the set of interior punctures of $\fS$.

Let $\fS'$ be a pb surface such that $\partial \fS'$ has at least two components, and let $u$ be an oriented corner arc such that $u$ connects two different components of $\partial\fS'$.
Note that $u$ encircles a puncture, and we require its orientation to be counterclockwise around this puncture. See the left picture in Figure~\ref{Fig;badarc}.
For each pair $1\leq i,j\leq n$, we use $u_{ij}$ to denote the stated arc obtained from $u$ by equipping the endpoint (resp. starting point) of $u$ with the state $j$ (resp. $i$). See the left picture in Figure~\ref{Fig;badarc}.
We use $\bar u_{ij}$ to denote the image of the $u_{ij}$ under the projection  $\cS_{\hat\omega}^{\rm st}(\fS')\rightarrow \overline{\cS}_{\homega}^{\rm st}(\fS')$.
Then $\bar u_{ij}=0\in \overline{\cS}_{\homega}^{\rm st}(\fS')$ when $j>i$.

For each $1\leq k\leq n$, define $\mathbb J_k$ to be the set of $k$-element subsets of $\{1,2,\cdots,n\}$.
For each $I=\{1\leq i_1<\cdots < i_k\leq n\}\in\mathbb J_k$, define 
$$M_\omega^I(u):=\sum_{\sigma\in S_k}(-\omega)^{\ell(\sigma)} u_{i_1i_{\sigma(1)}}\cdots u_{i_ki_{\sigma(k)}}\in\cS_{\hat\omega}^{\rm st}(\fS'),$$
where $\ell(\sigma)=\#\{(i,j)\mid 1\leq i<j\leq n,\ \sigma(i)>\sigma(j)\}$ is the length of $\sigma$.
We use $\overline M_\omega^I(u)$ to denote the image of the $M_\omega^I(u)$ under the projection  $\cS_{\hat\omega}^{\rm st}(\fS')\rightarrow \overline{\cS}_{\homega}^{\rm st}(\fS')$. Then 
$$\overline M_\omega^I(u)=\bar u_{i_1i_1}\cdots
\bar u_{i_ki_k}\in \overline{\cS}_{\homega}^{\rm st}(\fS').$$

Let $p\in\mathcal P$. We use $\alpha(p)$ to denote the counterclockwise oriented crossingless diagram encircling $p$.
Recall that for each $1\leq k\leq n-1$, there exists an element $\left[ \alpha(p)_k\right]_{\bar\omega}\in\cS_{\bar\omega}(\fS)$ (see Definition \ref{def.threading_of_element}).

For any subset $I$ of $\{1,2,\cdots,n\}$, define 
\begin{align}
    {\bf c}(\lambda,p,I):=
    \sum_{i\in I} {\bf c}(\lambda,p,i),
\end{align}
where ${\bf c}(\lambda,p,i)$ is defined in
\eqref{def-a-l-p-i} with `${\bf a}$' replaced by `${\bf c}$'.
Set 
\begin{align}
    {\bf c}(\lambda,p):={\bf c}(\lambda,p,\{1,2,\cdots,n\}).
\end{align}

For $1\leq i\leq n$ and $p\in\mathcal P$, define
\begin{align}\label{def-vector-d}
    {\bf d}(\lambda,p,i):=n {\bf c}(\lambda,p,\bar i)
    -{\bf c}(\lambda,p).
\end{align}
Note that 
${\bf c}(\lambda,p,1)={\bf 0}$.
With the convention that ${\bf b}(\lambda,p,0)={\bf b}(\lambda,p,n)={\bf 0}$,
Equation \eqref{eq-vectors-b-c-new} implies
\begin{align}\label{eq-vectors-b-d}
    {\bf d}(\lambda,p,i)={\bf b}(\lambda,p,i)-{\bf b}(\lambda,p,i-1)
\end{align}
for $1\leq i\leq n$, where ${\bf b}(\lambda,p,i)$ is defined in
\eqref{def-a-l-p-i} with `${\bf a}$' replaced by `${\bf b}$'.

The following proposition describes the image of $\left[\alpha(p)_k\right]_{\bar\omega}$ under the quantum trace map.  
As noted for $n=2,3$, understanding 
$\tr\!\left(\left[\alpha(p)_k\right]_{\bar\omega}\right)$ is crucial for studying the center and representation theory of the $\SL$-skein algebra \cite{representation2,unicity,kim2024unicity}.  
This proposition will be applied in \S\ref{subsec-rep-skein} to construct irreducible representations of the projected $\SL$-skein algebra.

\begin{proposition}\label{lem-image-loop-trace}
    For each $p\in\mathcal P$ and $1\leq k\leq n-1$, we have    $$\tr\left(\left[\alpha(p)_k\right]_{\bar\omega}\right)=\sum_{I\in\mathbb J_k} Z^{n{\bf c}(\lambda,p,I)-k{\bf c}(\lambda,p)}=
    \sum_{I\in\mathbb J_k}\left( \left[ \prod_{i\in I} Z^{{\bf d}(\lambda,p,i)}\right] \right).$$
\end{proposition}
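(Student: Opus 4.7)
The plan is to reduce the computation to a local analysis in a neighborhood of $p$ via the compatibility of the quantum trace map with splitting homomorphisms (Theorem~\ref{thm-trace-cut}), and then to apply Lemma~\ref{lem-trace-arc-p4} in each ideal triangle adjacent to $p$.

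First, I would isotope $\alpha(p)$ so that it is a small counterclockwise loop around $p$ meeting each edge $e\in\lambda$ incident to $p$ in exactly one transverse point. Let $e_1,\dots,e_m$ be these edges, so that ${\bf pr}_\lambda^{-1}(\{p\})=\{p_1,\dots,p_m\}$ lies in triangles $\tau_1,\dots,\tau_m\in\mathbb F_\lambda$ (possibly with repetitions). Applying the composite splitting $\mathbb S_{e_1}\circ\cdots\circ\mathbb S_{e_m}$ to $\left[\alpha(p)_k\right]_{\bar\omega}$, and using Theorem~\ref{thm-trace-cut}, the computation of $\tr\left(\left[\alpha(p)_k\right]_{\bar\omega}\right)$ is reduced to computing the tensor product over $\tau_j$ of the quantum traces of stated corner arcs $\beta_j$ at $p_j$. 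Under this splitting, the threading with $[\alpha_k]_{\bar\omega}$ produces, at each cut $e_i$, a sum over state assignments to the $n$ parallel copies of $\alpha(p)$ in the spirit of \eqref{eq-def-splitting}. Here I would use the defining normalization \eqref{eq-alphak-loop} of $[\alpha_k]_{\bar\omega}$ together with the antisymmetrizer relation \eqref{wzh.five} and the vanishing of bad arcs to show that the only surviving contributions come from state profiles indexed by $k$-element subsets $I\in\mathbb J_k$ of $\{1,\dots,n\}$, each appearing with coefficient $1$. This reduces the computation in each $\tau_j$ to evaluating $\mathrm{tr}_{\tau_j}(\beta_j^{(t)})$ for $t\in I$.

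Next, for each triangle $\tau_j$ and state $t$, the single-triangle analog of Lemma~\ref{lem-trace-arc-p4} (obtained by embedding $\tau_j$ into the quadrilateral $\mathbb P_{4,e}$ through any $e\in\lambda$ adjacent to $\tau_j$) gives $\mathrm{tr}_{\tau_j}(\beta_j^{(t)})=Z^{n{\bf c}(\tau_j,p_j,t)-{\bf c}(\tau_j,p_j)}$. Assembling the local formulas via the group embedding $\iota$ from \eqref{def-iota-emb} and summing over $I\in\mathbb J_k$, the definitions of ${\bf c}(\lambda,p,I)$ and ${\bf c}(\lambda,p)$ yield the first equality $\sum_{I\in\mathbb J_k}Z^{n{\bf c}(\lambda,p,I)-k{\bf c}(\lambda,p)}$. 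For the second equality, Lemma~\ref{lem-loop-center} shows that each ${\bf a}(\lambda,p,t)$, hence each ${\bf c}(\lambda,p,i)$ and each ${\bf d}(\lambda,p,i)$, lies in the kernel of $Q_\lambda$, so $Z^{{\bf d}(\lambda,p,i)}$ is central. Consequently the Weyl-ordered product collapses: $\left[\prod_{i\in I}Z^{{\bf d}(\lambda,p,i)}\right]=Z^{\sum_{i\in I}{\bf d}(\lambda,p,i)}$. The definition \eqref{def-vector-d} gives $\sum_{i\in I}{\bf d}(\lambda,p,i)=n{\bf c}(\lambda,p,\bar I)-k{\bf c}(\lambda,p)$, and the involution $I\mapsto \bar I$ is a bijection on $\mathbb J_k$, which closes the identification.

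The main obstacle will be the state-expansion step: correctly unpacking the splitting of $[\alpha_k]_{\bar\omega}$ from $\cS_{\bar\omega}(\mathsf A)$ into stated parallel copies of $\alpha(p)$, and verifying that the combinatorics of \eqref{eq-def-splitting} combined with \eqref{wzh.four}--\eqref{wzh.eight} and the annihilation of bad arcs in $\overline{\cS}^{\rm st}_{\hat\omega}$ collapses the sum over all $n^m$ state profiles into precisely the sum over $\mathbb J_k$ with unit coefficients. One natural route is an induction on $m$ using the quadrilateral case already handled by Lemma~\ref{lem-trace-arc-p4}; alternatively, one can argue in the universal case $\fS=\mathsf A$ via \eqref{eq-alphak-loop} and then transport the resulting local identity back to $\fS$ through the embedding \eqref{def-embedding-D} together with Theorem~\ref{thm-trace-cut}. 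Once this bookkeeping is in place, the remaining steps are essentially linear algebra on the exponent vectors.
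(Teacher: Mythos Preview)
Your overall plan is sound, and your argument for the second equality (via centrality of $Z^{{\bf d}(\lambda,p,i)}$ and the bijection $I\mapsto\bar I$ on $\mathbb J_k$) is correct. However, the paper proceeds more economically by a different reduction: it cuts along a \emph{single} edge $e\in\lambda$ incident to $p$, rather than along all of them. After this one cut, $\alpha(p)$ becomes a single corner arc $u$ in $\mathsf{Cut}_e(\fS)$, and two cited results do all the remaining work. First, \cite[Proposition~7.4]{KLW} gives the splitting formula $\mathbb S_e([\alpha(p)_k]_{\bar\omega})=\sum_{I\in\mathbb J_k}\bar u_{i_1i_1}\cdots\bar u_{i_ki_k}$ directly---this is precisely your ``main obstacle,'' dispatched by citation rather than by induction on $m$ or by working in $\mathsf A$. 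Second, \cite[Lemma~4.5 and Remark~4.6]{KimWang} computes each ${\rm tr}_{\lambda'}(\bar u_{tt})=Z^{\iota(n{\bf c}(\lambda,p,t)-{\bf c}(\lambda,p))}$ on the once-cut surface, replacing your indirect step of embedding each $\tau_j$ into a quadrilateral to access Lemma~\ref{lem-trace-arc-p4}. Commutativity of the $\bar u_{tt}$ (from \cite[Lemma~7.6(a)]{LY23}) turns the product into a single Weyl-ordered monomial, Theorem~\ref{thm-trace-cut} intertwines this with $\mathcal S_e$, and injectivity of $\mathcal S_e$ finishes.

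By contrast, your multi-cut reduction into triangles forces you to match states across all $m$ cuts simultaneously and to verify the unit-coefficient collapse by hand; this works but duplicates what is already packaged in the cited lemmas. Both routes arrive at the same place; the paper's single-cut argument simply externalizes the hardest step to existing literature.
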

\begin{proof}
  Let $e$ be an ideal arc in $\lambda$ connecting to $p$.  
There are two cases:  
(1) the two endpoints of $e$ are distinct, and  
(2) both endpoints of $e$ coincide at $p$.  
We provide a detailed proof only for case (1), since a similar argument applies to case (2).  

Let $\fS'$ denote $\Cut_e(\fS)$ (see \eqref{def-pr-cut-fs}), and let $\lambda'$ denote $\lambda_e$ (see \eqref{induced-ideal-tri}).
By cutting $\alpha(p)$ along $e$, we obtain a corner arc $u$ in $\fS'$.  
This cutting induces a group embedding 
\[
\iota\colon \mathcal B_\lambda \longrightarrow \mathcal B_{\lambda'}.
\]  
We now state two lemmas that will be used in the proof.

\begin{lemma}\cite[Proposition 7.4]{KLW}\label{lem-cutting-image-loop}
    We have $$\mathbb S_e(\left[\alpha(p)_k\right]_{\bar\omega})
    =\sum_{I\in\mathbb J_k}\overline{M}_\omega^I(u)=\sum_{1\leq i_1<\cdots <i_k\leq n}\bar u_{i_1i_1}\cdots \bar u_{i_ki_k}\in\rS^{\rm st}(\fS').$$
\end{lemma}

\begin{lemma}\cite[Lemma 4.5 and Remark 4.6]{KimWang}\label{lem-image-corner-arc}
    For each $1\leq t\leq n$, we have 
    $${\rm tr}_{\lambda'} (\bar u_{tt})
    =Z^{\iota(n{\bf c}(\lambda,p,t)-{\bf c}(\lambda,p))}.$$
\end{lemma}

It is shown in \cite[Lemma~7.6(a)]{LY23} that 
$\bar u_{ii}\bar u_{jj}= \bar u_{jj} \bar u_{ii} \in \overline{\cS}_{\hat\omega}^{\rm st}(\fS')$ 
for all $1\leq i,j\leq n$.
Hence we have
\begin{align}\label{commute-P-fS-Z}
    {\rm tr}_{\lambda'}(\bar u_{ii}) \,
    {\rm tr}_{\lambda'}(\bar u_{jj})
    = 
    {\rm tr}_{\lambda'}(\bar u_{jj}) \,
    {\rm tr}_{\lambda'}(\bar u_{ii})\in \mathcal Z_{\hat\omega}^{\rm bl}(\fS',\lambda')
\end{align}
for all $1\leq i,j\leq n$.
We have
\begin{align*}
    \mathcal S_e(\tr(\left[\alpha(p)_k\right]_{\bar\omega}))
    &={\rm tr}_{\lambda'}(\mathbb S_e(\left[\alpha(p)_k\right]_{\bar\omega}))
    \quad(\because \mbox{Theorem \ref{thm-trace-cut}})\\
    &=\sum_{1\leq i_1<\cdots <i_k\leq n}{\rm tr}_{\lambda'}(\bar u_{i_1i_1})\cdots {\rm tr}_{\lambda'}(\bar u_{i_ki_k})
    \quad(\because \mbox{Lemma \ref{lem-cutting-image-loop}})\\
    &=\sum_{I\in\mathbb J_k} Z^{\iota(n{\bf c}(\lambda,p,I)-k{\bf c}(\lambda,p))}
    \quad(\because \mbox{Lemma \ref{lem-image-corner-arc} and \eqref{commute-P-fS-Z}})\\
    &=\mathcal S_e\left(\sum_{I\in\mathbb J_k} Z^{n{\bf c}(\lambda,p,I)-k{\bf c}(\lambda,p)}\right).
\end{align*}
Then we have $$\tr\left(\left[\alpha(p)_k\right]_{\bar\omega}\right)=\sum_{I\in\mathbb J_k} Z^{n{\bf c}(\lambda,p,I)-k{\bf c}(\lambda,p)}=
    \sum_{I\in\mathbb J_k}\left( \left[ \prod_{i\in I} Z^{{\bf d}(\lambda,p,i)}\right] \right)$$ because $\mathcal S_e$ is injective.

\end{proof}

In the remainder of this section, we assume that $\fS$ is a punctured surface.  
Before turning to the representation theory of the projected $\SL$-skein algebra, we first review the connection between the ${\rm SL}_n$-character variety of $\fS$ and $\cS_{\bar\eta}(\fS)$ in the case $\bar\eta=\pm 1$.
Then we will use this connection and the Frobenius homomorphism in Theorem~\ref{thm-Fro} to define the classical shadow of an irreducible representation of the projected $\SL$-skein algebra.

\subsection{The ${\rm SL}_n$-character variety}
For any topological space $T$ such that $\pi_1(T)$ is a finitely generated group,
we use $\text{Hom}(\pi_1(T),{\rm SL}_n(\mathbb C))$ to denote the set of group homomorphisms from $\pi_1(T)$ to ${\rm SL}_n(\mathbb C)$ (one might want to choose a basepoint of $T$; this choice will not matter later).
We define an equivalence relation $\simeq$ on  $\text{Hom}(\pi_1(T),{\rm SL}_n(\mathbb C))$.
 Let $\chi,\chi'\in \text{Hom}(\pi_1(T),{\rm SL}_n(\mathbb C))$. Define $\chi\simeq\chi'$ if and only if $
 {\rm tr}(\chi(x))=
 {\rm tr}(\chi'(x))$
 for all $x\in\pi_1(T)$.
Define
\begin{align}\label{eq-Sl3-character}
\mathfrak{X}_{{\rm SL}_n(\mathbb{C})}(T) = \text{Hom}(\pi_1(T),{\rm SL}_n(\mathbb C))/\simeq.
\end{align}
Then $\mathfrak{X}_{{\rm SL}_n(\mathbb{C})}(T)$ is an algebraic set over $\mathbb C$  \cite{S2001SLn}. 
We use $\mathcal{O}_{{\rm SL}_n(\mathbb{C})}(T)$
to denote the its regular function ring.

\def\Osl{\mathcal{O}_{{\rm SL}_n(\mathbb{C})}(\fS)}
\def\Osd{\mathcal{O}_{{\rm SL}_n(\mathbb{C})}^{(d)}(\fS)}
\def\Xsl{\mathfrak{X}_{{\rm SL}_n(\mathbb{C})}(\fS)}
\def\Xsd{\mathfrak{X}_{{\rm SL}_n(\mathbb{C})}^{(d)}(\fS)}
\def\tra{{\rm tr}}


When $\bar\eta=\pm 1$, we have 
 \begin{equation}\label{eq-cross-com}
  	\raisebox{-.20in}{
  		
  		\begin{tikzpicture}
  			\tikzset{->-/.style=
  				
  				{decoration={markings,mark=at position #1 with
  						
  						{\arrow{latex}}},postaction={decorate}}}
  			\filldraw[draw=white,fill=gray!20] (-0,-0.2) rectangle (1, 1.2);
  			\draw [line width =1pt,decoration={markings, mark=at position 0.5 with {\arrow{>}}},postaction={decorate}](0.6,0.6)--(1,1);
  			\draw [line width =1pt,decoration={markings, mark=at position 0.5 with {\arrow{>}}},postaction={decorate}](0.6,0.4)--(1,0);
  			\draw[line width =1pt] (0,0)--(0.4,0.4);
  			\draw[line width =1pt] (0,1)--(0.4,0.6);
  			\draw[line width =1pt] (0.4,0.6)--(0.6,0.4);
  		\end{tikzpicture}
  	}
  	= 
  	\raisebox{-.20in}{
  		\begin{tikzpicture}
  			\tikzset{->-/.style=
  				
  				{decoration={markings,mark=at position #1 with
  						
  						{\arrow{latex}}},postaction={decorate}}}
  			\filldraw[draw=white,fill=gray!20] (-0,-0.2) rectangle (1, 1.2);
  			\draw [line width =1pt,decoration={markings, mark=at position 0.5 with {\arrow{>}}},postaction={decorate}](0.6,0.6)--(1,1);
  			\draw [line width =1pt,decoration={markings, mark=at position 0.5 with {\arrow{>}}},postaction={decorate}](0.6,0.4)--(1,0);
  			\draw[line width =1pt] (0,0)--(0.4,0.4);
  			\draw[line width =1pt] (0,1)--(0.4,0.6);
  			\draw[line width =1pt] (0.6,0.6)--(0.4,0.4);
  		\end{tikzpicture}
  	}\in\cS_{\bar\eta}(\fS).
  \end{equation}
Equation \eqref{eq-cross-com} shows that $\cS_{\bar\eta}(\fS)$ is a commutative algebra. 
Then we have the following.

\begin{lemma}[\cite{Sik05,wang2024TQFT}]\label{lem-bijection-character}
Suppose that $\fS$ is a punctured surface with $\partial\fS=\emptyset$,
and $\bar\eta = \pm 1$.
    There is an algebra isomorphism 
    \begin{align*}
        \CT\colon\cS_{\bar\eta}(\fS)\rightarrow \Osl.
    \end{align*}
\end{lemma}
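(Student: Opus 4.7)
The plan is to construct $\CT$ directly on $n$-web generators, verify that it descends to $\cS_{\bar\eta}(\fS)$ at $\bar\eta = \pm 1$, and then argue bijectivity using classical invariant theory.

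First I would define $\CT$ on the generating $n$-webs. For a framed oriented knot $\gamma$ representing a conjugacy class $[\gamma]\in\pi_1(\fS)/\!\sim$, set
\[
\CT(\gamma)([\chi]) \;=\; \tra(\chi([\gamma])).
\]
For a general (un-stated) $n$-web $\alpha$ in $\fS\times(-1,1)$, I would regard each edge as carrying a copy of the defining $\mathrm{SL}_n$-representation $\mathbb C^n$ (or its dual, according to orientation) evaluated along the corresponding path in $\pi_1(\fS)$, and contract at each $n$-valent sink/source using the canonical $\mathrm{SL}_n$-invariant tensors $\epsilon\in\Lambda^n\mathbb C^n\cong\mathbb C$ and its dual. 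The resulting number is invariant under conjugation of the representation and hence defines a regular function on $\Xsl$. Since $\cS_{\bar\eta}(\fS)$ is commutative by \eqref{eq-cross-com}, $\CT$ is a natural candidate for an algebra homomorphism.

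Second I would check that $\CT$ respects the defining relations \eqref{w.cross}--\eqref{wzh.four} at $\bar\eta=\pm 1$. Here $\omega^{1/n}=\bar\eta=\pm1$ and $\omega-\omega^{-1}=0$, so the crossing relation \eqref{w.cross} reduces to the equality of positive and negative crossings, which holds classically for matrix products of commuting elements under the two-dimensional projection. The twist relation \eqref{w.twist} follows from the evaluation of the constant $\mathbbm{t}$ at $\bar\eta=\pm1$ together with the framing invariance of the trace. The unknot relation \eqref{w.unknot} reproduces $\tra(\mathrm{Id})=n$ up to the appropriate sign. The $n$-valent vertex relation \eqref{wzh.four} is the most delicate one; it should follow from the classical expansion
\[
\det(M_1,\ldots,M_n) \;=\; \sum_{\sigma\in S_n} \sgn(\sigma)\,\epsilon_{i_{\sigma(1)}\cdots i_{\sigma(n)}}(M_1)_{j_1}^{i_1}\cdots (M_n)_{j_n}^{i_n},
\]
combined with the fact that at $\bar\eta=\pm 1$ the braid factor $(-\omega^{(1-n)/n})^{\ell(\sigma)}$ collapses to $\sgn(\sigma)$ (up to a uniform scalar).

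Third I would establish bijectivity. For surjectivity, I would invoke the classical Procesi--Razmyslov description of $\Osl$: it is generated (as a $\mathbb C$-algebra) by trace functions $\chi\mapsto\tra(\chi(\ell))$ for $\ell\in\pi_1(\fS)$, together with their products coming from Cayley--Hamilton, and all such functions lie in $\im\CT$ by construction. For injectivity, I would appeal to a known basis of $\cS_{\bar\eta}(\fS)$ by reduced oriented web diagrams, and argue that $\CT$ sends distinct basis elements to linearly independent regular functions — alternatively, compare graded dimensions (Krull dimension), both equal to $2(n^2-1)(g-1)+(n^2-1)m$ for a surface of genus $g$ with $m$ punctures.

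The main obstacle will be verifying relation \eqref{wzh.four} in full generality (tracking the signs and the length statistic $\ell(\sigma)$ at $\bar\eta=\pm1$), and establishing injectivity for arbitrary $n$: the $n=2$ case of \cite{Sik05} uses the explicit diagrammatic calculus of the Kauffman bracket, while for general $n$ one needs either the webs-and-ladders presentation together with first fundamental theorem of invariant theory, or the cited reformulation in \cite{wang2024TQFT}.
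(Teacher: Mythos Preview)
Your proposal attempts to reconstruct the entire Sikora isomorphism from scratch, whereas the paper's proof is two sentences: it cites \cite{Sik05} for the case $\bar\eta=1$, and then invokes \cite[Theorem~4.2]{wang2024TQFT}, which gives an algebra isomorphism $\cS_{1}(\fS)\cong\cS_{-1}(\fS)$, to deduce the case $\bar\eta=-1$. Since the lemma is stated with attribution to these references, this is the intended argument.

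Your direct approach is in spirit what Sikora does, but it has a genuine gap at injectivity. You propose to ``appeal to a known basis of $\cS_{\bar\eta}(\fS)$ by reduced oriented web diagrams'': as the paper itself notes in the introduction, for $n>3$ no such ``good basis'' is currently available. Your fallback via equality of Krull dimensions would need $\cS_{\bar\eta}(\fS)$ to be a domain (so that a surjection onto a ring of the same dimension has trivial kernel), but establishing that $\cS_{\bar\eta}(\fS)$ is a domain at $\bar\eta=\pm1$ is essentially equivalent to what you are trying to prove. A second issue is that you treat $\bar\eta=1$ and $\bar\eta=-1$ uniformly; the natural trace assignment $\gamma\mapsto\tra(\chi(\gamma))$ does not directly satisfy the skein relations at $\bar\eta=-1$ without sign corrections, which is exactly why the paper routes through the isomorphism $\cS_{1}(\fS)\cong\cS_{-1}(\fS)$ rather than redoing the construction.
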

\begin{proof}
    When $\bar\eta=1$, this isomorphism was constructed in \cite{Sik05}.
 \cite[Theorem 4.2]{wang2024TQFT} shows that there is an algebra isomorphism between $\cS_{1}(\fS)$ and $\cS_{-1}(\fS)$.
\end{proof}


We can identify $\Xsl$ with $\HA(\Osl,\mathbb C)$.
Then the algebra isomorphism $$\CT^{-1}\colon\Osl\rightarrow \cS_{\bar\eta}(\fS)$$ induces a bijection
\begin{align}\label{def-CT-star}
    \mathcal{T}_{*}\colon\text{Hom}_{\text{Alg}}(\cS_{\bar\eta}(\fS),\mathbb C)\rightarrow \mathfrak{X}_{{\rm SL}_n(\mathbb{C})}(\fS).
\end{align}
We have the following.

\def\SSS{\cS_{\bar\omega}^{*}(\fS)}
\def\SSE{\cS_{\bar\eta}(\fS)}

\begin{lemma}\label{lem-identity-eq}
    Let $\theta\in \text{Hom}_{\text{Alg}}(\cS_{\bar\eta}(\fS),\mathbb C)$. Suppose that 
    $\CT_{*}(\theta) = \chi$.
    Then 
    $$\CT(\alpha)(\chi) = \theta(\alpha)$$
    for any $\alpha\in\SSE$.
\end{lemma}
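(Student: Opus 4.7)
The statement is essentially a definitional unfolding, so the plan is short. First I would recall the two identifications in play. Under the standard identification $\Xsl \cong \HA(\Osl,\mathbb C)$, a character $\chi\in\Xsl$ corresponds to the algebra homomorphism $\mathrm{ev}_\chi\colon \Osl\to\mathbb C$ sending $f\mapsto f(\chi)$; I will use the same symbol $\chi$ for both. The bijection $\CT_*$ in \eqref{def-CT-star} is by construction precomposition with $\CT^{-1}$, so for $\theta\in\HA(\SSE,\mathbb C)$ the image $\CT_*(\theta)$ is the composite
\[
\CT_*(\theta)\colon \Osl \xrightarrow{\;\CT^{-1}\;} \SSE \xrightarrow{\;\theta\;}\mathbb C.
\]

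The hypothesis $\CT_*(\theta)=\chi$ therefore asserts the equality of algebra homomorphisms $\chi=\theta\circ\CT^{-1}\colon \Osl\to\mathbb C$. Evaluating both sides on the regular function $\CT(\alpha)\in\Osl$ for an arbitrary $\alpha\in\SSE$, and using that $\CT$ is an algebra isomorphism (Lemma~\ref{lem-bijection-character}), I obtain
\[
\CT(\alpha)(\chi)\;=\;\chi\bigl(\CT(\alpha)\bigr)\;=\;\theta\bigl(\CT^{-1}(\CT(\alpha))\bigr)\;=\;\theta(\alpha),
\]
which is the desired identity.

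There is no substantive obstacle: the entire argument is a one-line chase through the definitions of $\CT_*$, of the identification of $\Xsl$ with $\HA(\Osl,\mathbb C)$, and of the inverse isomorphism $\CT^{-1}$. The only mild point worth flagging is consistency of conventions, namely that $\CT_*$ is indeed defined by precomposition with $\CT^{-1}$ rather than by postcomposition with $\CT$; this is already fixed by the definition in \eqref{def-CT-star}.
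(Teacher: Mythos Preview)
Your proof is correct and follows essentially the same approach as the paper: both arguments unwind the definitions of $\CT_*$ and the identification $\Xsl\cong\HA(\Osl,\mathbb C)$, then evaluate at $\CT(\alpha)$ and use $\CT^{-1}\circ\CT=\id$. The paper's proof is the same three-line computation.
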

\begin{proof}
    We have 
    \begin{align*}
        \CT(\alpha)(\chi) =& \CT_{*}(\theta) (\CT(\alpha)) \quad(\because \mbox{$\CT_{*}(\theta) = \chi$})\\
        =& \theta (\mathcal T^{-1}(\CT(\alpha))) \quad(\because \mbox{the definition of $\mathcal T_{*}$})\\
        =& \theta(\alpha).
    \end{align*}
\end{proof}

\subsection{Construction of irreducible representations of $\SxsS$}\label{subsec-rep-skein}

In this subsection, we work under assumption \Rlabel{2} and further assume that $[n]_\omega! \neq 0$.

\def\End{{\rm End}_{\BC}}

\def\trace{{\rm tr}}

\begin{theorem}\cite[Theorems~4.2 and 5.8]{wang2024TQFT}
Let $\fS$ be a punctured surface such that each connected component of $\fS$ contains at least one puncture.
    Under assumption \Rlabel{2}, we have 
$\cS_{\bar\eta}^{*}(\fS)= \cS_{\bar\eta}(\fS)$.  
In particular, the Frobenius homomorphism $\Phi$ in Theorem~\ref{thm-Fro} is an algebra homomorphism from 
$\cS_{\bar\eta}(\fS)$ to $\cS_{\bar\omega}^{*}(\fS)$.
\end{theorem}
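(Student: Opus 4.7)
The plan splits into two parts: first, establishing that $\cS_{\bar\eta}^{*}(\fS) = \cS_{\bar\eta}(\fS)$ (that is, showing the kernel of the ``enlarge a puncture'' homomorphism $f_*\colon \cS_{\bar\eta}(\fS)\to\cS_{\hat\eta}^{\rm st}(\fS')$ is trivial); second, merely re-interpreting the Frobenius homomorphism of Theorem~\ref{thm-Fro} in light of this identification. The first part is the substance of the theorem.

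First I would verify the preparatory fact that assumption \Rlabel{2} forces $\bar\eta=\pm 1$. Indeed, \Rlabel{2} gives $d=\gcd(N',n)=1$, so $N=N'/d=N'$, and since $\bar\omega^{2}$ has order $N'$ we get $\bar\eta^{2}=\bar\omega^{2N^{2}}=(\bar\omega^{2N'})^{N}=1$. Thus the crossing relation \eqref{eq-cross-com} collapses to positive${}={}$negative crossing, so $\cS_{\bar\eta}(\fS)$ is commutative, and Lemma~\ref{lem-bijection-character} identifies it with the coordinate ring $\mathcal O_{\SL(\mathbb C)}(\fS)$. On the target side, one also needs the analogous fact at the level of $\cS_{\hat\eta}^{\rm st}(\fS')$ (a reduced stated algebra): namely, at $\bar\eta=\pm 1$ its structure becomes essentially classical, so one can profitably compute inside it via classical representation-theoretic means.

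The main step would then be to exhibit a left inverse to $f_*$. Geometrically, enlarging the puncture $p$ to a boundary edge does not change $\pi_{1}$, so the identification $\cS_{\bar\eta}(\fS)\cong \mathcal O_{\SL(\mathbb C)}(\fS)=\mathcal O_{\SL(\mathbb C)}(\fS')$ suggests a natural candidate. To construct it skein-theoretically, I would use the splitting homomorphism and the reduction-by-bad-arcs relations in $\cS_{\hat\eta}^{\rm st}(\fS')$: any diagram in $\fS'$ can be pushed away from the new boundary edge $c_p$ using the stated skein relations \eqref{wzh.four}--\eqref{wzh.eight}, and a summation (or "trace") over boundary states produces a well-defined map $r\colon\cS_{\hat\eta}^{\rm st}(\fS')\to\cS_{\bar\eta}(\fS)$. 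Checking that $r\circ f_{*}$ equals the identity is a direct computation using that for a diagram $W$ in $\fS$ the image $f(W)$ has no endpoints on $c_p$, so pushing away is trivial. Alternatively, one can pick a basis of $\cS_{\bar\eta}(\fS)$ by non-elliptic $n$-web diagrams and verify that their images under $f_{*}$ remain linearly independent via a ``highest-term'' argument in a suitable quantum-trace-like expansion; this second route avoids having to identify the classical shape of $\cS_{\hat\eta}^{\rm st}(\fS')$ but still needs the $\bar\eta=\pm 1$ simplification to kill all quantum correction terms.

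The main obstacle is exactly this injectivity step: the reduced stated skein algebra at a root of unity has many nonobvious relations (bad arcs, corner-arc identifications, and the Weyl-ordering of boundary states in \eqref{wzh.eight}), and one must show that none of these relations can accidentally make an element of $\cS_{\bar\eta}(\fS)$ vanish once pushed into $\cS_{\hat\eta}^{\rm st}(\fS')$. Once the injectivity is secured, the first part gives $\cS_{\bar\eta}^{*}(\fS):=\cS_{\bar\eta}(\fS)/\mathcal K_{\hat\eta,\fS}=\cS_{\bar\eta}(\fS)$, and then the Frobenius homomorphism $\Phi\colon \cS_{\bar\eta}^{*}(\fS)\to\cS_{\bar\omega}^{*}(\fS)$ of Theorem~\ref{thm-Fro} is automatically a homomorphism $\cS_{\bar\eta}(\fS)\to\cS_{\bar\omega}^{*}(\fS)$, which is the second half of the assertion.
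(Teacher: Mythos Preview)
The paper does not prove this theorem at all: it is stated purely as a citation to \cite[Theorems~4.2 and 5.8]{wang2024TQFT}, with no argument given. So there is no ``paper's own proof'' to compare against, and your proposal is an attempt to supply what the paper deliberately outsources.

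Your reduction to $\bar\eta=\pm1$ is correct and matches what the paper uses elsewhere. However, the heart of your sketch --- building a left inverse $r\colon\cS_{\hat\eta}^{\rm st}(\fS')\to\cS_{\bar\eta}(\fS)$ by ``pushing diagrams off $c_p$'' and ``tracing over boundary states'' --- is not a known construction for stated ${\rm SL}_n$-skein algebras, and you have not checked that such an $r$ respects the boundary relations \eqref{wzh.five}--\eqref{wzh.eight}. A closed-up stated diagram near $c_p$ does not obviously return a single well-defined element of $\cS_{\bar\eta}(\fS)$; the height ordering on $c_p$ and the bad-arc relations interact nontrivially, and this is precisely the content that makes the injectivity of $f_*$ (equivalently, of the splitting map) difficult in general. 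Your alternative ``highest-term via a basis of non-elliptic webs'' also presupposes a confluence/basis result for $\cS_{\bar\eta}(\fS)$ at general $n$ that the present paper explicitly flags as unavailable (see the remarks preceding Theorem~\ref{intro-1}). So as written, the proposal identifies the right target (injectivity of $f_*$ at $\bar\eta=\pm1$) but does not supply a mechanism that would actually establish it; the cited reference uses TQFT/functorial methods specific to $\bar\omega^n=1$ that are not reproduced here.
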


     Let $\fS$ be a punctured surface, and let $\mathcal P$ be the set of punctures of $\fS$. We require that each connected component of $\fS$ contains at least one puncture.
     Let $\rho\colon \SSS\rightarrow \End(V)$ be a finite dimensional irreducible representation of 
     $\cS_{\bar\omega}^{*}(\fS)$. 
From Theorem \ref{thm-Fro}(c), for any $\beta\in \SSE$, we have that
$\Phi(\beta)$ is a central element in $\SSS$.
Schur's lemma implies that
\begin{align}\label{deter-clas}
    \text{$\rho(\Phi(\beta)) = \theta(\beta)\Id_V,\text{ for some  $\theta(\beta)\in\mathbb C$.}$}
\end{align}
Then $\theta\in \hom(\SSE,\BC)$ because $\rho$ is an algebra homomorphism.

For $1\leq i\leq n-1$ and $p\in\mathcal P$, we have that $[\alpha(p)_i]_{\bar\omega}$ is a central element in $\SSS$.
    It follows from the Schur's lemma that
    \begin{align}\label{deter-loop}
        \text{$\rho([\alpha(p)_i]_{\bar\omega}) = s(p,i)\Id_V,\text{ for some  $s(p,i)\in\mathbb C$.}$}
    \end{align}

Set $\chi=\mathcal T_{*}(\theta)\in\Xsl$.
Lemma \ref{lem-identity-eq} shows that $\theta(\beta) = \mathcal T(\beta) (\chi)$ for $\beta\in\SSE$. Then,
for $1\leq i\leq n-1$ and $p\in\mathcal P$,
we have 
\begin{equation}\label{shadow-relation}
\begin{split}
    &\CT([\alpha(p)_i]_{\bar\eta}) (\chi)\Id_V=\theta([\alpha(p)_i]_{\bar\eta})\Id_V = \rho(\Phi([\alpha(p)_i]_{\bar\eta})) = 
    \rho\left(\bar P_{N,i}([\alpha(p)_1]_{\bar\omega},\cdots, [\alpha(p)_{n-1}]_{\bar\omega})\right)\\
    =& \bar P_{N,i}\left(\rho([\alpha(p)_1]_{\bar\omega}),\cdots, \rho([\alpha(p)_{n-1}]_{\bar\omega})\right)\Id_V
    =\bar P_{N,i}(s(p,1),\cdots, s(p,n-1))\Id_V.
\end{split}
\end{equation}

We now give the following definition.

\begin{definition}\label{def-character-pair}
Let $\bar\eta=\pm 1$.
 Let $\fS$ be a punctured surface, and let $\mathcal P$ be the set of punctures of $\fS$. We require that each connected component of $\fS$ contains at least one puncture.
    A {\bf character pair} of $\fS$ is pair $$(\chi,\{s(p,i)\mid p\in\mathcal P, 1\leq i\leq n-1\}),$$ where $\chi\in \Xsl$ and $\{s(p,i)\mid p\in\mathcal P, 1\leq i\leq n-1\}\subset \mathbb C$ such that
  $$\CT([\alpha(p)_i]_{\bar\eta}) (\chi)
  =\bar P_{N,i}(s(p,1),s(p,2),\cdots, s(p,n-1)) \quad (\text{see Lemma \ref{lem-bijection-character} for $\CT$})$$
  for $1\leq i\leq n-1$ and $p\in\mathcal P$.


\end{definition}

Equations~\eqref{deter-clas}, \eqref{deter-loop}, and \eqref{shadow-relation} imply the following.

\begin{proposition}\label{prop-classical-shadow}
     Suppose that we have the assumption  \Rlabel{1} and $[n]_\omega!\neq 0$.
     Let $\fS$ be a punctured surface such that each connected component of $\fS$ contains at least one puncture.
     Let $$\rho\colon \SSS\rightarrow \End(V)$$ be a finite dimensional irreducible representation of 
     $\cS_{\bar\omega}(\fS)$. 
     The $\rho$ determines a character pair
     $$(\chi,\{s(p,i)\mid p\in\mathcal P, 1\leq i\leq n-1\})$$ of $\fS$
     satisfying the following:
     \begin{enumerate}
         \item $\rho([\alpha(p)_i]_{\bar\omega}) = s(p,i)\Id_V$ for 
         $1\leq i\leq n-1$ and $p\in\mathcal P$.

         \item For any $\beta\in\cS_{\bar\eta}(\fS)$, we have 
         $$\rho(\Phi(\beta)) = \CT(\beta)(\chi) \Id_V,$$
         where $\Phi$ is the homomorphism in Theorem \ref{thm-Fro} and $\CT_{*}$ is defined in \eqref{def-CT-star}.
     \end{enumerate}
\end{proposition}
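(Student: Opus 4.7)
The plan is to essentially formalize the paragraph preceding the proposition, which already constructs the candidate character pair from $\rho$; the task is to verify the two required properties and to check that the pair really lies in the set of character pairs in the sense of Definition \ref{def-character-pair}. I will first produce the two scalar-valued data via Schur's lemma, and then use Theorem \ref{thm-Fro} to identify them correctly.

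The first step is to produce the scalars $s(p,i)$. The elements $[\alpha(p)_i]_{\bar\omega}$ come from threading along a peripheral loop, and they lie in the center of $\SSS$ (this uses Theorem \ref{thm-Fro}(b) for $\bar\eta=\pm 1$, which is why the assumption \Rlabel{2} is in force throughout this subsection). Since $V$ is finite-dimensional and $\rho$ is irreducible, Schur's lemma gives $\rho([\alpha(p)_i]_{\bar\omega})=s(p,i)\Id_V$ for a unique $s(p,i)\in\mathbb C$, yielding property (1).

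The second step produces $\chi$. By Theorem \ref{thm-Fro}(b), the image of $\Phi\colon \SSE\to \SSS$ lies in $\mathsf Z(\SSS)$, so Schur's lemma again gives a map $\theta\colon \SSE\to \mathbb C$ with $\rho(\Phi(\beta))=\theta(\beta)\Id_V$. Because $\Phi$ and $\rho$ are algebra homomorphisms, so is $\theta$. Define $\chi := \mathcal T_*(\theta)\in\Xsl$ via \eqref{def-CT-star}. Applying Lemma \ref{lem-identity-eq} then gives $\theta(\beta)=\mathcal T(\beta)(\chi)$ for all $\beta\in\SSE$, which is precisely property (2).

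It remains to verify that $(\chi,\{s(p,i)\})$ is a character pair, i.e.\ that the compatibility $\mathcal T([\alpha(p)_i]_{\bar\eta})(\chi)=\bar P_{N,i}(s(p,1),\dots,s(p,n-1))$ holds. Specializing property (2) to $\beta=[\alpha(p)_i]_{\bar\eta}$ and using Theorem \ref{thm-Fro}(a), which states $\Phi([\alpha(p)_i]_{\bar\eta})=\bar P_{N,i}([\alpha(p)_1]_{\bar\omega},\dots,[\alpha(p)_{n-1}]_{\bar\omega})$ in $\SSS$, I substitute into $\rho$ and invoke property (1) to pull each $\rho([\alpha(p)_k]_{\bar\omega})$ out as the scalar $s(p,k)$; this is exactly the computation carried out in \eqref{shadow-relation}. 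The only mild obstacle is ensuring the centrality of $[\alpha(p)_i]_{\bar\omega}$ in $\SSS$, but this follows because $\alpha(p)$ is isotopic into an arbitrarily small neighborhood of the puncture $p$ and hence is disjoint (up to height) from every other $n$-web; since the projected skein algebra respects such local isotopy, $[\alpha(p)_i]_{\bar\omega}$ commutes with every class. With this, both required identities hold and the proof is complete.
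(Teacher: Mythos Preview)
Your proposal is correct and follows the same approach as the paper: the paper's ``proof'' consists precisely of the derivations in \eqref{deter-clas}, \eqref{deter-loop}, and \eqref{shadow-relation} preceding the proposition, which you have reproduced with a bit more care (including the isotopy justification for the centrality of $[\alpha(p)_i]_{\bar\omega}$). One small remark: the centrality of $[\alpha(p)_i]_{\bar\omega}$ does not itself require Theorem~\ref{thm-Fro}(b); it is the centrality of $\im\Phi$ (needed to define $\theta$) that uses \Rlabel{2}, as you correctly invoke in your second step.
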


\begin{definition}
  As in Proposition \ref{prop-classical-shadow}, the character pair associated with an irreducible representation $\rho$ of $\SSS$ is called the {\bf classical shadow} of $\rho$.
\end{definition}

\def\End{{\rm End}_{\BC}}

A natural question to ask is: given a character pair, does there exist an irreducible representation whose classical shadow corresponds to this character? In the remainder of this section, we will focus on addressing this question.


Suppose that $\fS$ is a triangulable punctured surface with a triangulation $\lambda$.
Corollary \ref{cor-center-Z-two}(a) and Equation \eqref{lambda-one-equal-bl} implies that $\Zee$
is a commutative algebra under the assumption \Rlabel{2}.

Then the algebra homomorphism 
$\tr\colon \cS_{\bar\eta}(\fS)\rightarrow
    \mathcal{Z}_{\hat\eta}^{\rm bl}(\fS,\lambda)$ (see Theorem \ref{thm.quantum_trace}(a)) induces a map
    $${\rm tr}^*\colon
    \HA(\mathcal{Z}_{\hat\eta}^{\rm bl}(\fS,\lambda),\mathbb C)\rightarrow\HA(\cS_{\bar\eta}(\fS),\mathbb C).$$
We use $\overline{\mathcal{T}}$ to denote the composition of the following maps
$$\HA(\mathcal{Z}_{\hat\eta}^{\rm bl}(\fS,\lambda),\mathbb C)\xrightarrow{{\rm tr}^*}\HA(\cS_{\bar\eta}(\fS),\mathbb C)\xrightarrow{\mathcal{T}_{*}}\mathfrak{X}_{{\rm SL}_n(\mathbb{C})}(\fS).$$

The following theorem establishes that, for any character pair 
\[
(\chi,\{s(p,i)\mid p\in\mathcal P,\; 1\leq i\leq n-1\}) 
\quad \text{with } \chi \in \im\overline{\mathcal T},
\]
there exists an irreducible representation of $\SxsS$ whose classical shadow is precisely this character pair. 
This result constitutes the fourth main theorem of the paper.

\def\alle{[\alpha(p)_i]_{\bar\eta}}
\def\allo{[\alpha(p)_i]_{\bar\omega}}

\begin{theorem}\label{thm-main-3}
Suppose that we have the assumption \Rlabel{2} and $[n]_\omega!\neq 0$.
     Let $\fS$ be a triangulable punctured surface with a triangulation $\lambda$.
    Let $$(\chi,\{s(p,i)\mid p\in\mathcal P, 1\leq i\leq n-1\})$$ be a character pair of $\fS$ (Definition \ref{def-character-pair})
    with $\chi\in\im\overline{\CT}$, where $\mathcal P$ is the set of punctures of $\fS$.
    There exists an irreducible representation of $\Zoo$
    $$\bar\rho\colon\Zoo\rightarrow \End(V)$$
    such that the classical shadow of any irreducible sub-representation of   $$\rho\colon\SSS\xrightarrow{\tr}\Zoo\xrightarrow{\bar\rho}\End(V)$$
    is $(\chi,\{s(p,i): p\in\mathcal P, 1\leq i\leq n-1\})$.
\end{theorem}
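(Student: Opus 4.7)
The plan is to construct $\bar\rho$ by a judicious application of Theorem~\ref{thm-representation-Fock}. Since $\chi\in\im\overline{\mathcal T}$, first choose $\theta\in\hom(\Zee,\mathbb C)$ with $\overline{\mathcal T}(\theta)=\chi$, so that by Lemma~\ref{lem-identity-eq} we have $\mathcal T(\beta)(\chi)=\theta(\tr(\beta))$ for every $\beta\in\SSE$. Under assumption~\Rlabel{2} one has $d=1$, and \eqref{lambda-one-equal-bl} gives $\Zee_d=\Zee$, so $\theta$ already lies in $\hom(\Zee_d,\mathbb C)$, as required by Theorem~\ref{thm-representation-Fock}.

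Next I would unpack the character-pair condition. For each $p\in\mathcal P$ set $d_j(p):=\theta(Z^{{\bf d}(\lambda,p,j)})\in\mathbb C^*$; the telescoping identity \eqref{eq-vectors-b-d} together with the convention ${\bf b}(\lambda,p,0)={\bf b}(\lambda,p,n)=\mathbf 0$ forces $\prod_{j=1}^n d_j(p)=1$. Each $Z^{{\bf d}(\lambda,p,j)}$ is central in $\mathcal Z_{\hat\eta}(\fS,\lambda)$ because ${\bf d}(\lambda,p,j)$ lies in the right kernel of $Q_\lambda$ (being the difference of two such vectors by Corollary~\ref{cor-center-generic}(a)), so the Weyl brackets in Proposition~\ref{lem-image-loop-trace} are trivial and applying $\theta$ gives $\theta(\tr([\alpha(p)_i]_{\bar\eta}))=e_i(d_1(p),\ldots,d_n(p))$. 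The character-pair relation of Definition~\ref{def-character-pair} therefore reduces to
$$e_i(d_1(p),\ldots,d_n(p))=\bar P_{N,i}(s(p,1),\ldots,s(p,n-1)),\quad 1\leq i\leq n-1.$$
Lemma~\ref{lem-solu} then produces $Y_1(p),\ldots,Y_n(p)\in\mathbb C^*$ with $Y_j(p)^N=d_j(p)$, $\prod_j Y_j(p)=1$, and $e_i(Y_1(p),\ldots,Y_n(p))=s(p,i)$. Define $b(p,i):=Y_1(p)\cdots Y_i(p)$; telescoping yields $b(p,i)^N=d_1(p)\cdots d_i(p)=\theta(Z^{{\bf b}(\lambda,p,i)})$, exactly the hypothesis of Theorem~\ref{thm-representation-Fock}. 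Applying that theorem with $f=\theta$ gives the required irreducible $\bar\rho\colon\Zoo\to\End(V)$ with $\bar\rho(\PT(X))=\theta(X)\,\Id_V$ and $\bar\rho(Z^{{\bf b}(\lambda,p,i)})=b(p,i)\,\Id_V$.

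It remains to identify the classical shadow of any irreducible subrepresentation $W\subset V$ of $\rho:=\bar\rho\circ\tr$. By Theorem~\ref{thm-center-balanced-root-of-unity} each $Z^{{\bf b}(\lambda,p,j)}$ is central in $\Zoo$, so each $Z^{{\bf d}(\lambda,p,j)}=Z^{{\bf b}(\lambda,p,j)}Z^{-{\bf b}(\lambda,p,j-1)}$ is central and acts under $\bar\rho$ by the scalar $b(p,j)b(p,j-1)^{-1}=Y_j(p)$ (with the conventions $b(p,0)=b(p,n)=1$). Plugging into Proposition~\ref{lem-image-loop-trace} for $\tr([\alpha(p)_i]_{\bar\omega})$ gives $\rho([\alpha(p)_i]_{\bar\omega})=e_i(Y_1(p),\ldots,Y_n(p))\,\Id_V=s(p,i)\,\Id_V$. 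For the character-variety half, the intertwining $\tr\circ\Phi=\PT\circ\tr$ from Theorem~\ref{thm-Fro}(c) combined with Theorem~\ref{thm-representation-Fock}(1) yields $\rho(\Phi(\beta))=\bar\rho(\PT(\tr(\beta)))=\theta(\tr(\beta))\,\Id_V=\mathcal T(\beta)(\chi)\,\Id_V$. Both identities hold on all of $V$, hence also on $W$, so the classical shadow of $W$ is exactly $(\chi,\{s(p,i)\})$.

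The principal technical hurdle will be the middle paragraph: one must simultaneously satisfy (i) the normalization $\prod_j Y_j(p)=1$, (ii) the elementary-symmetric identities $e_i(Y)=s(p,i)$ imposed by the character-pair relation, and (iii) the $N$-th power conditions $b(p,i)^N=\theta(Z^{{\bf b}(\lambda,p,i)})$ demanded by Theorem~\ref{thm-representation-Fock}. The identity ${\bf d}(\lambda,p,j)={\bf b}(\lambda,p,j)-{\bf b}(\lambda,p,j-1)$ from \eqref{eq-vectors-b-d} is what aligns these three constraints, converting the multiplicative telescoping $b(p,i)=\prod_{j\leq i}Y_j(p)$ into the additive relation between ${\bf b}$'s and ${\bf d}$'s on the quantum torus side, which makes the existence statement of Lemma~\ref{lem-solu} directly applicable.
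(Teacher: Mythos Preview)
Your proposal is correct and follows essentially the same route as the paper's proof: choose a preimage of $\chi$ under $\overline{\mathcal T}$, use Proposition~\ref{lem-image-loop-trace} and the character-pair relation to reduce to the system solved by Lemma~\ref{lem-solu}, then feed the resulting $N$-th roots into Theorem~\ref{thm-representation-Fock} and verify the classical shadow via Theorem~\ref{thm-Fro}(c). The only differences are cosmetic (your $\theta$, $d_j(p)$, $Y_j(p)$, $b(p,i)$ correspond to the paper's $f$, $d(p,j)$, $t(p,j)$, $l(p,i)$), and your explicit remark that $d=1$ forces $\Zee_d=\Zee$ is a helpful clarification the paper leaves implicit.
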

\begin{proof}
Since $\chi \in \im \overline{\CT}$, there exists $f \in \HA(\Zee,\BC)$ such that $\overline{\CT}(f)=\chi$.
By Lemma~\ref{lem-identity-eq}, we have  
\begin{align}\label{eq-iden-T-tr-f}
    \CT(\alpha)(\chi) = f(\tr(\alpha))
\end{align}
for any $\alpha \in \SSE$.

For $1 \leq i \leq n-1$ and $p \in \mathcal P$, suppose that
\[
f(Z^{{\bf b}(\lambda,p,i)}) = b(p,i) \in \mathbb C^{*}.
\]
Define  
\begin{align}\label{eq-equality-bd}
    b(p,0)=b(p,n)=1, \qquad d(p,i) = \frac{b(p,i)}{b(p,i-1)}
\end{align}
for $1 \leq i \leq n$.  
Recall that ${\bf d}(\lambda,p,i)$ was defined in \eqref{eq-vectors-b-d} for $1 \leq i \leq n$ and $p \in \mathcal P$.  
For $1 \leq i \leq n-1$, we obtain
\begin{equation}\label{eq-cha-elementary}
    \begin{split}
        \mathcal T([\alpha(p)_i]_{\bar\eta})(\chi) 
        &= f(\tr([\alpha(p)_i]_{\bar\eta})) 
            \quad(\because\text{\eqref{eq-iden-T-tr-f}}) \\
        &= f\!\left( \sum_{J\in\mathbb J_i}\Big[ \prod_{j\in J} Z^{{\bf d}(\lambda,p,j)} \Big] \right) 
            \quad(\because\text{Lemma~\ref{lem-image-loop-trace}}) \\
        &= e_i(d(p,1), \dots, d(p,n)) 
            \quad(\because\text{\eqref{eq-vectors-b-d} and \eqref{eq-equality-bd}}),
    \end{split}
\end{equation}
where $e_i$ denotes the $i$-th elementary symmetric polynomial \eqref{elementary_symmetric_polynomial}.  

Since $(\chi,\{s(p,i) \mid p \in \mathcal P,\, 1 \leq i \leq n-1\})$ is a character pair, we also have  
\begin{align}\label{eq-cha-pair-PN}
    \mathcal T([\alpha(p)_i]_{\bar\eta})(\chi)=
        \bar P_{N,i}(s(p,1), \dots, s(p,n-1)).
\end{align}

Combining Lemma~\ref{lem-solu} with \eqref{eq-cha-elementary}–\eqref{eq-cha-pair-PN}, we deduce
\begin{align}\label{eq-s-e-t}
    (s(p,1), \dots, s(p,n-1)) 
    = \big(e_1(t(p,1), \dots, t(p,n)), \dots, e_{n-1}(t(p,1), \dots, t(p,n))\big),
\end{align}
where $t(p,i) \in \mathbb C^*$ satisfies
\begin{align}\label{eq-equality-td}
    t(p,i)^N = d(p,i), \qquad 1 \leq i \leq n.
\end{align}

For $1 \leq i \leq n$, define
\[
l(p,i):=\prod_{j=1}^i t(p,j).
\]
Then from \eqref{eq-equality-bd} and \eqref{eq-equality-td},  
\[
l(p,i)^N = b(p,i) = f(Z^{{\bf b}(\lambda,p,i)})
\]
for $1 \leq i \leq n-1$ and $p \in \mathcal P$.  

By Theorem~\ref{thm-representation-Fock}, there exists an irreducible representation  
\[
\bar\rho\colon \Zoo \to \End(V)
\]
such that
\begin{align}\label{fro-rep}
    \bar\rho(\PT(Z)) = f(Z)\,\Id_V \qquad (Z \in \Zee),
\end{align}
and
\[
\bar\rho(Z^{{\bf b}(\lambda,p,i)}) = l(p,i)\,\Id_V \qquad (1 \leq i \leq n-1,\; p \in \mathcal P).
\]

Hence, for $1 \leq i \leq n$ and $p \in \mathcal P$,  
\begin{align}\label{loop-rep}
    \bar\rho(Z^{{\bf d}(\lambda,p,i)}) 
    = \bar\rho(Z^{{\bf b}(\lambda,p,i)})\,
      \bar\rho(Z^{{\bf b}(\lambda,p,i-1)})^{-1}
    = l(p,i) l(p,i-1)^{-1}
    = t(p,i),
\end{align}
where ${\bf b}(\lambda,p,0)={\bf b}(\lambda,p,n)={\bf 0}$ and the first equality follows from \eqref{eq-vectors-b-d}.  

Now set $\rho=\bar\rho\circ\tr$.  
For any $\alpha \in \SSE$, we have
\begin{equation}\label{shadow-fro}
    \begin{split}
        \rho (\Phi(\alpha)) 
        &= \bar\rho (\PT(\tr(\alpha)))           \quad(\because\text{$\rho=\bar\rho\circ\tr$ and Theorem~\ref{thm-Fro}(b)}) \\
        &= f(\tr(\alpha))\,\Id_V 
            \quad(\because\text{\eqref{fro-rep}}) \\
        &= \mathcal T(\alpha)(\chi)\,\Id_V 
            \quad(\because\text{\eqref{eq-iden-T-tr-f}}).
    \end{split}
\end{equation}

For $1 \leq i \leq n-1$ and $p \in \mathcal P$,  
\begin{equation}\label{shadow-loop}
    \begin{split}
        \rho (\alle) 
        &= \bar\rho \!\left( \sum_{J\in\mathbb J_i} \Big[ \prod_{j\in J} Z^{{\bf d}(\lambda,p,j)} \Big] \right)           \quad(\because\text{$\rho=\bar\rho\circ\tr$ and Lemma~\ref{lem-image-loop-trace}})\\
        &= \left( \sum_{J\in\mathbb J_i} \prod_{j\in J} t(p,j) \right)\Id_V 
            \quad(\because\text{\eqref{loop-rep}}) \\
        &= s(p,i)\,\Id_V 
            \quad(\because\text{\eqref{eq-s-e-t}}).
    \end{split}
\end{equation}

Equations \eqref{shadow-fro} and \eqref{shadow-loop} show that
the classical shadow of any irreducible subrepresentation of $\rho$ is precisely 
\[
(\chi,\{s(p,i) \mid p \in \mathcal P,\, 1 \leq i \leq n-1\}).
\]
\end{proof}

\begin{remark}
 In the case when $n=2$, Theorem \ref{thm-main-3}
 was proved in \cite{representation2}.
\end{remark}

\section{The naturality of the constructed representations of the ${\rm SL}_n$-skein algebra}\label{sec-naturality}

Let $\fS$ be a triangulable punctured surface with two triangulations $\lambda$ and $\lambda'$.  
It was shown in \cite{KimWang} that $\tr$ and ${\rm tr}_{\lambda'}$ are related to each other by a sequence of balanced $n$-th root version of quantum coordinate change isomorphisms.  
In this section, we use these isomorphisms to prove that, under mild conditions, the representation constructed in Theorem~\ref{thm-main-3} with respect to $\lambda$ is isomorphic to the one constructed with respect to $\lambda'$ (Theorem~\ref{thm-naturality}).

In \S\ref{subsec-mutation} and \S\ref{subsec-coordinate-change}, we recall the quantum coordinate change isomorphism from \cite{KimWang} and present several related results.  
In \S\ref{sub-sec-naturality}, we state and prove Theorem~\ref{thm-naturality}.

\subsection{Classical and quantum cluster $\mathcal X$-mutations}\label{subsec-mutation}

\def\Fr{\text{Frac}}

Fix a non-empty set $\VV$, and a non-empty subset $\VV_{\rm mut}$ of $\VV$. The elements of $\VV$, $\VV_{\rm mut}$ and $\VV\setminus \VV_{\rm mut}$ are called {\bf vertices}, {\bf mutable vertices} and {\bf frozen vertices}, respectively. By a quiver $\Gamma$ we mean a directed graph whose set of vertices is $\VV$ 
and equipped with weight on the edges so that the weight on each edge is $1$ unless the edge connects two frozen vertices, in which case the weight is $\frac{1}{2}$. An edge of weight $1$ will be called an {\bf arrow}, and an edge of weight $\frac{1}{2}$ a {\bf half-arrow}. As before, we denote by $Q = (Q(u,v))_{u,v\in \VV}$ to denote the signed adjacency matrix of $\Gamma$. A quiver is considered up to equivalence, where two quivers are equivalent if they yield the same signed adjacency matrix. In particular, a quiver can be represented by a representative quiver without an oriented cycle of length 1 or 2. Let $\mathcal{F}$ be the field of rational functions over $\mathbb{Q}$ with the set of algebraically independent generators enumerated by $\VV$, say $\mathcal{F} = \mathbb{Q}(\{y_v\}_{v\in \VV})$. A {\bf cluster $\mathcal{X}$-seed} is a pair $\mathcal{D} = (\Gamma,(X_v)_{v\in \VV})$, where $\Gamma$ is a quiver and $\{X_v\}_{v\in \VV}$ forms an algebraically independent generating set of $\mathcal{F}$ over $\mathbb{Q}$. The signed adjacency matrix $Q$ of $\Gamma$ is called the {\bf exchange matrix} of the seed $\mathcal{D}$.

\def\sgn{\text{sgn}}

Suppose that $k\in\VV_{
\rm mut}$. The {\bf mutation}
$\mu_k$ at the 
mutable vertex $k\in\VV_{\rm mut}$ is defined to be the process that transforms a seed $\mathcal{D} = (\Gamma,(X_v)_{v\in \VV})$ into a new seed
$\mu_k(\mathcal D) = \mathcal D' = 
(\Gamma', (
X_v')_{v\in 
\VV})
$.
Here 
\begin{align*}
    X_v'= \begin{cases}
    X_v^{-1} & v=k,\\
    X_v (1+ 
    X_k^{-\text{sgn}(Q(v,k)) })^{-Q(v,k)} & v\neq k,
\end{cases}
\end{align*}
where 
$$
\sgn(a)=
\begin{cases}
    1 & a>0,\\
    0 & a=0,\\
    -1 & a<0,
\end{cases}
$$
for $a\in\mathbb R$, and $\Gamma'$ is obtained from 
$\Gamma$ by the following procedures: (1)
 reverse all the arrows incident to the vertex $k$,
(2) for each pair of arrows $k\rightarrow i$ and $j\rightarrow k$ draw an arrow $i\rightarrow j$,
(3) delete pairs of arrows $i\rightarrow j$ and $j\rightarrow i$ going in the opposite directions (more precisely, tidy up the quiver while maintaining the signed adjacency matrix).
We use $Q' = (Q'(u,v))_{u,v\in\VV}$ to denote the signed adjacency matrix of $\Gamma'$. Then we have 
\begin{align*}
Q'(u,v) = \begin{cases}
    - Q(u,v) & k\in\{u,v\},\\
    Q(u,v) +\frac{1}{2}(Q(u,k)|Q(k,v)|+
    |Q(u,k)|Q(k,v)) & k\notin\{u,v\}.
\end{cases}
\end{align*}

It is well-known that $\mu_k(\mu_k(\mathcal D)) = \mathcal D$.

Suppose that 
$\mathcal{D} = (\Gamma,(X_v)_{v\in \VV})$ is an $\mathcal X$-seed 
whose 
exchange matrix 
is $Q$.
Define the {\bf Fock-Goncharov algebra} associated to $\mathcal D$ to be the quantum torus algebra
\begin{align*}
\mathcal{X}_\omega(\mathcal{D})  = \bT_\omega(Q)  = \mathbb C\langle X_v^{\pm 1}, v\in \VV \rangle / (X_v X_{v'} = \omega^{2Q(v,v')} X_{v'} X_v \mbox{ for } v,v' \in \VV).
\end{align*}

A useful notion is the Weyl-ordered product, which is a certain normalization of a Laurent monomial defined as follows: for any $v_1,\ldots,v_r \in \VV$ and $a_1,\ldots,a_r \in \mathbb{Z}$,
\begin{align}
\label{Weyl_ordering}
\left[ X_{v_1}^{a_1} X_{v_2}^{a_2} \cdots X_{v_r}^{a_r} \right] := \omega^{-\sum_{i<j} Q(v_i,v_j)} X_{v_1}^{a_1} X_{v_2}^{a_2} \cdots X_{v_r}^{a_r}
\end{align}
In particular, for ${\bf t} = (t_v)_{v\in \VV} \in \mathbb{Z}^\VV$, the following notation for the corresponding Weyl-ordered Laurent monomial will become handy:
$$
X^{\bf t} := \left[ \prod_{v\in \VV} X_v^{t_v}\right].
$$

The following special function is a crucial ingredient.
\begin{definition}[compact quantum dilogarithm \cite{F95,FK94}]
    The quantum dilogarithm for a quantum parameter
$\omega$ is the function
$$\Psi^\omega(x) = \prod_{r=1}^{+\infty} (1+ \omega^{2r
-1} x)^{-1}.$$
\end{definition}
For our purposes, the infinite product can be understood formally, as what matters for us is only the following:

\def\sgn{{\rm sgn}}

\begin{lemma}\label{lem-F-P}
    Suppose that $xy = \omega^{2m} yx$ for some $m\in \mathbb{Z}$. Then we have 
    $${\rm Ad}_{\Psi^\omega(x)}(y) = y F^\omega(x,m),$$
    where
$$F^\omega(x,m)=\prod_{r=1}^{|m|} (1+ \omega^{(2r-1){\rm sgn}(m)}x)^{\sgn(m)}.$$
\end{lemma}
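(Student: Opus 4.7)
The plan is to reduce the computation of $\mathrm{Ad}_{\Psi^\omega(x)}(y)$ to a telescoping of infinite products in the variable $x$. First, I would translate the commutation relation $xy=\omega^{2m}yx$ into the shift rule
\[
    y\, x^{k} = \omega^{-2mk}\, x^{k}\, y \quad (k\in\mathbb Z),
\]
and then, by $\mathbb C$-linear extension, into the intertwining identity
$f(x)\,y = y\,f(\omega^{2m}x)$
for any formal power (or Laurent) series $f$. Applied to $f=\Psi^\omega$ this gives
\[
    \Psi^\omega(x)\,y = y\,\Psi^\omega(\omega^{2m}x),
\]
hence
\[
    \mathrm{Ad}_{\Psi^\omega(x)}(y) = y\cdot \Psi^\omega(\omega^{2m}x)\,\Psi^\omega(x)^{-1}.
\]
So it remains to identify the ratio $\Psi^\omega(\omega^{2m}x)\,\Psi^\omega(x)^{-1}$ with $F^\omega(x,m)$.

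Next, I would carry out the index-shift in the defining product
$\Psi^\omega(\omega^{2m}x)=\prod_{r\ge 1}(1+\omega^{2(r+m)-1}x)^{-1}$.
Reindexing by $s=r+m$ yields
$\prod_{s\ge m+1}(1+\omega^{2s-1}x)^{-1}$,
and since every factor is a function of $x$ alone, they all commute and can be freely cancelled against the factors of $\Psi^\omega(x)^{-1}=\prod_{r\ge 1}(1+\omega^{2r-1}x)$. For $m>0$ this leaves the finite product $\prod_{r=1}^{m}(1+\omega^{2r-1}x)$, matching $F^\omega(x,m)$ with $\sgn(m)=1$. For $m<0$, the leftover factors carry the exponent $-1$ and correspond to $s\in\{m+1,\dots,0\}$; substituting $t=1-s$ converts $\omega^{2s-1}$ into $\omega^{-(2t-1)}$ and gives $\prod_{t=1}^{|m|}(1+\omega^{-(2t-1)}x)^{-1}$, again matching $F^\omega(x,m)$ with $\sgn(m)=-1$. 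The case $m=0$ is the trivial observation that $y$ commutes with $\Psi^\omega(x)$ and that $F^\omega(x,0)$ is the empty product $1$.

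Since the infinite products in the definition of $\Psi^\omega$ are to be understood formally, the only subtlety is justifying the cancellation; I would phrase it either as an identity of formal power series in $x$ (after truncating and observing term-by-term agreement) or, equivalently, by working in the completion with respect to the $x$-adic topology, so no analytic convergence issue arises. The main obstacle, such as it is, is bookkeeping the index shift cleanly in both sign cases for $m$; once that is done the equality $\Psi^\omega(\omega^{2m}x)\,\Psi^\omega(x)^{-1}=F^\omega(x,m)$ is immediate and the lemma follows.
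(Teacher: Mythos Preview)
Your argument is correct and is the standard derivation of this identity. Note, however, that the paper does not actually prove this lemma: it is stated without proof, immediately after the definition of $\Psi^\omega$, as a known consequence of the formal infinite product (the surrounding text says ``what matters for us is only the following''). So there is no paper proof to compare against; your telescoping computation via the shift rule $f(x)\,y = y\,f(\omega^{2m}x)$ is exactly the expected justification and fills in what the paper leaves implicit.
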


\begin{definition}[quantum $\mathcal X$-mutation for Fock-Goncharov algebras \cite{BZ,FG09a,FG09b}]\label{def.quantum_X-mutation}
     Suppose 
     that $\mathcal{D} = (\Gamma,(X_v)_{v\in \VV})$ is an $\mathcal X$-seed, 
     $k$$\in \VV_{\rm mut}$ is a 
     mutable vertex of $\Gamma$, and $\mathcal D'=\mu_k(\mathcal D)$.
      The quantum mutation map 
      is the 
      isomorphism between the 
      skew-fields
      $$\mu_{
      \mathcal{D},\mathcal{D}'}^\omega=\mu_k^\omega \, \colon \, \Fr(
      \mathcal{X}_\omega(\mathcal D'))\rightarrow
      \Fr(
      \mathcal{X}_\omega(\mathcal D))$$
      given by the composition
      $$\mu_k^\omega= \mu_k^{\sharp \omega}\circ \mu_k',$$
      where $\mu'_k$ is the isomorphism of skew-fields
      $$\mu_k'\colon \Fr(
      \mathcal{X}_\omega(\mathcal D'))\rightarrow
      \Fr(
      \mathcal{X}_\omega(\mathcal D))$$ 
      given by 
      \begin{align}\label{eq-quantum-mutation}
      \mu_k'(
      X_v') = 
      \begin{cases}
          X_k^{-1}& \mbox{if } v=k,\\
          \left[ X_v X_k^{[Q(v,k)]_+}\right]
          & \mbox{if } v\neq k.
      \end{cases}
      \end{align}
      where $[a]_+$ stands for the positive part of a real number $a$
      $$
      [a]_+ := \max\{a,0\} = {\textstyle \frac{1}{2}(a+|a|)},
$$
and $[\sim]$ is the Weyl-ordered product as in \eqref{Weyl_ordering},
      while
       $\mu^{\sharp \omega}_k$ is the following skew-field automorphism
       $$\mu_k^{\sharp \omega} = {\rm Ad}_{\Psi^\omega(
       X_k)}\colon \Fr(
       \mathcal{X}_\omega(\mathcal D))\rightarrow
      \Fr(
      \mathcal{X}_\omega(\mathcal D)).$$
\end{definition}

\subsection{The coordinate change isomorphism for the balanced part}\label{subsec-coordinate-change}
Suppose 
that $\mathcal{D} = (\Gamma,(X_v)_{v\in \VV})$ is an $\mathcal X$-seed 
whose exchange matrix is $Q$.
Define the {\bf $n$-th root Fock-Goncharov algebra} associated to $\mathcal D$ to be
\begin{align*}
\mathcal{Z}_{\hat{\omega}}(\mathcal{D})  = \mathbb{T}_{\hat{\omega}}(Q) = \BC\langle Z_v^{\pm 1}, v\in \VV \rangle / (Z_v Z_{v'} = {\hat{\omega}}^{2Q(v,v')} Z_{v'} Z_v \mbox{ for } v,v' \in \VV),
\end{align*}
into which the Fock-Goncharov algebra $\mathcal{X}_q(\mathcal{D})$ embeds as
$$
X_v \mapsto Z_v^n, \quad \forall v\in \VV.
$$

We use $\Fr(
\mathcal{Z}_{\hat{\omega}}(\mathcal{D}))$ to denote the 
skew-field of fractions of $
\mathcal{Z}_{\hat{\omega}}(\mathcal D)$. 
Naturally, ${\rm Frac}(\mathcal{X}_q(\mathcal{D}))$ embeds into ${\rm Frac}(\mathcal{Z}_{\hat{\omega}}(\mathcal{D}))$. The Weyl-ordered product can be defined similarly as before: for any $v_1,\ldots,v_r \in \VV$ and $a_1,\ldots,a_r \in \mathbb{Z}$,
\begin{align*}
\left[ Z_{v_1}^{a_1} \cdots Z_{v_r}^{a_r} \right] := {\hat{\omega}}^{-\sum_{i<j} Q(v_i,v_j)} Z_{v_1}^{a_1} \cdots Z_{v_r}^{a_r}.
\end{align*}
For ${\bf t} =(t_v)_{v\in \VV} \in \mathbb{Z}^\VV$ one defines the Weyl-ordered Laurent monomial:
\begin{align*}
Z^{\bf t} := \left[ \prod_{v\in \VV}Z_v^{t_v} \right],
\end{align*}

Define the 
     skew-field isomorphism $$\nu_k'\colon \Fr(
     \mathcal{Z}_{\hat{\omega}}(\mathcal D'))\rightarrow
      \Fr(
      \mathcal{Z}_{\hat{\omega}}(\mathcal D))$$
      such that the action of $\nu_k'$ on the generators is given by 
      the formula \eqref{eq-quantum-mutation} for $\mu'_k$ with $
      X_{v}$ replaced by $
      Z_{v}$:
     \begin{align}\label{eq-quantum-mutation_Z}
      \nu_k'(Z_v') = 
      \begin{cases}
          Z_k^{-1}& \mbox{if } v=k,\\
          \left[ Z_v Z_k^{[Q(v,k)]_+}\right]
          & \mbox{if } v\neq k.
      \end{cases}
      \end{align}
      It is easy to check that $\nu'_k$ extends $\mu'_k$. 

     For an $n$-root extension of the automorphism part $\mu^{\sharp q}_k$, we consider the following subalgebra of $\mathcal{Z}_{\hat{\omega}}(\mathcal{D})$.
   \begin{definition}\label{def.mbl}
   Define  
       $$\mathcal B_{\mathcal D}=\{{\bf t}=(
       t_v)_{v\in\VV}\in\mathbb Z^{
       \VV}\mid \sum_{v\in 
       \VV} Q(u,v)
       t_v=0\in\mathbb Z_n=
       \mathbb{Z}/n \mathbb{Z}\text{ for all }u\in\VV_{
       {\rm mut}}\},$$
    and define 
    $\mathcal{Z}^{\rm mbl}_{\hat{\omega}}(\mathcal{D})$
    to be the $\BC$-submodule of $
    \mathcal{Z}_{\hat{\omega}}(\mathcal D)$ spanned by $Z^{{\bf t}}$ for ${\bf t}\in \mathcal B_{\mathcal D}$. We call $\mathcal{Z}^{\rm mbl}_{\hat{\omega}}(\mathcal{D})$ the {\bf mutable-balanced subalgebra} of $\mathcal{Z}_{\hat{\omega}}(\mathcal{D})$.
   \end{definition}

Note that $\mathcal B_{\mathcal D}$ is a subgroup of $\mathbb Z^{
\VV}$. So 
$\mathcal{Z}^{\rm mbl}_{\hat{\omega}}(\mathcal{D})$
    is 
    indeed a subalgebra of $
    \mathcal{Z}_{\hat{\omega}}(\mathcal D)$.
    Since $n \mathbb Z^{
    \VV}\subset\mathcal B_{\mathcal D}\subset \mathbb Z^{
    \VV}$, we have 
    $$\mathcal{X}_{\omega}(\mathcal{D}) \subset \mathcal{Z}^{\rm mbl}_{\hat{\omega}}(\mathcal{D}) \subset \mathcal{Z}_{\hat{\omega}}(\mathcal{D}).$$

By \cite[Lemma 3.7]{KimWang},
there exists a unique skew-field isomorphism
    $$ \nu_k^{\sharp {\hat{\omega}}}:={\rm Ad}_{\Psi^q(
    X_k)}\colon \Fr(
    \mathcal{Z}^{\rm mbl}_{\hat{\omega}}(\mathcal D))\rightarrow
      \Fr(
      \mathcal{Z}^{\rm mbl}_{\hat{\omega}}(\mathcal D))$$
      such that for each ${\bf t} = (t_v)_{v\in \VV} \in \mathcal{B}_\mathcal{D}$, we have
      $$
      \nu^{\sharp {\hat{\omega}}}_k(Z^{\bf t}) = Z^{\bf t} \, F^\omega(X_k, m),
$$
where $m = \frac{1}{n} \sum_{v\in \VV} Q(k,v) t_v$.

By \cite[Lemma 3.8]{KimWang},
    the map $\Fr(
    \mathcal{Z}_{\hat{\omega}}(\mathcal D'))\xrightarrow{\nu_k'}
      \Fr(
      \mathcal{Z}_{\hat{\omega}}(\mathcal D))$ defined in \eqref{eq-quantum-mutation_Z} restricts to a  
      skew-field isomorphism
      $\Fr(
      \mathcal{Z}^{\rm mbl}_{\hat{\omega}}(\mathcal D'))\xrightarrow{\nu_k'}
      \Fr(
      \mathcal{Z}^{\rm mbl}_{\hat{\omega}}(\mathcal D)).$ \qed

We define the quantum mutation $\nu_k^{{\hat{\omega}}}$ for the $n$-th root Fock-Goncharov algebras, or more precisely for the skew-fields of fractions of their mutable-balanced subalgebras,  
to be the composition 
\begin{align}\label{eq-def-nu}
    \nu^{\hat{\omega}}_k := \nu^{\sharp{\hat{\omega}}}_k \circ \nu'_k ~:~ 
\Fr(
\mathcal{Z}^{\rm mbl}_{\hat{\omega}}(\mathcal D'))\xrightarrow{\nu_k'}
      \Fr(
      \mathcal{Z}^{\rm mbl}_{\hat{\omega}}(\mathcal D))\xrightarrow{\nu_k^{\sharp {\hat{\omega}}}}
      \Fr(
      \mathcal{Z}^{\rm mbl}_{\hat{\omega}}(\mathcal D)).
\end{align}

\begin{lemma}\cite[Lemma 3.9]{KimWang}\label{lem:nu_extends_mu}
    The map $\nu^{\hat{\omega}}_k$ extends $\mu^\omega_k$. \qed
\end{lemma}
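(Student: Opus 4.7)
The plan is to verify the statement by decomposing $\nu^{\hat\omega}_k$ and $\mu^\omega_k$ according to their definitions and checking compatibility factor by factor. Explicitly, since
$$\nu^{\hat\omega}_k = \nu^{\sharp\hat\omega}_k\circ\nu'_k, \qquad \mu^\omega_k = \mu^{\sharp\omega}_k\circ\mu'_k,$$
it suffices to show independently that (i) $\nu'_k$ extends $\mu'_k$ and (ii) $\nu^{\sharp\hat\omega}_k$ extends $\mu^{\sharp\omega}_k$, where the extension is understood with respect to the embedding $\mathcal X_\omega(\mathcal D)\hookrightarrow \mathcal Z_{\hat\omega}^{\rm mbl}(\mathcal D)$ given by $X_v\mapsto Z_v^n$, and similarly for $\mathcal D'$.

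For step (i), I would check the identity on the generators $X'_v = (Z'_v)^n$ of $\mathcal X_\omega(\mathcal D')$. When $v=k$ this is immediate: $\nu'_k((Z'_k)^n) = (Z_k^{-1})^n = X_k^{-1} = \mu'_k(X'_k)$. When $v\neq k$, one must verify
$$\nu'_k\bigl((Z'_v)^n\bigr) = \bigl[Z_v Z_k^{[Q(v,k)]_+}\bigr]^n = \bigl[Z_v^n Z_k^{n[Q(v,k)]_+}\bigr] = \mu'_k(X'_v).$$
The middle equality is a direct computation using $Z_k Z_v = \hat\omega^{-2Q(v,k)} Z_v Z_k$, from which a short induction gives $(Z_v^a Z_k^b)^n = \hat\omega^{-abQ(v,k)\,n(n-1)}Z_v^{na}Z_k^{nb}$; combining with the definition of Weyl ordering \eqref{Weyl_ordering} and using $\omega = \hat\omega^{n^2}$ yields the claim.

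For step (ii), I would argue on a typical monomial $X^{\bf t}\in \mathcal X_\omega(\mathcal D)$ with ${\bf t}\in\mathbb Z^V$, noting that under the embedding $X^{\bf t} = Z^{n{\bf t}}$ and $n{\bf t}\in \mathcal B_{\mathcal D}$ since $(n{\bf t})\cdot Q \in n\mathbb Z^V$. By Lemma~\ref{lem-F-P},
$$\mu^{\sharp\omega}_k(X^{\bf t}) = X^{\bf t}\, F^\omega(X_k,m),\qquad m = \tfrac12\cdot 2\sum_v Q(k,v) t_v = \sum_v Q(k,v) t_v.$$
On the other hand, by the defining property of $\nu^{\sharp\hat\omega}_k$ (stated just before \eqref{eq-def-nu}),
$$\nu^{\sharp\hat\omega}_k(Z^{n{\bf t}}) = Z^{n{\bf t}}\, F^\omega\!\bigl(X_k,\, \tfrac{1}{n}\textstyle\sum_v Q(k,v)\cdot n t_v\bigr) = X^{\bf t}\, F^\omega(X_k,m),$$
which matches. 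Combining (i) and (ii) and using that $\mu^{\sharp\omega}_k$ and $\nu^{\sharp\hat\omega}_k$ are algebra homomorphisms on the relevant subalgebras yields the lemma.

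I do not expect serious obstacles here; the only slightly delicate point is the Weyl-ordering identity $[Z_v^a Z_k^b]^n = [Z_v^{na}Z_k^{nb}]$ used in step (i), which requires careful bookkeeping of the powers of $\hat\omega$ that appear when consolidating products. Once this identity is in hand, both steps reduce to direct substitution into the definitions.
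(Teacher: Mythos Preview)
Your proposal is correct. The paper does not actually supply a proof of this lemma; it is cited from \cite{KimWang} and marked with a \qed. The paper does remark in passing, right after defining $\nu'_k$, that ``it is easy to check that $\nu'_k$ extends $\mu'_k$,'' which is exactly your step~(i). Your decomposition into the monomial part $\nu'_k$ and the automorphism part $\nu^{\sharp\hat\omega}_k$, followed by direct verification on generators and on Weyl-ordered monomials respectively, is the natural argument and matches what one would expect the proof in \cite{KimWang} to be.
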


 In order to apply to our situation, consider a triangulable pb surface $\frak{S}$ with a triangulation $\lambda$.
    Letting $\VV$ be $V_\lambda$ and $\VV_{\rm mut}$ be the subset of $V_\lambda$ consisting of the vertices contained in the interior of $\frak{S}$, one obtains a cluster $\mathcal{X}$-seed $\mathcal{D}_\lambda := (\Gamma_\lambda,(X_v)_{v\in V_\lambda})$.
    We denote the $n$-root Fock-Goncharov algebra and its mutable-balanced subalgebra as
    $$
    \mathcal{Z}_{\hat{\omega}}(\fS,\lambda) = \mathcal{Z}_{\hat{\omega}}(\mathcal{D}_\lambda), \qquad
    \mathcal{Z}_{\hat{\omega}}^{\rm mbl}(\fS,\lambda) = \mathcal{Z}^{\rm mbl}_{\hat{\omega}}(\mathcal{D}_\lambda).
$$
Recall the balanced subalgebra $\mathcal{Z}^{\rm bl}_{\hat{\omega}}(\fS,\lambda)$ of $\mathcal{Z}_{\hat{\omega}}(\frak{S}, \lambda)$ defined in \eqref{def-balanced}.
It follows from \cite[Lemma 3.11]{KimWang} that
$\mathcal{Z}^{\rm bl}_{\hat{\omega}}(\fS,\lambda) \subset \mathcal{Z}^{\rm mbl}_{\hat{\omega}}(\fS,\lambda)$.

Let $\fS$ be a triangulable pb surface with a triangulation $\lambda$. In the last subsection we considered a particular cluster $\mathcal{X}$-seed $\mathcal{D}_\lambda = (\Gamma_\lambda, (X_v)_{v\in V_\lambda})$, associated to a special quiver $\Gamma_\lambda$, with certain vertex sets $\VV = V_\lambda$ and $\VV_{\rm mut}$.

Suppose $\fS$ is a triangulable punctured surface with two triangulations $\lambda$ and $\lambda'$, where $\lambda'$ is obtaine from $\lambda$ by a flip (see Figure \ref{flip}).
There is a special sequence of $\mathcal X$-mutations $\mu_{v_r},\cdots,\mu_{v_2},\mu_{v_1}$ (\cite{FG06,GS19}, see \cite[Figure 3]{KimWang})  such that
\begin{align}
\label{two_seeds_connected_by_sequence_of_mutations}
\mathcal{D}_{\lambda'} = \mu_{v_r} \cdots \mu_{v_2} \mu_{v_1} (\mathcal{D}_\lambda)    
\end{align}
and in particular,
$$
\Gamma_{\lambda'} = \mu_{v_r} \cdots \mu_{v_2} \mu_{v_1} (\Gamma_\lambda).
$$
Define 
\begin{equation}\label{eq-Theta1}
\Phi_{
\lambda \lambda'}^{\omega}:=\mu_{v_1}^{\omega}\circ\cdots\circ\mu_{v_r}^{\omega}\colon\Fr(\mathcal X_{\omega}(\fS,\lambda'))\rightarrow 
\Fr(\mathcal X_{\omega}(\fS,\lambda)).
\end{equation}
\begin{equation}\label{eq-Theta2}
\Theta_{
\lambda \lambda'}^{{\hat{\omega}}}:=\nu_{v_1}^{{\hat{\omega}}}\circ\cdots\circ\nu_{v_r}^{{\hat{\omega}}}\colon\Fr(\mathcal Z_{\hat{\omega}}^{\rm mbl}(\fS,\lambda'))\rightarrow 
\Fr(\mathcal Z_{\hat{\omega}}^{\rm mbl}(\fS,\lambda)).
\end{equation}

\begin{figure}
	\centering
	\includegraphics[width=8cm]{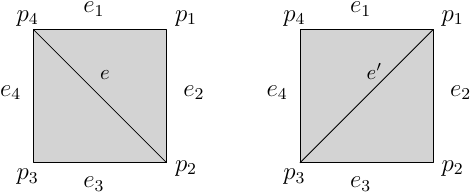}
	\caption{This is a local figure for doing a flip on the edge $e$.}\label{flip}
\end{figure}

\def\wl{\widetilde\lambda}

Suppose $\lambda$ and $\lambda'$ are two triangulations of $\fS$. A {\bf triangulation sweep} connecting 
$\lambda$ and $\lambda'$ is a sequence of triangulations
$\Lambda=(\lambda_1,\cdots,\lambda_m)$ such that 
$\lambda_1=\lambda$, $\lambda_m=\lambda'$, and $\lambda_{i+1}$ is obtained from $\lambda_i$ by a flip for each $1\leq i\leq m-1$.
Define 
\begin{equation}\label{eq-Theta-change1}
\Phi_\Lambda^{\omega} :=\Phi_{
\lambda_1\lambda_2}^{\omega}\circ\cdots\circ\Phi_{
\lambda_{m-1}\lambda_m}^{\omega}\colon\Fr(
\mathcal{X}_{\omega}(\fS,\lambda'))\rightarrow 
\Fr(
\mathcal{X}_{\omega}(\fS,\lambda)),
\end{equation}
\begin{equation}\label{eq-Theta-change2}
\Theta_{\Lambda}^{{\hat{\omega}}}  :=\Theta_{
\lambda_1\lambda_2}^{{\hat{\omega}}}\circ\cdots\circ\Theta_{
\lambda_{m-1}\lambda_m}^{{\hat{\omega}}}\colon\Fr(
\mathcal{Z}^{\rm mbl}_{\hat{\omega}}(\fS,\lambda'))\rightarrow 
\Fr(
\mathcal{Z}^{\rm mbl}_{\hat{\omega}}(\fS,\lambda)).
\end{equation}

\begin{proposition}\cite[Propositions 3.13 and 3.14]{KimWang}
\label{division_homomorphisms_for_sweep_independent}
    The division homomorphisms in Equations \eqref{eq-Theta-change1}-\eqref{eq-Theta-change2} are independent of the choice of $\Lambda$.
\end{proposition}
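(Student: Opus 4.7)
The plan is to reduce the independence statement to verifying a finite list of local relations satisfied by the elementary building blocks $\Phi_{\lambda_i\lambda_{i+1}}^\omega$ and $\Theta_{\lambda_i\lambda_{i+1}}^{\hat\omega}$.

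First, I would invoke the classical presentation of the \emph{Ptolemy (or flip) groupoid} of $\fS$: any two triangulation sweeps $\Lambda$ and $\Lambda'$ connecting the same pair $\lambda,\lambda'$ are related by a finite sequence of elementary moves of three types, namely (i) a flip followed immediately by its inverse (trivial cancellation), (ii) interchange of two flips supported on disjoint ideal quadrilaterals (disjoint commutation), and (iii) the pentagon identity coming from five successive flips around a pentagon. Consequently, proving the proposition reduces to checking that $\Phi^\omega$ and $\Theta^{\hat\omega}$ are compatible with each of these three elementary relations.

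Second, for the classical version $\Phi^\omega_\Lambda$, each flip is realized by the fixed mutation sequence $\mu_{v_r}^\omega\cdots\mu_{v_1}^\omega$ of \eqref{two_seeds_connected_by_sequence_of_mutations}, and the three compatibility checks (involution $\mu_k^\omega\mu_k^\omega=\mathrm{id}$, disjoint commutation, and the quantum pentagon) are the standard identities for quantum cluster $\mathcal X$-mutations established by Fock--Goncharov and Berenstein--Zelevinsky. These verifications take place entirely inside $\mathrm{Frac}(\mathcal X_\omega)$, so they apply directly.

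Third, for the $n$-th root version $\Theta^{\hat\omega}_\Lambda$, the strategy is to lift each of the three elementary relations from $\mu^\omega$ to $\nu^{\hat\omega}$. Using the factorization $\nu_k^{\hat\omega}=\nu_k^{\sharp\hat\omega}\circ\nu_k'$ together with Lemma~\ref{lem-F-P}, the automorphism $\nu_k^{\sharp\hat\omega}$ acts on a monomial $Z^{\bf t}$ with ${\bf t}\in\mathcal B_{\mathcal D}$ by multiplication by an explicit finite product $F^\omega(X_k,m)$, where $m=\tfrac1n\sum_v Q(k,v)t_v\in\mathbb Z$ thanks to the balancedness condition. This turns each of the three relations into a finite algebraic identity inside $\mathrm{Frac}(\mathcal Z^{\rm mbl}_{\hat\omega})$ involving Weyl-ordered monomials $Z^{\bf t}$ and the finite products $F^\omega(-,-)$. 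Since Lemma~\ref{lem:nu_extends_mu} guarantees that $\nu_k^{\hat\omega}$ restricts to $\mu_k^\omega$ on $\mathcal X_\omega$, the classical identities already force the identities on the $\mathcal X_\omega$-part; the additional content is the $n$-th root exponents, which one tracks by direct computation.

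The main obstacle is the pentagon relation at the $n$-th root level. For $\mu^\omega$ the pentagon is a well-known identity of compact quantum dilogarithms, but for $\nu^{\hat\omega}$ one must simultaneously control (a) how the monomial transformations $\nu'_{v_i}$ compose across five mutations, (b) how the balanced exponents $m_i=\tfrac{1}{n}\sum_v Q_i(v_i,v)t_v$ transform along the pentagon, and (c) how the finite products $F^\omega$ rearrange under the resulting Weyl reorderings. The expectation is that these $n$-th root corrections cancel precisely because of the balanced condition on $\mathcal B_{\mathcal D}$, and this is exactly the point where the definition of the mutable-balanced subalgebra (Definition~\ref{def.mbl}) is used in an essential way.
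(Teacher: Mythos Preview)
The paper does not prove this proposition at all: it is quoted verbatim from \cite[Propositions~3.13 and~3.14]{KimWang}, with no argument given. So there is no ``paper's own proof'' to compare against; the statement is treated as an imported black box.

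Your outline is the standard strategy one would expect such a proof to follow, and it is essentially what \cite{KimWang} does. A couple of remarks on your sketch as written. First, the relations in the flip groupoid are slightly more than the three you list: one also needs the ``distant square'' relation (two flips on non-adjacent arcs commute) --- you did list this as (ii) --- but depending on the surface one may also need to handle mapping-class-group-type relations if one works with labelled triangulations; in practice this is absorbed by working with the exchange graph of the cluster structure. Second, your final paragraph correctly isolates the pentagon for $\nu^{\hat\omega}$ as the nontrivial step, but what you have written is an expectation, not an argument: you have not indicated \emph{how} the balancedness condition forces the $F^\omega$-factors to cancel across the five mutations, nor have you addressed that the intermediate seeds in the pentagon sequence are not of the form $\mathcal D_\mu$ for any triangulation $\mu$, so one must check that the mutable-balanced subalgebras along the way are compatible. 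These are exactly the computations carried out in \cite{KimWang}, and they are genuinely laborious; your proposal identifies the right shape of the proof but stops short of the actual content.
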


By Proposition~\ref{division_homomorphisms_for_sweep_independent}, we write 
$$\Phi_{\lambda\lambda'}:=\Phi_{\Lambda}^{{\hat{\omega}}} \text{ and }
\Theta_{\lambda\lambda'}:=\Theta_{\Lambda}^{{\hat{\omega}}}.$$

\begin{theorem}[\cite{KimWang}]\label{thm-naturality-trace}
    Suppose that $\fS$ is a triangulable pb surface with two triangulations $\lambda$ and $\lambda'$. We have 
    \begin{enumerate}[label={\rm (\alph*)}]
        \item the division isomorphism $\Theta_{\lambda\lambda'}$ restricts to a division isomorphism
        \begin{align}\label{eq-coordinate-iso}\Theta_{\lambda\lambda'}\colon\Fr(\mathcal Z_{\hat \omega}^{\rm bl}(\fS,\lambda'))\rightarrow 
\Fr(\mathcal Z_{\hat \omega}^{\rm bl}(\fS,\lambda))
        \end{align}
        \item let $\lambda''$ be another triangulation of $\fS$, we have
        \begin{align}\label{eq-com-lambda}         \Theta_{\lambda\lambda'}\circ\Theta_{\lambda'\lambda''}=\Theta_{\lambda\lambda''},
        \end{align}
        \item the division isomorphism in \eqref{eq-coordinate-iso} is the extension of $\Phi_{\lambda\lambda'}$,
        \item the following diagram commutes
        \begin{equation*}
\begin{tikzcd}
\overline{\cS}_{\hat \omega}^{\rm st}(\fS) \arrow[r,"{\rm tr}_{\lambda'}"]
\arrow[d, "\tr"]  
&  \Fr(\mathcal Z_{\hat \omega}^{\rm bl}(\fS,\lambda'))  \arrow[dl, "\Theta_{\lambda\lambda'}"] \\
 \Fr(\mathcal Z_{\hat \omega}^{\rm bl}(\fS,\lambda))
&  \\
\end{tikzcd}
\end{equation*}
    \end{enumerate}
\end{theorem}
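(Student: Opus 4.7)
My plan is to reduce the four statements to the case of a single flip, and then to exploit the locality of a flip together with the compatibility of the quantum trace with splitting.

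First, statement (b) is formal. Given a triangulation sweep $\Lambda_1$ from $\lambda$ to $\lambda'$ and a triangulation sweep $\Lambda_2$ from $\lambda'$ to $\lambda''$, their concatenation $\Lambda_1\Lambda_2$ is a sweep from $\lambda$ to $\lambda''$. By construction \eqref{eq-Theta-change2}, one has $\Theta_{\Lambda_1\Lambda_2}^{\hat\omega}=\Theta_{\Lambda_1}^{\hat\omega}\circ\Theta_{\Lambda_2}^{\hat\omega}$, and Proposition~\ref{division_homomorphisms_for_sweep_independent} guarantees that this equals $\Theta_{\lambda\lambda''}$. Because any two triangulations are connected by a finite sequence of flips, property~(b) combined with locality lets me reduce (a), (c), (d) to the case where $\lambda'$ is obtained from $\lambda$ by a single flip at an edge $e$, where the mutation sequence $\mu_{v_1},\dots,\mu_{v_r}$ of \eqref{two_seeds_connected_by_sequence_of_mutations} is supported inside the quadrilateral $\mathbb P_{4,e}$ (see \cite[Figure~3]{KimWang}).

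For (a), I would check directly that each single mutation $\nu_{v_i}^{\hat\omega}$ in the flip sequence restricts from $\mathrm{Frac}(\mathcal Z_{\hat\omega}^{\rm mbl})$ to $\mathrm{Frac}(\mathcal Z_{\hat\omega}^{\rm bl})$. Since balancedness is tested triangle-by-triangle via membership in $\mathcal B_\tau$ generated by ${\bf k}_1,{\bf k}_2,{\bf k}_3$ and $(n\mathbb Z)^{V_\tau}$, and since mutations in the flip sequence modify small vertices only inside $\mathbb P_{4,e}$, only the triangulation of $\mathbb P_{4,e}$ matters. I would verify that after each individual mutation $\nu_{v_i}^{\hat\omega}$ the image of a Weyl-ordered monomial $Z^{\bf t}$ with ${\bf t}$ balanced is again a $\mathbb C$-linear combination of balanced Weyl-ordered monomials, using the explicit formulas \eqref{eq-quantum-mutation_Z} for $\nu_k'$ and the action of $\nu_k^{\sharp\hat\omega}$ on $Z^{\bf t}$ (with exponent $m=\tfrac{1}{n}\sum_v Q(k,v)t_v$, whose integrality uses precisely that ${\bf t}$ is balanced). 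Statement (c) then follows from Lemma~\ref{lem:nu_extends_mu} applied to each step of the sequence and composed.

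The main obstacle is (d). My strategy is to cut $\fS$ along the four boundary edges $e_1,e_2,e_3,e_4$ of $\mathbb P_{4,e}$ and apply Theorem~\ref{thm-trace-cut} iteratively. This produces a commutative diagram relating $\mathrm{tr}_\lambda$ and $\mathrm{tr}_{\lambda'}$ on $\fS$ to the analogous quantum traces on $\mathbb P_{4,e}$ and on the complementary pb surface $\fS'$, intertwined by splitting homomorphisms. On the complementary piece the triangulation is unchanged, so the compatibility is trivial there, and the claim reduces to verifying the commutativity of
\begin{equation*}
\begin{tikzcd}
\overline{\cS}_{\hat\omega}^{\rm st}(\mathbb P_{4,e}) \arrow[r,"{\rm tr}_{\lambda'_e}"]\arrow[d,"{\rm tr}_{\lambda_e}"]
& \mathrm{Frac}(\mathcal Z_{\hat\omega}^{\rm bl}(\mathbb P_{4,e},\lambda'_e)) \arrow[dl,"\Theta_{\lambda_e\lambda'_e}"] \\
\mathrm{Frac}(\mathcal Z_{\hat\omega}^{\rm bl}(\mathbb P_{4,e},\lambda_e)) &
\end{tikzcd}
\end{equation*}
where $\lambda_e,\lambda'_e$ are the two triangulations of $\mathbb P_{4,e}$. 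Since $\mathbb P_{4,e}$ is a polygon, $\mathrm{tr}_{\lambda_e}$ is injective by Theorem~\ref{thm.quantum_trace}(c), so it suffices to test the identity on a generating set of $\overline{\cS}_{\hat\omega}^{\rm st}(\mathbb P_{4,e})$, namely stated arcs connecting pairs of boundary edges. For each such arc one can compute both $\mathrm{tr}_{\lambda_e}$ and $\Theta_{\lambda_e\lambda'_e}\circ \mathrm{tr}_{\lambda'_e}$ explicitly using Lemma~\ref{lem-image-corner-arc} and its analogues, and match them. The bookkeeping of quantum dilogarithm contributions from each $\nu_{v_i}^{\sharp\hat\omega}$ and of Weyl-ordering phase factors is the most delicate part, but because everything reduces to a fixed finite polygon the verification is a bounded calculation.
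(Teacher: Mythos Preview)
The paper does not prove this theorem at all: it is quoted from \cite{KimWang} (with a remark that \cite{LY23} also establishes $\Theta_{\lambda\lambda'}$ by a different method), so there is no in-paper proof to compare against.

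That said, your outline has a genuine gap in the argument for (a). You propose to check that ``each single mutation $\nu_{v_i}^{\hat\omega}$ in the flip sequence restricts from $\mathrm{Frac}(\mathcal Z_{\hat\omega}^{\rm mbl})$ to $\mathrm{Frac}(\mathcal Z_{\hat\omega}^{\rm bl})$'' and that after each individual mutation a balanced monomial maps to a combination of balanced monomials. But the notion of \emph{balanced} is defined only with respect to an actual ideal triangulation (via $\mathcal B_\lambda$ generated triangle-by-triangle from ${\bf k}_1,{\bf k}_2,{\bf k}_3$), whereas the intermediate seeds $\mathcal D_1,\dots,\mathcal D_{r-1}$ in the flip mutation sequence are not of the form $\mathcal D_{\lambda_i}$ for any triangulation. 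The paper itself flags exactly this issue later, in the remark following Theorem~\ref{thm-naturality}: ``When $n>2$, however, we do not know how to define the `balanced part' $\mathcal Z_{\hat\omega}^{\rm bl}(\mathcal D_j)$ for $1\leq j\leq r-1$.'' So your step-by-step verification cannot be carried out as stated; one needs either an intrinsic characterization of balancedness that makes sense for the intermediate seeds, or a global argument that only compares the initial and final seeds. Your treatments of (b), (c), and the reduction strategy for (d) are reasonable.
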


\begin{remark}
    $\Theta_{\lambda\lambda'}$ was also established in \cite{LY23} using a different technique.
\end{remark}

The following proposition will later be used to prove the main result of this section, Theorem~\ref{thm-naturality}.

\begin{proposition}\label{prop-loop-nature}
    Let $\fS$ be a triangulable pb surface with two triangulations $\lambda$ and $\lambda'$.
    For each puncture $p$ of $\fS$ and $1\leq i\leq n-1$, we have 
    $$\Theta_{\lambda\lambda'}(Z^{{\bf b}(\lambda',p,i)}) = Z^{{\bf b}(\lambda,p,i)},$$
    where ${\bf b}(\lambda,p,i)$ and ${\bf b}(\lambda',p,i)$ are defined as in \eqref{def-a-l-p-i}.
\end{proposition}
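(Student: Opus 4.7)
The plan is to reduce to the case of a single flip and then employ a cutting argument that transforms the problem into computing traces of stated corner arcs, for which Lemma~\ref{lem-image-corner-arc} provides explicit single-monomial formulas (in contrast to the symmetric sum of monomials appearing in Proposition~\ref{lem-image-loop-trace} for the unstated peripheral loop).

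By the composition law $\Theta_{\lambda\lambda''}=\Theta_{\lambda\lambda'}\circ\Theta_{\lambda'\lambda''}$ of Theorem~\ref{thm-naturality-trace}(b), it suffices to prove the identity when $\lambda'$ is obtained from $\lambda$ by a single flip at some edge $e_0$. Since each puncture is adjacent to at least two distinct ideal arcs, one can select an edge $e\in\lambda\cap\lambda'=\lambda\setminus\{e_0\}$ incident to $p$. Cutting $\fS$ along $e$ produces a pb surface $\fS':=\mathsf{Cut}_e(\fS)$ with induced triangulations $\lambda_e$ and $\lambda_e'$, related by the very same flip at $e_0$, whose mutation vertices all lie in the interior of the quadrilateral $\mathbb P_{4,e_0}$---a region disjoint from $e$. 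A direct verification using \eqref{eq-quantum-mutation_Z}, Lemma~\ref{lem-F-P}, and the splitting formula~\eqref{cutting_homomorphism_for_Z_omega} will then yield the naturality square $\mathcal S_e\circ\Theta_{\lambda\lambda'}=\Theta_{\lambda_e\lambda_e'}\circ\mathcal S_e$.

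On $\fS'$, the peripheral loop $\alpha(p)$ becomes a corner arc $u$ at the boundary puncture coming from $p$. By Lemma~\ref{lem-image-corner-arc}, for each $1\leq t\leq n$ one has the single-monomial traces
\[
\tr_{\lambda_e}(\bar u_{tt})=Z^{\iota(n{\bf c}(\lambda,p,t)-{\bf c}(\lambda,p))},\qquad \tr_{\lambda_e'}(\bar u_{tt})=Z^{\iota'(n{\bf c}(\lambda',p,t)-{\bf c}(\lambda',p))},
\]
with $\iota,\iota'$ the embeddings induced by $\mathcal S_e$. Applying the commutative diagram of Theorem~\ref{thm-naturality-trace}(d) to $\fS'$ and reindexing via $s=n+1-t$ and \eqref{def-vector-d} yields the crucial index-by-index identity
\[
\Theta_{\lambda_e\lambda_e'}\bigl(Z^{\iota'({\bf d}(\lambda',p,s))}\bigr)=Z^{\iota({\bf d}(\lambda,p,s))},\qquad 1\leq s\leq n.
\]
Since each ${\bf d}(\lambda,p,s)$ lies in $\ker Q_\lambda$ (being a difference of the central ${\bf b}$-vectors), the family $\{Z^{{\bf d}(\lambda,p,s)}\}_s$ is pairwise commutative, so by~\eqref{eq-vectors-b-d} one has $Z^{{\bf b}(\lambda,p,i)}=\prod_{s=1}^{i}Z^{{\bf d}(\lambda,p,s)}$, and analogously for $\lambda'$. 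Multiplying the above identities gives $\Theta_{\lambda_e\lambda_e'}(Z^{\iota'({\bf b}(\lambda',p,i))})=Z^{\iota({\bf b}(\lambda,p,i))}$; invoking the naturality square together with the injectivity of $\mathcal S_e$ then establishes the desired equality $\Theta_{\lambda\lambda'}(Z^{{\bf b}(\lambda',p,i)})=Z^{{\bf b}(\lambda,p,i)}$.

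The main obstacle I anticipate is rigorously verifying the naturality square $\mathcal S_e\circ\Theta_{\lambda\lambda'}=\Theta_{\lambda_e\lambda_e'}\circ\mathcal S_e$. Although the disjointness of the mutation vertices from $e$ makes this plausible, one must carefully track how the Weyl-ordered monomials appearing in the quantum dilogarithm $\Psi^\omega(X_k)$ interact with the doubling-of-vertices $Z_v\mapsto Z_{v'}Z_{v''}$, and confirm that both compositions agree on every generator. Once this is in hand, the remaining verifications reduce to direct substitutions using Proposition~\ref{lem-image-loop-trace} and Lemma~\ref{lem-image-corner-arc}.
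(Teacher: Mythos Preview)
Your approach is essentially the paper's: both reduce to a single flip, cut the surface so that the peripheral data at $p$ becomes a product of corner-arc traces (which are single monomials), invoke Theorem~\ref{thm-naturality-trace}(d) on the cut surface, and conclude by injectivity of the splitting map. The only difference is which edges are cut. The paper cuts along all four boundary edges $e_1,\ldots,e_4$ of the flip quadrilateral, separating off a $\mathbb P_4$, and uses Lemma~\ref{lem-Z-tr} together with \eqref{eq-ZKvb} to identify $Z^{{\bf b}}$ with a product of corner-arc traces there; you instead cut along a single edge $e$ incident to $p$ and use Lemma~\ref{lem-image-corner-arc} with the telescoping ${\bf d}$-decomposition \eqref{eq-vectors-b-d}. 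Your acknowledged obstacle, the naturality square $\mathcal S_e\circ\Theta_{\lambda\lambda'}=\Theta_{\lambda_e\lambda_e'}\circ\mathcal S_e$, is exactly what the paper handles by citing \cite[Lemma~3.18]{KimWang} (this is diagram~\eqref{eq-commmm-key}, stated for the four-edge cut); your single-edge version is not literally that lemma, but the same argument applies since the mutation vertices all lie in the interior of $\mathbb P_{4,e_0}$ and hence are disjoint from $e$. The paper's choice has the practical advantage that the compatibility lemma is already available in the literature, while yours is slightly more economical.
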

\begin{proof}
We may assume that $\lambda'$ is obtained from $\lambda$ by a flip on edge $e$, by equation \eqref{eq-com-lambda}.
Suppose the local picture for this flip is as illustrated in Figure \ref{flip}, where the left picture corresponds to $\lambda$ and the right picture to $\lambda'$.

Let $\fS^{\sharp}$ denote the surface obtained from $\fS$ by cutting along the edges $e_i$ for $1\leq i\leq 4$. Then we have $\fS^{\sharp} = \fS^{*}\bigsqcup\mathbb P_4$. Let {\bf pr} denote the projection from
$\fS^{\sharp}$ to $\fS$. If $({\bf pr})^{-1}(p)\cap \{p_1,p_2,p_3,p_4\}=\emptyset$, the statement of the proposition is trivial. Otherwise, suppose that $({\bf pr})^{-1}(p)\cap \{p_1,p_2,p_3,p_4\}\neq\emptyset$. We only treat the case $({\bf pr})^{-1}(p)\cap \{p_1,p_2,p_3,p_4\}=\{p_4\}$, since the same techniques apply to the other cases.

Let $\lambda_4$ (resp. $\lambda_4'$) denote the triangulation of $\mathbb P_4$ shown in the left (resp. right) picture of Figure \ref{flip}.
The triangulations $\lambda$ and $\lambda'$ of $\fS$ induce the same triangulation of $\fS^{*}$, denoted by $\lambda^{*}$.
Define algebra embeddings
\begin{align*}
        \mathcal S_E&:=\mathcal S_{e_1}\circ \mathcal S_{e_2}\circ \mathcal S_{e_3}\circ \mathcal S_{e_4}\colon 
        \mathcal Z_{\hat \omega}^{\rm bl}(\fS,\lambda)\rightarrow
        \mathcal Z_{\hat \omega}^{\rm bl}(\fS^{*},\lambda^{*})\otimes_{\mathbb C}
        \mathcal Z_{\hat \omega}^{\rm bl}(\mathbb P_4,\lambda_4)\\
         \mathcal S_E'&:=\mathcal S_{e_1}'\circ \mathcal S_{e_2}'\circ \mathcal S_{e_3}'\circ \mathcal S_{e_4}'\colon 
        \mathcal Z_{\hat \omega}^{\rm bl}(\fS,\lambda')\rightarrow
        \mathcal Z_{\hat \omega}^{\rm bl}(\fS^{*},\lambda^{*})\otimes_{\mathbb C}
        \mathcal Z_{\hat q}^{\rm bl}(\mathbb P_4,\lambda_4'),
\end{align*}
where $\mathcal S_{e_i}$ and $\mathcal S_{e_i}'$ are defined in \eqref{cutting_homomorphism_for_Z_omega}.
Then $\mathcal S_{E}$ and $\mathcal S_{E}'$ induce the following division embeddings:
\begin{align*}
        \mathcal S_E&\colon 
        \Fr(\mathcal Z_{\hat \omega}^{\rm bl}(\fS,\lambda))\rightarrow
        \Fr(\mathcal Z_{\hat \omega}^{\rm bl}(\fS^{*},\lambda^{*})\otimes_{\mathbb C}
        \mathcal Z_{\hat \omega}^{\rm bl}(\mathbb P_4,\lambda_4))\\
         \mathcal S_E'&\colon 
        \Fr(\mathcal Z_{\hat \omega}^{\rm bl}(\fS,\lambda'))\rightarrow
        \Fr(\mathcal Z_{\hat \omega}^{\rm bl}(\fS^{*},\lambda^{*})\otimes_{\mathbb C}
        \mathcal Z_{\hat \omega}^{\rm bl}(\mathbb P_4,\lambda_4')).
\end{align*}

By \cite[Lemma 3.18]{KimWang}, the following diagram commutes:
\begin{equation}\label{eq-commmm-key}
\begin{tikzcd}
\Fr(\mathcal Z_{\hat \omega}^{\rm bl}(\fS,\lambda')) \arrow[r, "\mathcal S_{E}'"]
\arrow[d, "\Theta_{
\lambda \lambda'}"]  
& \Fr(\mathcal Z_{\hat \omega}^{\rm bl}(\fS^{*},\lambda^{*})\otimes_R
        \mathcal Z_{\hat \omega}^{\rm bl}(\mathbb P_4,\lambda_4')) \arrow[d, "\Theta_{
(\lambda^{*}\sqcup\lambda_4) (\lambda^{*}\sqcup\lambda_4')}"] \\
 \Fr(\mathcal Z_{\hat \omega}^{\rm bl}(\fS,\lambda))
 \arrow[r, "\mathcal S_E"] 
&  \Fr(\mathcal Z_{\hat \omega}^{\rm bl}(\fS^{*},\lambda^{*})\otimes_R
        \mathcal Z_{\hat \omega}^{\rm bl}(\mathbb P_4,\lambda_4))
\end{tikzcd}
\end{equation}

We label the two ideal triangles in the left (resp. right) picture of Figure \ref{flip} as $\tau_1,\tau_2$ (resp. $\tau_1',\tau_2'$).
Assume that $\tau_2'$ contains $p_4$.
Let $f_e$ (resp. $f_{e'}$) denote the group embedding induced by cutting $\mathbb P_4$ along $e$ (resp. $e'$)
$$\mathcal B_{\lambda_4}\rightarrow
  \mathcal B_{\tau_1}\oplus \mathcal B_{\tau_2}\; (\text{resp. } 
  \mathcal B_{\lambda_4'}\rightarrow
  \mathcal B_{\tau_1'}\oplus \mathcal B_{\tau_2'}).$$
It is straightforward to verify that 
$$({\bf b}(\tau_1,p_4,i), {\bf b}(\tau_2,p_4,i))\in\im f_{e}
    \quad\text{and}\quad
    ({\bf 0}, {\bf b}(\tau_2',p_4,i))\in\im f_{e'},$$
for $1\leq i\leq n-1$.
Hence, for $1\leq i\leq n-1$, there exist 
${\bf b}(p_4,i)\in\mathcal B_{\lambda_4}$ and 
${\bf b}'(p_4,i)\in\mathcal B_{\lambda_4'}$ such that 
\begin{align*}
        f_e({\bf b}(p_4,i))&=
        ({\bf b}(\tau_1,p_4,i), {\bf b}(\tau_2,p_4,i)),\\
        f_{e'}({\bf b}'(p_4,i))&=
        ({\bf 0}, {\bf b}(\tau_2',p_4,i)).
\end{align*}

Let $C$ be the corner arc in $\mathbb P_4$ encircling $p_4$, oriented counterclockwise around $p_4$. For each $t \in \{1,2,\dots,n\}$, denote by $C_t$ the stated arc in $\mathbb P_4$ obtained by assigning state $t$ to both endpoints of $C$.
It follows from Lemma \ref{lem-Z-tr}  and \eqref{eq-ZKvb}
that
\begin{align}\label{eq-Zb-tC}
 Z^{{\bf b}(p_4,i)}=\prod_{t=n-i+1}^{n} {\rm tr}_{\lambda_4}(C_t) \text{ and }
  Z^{{\bf b}'(p_4,i)}=\prod_{t=n-i+1}^{n} {\rm tr}_{\lambda_4'}(C_t).
\end{align}

We then compute:
\begin{align*}
    &\Theta_{
(\lambda^{*}\sqcup \lambda_4) (\lambda^{*}\sqcup\lambda_4')}(\mathcal S_E'(Z^{{\bf b}(\lambda',p,i)}))\\
    =&
    \Theta_{
(\lambda^{*}\sqcup \lambda_4) (\lambda^{*}\sqcup \lambda_4')}(
    Z^{\bf k}\otimes_{\mathbb C}
    Z^{{\bf b}'(p_4, i)})
    \quad(\mbox{here $Z^{\bf k}\in \mathcal Z_{\hat\omega}^{\rm bl}(\fS^{*},\lambda^{*})$})\\
    =& 
     Z^{\bf k}\otimes_{\mathbb C}
     \Theta_{\lambda_4\lambda_4'}\left(
     \prod_{t=n-i+1}^{n} {\rm tr}_{\lambda_4'}(C_t)\right)
     \quad(\because \mbox{\eqref{eq-Zb-tC}})\\
     =&Z^{\bf k}\otimes_{\mathbb C}
     \prod_{t=n-i+1}^{n} {\rm tr}_{\lambda_4}(C_t)
     \quad(\because \mbox{Theorem \ref{thm-naturality}})\\
     =& 
    Z^{\bf k}\otimes_{\mathbb C}
    Z^{{\bf b}(p_4, i)} \quad(\because \mbox{\eqref{eq-Zb-tC}})\\
    =& \mathcal S_E(Z^{{\bf b}(\lambda,p,i)}).
\end{align*}
Diagram \eqref{eq-commmm-key} shows that 
$$
\mathcal S_E(\Theta_{\lambda\lambda'}(Z^{{\bf b}(\lambda',p,i)}))
=\Theta_{
(\lambda^{*}\sqcup \lambda_4) (\lambda^{*}\sqcup \lambda_4')}(\mathcal S_E'(Z^{{\bf b}(\lambda',p,i)}))
=\mathcal S_E(Z^{{\bf b}(\lambda,p,i)}).
$$
This completes the proof since $\mathcal S_E$ is injective. 
\end{proof}

Suppose 
that $\mathcal{D} = (\Gamma,(X_v)_{v\in \VV})$ is an $\mathcal X$-seed whose exchange matrix is $Q$.
Under the assumption \Rlabel{2}, 
 there is an algebra embedding 
$$
\PT\colon \mathcal Z_{\hat \eta}(\mathcal D)\rightarrow \mathcal Z_{\hat \omega}(\mathcal D),
$$
defined by 
$$
\PT (Z_v) = Z_v^{N} \quad\mbox{for all}\quad v\in V.
$$
It is easy to show that $\PT$ induces an algebra embedding
\begin{align}\label{eq-fro-mbl}
\PT\colon \mathcal Z_{\hat \eta}^{\rm mbl}(\mathcal D)\rightarrow \mathcal Z_{\hat \omega}^{\rm mbl}(\mathcal D),
\end{align}
where $\mathcal Z_{\hat \eta}^{\rm mbl}(\mathcal D)$ and $\mathcal Z_{\hat \omega}^{\rm mbl}(\mathcal D)$ are defined in Definition \ref{def.mbl}.
Since $\PT$ is injective,
 $\PT$ induces a skew-field embedding
\begin{align*}
    \PT\colon \Fr(\mathcal Z_{\hat \eta}^{\rm mbl}(\mathcal D))\rightarrow
    \Fr(\mathcal Z_{\hat \omega}^{\rm mbl}(\mathcal D)).
\end{align*}

Let $k$ be a mutable vertex, and $\mathcal D'=\mu_k(\mathcal D)$.
As defined in \eqref{eq-def-nu}, there are isomorphisms
$$
\nu_k^{\hat\eta}\colon\Fr(
\mathcal{Z}^{\rm mbl}_{\hat{\eta}}(\mathcal D'))\rightarrow
\Fr(
\mathcal{Z}^{\rm mbl}_{\hat{\eta}}(\mathcal D)) \text{ and }
\nu_k^{\hat\omega}\colon\Fr(
\mathcal{Z}^{\rm mbl}_{\hat{\omega}}(\mathcal D'))\rightarrow
\Fr(
\mathcal{Z}^{\rm mbl}_{\hat{\omega}}(\mathcal D)).$$
Then the following holds.

\def\Xbe{\mathcal{Z}_{\heta}^{\rm bl}(\fS,\lambda)}
\def\Xbz{\mathcal{Z}_{\hat{\omega}}^{\rm bl}(\fS,\lambda)}

\begin{lemma}
    The following diagram commutes
    \begin{equation}\label{eq-commmm-Fro}
\begin{tikzcd}
\Fr(\mathcal Z_{\hat \eta}^{\rm mbl}(\mathcal D')) \arrow[r, "\PT"]
\arrow[d, "\nu_k^{\hat\eta}"]  
& \Fr(\mathcal Z_{\hat \omega}^{\rm mbl}(\mathcal D')) \arrow[d, "\nu_k^{\hat\omega}"] \\
\Fr(\mathcal Z_{\hat \eta}^{\rm mbl}(\mathcal D))
 \arrow[r, "\PT"] 
&  \Fr(\mathcal Z_{\hat \omega}^{\rm mbl}(\mathcal D))
\end{tikzcd}
\end{equation}
\end{lemma}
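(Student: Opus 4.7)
The plan is to verify the commutativity on a set of generators and then extend it to the whole skew field. Since $\nu_k^{\hat\omega} = \nu_k^{\sharp \hat\omega} \circ \nu_k'$ by \eqref{eq-def-nu} (and likewise for $\nu_k^{\hat\eta}$), it suffices to establish the two commutativities
\begin{align*}
\PT \circ \nu_k' = \nu_k' \circ \PT
\quad\text{and}\quad
\PT \circ \nu_k^{\sharp\hat\eta} = \nu_k^{\sharp\hat\omega} \circ \PT.
\end{align*}

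First I would handle the monomial part. On the generator $Z_v'\in \mathcal Z_{\hat\eta}^{\rm mbl}(\mathcal D')$ the formula \eqref{eq-quantum-mutation_Z} reads $\nu_k'(Z_v')=Z_k^{-1}$ for $v=k$ and $\nu_k'(Z_v')=[Z_v Z_k^{[Q(v,k)]_+}]_{\hat\eta}$ otherwise. Direct computation using the commutation relation $Z_v Z_k = \hat\omega^{2Q(v,k)} Z_k Z_v$ gives the standard Weyl-ordered power identity
\begin{align*}
([Z_v Z_k^{a}]_{\hat\omega})^N = [Z_v^N Z_k^{Na}]_{\hat\omega}.
\end{align*}
Combining this with $\hat\eta=\hat\omega^{N^2}$, we get
\begin{align*}
\PT([Z_v Z_k^a]_{\hat\eta})
  = \hat\eta^{-aQ(v,k)} Z_v^N Z_k^{Na}
  = \hat\omega^{-N^2 a Q(v,k)} Z_v^N Z_k^{Na}
  = [Z_v^N Z_k^{Na}]_{\hat\omega}
  = (\nu_k'\circ \PT)(Z_v').
\end{align*}
For $v=k$ both sides equal $Z_k^{-N}$. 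Since $\nu_k'$ and $\PT$ are algebra homomorphisms, the two agree on all of $\Fr(\mathcal Z_{\hat\eta}^{\rm mbl}(\mathcal D'))$.

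Next I would handle the conjugation part. Fix ${\bf t}\in\mathcal B_{\mathcal D}$ and set $m=\tfrac1n\sum_v Q(k,v)t_v\in\mathbb Z$. By the definition of $\nu_k^{\sharp\hat\eta}$ recalled just above \eqref{eq-def-nu},
\begin{align*}
\PT\bigl(\nu_k^{\sharp\hat\eta}(Z^{\bf t})\bigr)
= Z^{N{\bf t}}\, F^\eta\bigl(X_k^N,m\bigr),
\qquad
\nu_k^{\sharp\hat\omega}\bigl(\PT(Z^{\bf t})\bigr)
= Z^{N{\bf t}}\, F^\omega(X_k,Nm),
\end{align*}
using $\PT(Z^{\bf t}) = Z^{N{\bf t}}$ (verified as in Step~1) and $\PT(X_k^{(\eta)})=X_k^N$. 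So the task reduces to the scalar identity
\begin{align}\label{eq-plan-dilog}
F^\eta(X_k^N,m) = F^\omega(X_k,Nm).
\end{align}
Assumption \Rlabel{1} gives $\omega^2$ of order $N$, hence $\eta^2=(\omega^2)^{N^2}=1$, which collapses the right side of $F^\eta(X_k^N,m)=\prod_{r=1}^{|m|}(1+\eta^{(2r-1)\sgn(m)}X_k^N)^{\sgn(m)}$ to $(1+\eta X_k^N)^m$. On the other side, the quantum dilogarithm identity at a root of unity,
\begin{align*}
\prod_{r=1}^N (1+\omega^{2r-1}X) = 1+\eta X^N,
\end{align*}
holds by a short root-counting and leading-coefficient comparison (both sides are polynomials of degree $N$ with the same roots $-\omega^{-(2r-1)}$ and matching coefficients of $X^0$ and $X^N$). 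Combining this identity with the periodicity $\omega^{2N}=1$, one factors $F^\omega(X_k,Nm)=[\prod_{r=1}^N(1+\omega^{2r-1}X_k)]^m=(1+\eta X_k^N)^m$, matching the other side of \eqref{eq-plan-dilog}.

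The only delicate step is the dilogarithm identity in the conjugation part; however, once $\eta^2=1$ is observed, this is standard. With both halves established, the compositions match and the diagram commutes.
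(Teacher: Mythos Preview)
Your proof is correct and follows essentially the same approach as the paper: both split $\nu_k^{\hat\omega}=\nu_k^{\sharp\hat\omega}\circ\nu_k'$ and verify commutativity with $\PT$ for each factor separately, checking $\nu_k'$ on generators and reducing $\nu_k^{\sharp}$ to the dilogarithm identity $\prod_{r=1}^{N|m|}(1+\omega^{(2r-1)\sgn(m)}X_k)^{\sgn(m)}=\prod_{r=1}^{|m|}(1+\eta^{(2r-1)\sgn(m)}X_k^N)^{\sgn(m)}$. The paper simply cites this identity as well known, whereas you supply a short proof via $\eta^2=1$ and root counting, which is a nice addition; one small imprecision is that $Z_v'$ need not lie in $\mathcal Z_{\hat\eta}^{\rm mbl}(\mathcal D')$, but since $\nu_k'$ and $\PT$ extend to the full $n$-th root algebra (as the paper's diagram \eqref{eq-diag-vF1} makes explicit), checking on those generators is still valid.
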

\begin{proof}
From \eqref{eq-def-nu}, it suffices to show that the following two diagrams commute:
\begin{equation}\label{eq-diag-vF}
\begin{tikzcd}
\Fr(\mathcal Z_{\hat \eta}^{\rm mbl}(\mathcal D)) \arrow[r, "\PT"]
\arrow[d, "\nu_k^{\sharp\hat \eta}"]  
& \Fr(\mathcal Z_{\hat \omega}^{\rm mbl}(\mathcal D)) \arrow[d, "\nu_k^{\sharp\hat \omega}"] \\
\Fr(\mathcal Z_{\hat \eta}^{\rm mbl}(\mathcal D)) \arrow[r, "\PT"] 
& \Fr(\mathcal Z_{\hat \omega}^{\rm mbl}(\mathcal D))
\end{tikzcd}
\end{equation}
and
\begin{equation}\label{eq-diag-vF1}
\begin{tikzcd}
\Fr(\mathcal Z_{\hat \eta}(\mathcal D')) \arrow[r, "\PT"]
\arrow[d, "\nu_k'"]  
& \Fr(\mathcal Z_{\hat \omega}(\mathcal D')) \arrow[d, "\nu_k'"] \\
\Fr(\mathcal Z_{\hat \eta}(\mathcal D)) \arrow[r, "\PT"] 
& \Fr(\mathcal Z_{\hat \omega}(\mathcal D))
\end{tikzcd}.
\end{equation}

For any ${\bf t}=(t_v)_{v\in V}\in\mathcal B_{\mathcal D}$ (see Definition~\ref{def.mbl}), suppose that
\[
\sum_{v\in \VV} Q(u,v){\bf t}_v=mn.
\]
Then
\[
v_k^{\sharp\hat \eta}(Z^{\bf t}) = Z^{\bf t} F^\eta(X_k, m)
= Z^{\bf t} \prod_{r=1}^{|m|} \bigl(1+\eta^{(2r-1){\rm sgn}(m)} X_k \bigr)^{{\rm sgn}(m)}
\in \mathcal Z_{\hat \eta}^{\rm mbl}(\mathcal D),
\]
where $X_k=Z_k^n \in \mathcal X_{\eta}(\mathcal D)$.  
By the definition of $\PT$, we obtain
\begin{align}\label{eq-Fveta}
    \PT(v_k^{\sharp\hat \eta}(Z^{\bf t}))
    = Z^{N{\bf t}} \prod_{r=1}^{|m|} \bigl(1+\eta^{(2r-1){\rm sgn}(m)} X_k^N \bigr)^{{\rm sgn}(m)}
    \in \mathcal Z_{\hat \omega}^{\rm mbl}(\mathcal D).
\end{align}

Moreover,
\[
\sum_{v\in \VV} Q(u,v)N{\bf t}_v= mNn.
\]
Thus,
\begin{align}\label{eq-vomeF}
    v_k^{\sharp\hat \omega}(\PT(Z^{\bf t}))
    = v_k^{\sharp\hat \omega}(Z^{N{\bf t}})
    = Z^{N{\bf t}} \prod_{r=1}^{N|m|} \bigl(1+\omega^{(2r-1){\rm sgn}(m)} X_k \bigr)^{{\rm sgn}(m)}
    \in \mathcal Z_{\hat \omega}^{\rm mbl}(\mathcal D).
\end{align}
Recall that the order of $\omega^2$ is $N$ and $\eta=\omega^{N^2}$.  
It is well known that
\begin{align}\label{eq-quantum-equation}
    \prod_{r=1}^{|m|} \bigl(1+\eta^{(2r-1){\rm sgn}(m)} X_k^N \bigr)^{{\rm sgn}(m)}
    = \prod_{r=1}^{N|m|} \bigl(1+\omega^{(2r-1){\rm sgn}(m)} X_k \bigr)^{{\rm sgn}(m)}.
\end{align}
Equations~\eqref{eq-Fveta}, \eqref{eq-vomeF}, and~\eqref{eq-quantum-equation} imply that the diagram in~\eqref{eq-diag-vF} commutes.  

To prove the commutativity of the diagram in~\eqref{eq-diag-vF1}, it suffices to show that
\[
    \PT(\nu_k'(Z_v')) = \nu_k'(\PT(Z_v'))
\]
for every $v\in \VV$.  

If $v\neq k$, then
\begin{align*}
    \PT(\nu_k'(Z_v')) &= \PT([Z_v Z_k^{[Q(v,k)]_+}]_{\hat\eta})
    = [Z_v^N Z_k^{N[Q(v,k)]_+}]_{\hat\omega}, \\
    \nu_k'(\PT(Z_v')) &= \nu_k'((Z_v')^N)
    = (\nu_k'(Z_v'))^N
    = ([Z_v Z_k^{[Q(v,k)]_+}]_{\hat\omega})^N
    = [Z_v^N Z_k^{N[Q(v,k)]_+}]_{\hat\omega}.
\end{align*}
If $v=k$, then
\[
    \PT(\nu_k'(Z_v')) = \PT(Z_k^{-1}) = Z_k^{-N},
    \qquad
    \nu_k'(\PT(Z_v')) = \nu_k'((Z_v')^N)
    = (\nu_k'(Z_v'))^N = Z_k^{-N}.
\]
Hence the diagram in~\eqref{eq-diag-vF1} commutes as well.  
This completes the proof.
\end{proof}

\def\hom{\text{Hom}_{\text{Alg}}}

\subsection{The naturality of the constructed representations}\label{sub-sec-naturality}
Let $\mathcal{D} = (\Gamma,(X_v)_{v\in \VV})$ be an $\mathcal X$-seed,  let $k$ be a mutable vertex. Suppose that $\mu_k(\mathcal D)=
\mathcal{D}' = (\Gamma',(X_v')_{v\in \VV})$.
Let $\eta=\pm 1$.
From Definition \ref{def.quantum_X-mutation}, we have $$ \mu_k^\eta(X_v')=\begin{cases}
    X_k^{-1}  & v=k\\
    X_v X_k^{[Q(v,k)]_{+}} (1+\eta X_k)^{-Q(v,k)} & v\neq k.
\end{cases}
$$
Let $f\in\hom(\mathcal X_\eta(\mathcal D),\mathbb C)$.
Define $f'=\mu_k^\eta(f)\in \hom(\mathcal X_\eta(\mathcal D'),\mathbb C)$ with
\begin{align}\label{eq-def-mu-number}
    f'(X_v')= \begin{cases}
    f(X_v)^{-1} & v=k,\\
    f(X_v) f(X_k)^{[Q(v,k)]_{+}} (1+\eta f(X_k))^{-Q(v,k)} & v\neq k.
\end{cases}
\end{align}
To make equation \eqref{eq-def-mu-number} well-defined, we require $f(X_k)\neq -\eta$.
We call that $f$ is {\bf compatible} with the mutation $\mu_k^\eta$ if $f(X_k)\neq -\eta$.


\begin{definition}\label{def-compatible}
    Let $\mathcal{D} = (\Gamma,(X_v)_{v\in \VV})$ be an $\mathcal X$-seed,  let $k_1,\cdots,k_m$ be mutable vertices, and let
   $f\in\hom(\mathcal X_\eta(\mathcal D),\mathbb C)$.
    We call that $f$ is {\bf compatible} with the mutation sequence $(\mu_{k_1}^\eta,\cdots,\mu_{k_m}^\eta)$ if 
    $\mu_{k_i}^\eta\cdots \mu_{k_1}^\eta(f))$ is compatible with the mutation $\mu_{k_{i+1}}^\eta$ for each $0\leq i\leq n-1$.
    Here $\mu_{k_0}^\eta(f) = f$.
\end{definition}


\def\sss{\cS_{\bar\omega}(\fS)}
\def\see{\cS_{\bar\eta}(\fS)}
\def\zzm{\mathcal Z_{\hat\omega}^{\rm mbl}}
\def\ZEE{\mathcal Z_{\hat\eta}^{\rm mbl}}
\def\trl{{\rm tr}_{\lambda}}
\def\trp{{\rm tr}_{\lambda'}}
\def\TR{{\rm tr}}

Under the assumption \Rlabel{2}, we have $\hat\eta^{2n}=1$ and $\hat\omega^{2nN}=1$.  
Consequently, the algebra $\ZEE(\mathcal D)$ is commutative, and
\[
\PT\!\left(\mathcal Z_{\hat \eta}^{\rm mbl}(\mathcal D)\right)
   \;\subset\; \mathsf{Z}\!\left(\mathcal Z_{\hat \omega}^{\rm mbl}(\mathcal D)\right),
\]
where $\PT$ is the homomorphism defined in \eqref{eq-fro-mbl}.

The following theorem is our last  main result. 
It asserts that, under mild conditions, the irreducible representation of the projected $\SL$-skein algebra established in Theorem~\ref{thm-main-3} is independent of the choice of ideal triangulation~$\lambda$.

\begin{theorem}\label{thm-naturality}
Let $\fS$ be a triangulable punctured surface, and let $\lambda$ and $\lambda'$ be two triangulations of $\fS$ such that $\lambda'$ is obtained from $\lambda$ by a flip. 
Recall that there exists a sequence of $\mathcal X$-mutations $\mu_{v_r}, \dots, \mu_{v_2}, \mu_{v_1}$ such that
(see \eqref{two_seeds_connected_by_sequence_of_mutations})
\[
\mathcal{D}_{\lambda'} \;=\; \mu_{v_r} \cdots \mu_{v_2} \mu_{v_1}(\mathcal{D}_\lambda).
\]
Suppose that we have the assumption \Rlabel{2}. 
Let 
$$
\text{$h\in \hom(\mathsf{Z}(\zmm),\BC)$ (see Definition~\ref{def.mbl} for $\mathcal Z^{\rm mbl}$),}
$$
and set 
$$g = h\circ \PT,$$ where $\mathsf{Z}(\zmm)$ is the center of $\zmm$ and $\PT\colon \zem\to\mathsf{Z}(\zmm)$ is defined in \eqref{eq-fro-mbl}.  
Suppose that $g|_{\Xee}$ is compatible with the mutation sequence $(\mu_{k_1}^\eta, \dots, \mu_{k_m}^\eta)$. Then:

\begin{enumerate}[label={\rm (\alph*)}]
    \item For any $X\in \zemp$, one can write 
    \[
    \Theta_{\lambda\lambda'}(X) = PQ^{-1}\in \Fr(\zem),
    \]
    with $P,Q\in \zem$ and $g(Q)\neq 0$.  
    Define $g'(X):=g(P)g(Q)^{-1}$. Then $g'$ is a well-defined algebra homomorphism $\zemp \to \BC$. Moreover, the following diagram commutes:
    \[
    \begin{tikzcd}
    \cS_{\bar \eta}(\fS) \arrow[r, "{\rm tr}_\lambda"]
    \arrow[d, "{\rm tr}_{\lambda'}"]  
    & \zem \arrow[d, "g"] \\
    \zemp
     \arrow[r, "g'"] 
    &  \BC
    \end{tikzcd}.
    \]

    \item Let $\mathcal P$ be the set of punctures of $\fS$.
    For each $p\in \mathcal P$ and $1\leq i\leq n-1$, we have
    \[
    g\!\left(Z^{{\bf b}(\lambda,p,i)}\right) \;=\; g'\!\left(Z^{{\bf b}(\lambda',p,i)}\right).
    \]

    \item Define $b(p,i) := h\!\left(Z^{{\bf b}(\lambda,p,i)}\right)$ for $1\leq i\leq n-1$ and $p\in\mathcal P$. Then
    \[
    b(p,i)^N \;=\; g\!\left(Z^{{\bf b}(\lambda,p,i)}\right) \;=\; g'\!\left(Z^{{\bf b}(\lambda',p,i)}\right).
    \]
    Let $f = g|_{\Zee}$ and $f' = g'|_{\Zep}$.  
    By Theorem~\ref{thm-representation-Fock}, the data $f$ (resp. $f'$) together with $(b(p,i))_{1\leq i\leq n-1, \, p\in\mathcal P}$ uniquely determine an irreducible representation
    \[
    \rho\colon \Zoo \to \End(V) \quad \text{(resp. $\rho'\colon \Zop \to \End(V')$)}.
    \]
    As representations of $\cS_{\hat\omega}(\fS)$, the following two are isomorphic:
    \[
    \cS_{\bar\omega}(\fS)\xrightarrow{\tr}\Zoo\xrightarrow{\rho}\End(V), 
    \qquad
    \cS_{\bar\omega}(\fS)\xrightarrow{{\rm tr}_{\lambda'}}\Zop\xrightarrow{\rho'}\End(V').
    \]
\end{enumerate}
\end{theorem}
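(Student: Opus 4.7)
The plan is to work at two levels: for parts (a) and (b), I will use the $\hat\eta$-version $\Theta^{\hat\eta}_{\lambda\lambda'}\colon \Fr(\zemp) \to \Fr(\zem)$ of the coordinate-change isomorphism, defined analogously to $\Theta_{\lambda\lambda'}$ by replacing $\hat\omega$ with $\hat\eta$ throughout \eqref{eq-Theta2}--\eqref{eq-Theta-change2}; for (c), I will use the $\hat\omega$-version $\Theta^{\hat\omega}_{\lambda\lambda'}$ itself. The two are linked by the commutative diagram \eqref{eq-commmm-Fro} and the Frobenius identity \eqref{eq-quantum-equation}, and the compatibility hypothesis on $g|_{\Xee}$ serves as the bridge from the classical $\mathcal X$-cluster side to the quantum computations.

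For part (a), decompose $\Theta^{\hat\eta}_{\lambda\lambda'} = \nu^{\hat\eta}_{v_1} \circ \cdots \circ \nu^{\hat\eta}_{v_r}$. By Lemma~\ref{lem-F-P} and \eqref{eq-quantum-mutation_Z}, each $\nu^{\hat\eta}_k$ sends a Weyl-ordered monomial $Z^{\bf t}$ to a new monomial times $F^\eta(X_k, m)^{\mathrm{sgn}(m)}$, where $m = \tfrac{1}{n}\sum_v Q(k,v)t_v \in \mathbb Z$ (integrality follows from the mutable-balanced condition). Iterating for any $X \in \zemp$ gives $\Theta^{\hat\eta}_{\lambda\lambda'}(X) = PQ^{-1}$ with $P,Q \in \zem$, where $Q$ is a product of classical dilogarithm factors $1+\eta Y_i$ with each $Y_i$ a Laurent monomial in the $\mathcal X$-coordinates at an intermediate stage of the mutation sequence. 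The compatibility of $g|_{\Xee}$ with $(\mu^\eta_{v_1},\dots,\mu^\eta_{v_r})$ precisely guarantees $g(1+\eta Y_i) \neq 0$ (using $\eta = \bar\eta^n = \pm 1$ under \Rlabel{2}), so $g(Q)\neq 0$ and $g'(X):=g(P)g(Q)^{-1}$ is a well-defined algebra homomorphism. Commutativity of the diagram in (a) follows by applying $g$ to the $\hat\eta$-version of Theorem~\ref{thm-naturality-trace}(d), namely ${\rm tr}_\lambda = \Theta^{\hat\eta}_{\lambda\lambda'} \circ {\rm tr}_{\lambda'}$. Part (b) is then immediate from the $\hat\eta$-analogue of Proposition~\ref{prop-loop-nature}: $\Theta^{\hat\eta}_{\lambda\lambda'}(Z^{{\bf b}(\lambda',p,i)}) = Z^{{\bf b}(\lambda,p,i)}$ in $\zem$.

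For part (c), the strategy is to construct a representation $\tilde\rho$ of $\Zop$ on $V$ by transporting $\rho$ through $\Theta^{\hat\omega}_{\lambda\lambda'}$, then invoke Lemma~\ref{lem-irre-Azumaya}. I will extend $\rho$ to a representation $\hat\rho$ of the localization of $\Zoo$ at the central elements $\PT(1+\eta Y_i) \in \PT(\Xee) \subset \mathsf Z(\Zoo)$ that appear as $\PT$-images of the denominators in part (a); each such element acts via $\rho$ by the non-zero scalar $f(1+\eta Y_i)$ (Theorem~\ref{thm-representation-Fock}(1) together with the compatibility hypothesis). The commutative diagram \eqref{eq-commmm-Fro} combined with \eqref{eq-quantum-equation} shows that the denominators of $\Theta^{\hat\omega}_{\lambda\lambda'}|_{\Zop}$, tracked inductively along the mutation sequence using $\PT \circ \nu^{\hat\eta}_k = \nu^{\hat\omega}_k \circ \PT$, are exactly the $\PT$-images of the $\hat\eta$-denominators from part (a). Hence $\tilde\rho(x) := \hat\rho(\Theta^{\hat\omega}_{\lambda\lambda'}(x))$ is a well-defined representation of $\Zop$ on $V$; its central character on $\mathsf Z(\Zop)$ coincides with that of $\rho'$, both being determined by $(f',\{b(p,i)\})$ via Theorem~\ref{thm-representation-Fock}, so Lemma~\ref{lem-irre-Azumaya} yields $\tilde\rho \cong \rho'$. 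Combined with Theorem~\ref{thm-naturality-trace}(d) at the $\hat\omega$-level, this gives
\[
\rho \circ {\rm tr}_\lambda
= \hat\rho \circ \Theta^{\hat\omega}_{\lambda\lambda'} \circ {\rm tr}_{\lambda'}
= \tilde\rho \circ {\rm tr}_{\lambda'}
\cong \rho' \circ {\rm tr}_{\lambda'}
\]
as representations of $\cS_{\bar\omega}(\fS)$, completing the proof.

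The main obstacle is the denominator identification in (c): rigorously verifying that the $\hat\omega$-denominators arising from $\Theta^{\hat\omega}_{\lambda\lambda'}|_{\Zop}$ are $\PT$-images of the $\hat\eta$-denominators produced in part (a). This requires a careful inductive tracking along the mutation sequence, showing that the combined action of the Laurent-monomial part $\nu'_k$ and the dilogarithm-twisting part $\nu^{\sharp\hat\omega}_k$ on a balanced element produces, after each step, quantum dilogarithm factors whose exponents are multiples of $N$, so that they lie in $\PT(\Xee)$ via \eqref{eq-quantum-equation}, hence are central and act as non-zero scalars under $\rho$.
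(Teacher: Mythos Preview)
Your treatment of (a) and (b) is essentially the paper's: an induction along the mutation sequence, using the compatibility hypothesis to ensure each factor $1+\eta X_{v_j}$ is nonzero under the intermediate character $g_{j-1}$, and then invoking Proposition~\ref{prop-loop-nature} for (b).

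For (c) there is a genuine gap. The claim that the $\hat\omega$-denominators of $\Theta^{\hat\omega}_{\lambda\lambda'}|_{\Zop}$ are $\PT$-images of the $\hat\eta$-denominators is false: applying $\nu^{\sharp\hat\omega}_k$ to a balanced monomial $Z^{\bf t}$ produces $F^\omega(X_k,m)$ with $m=\tfrac{1}{n}\sum_v Q(k,v)t_v$, and this integer is \emph{not} a multiple of $N$ in general, so \eqref{eq-quantum-equation} does not collapse the product and the individual factors $(1+\omega^{2r-1}X_k)$ are not central in $\Zoo$. Localizing $\Zoo$ at central elements therefore does not contain the image of $\Theta^{\hat\omega}_{\lambda\lambda'}|_{\Zop}$, and the relation $\PT\circ\nu^{\hat\eta}_k=\nu^{\hat\omega}_k\circ\PT$ only controls the denominators of $\Theta^{\hat\omega}_{\lambda\lambda'}$ on $\PT$-images, not on all of $\Zop$.

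The paper's approach differs in two essential ways. First, it uses the datum $h\in\hom(\mathsf Z(\zmm),\BC)$ to lift $\rho$ to an irreducible representation $\bar\rho$ of the \emph{larger} algebra $\zmm$ (via Lemma~\ref{lem-irre-Azumaya} and Corollary~\ref{prop-representation-Fock}); this is precisely why the hypothesis is placed on $\mathsf Z(\zmm)$ rather than $\mathsf Z(\Zoo)$. Second, it transports $\bar\rho$ one mutation at a time through the intermediate mutable-balanced algebras $\zzm(\mathcal D_j)$ (where no ``balanced'' subalgebra exists), showing at each step that the non-central denominator $(1+\omega^{2r-1}X_{v_j})$ acts \emph{invertibly}---not by centrality, but by an eigenvalue argument: since $\bar\rho_{j-1}(X_{v_j})^N=g_{j-1}(X_{v_j})\,\Id_V$ and $(-\omega)^N=-\eta$, any eigenvalue $\mu$ of $\bar\rho_{j-1}(X_{v_j})$ satisfies $\mu^N=g_{j-1}(X_{v_j})\neq -\eta$, hence $\mu\neq -\omega^{1-2r}$. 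This is the content of Lemma~\ref{lem-key-final}, and it is what makes the inductive transport of the representation well-defined.
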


Before we prove the above theorem, we introduce some notation and results. 
For each $1\leq i\leq n$, define 
$$\mathcal{D}_i = \mu_{v_i} \cdots \mu_{v_2} \mu_{v_1} (\mathcal{D}_\lambda).$$
Note that $\mathcal D_r=\mathcal D_{\lambda'}$. We set $\mathcal D_0=\mathcal D_{\lambda}$.
For each $0\leq i\leq n$, define 
\begin{align}\label{def-tr-i}
    {\rm tr}_i:=\nu_{v_{i+1}}^{\hat\omega}\circ\cdots \circ\nu_{v_r}^{\hat\omega}\circ {\rm tr}_{\lambda'}\colon \sss\rightarrow \Fr(\zzm(\mathcal D_i)).
\end{align}
Then $\TR_0=\trl,\TR_r=\trp$. We have the following:
\begin{lemma}\cite[Equation~(139)]{WH2025}
 For each $0\leq j\leq r$, we have $\im \TR_j\subset\ZEE(\mathcal D_j)$.
\end{lemma}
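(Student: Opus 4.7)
The plan is to prove the inclusion by descending induction on $j$, running from $j=r$ down to $j=0$. The base case $j=r$ is immediate: since $\TR_r=\trp$, Theorem~\ref{thm.quantum_trace}(a) places $\im\trp$ inside $\mathcal Z^{\rm bl}_{\hat\omega}(\fS,\lambda')$, which in turn sits inside $\zzm(\mathcal D_{\lambda'})=\zzm(\mathcal D_r)$ by the inclusion of the balanced into the mutable-balanced subalgebra (noted right after Definition~\ref{def.mbl}).

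For the inductive step, suppose $\im\TR_{j+1}\subset\zzm(\mathcal D_{j+1})$. Since $\TR_j=\nu_{v_{j+1}}^{\hat\omega}\circ\TR_{j+1}$, it suffices to show that $\nu_{v_{j+1}}^{\hat\omega}$ sends elements of $\im\TR_{j+1}$ into $\zzm(\mathcal D_j)$. Decompose $\nu_{v_{j+1}}^{\hat\omega}=\nu_{v_{j+1}}^{\sharp\hat\omega}\circ\nu_{v_{j+1}}'$ as in \eqref{eq-def-nu}. The map $\nu_{v_{j+1}}'$ restricts to an isomorphism of mutable-balanced subalgebras (see the paragraph containing \eqref{eq-quantum-mutation_Z}), so the issue reduces to controlling the automorphism $\nu_{v_{j+1}}^{\sharp\hat\omega}={\rm Ad}_{\Psi^\omega(X_{v_{j+1}})}$. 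By Lemma~\ref{lem-F-P}, this automorphism transforms a Weyl-ordered monomial $Z^{\bf t}$ into $Z^{\bf t}\cdot F^\omega(X_{v_{j+1}},m)$, where $m=\tfrac{1}{n}\sum_v Q(v_{j+1},v)t_v$ (well-defined by $\mathcal B_{\mathcal D_j}$-membership). The factor $F^\omega(X_{v_{j+1}},m)$ is a polynomial in $X_{v_{j+1}}$ when $m\geq 0$ but involves inverses $(1+\omega^{2s-1}X_{v_{j+1}})^{-1}$ when $m<0$, and the latter would force $\TR_j(\alpha)$ out of $\zzm(\mathcal D_j)$.

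The main obstacle, therefore, is to rule out denominators, i.e.\ to verify that $m\geq 0$ for every Weyl-ordered monomial in the expansion of $\nu_{v_{j+1}}'(\TR_{j+1}(\alpha))$ for every $\alpha\in\sss$. The plan is geometric: one realizes each intermediate seed $\mathcal D_j$ via a "partial splitting" of the flipped quadrilateral $\mathbb P_{4,e}$, so that $\mathcal D_j$ arises as the signed adjacency seed of an intermediate bundle of arcs; for this configuration, the ${\rm SL}_n$ quantum trace of Theorem~\ref{thm.quantum_trace} supplies a direct presentation of a "geometric" trace ${\rm tr}_j^{\rm geo}(\alpha)\in\zzm(\mathcal D_j)$, and compatibility with the splitting homomorphism $\mathbb S_e$ (Theorem~\ref{thm-trace-cut}) together with the explicit local $\mathrm{SL}_n$ flip formulas identifies ${\rm tr}_j^{\rm geo}$ with $\TR_j$. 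This analysis is exactly what is carried out one mutation at a time in \cite{WH2025}, which is invoked here to close the induction.
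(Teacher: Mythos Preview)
The paper gives no proof of this lemma and simply cites \cite[Equation~(139)]{WH2025}; your proposal does the same after correctly identifying the inductive structure, the harmless monomial part $\nu_k'$, and the key obstacle (potential denominators from the automorphism $\nu_k^{\sharp\hat\omega}$ when $m<0$). Note that the statement as written contains a typo: $\ZEE(\mathcal D_j)$ should read $\zzm(\mathcal D_j)$, which you implicitly corrected.
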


\begin{lemma}\label{lem-img-v-b}
    For $0\leq j\leq r$, $p\in\mathcal P$, and $1\leq i\leq n-1$, we have $\nu_{v_{j+1}}^{\hat\omega}\cdots \nu_{v_r}^{\hat\omega}(Z^{{\bf b}(\lambda',p,i)}) = Z^{{\bf b}(j,p,i)}\in \zzm(\mathcal D_j)$ for some ${\bf b}(j,p,i)\in \mathcal B_{\mathcal D_j}$. In particular ${\bf b}(0,p,i)= {\bf b}(\lambda,p,i)$ and ${\bf b}(r,p,i)= {\bf b}(\lambda',p,i)$.
\end{lemma}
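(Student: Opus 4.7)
The plan is to prove the lemma by downward induction on $j$, starting from $j = r$. The base case is immediate: set ${\bf b}(r,p,i) := {\bf b}(\lambda',p,i)$, which lies in $\mathcal B_{\lambda'} \subset \mathcal B_{\mathcal D_{\lambda'}} = \mathcal B_{\mathcal D_r}$. I will carry through the induction a strengthened invariant, namely that not only does ${\bf b}(j,p,i)$ lie in $\mathcal B_{\mathcal D_j}$, but in fact it satisfies the centrality condition ${\bf b}(j,p,i)\,Q_{\mathcal D_j} = {\bf 0}$; the invariant holds at $j=r$ by Proposition~\ref{Prop-central-generic}, since ${\bf b}(\lambda',p,i)\,Q_{\lambda'} = {\bf 0}$.

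For the inductive step, assume that $\nu_{v_{j+2}}^{\hat\omega}\cdots \nu_{v_r}^{\hat\omega}(Z^{{\bf b}(\lambda',p,i)}) = Z^{{\bf b}(j+1,p,i)}$ with ${\bf b}(j+1,p,i)\,Q_{\mathcal D_{j+1}} = {\bf 0}$. Writing $\nu_{v_{j+1}}^{\hat\omega} = \nu_{v_{j+1}}^{\sharp\hat\omega}\circ \nu'_{v_{j+1}}$, I would first compute $\nu'_{v_{j+1}}(Z^{{\bf b}(j+1,p,i)})$ using the explicit formula \eqref{eq-quantum-mutation_Z}: the result is a genuine monomial $Z^{{\bf t}}$ with $t_v = b(j+1,p,i)_v$ for $v \neq v_{j+1}$ and $t_{v_{j+1}} = -b(j+1,p,i)_{v_{j+1}} + \sum_{v \neq v_{j+1}} [Q_{\mathcal D_{j+1}}(v,v_{j+1})]_+\, b(j+1,p,i)_v$. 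Next, since the $(v_{j+1})$-entry of ${\bf b}(j+1,p,i)\,Q_{\mathcal D_{j+1}}$ vanishes, the commutation exponent $m = \tfrac{1}{n}\sum_v Q_{\mathcal D_j}(v_{j+1},v)\, t_v$ will also vanish (once the $Q$-kernel is shown to be preserved; see below). Consequently $F^\omega(X_{v_{j+1}}, m) = 1$ and $\nu_{v_{j+1}}^{\sharp\hat\omega}(Z^{{\bf t}}) = Z^{{\bf t}}$, so I may set ${\bf b}(j,p,i) := {\bf t}$.

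The main technical obstacle is the pure linear-algebra verification that the kernel of the signed adjacency matrix is preserved under the exponent mutation induced by $\nu'_{v_{j+1}}$. Concretely, if $Q' = Q_{\mathcal D_{j+1}}$ is obtained from $Q = Q_{\mathcal D_j}$ by quiver mutation at $k = v_{j+1}$, and if ${\bf b}'$ satisfies ${\bf b}' Q' = {\bf 0}$, one must show that the vector ${\bf t}$ defined by $t_v = b'_v$ for $v \neq k$ and the formula above for $t_k$ satisfies ${\bf t}\, Q = {\bf 0}$. This follows from a case analysis using the mutation rule $Q(u,v) = Q'(u,v) + \tfrac{1}{2}(Q'(u,k)|Q'(k,v)| + |Q'(u,k)|Q'(k,v))$ for $k\notin\{u,v\}$, $Q(u,k) = -Q'(u,k)$, and the identity $[a]_+ - [-a]_+ = a$; it is elementary but requires careful bookkeeping of signs. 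This step is expected to be the heart of the argument, and it is essentially the assertion that the classical $\mathcal X$-mutation preserves central monomials.

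Once this linear-algebra step is established, the induction carries through: at each stage the result is a monomial $Z^{{\bf b}(j,p,i)}$ with ${\bf b}(j,p,i)\,Q_{\mathcal D_j} = {\bf 0}$, hence in particular ${\bf b}(j,p,i) \in \mathcal B_{\mathcal D_j}$. The boundary identifications ${\bf b}(r,p,i) = {\bf b}(\lambda',p,i)$ and ${\bf b}(0,p,i) = {\bf b}(\lambda,p,i)$ are then automatic: the former by construction, and the latter by combining the induction with Proposition~\ref{prop-loop-nature}, using that the monomial basis of $\mathcal Z_{\hat\omega}(\mathcal D_\lambda)$ determines its exponent vector uniquely.
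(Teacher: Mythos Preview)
Your approach is essentially the same as the paper's: downward induction on $j$ carrying the strengthened invariant ${\bf b}(j,p,i)\,Q_{\mathcal D_j}={\bf 0}$, so that the automorphism part $\nu_{v_{j+1}}^{\sharp\hat\omega}$ acts trivially and only the monomial map $\nu'_{v_{j+1}}$ matters. The identification ${\bf b}(0,p,i)={\bf b}(\lambda,p,i)$ via Proposition~\ref{prop-loop-nature} is exactly right.

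The one place where the paper is slicker is the verification that the $Q$-kernel is preserved. You propose to do this by a direct case analysis on the mutation formula for $Q$; this works, but note that in the formula for $t_{v_{j+1}}$ you should use $[Q_{\mathcal D_j}(v,v_{j+1})]_+$ rather than $[Q_{\mathcal D_{j+1}}(v,v_{j+1})]_+$, since the formula \eqref{eq-quantum-mutation_Z} for $\nu'_k\colon\Fr(\mathcal Z(\mathcal D'))\to\Fr(\mathcal Z(\mathcal D))$ uses the exchange matrix $Q$ of the \emph{target} seed $\mathcal D=\mathcal D_j$. The paper avoids the bookkeeping entirely: since $\nu'_k$ is a skew-field isomorphism $\Fr(\mathcal Z_{\hat\omega}(\mathcal D'))\to\Fr(\mathcal Z_{\hat\omega}(\mathcal D))$, it sends central elements to central elements; as ${\bf b}(j+1,p,i)\,Q_{\mathcal D_{j+1}}={\bf 0}$ means $Z^{{\bf b}(j+1,p,i)}$ is central (for generic $\hat\omega$, and the exponent formula is independent of $\hat\omega$), its monomial image $Z^{{\bf b}(j,p,i)}$ is central, hence ${\bf b}(j,p,i)\,Q_{\mathcal D_j}={\bf 0}$. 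This one-line argument replaces your case analysis and is worth knowing.
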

\begin{proof}
Suppose that $\mu_{v_r}(Q_{\lambda'}) = Q_{r-1}$. 
From the definition of $\nu_{v_r}'$, we know that 
$\nu_{v_r}'(Z^{{\bf b}(\lambda',p,i)}) = Z^{{\bf b}(r-1,p,i)}\in \zzm(\mathcal D_{r-1})$ for some ${\bf b}(r-1,p,i)\in\mathcal B_{\mathcal D_{r-1}}$.
From Proposition \ref{Prop-central-generic}, we know that ${\bf b}(\lambda',p,i) Q_{\lambda'}={\bf 0}$. We have ${\bf b}(r-1,p,i) Q_{r-1}={\bf 0}$ because $\nu_{v_r}'$ is a well-defined skew-field isomorphism. Thus $\nu_{v_r}^{\sharp\hat\omega}(Z^{{\bf b}(r-1,p,i)})= Z^{{\bf b}(r-1,p,i)}$ and 
$\nu_{v_r}^{\hat\omega}(Z^{{\bf b}(\lambda',p,i)})= Z^{{\bf b}(r-1,p,i)}$.

Using a similar argument as above, we can show that  
$\nu_{v_{r-1}}^{\hat\omega}\big( \nu_{v_r}^{\hat\omega}(Z^{{\bf b}(\lambda',p,i)}) \big) 
= Z^{{\bf b}(r-2,p,i)} \in \zzm(\mathcal D_{r-2})$  
for some ${\bf b}(r-2,p,i) \in \mathcal B_{\mathcal D_{r-2}}$.  
Proceeding inductively, the lemma follows.
\end{proof}

For each $0\leq i\leq n$, define 
\begin{equation}\label{eq-nu-i-def}
    \nu_i^{\hat\omega}:=
\nu_{v_{0}}^{\hat\omega}\circ\nu_{v_{1}}^{\hat\omega}\circ\cdots \circ\nu_{v_i}^{\hat\omega}\colon \Fr(\zzm(\mathcal D_i))\rightarrow\Fr(\zmm),
\end{equation}
where $\nu_{v_{0}}^{\hat\omega}$ is the identity map.
\begin{lemma}\label{lem-com-v-tr}
    The following diagram commutes:
    \begin{equation*}
\begin{tikzcd}
\cS_{\hat \omega}(\fS) \arrow[r,"{\rm tr}_i"]
\arrow[d, "\tr"]  
&  \Fr(\zzm(\mathcal D_i))  \arrow[dl, "\nu_i^{\hat\omega}"] \\
 \Fr(\mathcal Z_{\hat \omega}^{\rm bl}(\fS,\lambda))
&  \\
\end{tikzcd}
\end{equation*}
\end{lemma}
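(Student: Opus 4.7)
The plan is to unwind the definitions of $\nu_i^{\hat\omega}$ and ${\rm tr}_i$ and reduce the commutativity to Theorem~\ref{thm-naturality-trace}(d). First I would substitute the definition of ${\rm tr}_i$ from \eqref{def-tr-i} and of $\nu_i^{\hat\omega}$ from \eqref{eq-nu-i-def} into the composition $\nu_i^{\hat\omega}\circ{\rm tr}_i$. This yields
\[
\nu_i^{\hat\omega}\circ {\rm tr}_i
\;=\;
\bigl(\nu_{v_0}^{\hat\omega}\circ\nu_{v_1}^{\hat\omega}\circ\cdots\circ\nu_{v_i}^{\hat\omega}\bigr)
\circ
\bigl(\nu_{v_{i+1}}^{\hat\omega}\circ\cdots\circ\nu_{v_r}^{\hat\omega}\circ {\rm tr}_{\lambda'}\bigr).
\]
Since $\nu_{v_0}^{\hat\omega}$ is the identity, the outer composition of quantum mutations is precisely $\nu_{v_1}^{\hat\omega}\circ\nu_{v_2}^{\hat\omega}\circ\cdots\circ\nu_{v_r}^{\hat\omega}$, which by the definition \eqref{eq-Theta2} of $\Theta_{\lambda\lambda'}^{\hat\omega}$ agrees with $\Theta_{\lambda\lambda'}$ on the relevant subalgebra.

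Next I would invoke Theorem~\ref{thm-naturality-trace}(d), which gives $\Theta_{\lambda\lambda'}\circ{\rm tr}_{\lambda'}=\tr$. Combining with the previous step yields $\nu_i^{\hat\omega}\circ {\rm tr}_i=\tr$, as claimed. The only subtle point requiring care is the compatibility of domains: one must check that composing the $\nu_{v_j}^{\hat\omega}$ in the stated order is well-defined on $\Fr(\zzm(\mathcal D_i))$, but this is immediate because each $\nu_{v_j}^{\hat\omega}$ is a skew-field isomorphism between $\Fr(\zzm(\mathcal D_j))$ and $\Fr(\zzm(\mathcal D_{j-1}))$, so the compositions telescope to a skew-field isomorphism between $\Fr(\zzm(\mathcal D_i))$ and $\Fr(\zmm)=\Fr(\zzm(\mathcal D_0))$.

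I do not expect any serious obstacle here, since the lemma is essentially a bookkeeping statement: the entire content has already been packaged into Theorem~\ref{thm-naturality-trace}(d) and Proposition~\ref{division_homomorphisms_for_sweep_independent}. The proof should therefore be a short paragraph, consisting of one algebraic identity together with a citation to Theorem~\ref{thm-naturality-trace}(d).
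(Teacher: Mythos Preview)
Your proposal is correct and follows exactly the same approach as the paper's proof, which consists of a one-line citation to Theorem~\ref{thm-naturality-trace}(d) together with equations \eqref{eq-Theta2}, \eqref{def-tr-i}, and \eqref{eq-nu-i-def}. You have simply spelled out the telescoping explicitly; the reference to Proposition~\ref{division_homomorphisms_for_sweep_independent} is unnecessary here since only a single flip is involved, but it does no harm.
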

\begin{proof}
    It follows from Theorem \ref{thm-naturality-trace}(d), equations \eqref{eq-Theta2}, \eqref{def-tr-i}, and \eqref{eq-nu-i-def}.
\end{proof}

\begin{proof}[Proof of Theorem \ref{thm-naturality}]
    (a) 
We will use induction on $0\leq i\leq n$ to prove the following statement.

{\bf Statement (*)}: For any element $X\in \ZEE(\mathcal D_i)$, we can write
    $\nu_i^{\hat\eta}(X) = PQ^{-1}$,
    where $P,Q\in\zem$, such that $g(Q)\neq 0$.
    Here $\nu_i^{\hat\eta}$ is defined as in \eqref{eq-nu-i-def} with $\hat\omega$ replaced by $\hat\eta$.
    Then $g_i(X):=g(P)g(Q)^{-1}$ is a well-defined 
    algebra homomorphism from $\ZEE(\mathcal D_i)$
    to $\BC$ and 
    \begin{align*}
        g_{i}|_{\mathcal X_{\eta}(\mathcal D_{i})}=
\mu_{v_{i}}^\eta\cdots \mu_{v_1}^{\eta} \mu_{v_0}^{\eta}(g|_{\Xee}),
    \end{align*}
where $\mu_{v_0}^{\eta}(g|_{\Xee})= g|_{\Xee}$.
Furthermore, the following diagram commutes:
      \begin{equation}\label{diag-tr-i-g}
\begin{tikzcd}
\cS_{\bar \eta}(\fS) \arrow[r, "{\rm tr}_\lambda"]
\arrow[d, "{\rm tr}_{i}"]  
& \zem \arrow[d, "g"] \\
\ZEE(\mathcal D_i)
 \arrow[r, "g_i"] 
&  \BC
\end{tikzcd}.
\end{equation}

Statement (*) is obviously true when $i=0$.
Assume that Statement (*) holds for $i-1$ ($i\geq 1$). 
Let $X\in \ZEE(\mathcal D_i)$.
From the definition of $\nu_{v_i}^\eta$, we can write $\nu_{v_i}^\eta(X) = UV^{-1}$, where $U,V\in\ZEE(\mathcal D_i)$ and $V$ is the product in $1 + \eta^{2r - 1} X_{v_{i}}= 1 + \eta X_{v_{i}}$. Since $g_{i-1}|_{\mathcal X_{\eta}(\mathcal D_{i-1})}$ is compatible with the mutation $\mu_{v_i}^\eta$, we have $g_{i-1}(X_{v_i})\neq -\eta$. This shows that 
$g_{i-1}(V)\neq 0$. 

From the assumption step, we can write 
$\nu_{i-1}^\eta(U) = U_1U_2^{-1}$ and 
$\nu_{i-1}^\eta(V) = V_1V_2^{-1}$, where 
$U_1,U_2,V_1,V_2\in\zem$ such that 
$$\text{$g(U_2)g(V_2)\neq 0$,  
$g_{i-1}(U) = g(U_1)g(U_2)^{-1}$, and $g_{i-1}(V) = g(V_1)g(V_2)^{-1}$.}$$
Then $g(V_1)\neq 0$ because $g_{i-1}(V)\neq 0$.
Then $$\nu_i^\eta(X) = \nu_{i-1}^\eta(\nu_{v_i}^\eta(X))=
\nu_{i-1}^\eta (U) \nu_{i-1}^\eta(V)^{-1}
=U_1V_2(U_2V_1)^{-1}.$$
Here $g(U_2V_1)\neq 0$. 

Set 
$$g_i(X)= g(U_1V_2) g(U_2 V_1)^{-1}$$
It is a trivial check that $g_i$ is well-defined algebra homomorphism from $\ZEE(\mathcal D_i)$ to $\mathbb C$ and 
\begin{align}\label{eq-i-i1-qqq}
    g_i(X) = g_{i-1}(U) g_{i-1}(V)^{-1}.
\end{align}
Since $\nu_{v_i}^{\eta}(X)=UV^{-1}$, 
Lemma \ref{lem:nu_extends_mu} and \eqref{eq-i-i1-qqq} show that 
$$g_i|_{\mathcal X_{\eta}(\mathcal D_{i})}
=\mu_{v_i}^\eta (g_{i-1}|_{\mathcal X_{\eta}(\mathcal D_{i-1})}) = 
\mu_{v_{i}}^\eta\cdots \mu_{v_1}^{\eta} \mu_{v_0}^{\eta}(g|_{\Xee}).$$

Next we want to show that the diagram in \eqref{diag-tr-i-g} commutes. Let $\alpha\in \see$.
We can write $\nu_i^{\hat\eta}(\TR_i(\alpha)) = ZW^{-1}$, where 
$Z,W\in\zem$ such that $g(W)\neq 0$. 
Then $g_i(\TR_i(\alpha)) = g(Z)g(W)^{-1}$.
Lemma \ref{lem-com-v-tr} shows that $\tr(\alpha) = ZW^{-1}$.
Thus $g(\tr(\alpha)) g(W) = g(Z)$.
This implies that $g(\tr(\alpha)) =g(Z)g(W)^{-1} = g_i(\TR_i(\alpha))$.

Then Statement (*) holds for $0\leq i\leq r$.
We complete the proof of (a) by putting $i=r$ in Statement (*).

(b) follows from (a) and Proposition \ref{prop-loop-nature}.

(c) For $1\leq i\leq n-1$ and $p\in\mathcal P$, we have $$g(Z^{{\bf b}(\lambda,p,i)}) = h(\PT(Z^{{\bf b}(\lambda,p,i)})) =h (Z^{N{\bf b}(\lambda,p,i)})
=h(Z^{{\bf b}(\lambda,p,i)})^N = b(p.i)^N.$$

Lemma~\ref{lem-irre-Azumaya} and
Corollary~\ref{prop-representation-Fock}  imply that there exists an irreducible representation of $\zmm$
$$\bar\rho\colon\zmm\rightarrow\End(V)$$
such that $\bar\rho(X) = h(X)\Id_V$, for any $X\in\mathsf{Z}(\zzm)$, and $\dim_\BC V= N^{(n^2-1)(g-1)+\frac{1}{2}n(n-1)|\mathcal P|}$.
Define 
$$\rho\colon \Zoo\rightarrow\zmm\xrightarrow{\bar\rho}\End(V).$$
Then $\rho$ is an irreducible representation of $\Zoo$ because 
$(\dim_\BC V)^2$ equals the rank of $\Zoo$ over $\mathsf{Z}(\Zoo)$ (Theorem~\ref{thm-rank-Z}).
It is a trivial check that 
$$\rho(\PT(W)) = f(W)\Id_V \text{ and }
\rho(Z^{{\bf b}(\lambda,p,i)}) = b(p,i)\Id_V$$
for $W\in \Zee$, $p\in\mathcal P$, and $1\leq i\leq n-1$.

To complete the proof, we need the following lemma. We will prove the following lemma later.
\begin{lemma}\label{lem-key-final}
    Let $\bar \rho_0=\bar\rho$, and let $1\leq j\leq r$. Suppose that we have a representation
    $$\rho_{j-1}\colon \zzm(\mathcal D_{j-1})\rightarrow \End(V)$$
    satisfying the following conditions:
    \begin{enumerate}
        \item $\bar \rho_{j-1}(\PT(Y)) = g_{i-1}(Y)\Id_V$, for $Y\in\ZEE(\mathcal D_{i-1})$, where $g_{j-1}$ is defined in the proof of (a).

        \item $\bar \rho_{j-1}(Z^{{\bf b}(j-1,p,i)}) = \bar\rho_0(Z^{{\bf b}(\lambda,p,i)})$ for $p\in\mathcal P$ and $1\leq i\leq n-1$, where $Z^{{\bf b}(j-1,p,i)}$ is defined in Lemma \ref{lem-img-v-b}.

        \item The following diagram commutes:
        \begin{equation*}
\begin{tikzcd}
\cS_{\bar \omega}(\fS) \arrow[r, "{\rm tr}_\lambda"]
\arrow[d, "{\rm tr}_{j-1}"]  
& \zmm \arrow[d, "\bar\rho_0"] \\
\zzm(\mathcal D_{j-1})
 \arrow[r, "\bar\rho_{j-1}"] 
&  \End(V)
\end{tikzcd}.
\end{equation*}
    \end{enumerate}
  Then, for any element $x\in \zzm(\mathcal D_j)$, we can write 
  $\nu_{v_j}^{\hat\omega}(x) = uv^{-1}=w^{-1}z\in \Fr(\zzm(\mathcal D_{j-1}))$, where $u,v,w,z\in \zzm(\mathcal D_{j-1})$ such that 
  $\bar\rho_{j-1}(v)$ and $\bar\rho_{j-1}(w)$ are invertible. 
  Put $\bar\rho_j(x)=\bar\rho_{j-1}(u) \bar\rho_{j-1}(v)^{-1}
  =\bar\rho_{j-1}(w)^{-1}\bar\rho_{j-1}(z).$
 We have that $\rho_{j}\colon \zzm(\mathcal D_{j})\rightarrow \End(V)$ is a well-defined representation satisfying Conditions~(1), (2), and (3) above, with $j-1$ replaced by $j$.

\end{lemma}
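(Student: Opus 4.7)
The plan is to transport $\bar\rho_{j-1}$ across the quantum mutation $\nu_{v_j}^{\hat\omega}$. Specifically, for $x\in\zzm(\mathcal D_j)$, I would present $\nu_{v_j}^{\hat\omega}(x)\in\Fr(\zzm(\mathcal D_{j-1}))$ as a fraction with a \emph{central} denominator in $\zzm(\mathcal D_{j-1})$ and then set $\bar\rho_j(x):=\bar\rho_{j-1}(u)\bar\rho_{j-1}(v)^{-1}$. Once this is in place, the verifications of Conditions~(1)--(3) for $j$ should reduce to the three compatibility diagrams already available: \eqref{eq-commmm-Fro} for~(1), Lemma~\ref{lem-img-v-b} for~(2), and Lemma~\ref{lem-com-v-tr} together with $\TR_{j-1}=\nu_{v_j}^{\hat\omega}\circ\TR_j$ for~(3).

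The hard part will be identifying the denominator and showing $\bar\rho_{j-1}$ sends it to an invertible operator. By the definition of $\nu_{v_j}^{\hat\omega}$ in~\eqref{eq-def-nu} and Lemma~\ref{lem-F-P}, each $\nu_{v_j}^{\hat\omega}(Z^{\bf t})$ is $\nu_{v_j}'(Z^{\bf t})$ multiplied by a product $F^\omega(X_{v_j},m)$ of linear factors $(1+\omega^{\pm(2r-1)}X_{v_j})$. Since $\omega^{2N}=1$, these factors are periodic of period $N$ in $r$, so for any finite sum the common denominator can be taken to be a power of $B:=\prod_{r=1}^{N}(1+\omega^{-(2r-1)}X_{v_j})$. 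A direct root count should give $B=\omega^{-N^2}(X_{v_j}^N+\eta)$, using the identity $(-1)^N\omega^{N}=-\eta$, which follows from $\eta=\omega^{N^2}$ and $\omega^{2N}=1$ by a short case analysis (noting that $\omega^N=1$ with $N$ even is excluded, because $\omega^2$ has order exactly $N$). Crucially, $X_{v_j}^N=\PT(X_{v_j}^\eta)$ lies in $\mathsf{Z}(\zzm(\mathcal D_{j-1}))$ by the mutable-balanced condition, so $B$ is central. Condition~(1) then gives $\bar\rho_{j-1}(B)=\omega^{-N^2}\bigl(g_{j-1}(X_{v_j}^\eta)+\eta\bigr)\Id_V$, which is an invertible scalar \emph{exactly} because $g_{j-1}|_{\mathcal X_\eta(\mathcal D_{j-1})}$ is compatible with $\mu_{v_j}^\eta$, i.e.\ $g_{j-1}(X_{v_j}^\eta)\neq-\eta$.

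Once this centrality is established, well-definedness of $\bar\rho_j$ becomes straightforward: given any two presentations $\nu_{v_j}^{\hat\omega}(x)=uv^{-1}=w^{-1}z$, I would rewrite both over the common central denominator $B^K$ (with $K$ depending on $x$) by multiplying numerator and denominator with appropriate polynomials in $X_{v_j}$, and then use that $\bar\rho_{j-1}(B^K)$ is a non-zero scalar to conclude equality of the two candidate values. The algebra-homomorphism property of $\bar\rho_j$ will be inherited from that of $\nu_{v_j}^{\hat\omega}$.

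Conditions~(1)--(3) for $j$ should then fall out in order. For~(1), diagram~\eqref{eq-commmm-Fro} gives $\nu_{v_j}^{\hat\omega}\circ\PT=\PT\circ\nu_{v_j}^{\hat\eta}$, so writing $\nu_{v_j}^{\hat\eta}(Y')=UV^{-1}$ for $Y'\in\ZEE(\mathcal D_j)$ will yield $\bar\rho_j(\PT(Y'))=g_{j-1}(U)g_{j-1}(V)^{-1}\Id_V=g_j(Y')\Id_V$, matching the inductive formula from the proof of part~(a). For~(2), Lemma~\ref{lem-img-v-b} tells us $\nu_{v_j}^{\hat\omega}(Z^{{\bf b}(j,p,i)})=Z^{{\bf b}(j-1,p,i)}\in\zzm(\mathcal D_{j-1})$ with trivial denominator, so Condition~(2) for $j-1$ propagates directly to $\bar\rho_j(Z^{{\bf b}(j,p,i)})=\bar\rho_0(Z^{{\bf b}(\lambda,p,i)})$. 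For~(3), combining $\TR_{j-1}=\nu_{v_j}^{\hat\omega}\circ\TR_j$ with the definition of $\bar\rho_j$ gives $\bar\rho_j\circ\TR_j=\bar\rho_{j-1}\circ\TR_{j-1}$, which chains with Condition~(3) for $j-1$ to $\bar\rho_0\circ\tr$.
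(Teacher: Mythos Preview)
Your proposal is correct and follows essentially the same route as the paper: write $\nu_{v_j}^{\hat\omega}(x)$ as a fraction whose denominators are products of factors $(1+\omega^{2r-1}X_{v_j})$, use the identity $(-\omega)^N=-\eta$ together with the compatibility hypothesis $g_{j-1}(X_{v_j})\neq-\eta$ to show these denominators act invertibly, and then verify Conditions~(1)--(3) via diagram~\eqref{eq-commmm-Fro}, Lemma~\ref{lem-img-v-b}, and Lemma~\ref{lem-com-v-tr} respectively. The only stylistic difference is that you bundle the factors into the central element $B=\omega^{-N^2}(X_{v_j}^N+\eta)$ and observe $\bar\rho_{j-1}(B)$ is a nonzero scalar, whereas the paper argues directly that each individual factor $\bar\rho_{j-1}(1+\omega^{2r-1}X_{v_j})$ is invertible by an eigenvalue contradiction (if $-\omega^{1-2r}$ were an eigenvalue of $\bar\rho_{j-1}(X_{v_j})$, then $(-\omega)^N=-\eta$ would be an eigenvalue of $\bar\rho_{j-1}(X_{v_j}^N)=g_{j-1}(X_{v_j})\Id_V$); both arguments rest on the same identity and yield the same conclusion.
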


Note that $\bar \rho_{j-1}$ satisfies Conditions (1), (2), and (3) in Lemma~\ref{lem-key-final} when $j=1$. 
After repeatedly applying Lemma~\ref{lem-key-final}  $r$ times, we can get a representation $\bar \rho_r\colon \zmp\rightarrow\End(V)$
satisfying Conditions (1), (2), and (3) in Lemma \ref{lem-key-final} with $j-1$ replaced by $r$.
Define $V'=V$ and 
$$\rho'\colon \Zop\rightarrow \zmp\xrightarrow{\bar\rho_r}
\End(V').$$
For any $K\in \Zep$, we have 
$$\rho'(\PT(K)) = \bar\rho_r(\PT(K)) = g_r(K)\Id_{V'} = g'(K)\Id_{V'}= f'(K)\Id_{V'},$$
where $g'=g_r$ and $f'=g'|_{\Zep}$.
For any $p\in\mathcal P$ and $1\leq i\leq n-1$, we have 
$$\rho'(Z^{{\bf b}(\lambda',p,i)})
=\bar\rho_r(Z^{{\bf b}(r,p,i)})=\bar \rho (Z^{{\bf b}(\lambda',p,i)}) = b(p,i).$$
Since $\dim_\BC(V')$ equals the rank of $\Zep$ over $\mathsf{Z}(\Zep)$, the representation $\rho'$ is irreducible. 

As representations of $\sss$, we have that $\rho'\circ\TR_{\lambda'}$ is isomorphic to 
$\rho\circ\TR_{\lambda}$ because $\rho'\circ\TR_{\lambda'}=\rho\circ\TR_{\lambda}$.

\end{proof}

Before proving Lemma \ref{lem-key-final}, we first note the fact that 
$(-\omega)^N = -\eta$.
Recall that $\eta=\omega^{N^2}$ and the order of $\omega^2$ is $N$.

{\bf Case (1)} $N$ is even: Then $\eta=(\omega^{N})^N=(\pm 1)^N=1$. We have $\omega^N=-1$. Otherwise $\omega^N=1$ and the order of $\omega^2$ is $\frac{1}{N}$, a contracition.
Thus $(-\omega)^N=\omega^N=-1=-\eta$.

{\bf Case (2)} $N$ is odd: Then $\eta=(\omega^{N})^N=\omega^N$. 
We have $(-\omega)^N=-\omega^N=-\eta$.

\begin{proof}[Proof of Lemma \ref{lem-key-final}]
    Let ${\bf t}\in\mathcal B_{\mathcal D_j}$.
    From the definition of $\nu_{v_j}^{\hat\omega}$,
    we have 
$$\nu_{v_j}^{\hat\omega}(Z^{\bf t}) = Z^{{\bf t}'}\prod_{r=1}^{|m|}(1+\omega^{(2r-1){{\rm sgn}(m)}} X_{v_j})^{{\rm sgn}(m)} =\left(\prod_{r=1}^{|m|}(1+\omega^{(2r-1-2m){{\rm sgn}(m)}} X_{v_j})^{{\rm sgn}(m)}\right)Z^{{\bf t}'},$$
where ${\bf t}'\in \mathcal B_{\mathcal D_{j-1}}$, and $m\in\mathbb Z$.
Thus, for $x\in \zzm(\mathcal D_j)$,
we have 
$\nu_{v_j}^{\hat\omega}(x) = uv^{-1} = w^{-1}z \in \Fr(\zzm(\mathcal D_{j-1}))$, 
where $u, v, w, z \in \zzm(\mathcal D_{j-1})$, and $v$ and $w$ are products in $(1 + \omega^{2r-1} X_{v_j})$.

Note that $g_{j-1}(X_{v_j})\neq -\eta$ because $g_{j-1}|_{\mathcal X_\eta(\mathcal D_{i-1})}$ is compatible with the mutation $\mu_{v_i}^\eta$.
We will show that $\Id_V + \omega^{2r-1} \bar\rho_{j-1}(X_{v_j})$ is invertible. 
Assume, to the contrary, that it is not. 
Then $-\omega^{-2r+1}$ is an eigenvalue of $\bar\rho_{j-1}(X_{v_j})$.
Thus $(-\omega^{-2r+1})^N=(-\omega)^N=-\eta$ is an eigenvalue of 
$\bar\rho_{j-1} (X_{v_j})^N = g_{j-1}(X_{v_j})\Id_V$.
This contradicts with $g_{j-1}(X_{v_j})\neq -\eta$.

From the above discussion, we know that $\bar\rho_{j-1}(v)$ and $\bar\rho_{j-1}(w)$ are invertible. 
Put  $\bar\rho_j (x)=\bar\rho_{j-1}(u) \bar\rho_{j-1}(v)^{-1}
  =\bar\rho_{j-1}(w)^{-1}\bar\rho_{j-1}(z)$.
It is a trivial check that 
$\rho_{j}\colon \zzm(\mathcal D_{j})\rightarrow \End(V)$ is a well-defined representation.

{\bf Condition (1)}: Let $Y\in \ZEE(\mathcal D_j)$.
From the proof of (a),
we know that $\nu_{v_j}^{\hat\eta}(Y) = Y_1 Y_2^{-1}
=Y_2^{-1} Y_1\in\Fr(\ZEE(\mathcal D_{j-1}))$, where $Y_1,Y_2\in \ZEE(\mathcal D_{j-1})$ such that $g_{j-1}(Y_2)\neq 0$.
Lemma \ref{eq-commmm-Fro} shows that 
$$\nu_{v_j}^{\hat\omega}(\PT(Y)) = \PT(\nu_{v_j}^{\hat\eta}(Y))
=\PT(Y_1)\PT(Y_2)^{-1}=\PT(Y_2)^{-1}\PT(Y_1)\in\Fr(\zzm(\mathcal D_{j-1})).$$
Here $\bar\rho_{j-1}(\PT(Y_2)) = g_{j-1}(Y_2)\Id_V$ is invertible because $g_{j-1}(Y_2)\neq 0$. Then we have 
$$\bar\rho_j(\PT(Y)) = \bar\rho_{j-1}(\PT(Y_1))\bar\rho_{j-1}(\PT(Y_2))^{-1}=g_{j-1}(Y_1) g_{j-1}(Y_2)^{-1}\Id_V=g_j(Y)\Id_V.$$

{\bf Condition (2)}: Lemma \ref{lem-img-v-b} implies that 
$\nu_{v_j}^{\hat\omega} (Z^{{\bf b}(j,p,i)}) = Z^{{\bf b}(j-1,p,i)}$. Then 
$$\bar\rho_j(Z^{{\bf b}(j,p,i)}) = \bar\rho_{j-1}(Z^{{\bf b}(j-1,p,i)})=\bar\rho_0(Z^{{\bf b}(\lambda,p,i)}).$$

{\bf Condition (3)}: Let $\beta\in\sss$. 
Suppose that $\nu_{v_j}^{\hat\omega}(\TR_j(\beta)) = u_1u_2^{-1}$, where $u_1,u_2\in \zzm(\mathcal D_{j-1})$ such that $\bar\rho_{j-1}(u_2)$ is invertible. 
Then 
$$\bar\rho_j(\TR_j(\beta)) = \bar\rho_{j-1}(u_1) \bar\rho_{j-1}(u_2)^{-1}.$$
It follows from Lemma \ref{lem-com-v-tr} that
$\TR_{j-1} = \nu_{v_j}^{\hat\omega}\circ \TR_j$.
Thus $\TR_{j-1}(\beta) = u_1 u_2^{-1}$. This shows that 
$$\text{$\bar\rho_{j-1}(\TR_{j-1}(\beta)) \bar\rho_{j-1}(u_2)=
\bar\rho_{j-1}(u_1)$ and $\bar\rho_{j-1}(\TR_{j-1}(\beta))=\bar\rho_{j-1}(u_1) \bar\rho_{j-1}(u_2)^{-1}.$}$$
We have
$$\bar\rho_j(\TR_j(\beta))=\bar\rho_{j-1}(\TR_{j-1}(\beta))=\bar\rho_0(\TR_{\lambda}(\beta)).$$

\end{proof}

\begin{remark}
When $n=2$, Theorem \ref{thm-naturality} was proved in \cite{representation3}.  
In this case, the quiver $\Gamma_{\lambda'}$ is obtained from $\Gamma_{\lambda}$ by a single mutation, and  
$\nu_{v_1}^{\omega}$ is defined from  
$\mathcal Z_{\hat\omega}^{\rm bl}(\fS,\lambda')$  
to $\mathcal Z_{\hat\omega}^{\rm bl}(\fS,\lambda)$.  
Consequently, the authors of \cite{representation3} did not introduce $\mathcal Z^{\rm mbl}$.  

When $n>2$, however, we do not know how to define the “balanced part” $\mathcal Z_{\hat\omega}^{\rm bl}(\mathcal D_j)$  
for $1 \leq j \leq r-1$.  
Therefore, in Theorem \ref{thm-naturality} for general $n$, we must instead involve $\mathcal Z^{\rm mbl}$.

\end{remark}

\begin{remark}
   Although Theorem~\ref{thm-naturality} concerns two triangulations related by a flip, it extends to arbitrary triangulations via Theorem~\ref{thm-naturality-trace}(b).  
We state Theorem~\ref{thm-naturality} only in the flip case, as it is more convenient to formulate in this setting.

\end{remark}

\bibliography{ref.bib}

\end{document}